\definecolor{marin}{rgb}   {0.,   0.1,   0.5} 
\definecolor{rouge}{rgb}   {0.8,   0.,   0.} 
\definecolor{sepia}{rgb}   {0.4,   0.25,   0.} 
\definecolor{mag}{rgb}   {0.3,   0,   0.3} 
\newtheorem{theorem}{Theorem}[section]
\newtheorem{corollary}[theorem]{Corollary}
\newtheorem{lemma}[theorem]{Lemma}
\newtheorem{proposition}[theorem]{Proposition}
\newtheorem{definition}[theorem]{Definition}
\newtheorem{remark}[theorem]{Remark}
\newcommand{\N}{\mathbb{N}}
\newcommand{\R}{\mathbb{R}}
\newcommand{\T}{\mathbb{T}}
\newcommand{\Z}{\mathbb{Z}}
\newcommand{\eps}{\varepsilon}
\begin{document}

\title[Dynamics of quintic nonlinear Schr\"odinger equations in $H^{2/5^+}(\mathbb{T})$
]{Dynamics of quintic nonlinear Schr\"odinger equations in $H^{2/5^+}(\mathbb{T})$}

\author{Joackim Bernier}

\address{\small{Nantes Universit\'e, CNRS, Laboratoire de Math\'ematiques Jean Leray, LMJL,
F-44000 Nantes, France
}}

\email{joackim.bernier@univ-nantes.fr}

\author{Beno\^it Gr\'ebert}

\address{\small{Nantes Universit\'e, CNRS, Laboratoire de Math\'ematiques Jean Leray, LMJL,
F-44000 Nantes, France
}}

\email{benoit.grebert@univ-nantes.fr}

\author{Tristan Robert}

\address{\small{Université de Lorraine, CNRS, IECL, F-54000 Nancy, France}}

\email{tristan.robert@univ-lorraine.fr}

\keywords{Birkhoff normal forms, low regularity, NLS equation, Strichartz estimates}

\subjclass[2010]{ 35Q55, 37K45, 37K55 }

\begin{abstract} 
In this paper, we succeed in integrating Strichartz estimates (encoding the dispersive effects of the equations) in Birkhoff normal form techniques. As a consequence, we  deduce a result on the long time behavior of quintic NLS solutions on the circle for small but very irregular initial data (in $H^s(\mathbb{T})$ for $s>2/5$). Note that since $2/5<1$ we cannot claim conservation of energy and, more importantly, since $2/5<1/2$, we must dispense with the algebra property of $H^s$. This is the first dynamical result where we use the dispersive properties of NLS in a context of Birkhoff normal form.

\end{abstract} 
\maketitle

\setcounter{tocdepth}{1} 
\tableofcontents

\section{Introduction}
Schematically, the Birkhoff normal form method consists of a first algebraic step where we transform the Hamiltonian of the PDE on a space of functions depending only on the space variable, and then of a second dynamic step where we deduce a long time behavior of the solutions of this PDE. In most of the results using this approach, the first step essentially involves multilinear estimates based on algebraic properties of the function space used, here the  Sobolev space on the d-dimensional torus $H^s(\T^d)$, and in this case a minimal regularity is required, here $s>d/2$. In this paper, we develop a new approach: we use dispersion properties already in the first step. The time oscillatory nature of the solutions, encoded in the Strichartz estimates, allows us to improve the multilinear estimates (essentially by lowering the regularity) and to propagate them.\\
As a result, combining normal form techniques and dispersive techniques, we are able  to specify the dynamics  of the quintic nonlinear Schr\"odinger equation (NLS) in $H^s(\T)$ with $s<1/2$, i.e. we can get rid of the algebra property so useful for non-linear equations. The proof is based on a normal form result without regularity, i.e. in $L^2$,  inspired by \cite{Bou04b} appendix 7 (see also \cite{CKO} and section \ref{sec:sketch}).

\subsection{Main results and comments}
To clarify our point we will focus on an example, the quintic NLS on the circle, but with two different linear perturbations:
\begin{itemize}
\item quintic NLS on the circle with a multiplicative potential
\begin{equation}\label{eq:NLS}\tag{NLS} 
i \partial_t u=-\partial_{x}^2u+Wu+\sigma |u|^4u, \quad x\in\T:=\R/2\pi\Z,\ t\in\R,
\end{equation}

\item quintic NLS on the circle with a convolution potential
\begin{equation}\label{eq:NLS*}\tag{NLS*} 
i \partial_t u=-\partial_{x}^2u+V\ast u+\sigma |u|^4u, \quad x\in\T:=\R/2\pi\Z,\ t\in\R,
\end{equation}
\end{itemize}
where, in both cases, $\sigma=\pm1$ allows considering
both the focusing and the defocusing cases and the potentials $W,V\in H^\theta(\T)$ ($\theta\geq0$ will be specified later) 
will be chosen to avoid resonances issues.

Before considering the long time behavior of the solutions,  we recall that according to Bourgain \cite{Bou93}, \eqref{eq:NLS}, with $W=0$, is locally well posed in $H^s(\mathbb{T})$ for $s>0$ and according to Li-Wu-Xu (see \cite{LWX}) it is globally well posed in\footnote{After completion of this work, small data global well-posedness of \eqref{eq:NLS} for $W=0$ was extended to $s>\frac13$ by Schippa \cite{Schippa}.} $H^s(\mathbb{T})$ for $s>2/5$. In section \ref{sec:gwp} we extend  these results  for both  \eqref{eq:NLS} and \eqref{eq:NLS*} to obtain the following.

\begin{proposition}\label{thm:gwp} Let $s>\frac25$. We assume that $V\in \mathcal{F}L^\infty(\mathbb{T};\mathbb{C})$ has real-valued Fourier coefficients\footnote{defined by $V_j:=(2\pi)^{-\frac12}\int_{\mathbb{T}} V(x) e^{-ijx} \ \mathrm{d}x$, see also \eqref{Fourier} below. Then $V\in\mathcal{F}L^\infty$ means $\|V\|_{\ell^\infty}:=\sup_{k\in\Z}|V_k|<\infty$.}, and that $W\in H^4(\mathbb{T};\mathbb{R})$ is even. Then there exists constants $\epsilon_0\in(0;1]$, $\beta_s\ge 1$, and $C_s>0$, Banach spaces $X^V,X^W\subset C(\R;H^s(\mathbb{T};\mathbb{C}))$, such that the following holds. For any initial datum $u_0\in H^s(\mathbb{T})$ with\footnote{Here, we only consider small initial data since this is the regime we are interested in to perform a Birkhoff normal form transformation. But the same global well-posedness result holds for large data in the defocusing case $\sigma>0$.} $\|u_0\|_{H^s}\le \epsilon_0$ there exists a unique global mild solution $u\in X^V$  (resp. $u\in X^W$)  with initial data $u(0)=u_0$ to \eqref{eq:NLS*} (resp. to \eqref{eq:NLS}).
Moreover, we have the growth estimate
\begin{equation}\label{growth1}
\|u(t)\|_{H^s}\le C_s(1+|t|)^{\beta_s}\|u_0\|_{H^s},~~t\in\mathbb{R}.
\end{equation}
\end{proposition}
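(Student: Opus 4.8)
The plan is to adapt Bourgain's $X^{s,b}$ local theory and the $I$-method globalisation of Li--Wu--Xu to the two linearly perturbed equations, the only new input being that the potentials contribute nothing harmful. First I would dispose of the linear part. For \eqref{eq:NLS*} the propagator is the Fourier multiplier $e^{-it(k^2+V_k)}$, which is unitary on every $H^\sigma(\T)$ and whose phase differs from the free one by the bounded term $tV_k$; hence Bourgain's local Strichartz estimate $\|e^{it\partial_x^2}f\|_{L^6_{t,x}([0,1]\times\T)}\lesssim_\eps\|f\|_{H^\eps}$ and the attendant $X^{s,b}$ machinery --- which use only the quadratic curvature of the symbol --- transfer verbatim (a bounded shift of the symbol perturbs modulation thresholds only on a bounded set). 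For \eqref{eq:NLS} I would write the linear part as the self-adjoint operator $-\partial_x^2+W$ with $W\in H^4(\T;\R)$: its propagator is unitary on $H^\sigma$ for $0\le\sigma\le4$, and since $W$ is bounded and $H^4$-smooth one may simply keep $Wu$ as an inhomogeneous term in Duhamel's formula. In both cases the structural hypotheses (real Fourier coefficients for $V$, evenness of $W$) are there to remove resonances in the subsequent normal form step and are irrelevant here.

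Next, local well-posedness. In Bourgain spaces $X^{s,1/2+}_T$ built on the perturbed flow (so that $X^{s,1/2+}_T\hookrightarrow C([-T,T];H^s)$) I would run a contraction on Duhamel's integral equation. The nonlinear estimate $\||u|^4u\|_{X^{s,-1/2+}_T}\lesssim T^{\delta}\|u\|_{X^{s,1/2+}_T}^{5}$ for some $\delta>0$ is Bourgain's: by the transferred embedding $\|v\|_{L^6_{t,x}}\lesssim\|v\|_{X^{0+,1/2+}}$, Hölder in the six resulting factors, and distributing the $s$ derivatives onto the two highest frequencies (summable since $s>0$, the $\eps$-loss being absorbed by $s>\tfrac25$). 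The potential terms are lower order: $\|V\ast u\|_{X^{s,-1/2+}_T}\lesssim T\,\|V\|_{\Fc L^\infty}\|u\|_{X^{s,1/2+}_T}$, and since $H^4(\T)\cdot H^s(\T)\hookrightarrow H^s(\T)$ for $|s|\le4$ --- so that no algebra property of $H^s$ is needed --- also $\|Wu\|_{X^{s,-1/2+}_T}\lesssim T\,\|W\|_{H^4}\|u\|_{X^{s,1/2+}_T}$. This yields a unique local solution on some $[0,T_0]$ with $T_0\ge\tau_0>0$ as soon as $\|u_0\|_{H^s}\le\epsilon_0$, with $\|u\|_{X^{s,1/2+}_{[0,T_0]}}\lesssim\|u_0\|_{H^s}$; one then sets $X^V,X^W$ to be the global-in-time versions of these local spaces (functions lying in $X^{s,1/2+}_{[n,n+1]}$ for all $n\in\Z$).

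For globalisation and the growth bound \eqref{growth1} I would use the $I$-method. The mass $\|u(t)\|_{L^2}$ is conserved in both equations (the linear parts being self-adjoint), so an $\epsilon_0$-small datum stays $\epsilon_0$-small in $L^2$ for all time. With $I=I_N$ the smooth Fourier multiplier equal to $1$ on $|k|\le N$ and to $|k/N|^{s-1}$ on $|k|\ge2N$, one has $\|u\|_{H^s}\lesssim\|I_Nu\|_{H^1}\lesssim N^{1-s}\|u\|_{H^s}$, and the modified energy $E(I_Nu)=\tfrac12\|\partial_xI_Nu\|_{L^2}^2+\tfrac12\langle WI_Nu,I_Nu\rangle+\tfrac{\sigma}{6}\|I_Nu\|_{L^6}^6$ (with $\langle V\ast I_Nu,I_Nu\rangle$ in the convolution case) satisfies the almost-conservation bound
\begin{equation*}
\big|E(I_Nu(t+1))-E(I_Nu(t))\big|\ \lesssim\ N^{-\gamma}\,C\big(\|u_0\|_{L^2}\big)
\end{equation*}
for some $\gamma=\gamma(s)>0$ provided $s>\tfrac25$. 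The quintic part of this increment is precisely the sharp multilinear estimate of Li--Wu--Xu, where the threshold $\tfrac25$ is optimal; the contributions of the potential --- the commutator $[I_N,W]u$, which is smoothing since $W\in H^4$, and the zero-order terms in which $W$ (resp. $V\ast$, which moreover commutes with $I_N$) appears as a bounded multiplier --- are strictly subcritical and do not move the threshold. Because the data is $\epsilon_0$-small the constant $C$ is small, so the modified energy controls $\|u\|_{H^s}$ from above and drifts slowly enough that the local theory can be continued indefinitely, giving global existence; and summing the increment over $\lfloor|t|\rfloor+1$ unit intervals and optimising $N\sim(1+|t|)^{1/(2(1-s)+\gamma)}$ in $\|u(t)\|_{H^s}^2\lesssim\|I_Nu(t)\|_{H^1}^2\lesssim N^{2(1-s)}\|u_0\|_{H^s}^2+|t|\,N^{-\gamma}C$ yields \eqref{growth1} (with $\beta_s<\tfrac12$, so in particular one may take $\beta_s\ge1$). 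For $s\ge1$ this simplifies: the genuine energy is conserved and, together with mass conservation and the $1$D inequality $\|u\|_{L^6}^6\lesssim\|u\|_{L^2}^4\|\partial_xu\|_{L^2}^2$, already bounds $\|u(t)\|_{H^1}$ uniformly, after which a normal-form/commutator estimate of the type $\tfrac{d}{dt}\|u(t)\|_{H^s}^2\lesssim_{\|u_0\|_{H^1}}\|u(t)\|_{H^s}^{2-\delta}$ gives polynomial growth of the higher norms.

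The main obstacle is the almost-conservation inequality displayed above, i.e. the energy-increment estimate for the quintic nonlinearity: this is the delicate $s>\tfrac25$ multilinear computation of Li--Wu--Xu, which I would reproduce in the $X^{s,b}$ framework built on the perturbed flow. The genuinely new point is to check that the potential produces no critical contribution to the increment; but the $H^4$-regularity of $W$ comfortably dominates the $\tfrac25$-loss and the convolution potential is transparent to $I_N$, so this step, while requiring some care with the commutator $[I_N,W]$, does not affect the threshold.
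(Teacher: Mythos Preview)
Your local-theory outline is broadly in line with the paper's, but your globalisation sketch has a real gap: the displayed almost-conservation law $|E(I_Nu(t{+}1))-E(I_Nu(t))|\lesssim N^{-\gamma}$ for the \emph{first} modified energy $E(I_Nu)$ on $\T$ does \emph{not} reach the threshold $s>\tfrac25$. Li--Wu--Xu's argument, which you invoke, is not a direct multilinear bound on the increment of $E(I_Nu)$; it rests on two ingredients you omit. First, one rescales to a large torus $\T_\lambda$ with $\lambda\sim N^{(1-s)/s}\|u_0\|_{H^s}^{1/s}$, so as to access a refined bilinear Strichartz estimate (Lemma~\ref{lem:bil} in the paper) which is false on $\T$ but holds with an $O(\lambda^{-1})$ correction on $\T_\lambda$. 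Second, one passes to a \emph{second} modified energy $E_2(u)=E(I_Nu)-\Lambda_6(\widetilde{M_6}/\alpha_6)$ whose correction term kills the slowly-decaying sextilinear interactions; only then does the increment gain $N^{-3}\lambda^{0+}$ (Lemma~\ref{lem:Ikey}), and only the combination of the $N^{-3}$ and the scaling choice for $\lambda$ produces the threshold $3-2\tfrac{1-s}{s}>0$, i.e.\ $s>\tfrac25$. Your scheme --- optimise $N$ in $\|u(t)\|_{H^s}^2\lesssim N^{2(1-s)}\|u_0\|_{H^s}^2+|t|N^{-\gamma}$ with no rescaling and no second energy --- is the first-generation $I$-method and stalls near $s\approx\tfrac12$. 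Relatedly, your claim ``$\beta_s<\tfrac12$'' is wrong: the paper's exponent is $\beta_s=\max\bigl(\tfrac{1-s}{\alpha(s)},1\bigr)$ with $\alpha(s)=3-2\tfrac{1-s}{s}-\epsilon$, which diverges as $s\downarrow\tfrac25$.

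On the potentials you are morally right but underestimate the work. For $V\ast$, the paper does incorporate it into the linear operator as you suggest, but then the entire second-generation machinery must be redone with the perturbed phase $\omega_k=k^2+(2\pi)^{1/2}V_k$: the resonance function $\alpha_6$, the non-resonant sets $\Omega_j$, and the key symbol bound $|\widetilde{M_6}^V|\lesssim|\alpha_6^V|$ all need to be rechecked (Lemmas~\ref{lem:E1E2}--\ref{lem:Ikey}), which is where the bounded shift in the phase actually enters. For $W$, the paper likewise treats $Wu$ perturbatively in the $I$-method after rescaling (the rescaled $W_\lambda$ carries a helpful $\lambda^{-2}$), but your commutator estimate $[I_N,W]$ alone is not enough: one must also bound the new septilinear term $\Lambda_7(M_7;u,\dots,\bar u,W)$ generated by differentiating the correction $\Lambda_6(\widetilde{M_6}/\alpha_6)$ in the second modified energy (Lemma~\ref{lem:Iperturb}). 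Finally, your proposal to build the local theory for \eqref{eq:NLS} on the free $X^{s,b}$ space and carry $Wu$ as a source term caps the local time at $O(\|W\|_{H^4}^{-1})$; the paper instead works in $X^{s,b}_{-\partial_x^2+W}$ (defined via the Sturm--Liouville eigenbasis, Lemma~\ref{lem:linearW}) so that for small data the local time is $\sim\|u_0\|_{H^s}^{-1}$, which feeds into the iteration count.
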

The spaces $X^V,X^W$ appearing in the statement above are Bourgain spaces adapted to the operator $-\partial_x^2+V\ast$, respectively $-\partial_x^2+W$. We refer to section~\ref{sec:gwp} below for proper definitions and properties of these spaces.

We stress out that although this result is not surprising for specialists, it requires many generalizations of multilinear estimates in the I-method of the first and second generation. Moreover, due to homogeneity problems, the convolutional and multiplicative cases must be considered differently. We refer the reader to the introduction of section \ref{sec:gwp} for a general presentation of the method, and to appendices \ref{appendix} and \ref{appendixproof} for the technical details.

\medskip

To state our dynamical results, which are the core of this work, we need to define the concept of strongly-non-resonant frequencies:
\begin{definition} \label{def:strg_nr}
Being given $\alpha >0$ and $ \mathcal{I} \subset \mathbb{Z}$, we say that a family of frequencies $\omega \in \mathbb{R}^\mathbb{\mathcal{I}}$ is strongly-non-resonant  if there exist $\alpha,\rho>0$ such that  for all $q\geq 1$, $\boldsymbol{m} \in (\mathbb{Z}^*)^q$ satisfying $\boldsymbol{m}_1+\cdots+\boldsymbol{m}_q=0$, $\boldsymbol{h}_1,\cdots,\boldsymbol{h}_q \in \mathcal{I}$ all distinct, it satisfies
$$
\big|   \sum_{j=1}^{q} \boldsymbol{m}_j \omega_{\boldsymbol{h}_j}  \big| \geq \rho \big(2  \min_{1\leq j \leq q} \langle \boldsymbol{h}_j \rangle\big)^{-\exp(\alpha |\boldsymbol{m}|_1 )}
$$
where $|\boldsymbol{m}|_1 := |\boldsymbol{m}_1|+\cdots +|\boldsymbol{m}_q|$.
\end{definition}
The reader used to the non-resonance conditions for PDEs may be surprised by our definition: the estimate seems quite weak since, in the right-hand side, the exponent decreases exponentially with the length of the linear combination of frequencies considered. We are more used to a polynomial decay. However, one must keep in mind that here the control is done with respect to the smallest index of the frequencies involved whereas, more classically, it is done with respect to the largest index (weak non-resonance) or with respect to the third largest (condition used in \cite{BG06}). This type of condition was already used in \cite{BG21} but quantified in a less precise way. It is this additional precision in the exponent that will allow us to optimize the procedure and reach exponential times.

\subsubsection{Results with a convolution potential}

\begin{theorem} \label{thm:conv}Let $V\in \mathcal{F}L^\infty(\mathbb{T};\mathbb{C})$ be a potential whose Fourier coefficients, $V_j$, $j\in \mathbb{Z}$, are real. If the frequencies $\omega_j = j^2+(2\pi)^\frac12 V_j$ are strongly-non-resonant according to Definition \ref{def:strg_nr}, then the solutions of \eqref{eq:NLS*} enjoy the following property.\\
For all $s>2/5$ and $\nu >0$, there exists $\varepsilon_1\in (0;\epsilon_0]$ and $\mu>0$ such that, if $u^{(0)} \in H^s(\mathbb{T})$ is a function satisfying
$$
\varepsilon := \| u^{(0)} \|_{H^s} \leq \varepsilon_1,
$$
then the global solution $u\in C^0(\mathbb{R};H^s(\mathbb{T}))$ of \eqref{eq:NLS*} with initial condition $u(0)=u^{(0)}$ provided by Proposition~\ref{thm:gwp} satisfies, for all $k \in \mathbb{Z}$ and all $t\in \mathbb{R}$,
$$
|t|< \varepsilon^{- \mu \log \frac{\log \varepsilon^{-1}}{\log (2\langle k\rangle)}} \quad \Longrightarrow \quad \big||u_k(t)|^2 - |u_k(0)|^2\big|\leq \varepsilon^{6-\nu}
$$
where $u_k = (2\pi)^{-\frac12}\int_{\mathbb{T}} u(x) e^{-ikx} \mathrm{d}x$.
\end{theorem}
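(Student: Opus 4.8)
The plan is to follow the classical Birkhoff normal form strategy, but carried out at $L^2$ regularity with dispersive (Strichartz / Bourgain space) estimates playing the role usually played by the algebra property of $H^s$. We work in Fourier variables, writing the Hamiltonian of \eqref{eq:NLS*} as $H = Z_2 + P_6$, where $Z_2 = \sum_k \omega_k |u_k|^2$ is the integrable quadratic part and $P_6$ is the (real) sextic term coming from $\sigma|u|^4u$, i.e. a sum over $k_1 - k_2 + k_3 - k_4 + k_5 - k_6 = 0$ of $u_{k_1}\bar u_{k_2}u_{k_3}\bar u_{k_4}u_{k_5}\bar u_{k_6}$. Since $|u_k|^2$ is the action conjugate to the angle of $u_k$, we have $\frac{d}{dt}|u_k|^2 = \{|u_k|^2, H\} = \{|u_k|^2, P_6\}$, so the quantity we must control is a time integral of a Poisson bracket of $P_6$ with an action.

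The core step is an iterative normal form reduction. First I would fix a frequency truncation parameter $N$ (to be optimized at the end as a function of $\varepsilon$ and $\langle k\rangle$) and split $P_6$ into a ``low-frequency'' part involving only indices $\le N$ and a ``high-frequency'' remainder; the remainder is estimated directly using the Strichartz/$X^{s,b}$ bounds of Proposition~\ref{thm:gwp} together with the smallness $\|u\|_{H^s}\le\varepsilon$, which gives a gain of a negative power of $N$ times $\varepsilon^6$. For the low-frequency part, I would run the usual Birkhoff step: solve the cohomological equation $\{Z_2,\chi\} = P_6^{\mathrm{nr}} - \langle P_6^{\mathrm{nr}}\rangle$ on the truncated (finite-dimensional) set of indices, where the small divisors $\sum \boldsymbol m_j \omega_{\boldsymbol h_j}$ are controlled below by the strongly-non-resonant condition of Definition~\ref{def:strg_nr}. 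The key point is that the lower bound degrades like $(\min_j\langle\boldsymbol h_j\rangle)^{-\exp(\alpha|\boldsymbol m|_1)}$, and since in each monomial one index is $\le N$ only via the monomial's own structure, one tracks how the constants $C_r$ in the $r$-th step of the iteration blow up — the exponential-in-$|\boldsymbol m|_1$ loss is exactly what makes the scheme close with a quantitative (rather than merely finite) number of steps. After the transformation, the new Hamiltonian is $Z_2 + \langle P_6^{\mathrm{nr}}\rangle + (\text{higher order})$, and $\langle P_6^{\mathrm{nr}}\rangle$ Poisson-commutes with every action $|u_k|^2$, so it does not contribute to the drift of $|u_k|^2$ at leading order.

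Then I would iterate: the new remainder (of order $\varepsilon^{10}$, or rather an $\varepsilon$-power determined by the number of steps) is again split into low/high frequency parts and normalized, each step requiring the change of variables to be well-defined and close to the identity on a ball of radius $\sim\varepsilon$ in the appropriate space — here is where one must check that the substitution $u\mapsto u\circ\Phi$ maps the Bourgain space $X^V$ into itself with good bounds, replacing the usual ``$H^s$ is an algebra'' argument by a multilinear estimate in $X^{s,b}$ (this is the technical heart, and where the local theory inputs from Proposition~\ref{thm:gwp} are reused). Running $r$ steps, one controls $\frac{d}{dt}(|u_k|^2\circ\Phi)$ by $C_r\,\varepsilon^{6}\,(N\text{-loss})^{?} + (\text{remainder of order } \varepsilon^{4r+2})$, and integrating in time up to $|t|\le T$ gives $||u_k(t)|^2 - |u_k(0)|^2| \lesssim C_r T (N^{-\gamma}\varepsilon^6 + \varepsilon^{4r+2}) + (\text{transformation error }\varepsilon^{>6})$. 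Optimizing $N$ and $r$: choosing $r\sim\log\log\varepsilon^{-1}$ and $N$ a suitable power of $\varepsilon^{-1}$ pushes the first error below $\varepsilon^{6-\nu}$ for $T$ as large as $\varepsilon^{-\mu\log(\log\varepsilon^{-1}/\log 2\langle k\rangle)}$, the $\langle k\rangle$ dependence entering precisely because the small-divisor bound in Definition~\ref{def:strg_nr} is measured against $\min_j\langle\boldsymbol h_j\rangle$, which for a monomial affecting $|u_k|^2$ is at most $\langle k\rangle$ — so higher $k$ gives worse divisors, hence shorter time, exactly matching the statement.

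The main obstacle I expect is the bookkeeping of the interplay between the three parameters — the Birkhoff order $r$, the truncation $N$, and the degradation of the small divisors with $|\boldsymbol m|_1$ — while simultaneously keeping every intermediate transformation bounded on the low-regularity Bourgain space. Concretely: each normal form step raises $|\boldsymbol m|_1$, which through Definition~\ref{def:strg_nr} costs a factor that is \emph{exponential} in the step index, so the constants $C_r$ grow like a tower; one must verify that this growth is still beaten by the gain $\varepsilon^{4r+2}$ from the increasing order, which forces $r$ to grow only like $\log\log\varepsilon^{-1}$ rather than $\log\varepsilon^{-1}$ — and it is this double-logarithm, together with the $\langle k\rangle$-dependent divisor loss, that produces the unusual $\varepsilon^{-\mu\log(\log\varepsilon^{-1}/\log 2\langle k\rangle)}$ time scale rather than a clean exponential. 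Handling the high-frequency remainders without any algebra property, purely through Strichartz estimates uniform on the relevant time intervals, is the other delicate point, since those estimates must be propagated consistently through all $r$ coordinate changes.
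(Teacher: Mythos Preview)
Your overall architecture—truncate to a finite box of Fourier modes, run a Birkhoff iteration on the low part, control the high-frequency remainder by the global $H^s$ bound, then optimize the truncation, the order $r$, and the small-divisor threshold—is the same as the paper's. Your reading of the time scale (the $\langle k\rangle$–dependence enters because the strongly-non-resonant lower bound is governed by $\min_j\langle\boldsymbol h_j\rangle\le\langle k\rangle$, forcing $r\sim\log\!\big(\log\varepsilon^{-1}/\log\langle k\rangle\big)$) is also correct.

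There is, however, a genuine gap in how you propose to carry out the normal-form iteration. You suggest that the ``technical heart'' is to check that each change of variables $\Phi$ maps the Bourgain space $X^V$ into itself, and that the Strichartz/$X^{s,b}$ machinery must be ``propagated consistently through all $r$ coordinate changes''. This is not the mechanism, and trying to do it this way would likely fail: $\Phi$ is a nonlinear map on phase space applied at each fixed time, so there is no natural way to measure it in a space-time norm, and after the change of variables the equation loses the simple linear part that defines $X^{s,b}$. The paper avoids this entirely. The normal form is performed in the \emph{finite-dimensional Euclidean space} $\mathbb{C}^{\mathcal M}$, $\mathcal M=\llbracket -M,M\rrbracket$, with only the $L^2$ (canonical Euclidean) topology; the flow $\Phi_\chi$ preserves $\|\cdot\|$ exactly, so no analytic control beyond $L^2$ is ever needed on the transformed variables.

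The point you are missing is the device that makes an $L^2$ normal form close. On homogeneous polynomials $P\in\mathcal P^{\mathcal M}_{2q}$ the paper introduces spectral norms
\[
\|P\|_{\mathscr H}=\sup_{a\in\mathbb Z}\|\lfloor\Pi_{\omega^{(i)},a}P\rceil\|_\infty,
\qquad
\|P\|_{\mathscr C}=\sup_{a\in\mathbb Z}\langle a\rangle\|\lfloor\Pi_{\omega^{(i)},a}P\rceil\|_\infty,
\]
where $\Pi_{\omega^{(i)},a}$ projects on the level set $\{\sum k_j^2-\sum\ell_j^2=a\}$ and $\|\cdot\|_\infty$ is the sup over the unit $L^2$ ball. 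Solving the cohomological equation sends $\mathscr H$ to $\mathscr C$, the Poisson bracket satisfies $\|\{P,\chi\}\|_{\mathscr H}\lesssim qq'\log(q'M^2)\,\|P\|_{\mathscr H}\|\chi\|_{\mathscr C}$, and $\|P\|_\infty\lesssim\log(qM^2)\|P\|_{\mathscr C}$ controls gradients via Corollary~\ref{cor:very_nice}. The Strichartz estimate is used \emph{once}, at the very start, to show $\|P_6^{(M)}\|_{\mathscr H}\lesssim e^{c\log M/\log\log M}$ via the Bourgain identity $\lfloor\Pi_{\omega^{(i)},a}P_6\rceil(u)=\frac{1}{24\pi}\int_{\mathbb T}e^{i\tau a}\|e^{i\tau\partial_x^2}u\|_{L^6}^6\,d\tau$; after that the entire iteration (Theorem~\ref{thm:Birk}) is purely algebraic in $(\mathscr H,\mathscr C)$, with no further dispersive input. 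The $H^s$ assumption and the $I$-method growth bound enter only afterwards and only through the truncation error $g(t)=\Pi^{(M)}(|u|^4u-|\Pi^{(M)}u|^4\Pi^{(M)}u)$, estimated by the elementary Sobolev embedding $H^{2/5}\subset L^{10}$ together with the polynomial growth of $\|u(t)\|_{H^s}$ from Proposition~\ref{thm:gwp}.
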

The next proposition states that, by randomizing the Fourier coefficient of $V$, with a reasonable law (we choose Gaussian law, but other choices are possible),   the strong non-resonance condition is almost surely satisfied.

\begin{proposition} \label{prop:main_conv} Let $s_*>0$ and $V^{\eqref{eq:NLS*}} \in \mathcal{D}'(\mathbb{T};\mathbb{C})$ be the random potential defined by
\begin{equation}\label{random*}
V^{\eqref{eq:NLS*}}(x) =(2\pi)^{-\frac12}\sum_{k \in \mathbb{Z}} X_k \langle k \rangle^{-s_*} e^{ikx}, 
\end{equation}
where  $X_k \sim \mathcal{N}(0;1)$ are normalized independent real Gaussian random variables. For $k\in \mathbb{Z}$, let $\omega^{\eqref{eq:NLS*}}_k := k^2 + (2\pi)^\frac12 V^{\eqref{eq:NLS*}}_k$ be the frequencies of \eqref{eq:NLS*}. \\
Then, almost surely, the frequencies $\omega^{\eqref{eq:NLS*}}$ of \eqref{eq:NLS*} are strongly-non-resonant.
\end{proposition}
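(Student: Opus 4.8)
The plan is to write $\omega_k=k^2+X_k\langle k\rangle^{-s_*}$ (absorbing $(2\pi)^{1/2}$ into the i.i.d.\ standard real Gaussians $X_k$, $k\in\mathbb{Z}$), and to observe that it is enough to exhibit one $\alpha=\alpha(s_*)>0$ for which, almost surely,
\[
I_\alpha:=\inf_{q,\,\boldsymbol{m},\,\boldsymbol{h}}\ \Big|\sum_{j=1}^q\boldsymbol{m}_j\omega_{\boldsymbol{h}_j}\Big|\;\big(2\min_{1\le j\le q}\langle\boldsymbol{h}_j\rangle\big)^{\exp(\alpha|\boldsymbol{m}|_1)}\ >\ 0,
\]
the infimum running over $q\ge1$, $\boldsymbol{m}\in(\mathbb{Z}^*)^q$ with $\boldsymbol{m}_1+\cdots+\boldsymbol{m}_q=0$, and distinct $\boldsymbol{h}_1,\dots,\boldsymbol{h}_q\in\mathbb{Z}$. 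With $\mathcal{B}_\rho:=\{I_\alpha<\rho\}$ we have $\bigcap_{\rho>0}\mathcal{B}_\rho=\{I_\alpha=0\}$ and the $\mathcal{B}_\rho$ decrease as $\rho\downarrow0$, so everything reduces to showing $\mathbb{P}(\mathcal{B}_\rho)\to0$ as $\rho\to0$.

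For a fixed tuple write $\sum_j\boldsymbol{m}_j\omega_{\boldsymbol{h}_j}=\mathcal{N}+G$ with $\mathcal{N}:=\sum_j\boldsymbol{m}_j\boldsymbol{h}_j^{\,2}\in\mathbb{Z}$ and $G:=\sum_j\boldsymbol{m}_jX_{\boldsymbol{h}_j}\langle\boldsymbol{h}_j\rangle^{-s_*}$; as the $\boldsymbol{h}_j$ are distinct, $G$ is a centred Gaussian with variance $V=\sum_j\boldsymbol{m}_j^{\,2}\langle\boldsymbol{h}_j\rangle^{-2s_*}$, and $|\boldsymbol{m}_j|\ge1$ gives $\langle h_*\rangle^{-2s_*}\le V\le|\boldsymbol{m}|_1^2$ with $\langle h_*\rangle:=\min_j\langle\boldsymbol{h}_j\rangle$. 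Bounding the Gaussian density by $(2\pi V)^{-1/2}$ and using the tail when $\mathcal{N}\ne0$ yields, for $\delta\in(0,\tfrac12]$,
\[
\mathbb{P}\Big(\big|\sum_j\boldsymbol{m}_j\omega_{\boldsymbol{h}_j}\big|<\delta\Big)\ \le\ \frac{2\delta}{\sqrt{2\pi V}}\exp\!\Big(-\tfrac{(|\mathcal{N}|-\delta)_+^{2}}{2V}\Big)\ \le\ C\,\delta\,\langle h_*\rangle^{s_*}\exp\!\Big(-\tfrac{\mathcal{N}^2}{8|\boldsymbol{m}|_1^{2}}\Big).
\]
This is the only probabilistic input.

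The real work is combinatorial, since for fixed $\boldsymbol{m}$ and $\langle h_*\rangle$ there are infinitely many admissible $\boldsymbol{h}$ and a naive union bound over all tuples diverges. One estimates $\mathbb{P}(\mathcal{B}_\rho)$ by organizing tuples by $M:=|\boldsymbol{m}|_1$ (the number of such $\boldsymbol{m}$, summed over $q$, is $\le C_0^{M}$) and by $N:=\max_j|\boldsymbol{h}_j|$ (given $q$ and $N$ there are $\le(2N+1)^{M}$ choices of $\boldsymbol{h}$), and then a short split by the size of $\langle h_*\rangle$ relative to $N$ puts us in one of two situations. Either $\langle h_*\rangle\ge N^{1/2}$, so that $\big(2\langle h_*\rangle\big)^{-\exp(\alpha M)}$ already decays super-polynomially in $N$ and swallows the count; or $\langle h_*\rangle<N^{1/2}$, so some low mode realizes $\langle h_*\rangle$ while some mode has $|\boldsymbol{h}_j|=N$: if $|\mathcal{N}|\ge N$ the factor $\exp(-\mathcal{N}^2/8M^2)$ wins, while if $|\mathcal{N}|<N$ the $\boldsymbol{m}$-weighted sum of squares is anomalously small, forcing arithmetic cancellation among the high modes. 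In that last case one peels off a ``high-frequency block'' of modes whose $\boldsymbol{m}$-coefficients sum to zero — contributing $0$ to $\mathcal{N}$ and an \emph{independent} Gaussian of variance $\lesssim M^2\langle N\rangle^{-2s_*}$ to $G$ — and inducts on $M$: deleting the block leaves $\langle h_*\rangle$ unchanged (it is realized by a low mode) and relaxes the threshold from $\rho(2\langle h_*\rangle)^{-\exp(\alpha M)}$ to the larger $\rho(2\langle h_*\rangle)^{-\exp(\alpha M')}$ with $M'<M$, so the original tuple can be bad only if the reduced one is within twice its own threshold (handled inductively, with room to spare since $\exp(\alpha M)\gg\exp(\alpha M')$) or the peeled Gaussian exceeds that threshold (a Gaussian-tail event that, after conditioning on the reduced tuple, sums over $N$ and over all high-frequency completions). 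Choosing $\alpha=\alpha(s_*)$ large enough that every geometric/polynomial series produced this way converges — the decisive requirement being essentially $\exp(2\alpha)>c(1+s_*)$, so that $\exp(\alpha M)$ outgrows the polynomial degrees ($\lesssim M$) and the count $C_0^{M}$ of the $\boldsymbol{m}$'s — one gets $\mathbb{P}(\mathcal{B}_\rho)\lesssim\rho^{\gamma}$ for some $\gamma\in(0,1]$, which tends to $0$.

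The main obstacle is this last reduction: pinning down the high-frequency block that must be peeled (where the arithmetic of integer solutions of $\boldsymbol{m}$-weighted sums of squares, including Pell-type relations among the $\boldsymbol{h}_j$, genuinely enters), closing the induction on $|\boldsymbol{m}|_1$, and — most delicately — keeping all estimates uniform across the infinitely many tuples, so that a single random $\rho>0$ works for all of them rather than one $\rho$ per family. The sharp exponential shape $\exp(\alpha|\boldsymbol{m}|_1)$ of the loss, rather than a polynomial one, is exactly what makes this competition favourable.
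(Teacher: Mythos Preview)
Your probabilistic estimate $\mathbb{P}(|\mathcal{N}+G|<\delta)\lesssim\delta\langle h_*\rangle^{s_*}e^{-\mathcal{N}^2/8|\boldsymbol{m}|_1^2}$ is correct, but the combinatorial reduction you sketch for the case $\langle h_*\rangle<N^{1/2}$, $|\mathcal{N}|<N$ is a genuine gap. The claim that one can always ``peel off a high-frequency block whose $\boldsymbol{m}$-coefficients sum to zero'' and which ``contributes $0$ to $\mathcal{N}$'' is not justified: having $\sum_{j\in S}\boldsymbol{m}_j=0$ for a high-frequency subset $S$ does \emph{not} force $\sum_{j\in S}\boldsymbol{m}_j\boldsymbol{h}_j^2=0$, and conversely there is no reason an arbitrary tuple with $|\mathcal{N}|<N$ and $\max|\boldsymbol{h}_j|=N$ must admit such a splittable block. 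The arithmetic you invoke (Pell-type relations, etc.)\ does not produce this structure. Even granting the peeling, closing the induction uniformly so that a single random $\rho$ works for every tuple is, as you acknowledge, not done.

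The paper avoids this entirely by separating the probabilistic and deterministic steps. First (its Lemma~5.1) one proves by a straightforward union bound the much weaker almost-sure estimate
\[
\Big|a+\sum_{j=1}^q\boldsymbol{m}_j\omega_{\boldsymbol{h}_j}\Big|\ \ge\ \gamma\,\Big(\min_j\langle\boldsymbol{h}_j\rangle\Big)^{-s_*}\prod_{j=1}^q|\boldsymbol{m}_j|^{-4}\langle\boldsymbol{h}_j\rangle^{-4}
\]
uniformly in $a\in\mathbb{Z}$; this sums trivially because the right-hand side carries a factor $\langle\boldsymbol{h}_j\rangle^{-4}$ for \emph{every} index, so no clever organization of tuples is needed. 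Second, this is upgraded \emph{deterministically} to the strong condition using only the asymptotic $\omega_k=k^2+O(\langle k\rangle^{-s_*})$: order the $\boldsymbol{h}_j$ by size, find the largest $q_\star$ for which the tail corrections $\sum_{j\ge p}|\boldsymbol{m}_j|\langle\boldsymbol{h}_j\rangle^{-s_*}$ still exceed half the weak bound for the first $p-1$ modes; beyond $q_\star$ the high modes are replaced by their integer parts $\boldsymbol{h}_j^2$ (absorbed into $a$). The very definition of $q_\star$ then forces a recursive inequality $\log\langle\boldsymbol{h}_p\rangle\lesssim\log\langle\boldsymbol{h}_1\rangle+\sum_{j<p}\log\langle\boldsymbol{h}_j\rangle$ for $p\le q_\star$, and a discrete Gr\"onwall yields $\langle\boldsymbol{h}_p\rangle\le(C|\boldsymbol{m}|_1^{cq_\star}\langle\boldsymbol{h}_1\rangle)^{\exp(cq_\star/s_*)}$. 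Plugging back into the weak bound gives exactly the $\exp(\alpha|\boldsymbol{m}|_1)$ shape. No induction on $|\boldsymbol{m}|_1$, no arithmetic of quadratic forms, and the randomness is used only once, for the easy weak bound.
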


\begin{remark}\label{rk:mu}
The constant $\mu$ in Theorem \ref{thm:conv} depends only on the potential $V$ through the parameter $\alpha$ (the exponent in Definition \ref{def:strg_nr}). Moreover, in Proposition \ref{prop:main_conv}, the parameter $\alpha$ depends only on the potential $V$ though its regularity $s_*$. \end{remark}

We postpone to section \ref{sec:comments} the comments about Theorem \ref{thm:conv}.

\subsubsection{Results with a multiplicative potential}
Our result concerning \eqref{eq:NLS} is a bit more complicated to state, essentially due to spectral complications: $-\partial^2_{x} +V\ast$ diagonalizes in the Fourier basis but $-\partial^2_{x}+W$ diagonalizes in its own Hilbert basis. To simplify the presentation, we will focus on the Dirichlet problem, and refer to \cite{BG21} to explain why the result is  more complicated (but reachable) in the periodic case. We assume $W$ to be even, so that we can identify the Dirichlet condition with a symmetry condition on the solution of the periodic problem: we are interested in solutions of \eqref{eq:NLS} that satisfy $u(x)=-u(-x)$ for almost any $x\in\T$. This definition of the Dirichlet problem still makes sense in low regularity, $u\in H^s$ with $s<1/2$. First, we need some results about the Dirichlet spectrum of the Sturm--Liouville operator. Given a potential $W\in L^2(\T)$, we still denote by $W$ its restriction on $[0;\pi]$.

 \begin{proposition}[Thm 7 page 43 of \cite{PT}]\label{prop_dir}  
  For all real-valued $W\in L^2(0;\pi)$, there exist an increasing sequence of real numbers $(\lambda_n)_{n\geq 1}$ and a Hilbertian basis $(f_n)_{n\geq 1}$ of $L^2(0;\pi)$, composed of functions $f_n \in H^2 \cap H^1_0$, such that for all $n\geq 1$ we have $f_n(0) = f_n(\pi) = 0$ and 
\begin{equation}
\label{eq_SL_dir}
-\partial_x^2 f_n(x) + W(x) f_n(x) =\lambda_n f_n(x), \quad  \forall x\in (0;\pi).
\end{equation}
\end{proposition}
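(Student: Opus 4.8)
The plan is to realize $-\partial_x^2 + W$ with Dirichlet boundary conditions as a self-adjoint operator with compact resolvent via its quadratic form, and then to read off the claimed spectral decomposition from the spectral theorem, adding the two standard one-dimensional refinements: elliptic regularity of the eigenfunctions and simplicity of the eigenvalues.

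First I would introduce the quadratic form
\[
Q(f) := \int_0^\pi \big( |f'(x)|^2 + W(x)|f(x)|^2 \big)\, \mathrm{d}x, \qquad f \in H^1_0(0;\pi).
\]
The only delicate point is that $W$ is merely in $L^2$, so I would exploit the one-dimensional Sobolev embedding $H^1_0(0;\pi) \hookrightarrow C([0;\pi])$, in the quantitative form $\|f\|_{L^\infty}^2 \leq 2\|f\|_{L^2}\|f'\|_{L^2}$, together with $\|f\|_{L^4}^2 \leq \|f\|_{L^\infty}\|f\|_{L^2}$, to get for every $\delta>0$
\[
\Big| \int_0^\pi W|f|^2 \Big| \leq \|W\|_{L^2}\,\|f\|_{L^4}^2 \leq \|W\|_{L^2}\,\|f\|_{L^\infty}\|f\|_{L^2} \leq \delta \|f'\|_{L^2}^2 + C_\delta \|f\|_{L^2}^2 .
\]
This shows that $Q$ is semibounded from below and closed on $H^1_0(0;\pi)$, hence by the representation theorem for quadratic forms it is the form of a unique self-adjoint operator $L$ on $L^2(0;\pi)$, bounded from below, with form domain $H^1_0(0;\pi)$.

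Next, since the form domain $H^1_0(0;\pi)$ embeds compactly into $L^2(0;\pi)$ (Rellich on a bounded interval), the resolvent of $L$ is compact. The spectral theorem for self-adjoint operators with compact resolvent then yields a non-decreasing sequence of real eigenvalues $\lambda_1 \leq \lambda_2 \leq \cdots$ with $\lambda_n \to +\infty$, together with an associated Hilbertian basis of $L^2(0;\pi)$ made of eigenfunctions $f_n \in H^1_0(0;\pi)$ solving $-f_n'' + W f_n = \lambda_n f_n$ in the weak sense. To upgrade the regularity I would note that $f_n \in H^1_0(0;\pi) \subset C([0;\pi])$ is in particular bounded, so $W f_n \in L^2(0;\pi)$, whence $f_n'' = (W - \lambda_n) f_n \in L^2(0;\pi)$ and $f_n \in H^2(0;\pi)$; since $H^2(0;\pi) \hookrightarrow C^1([0;\pi])$, the boundary conditions $f_n(0) = f_n(\pi) = 0$ are attained pointwise, as required.

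Finally, to turn the non-decreasing sequence into a genuinely increasing one I would invoke simplicity of the eigenvalues: the Cauchy problem $-g'' + Wg = \lambda g$, $g(0) = 0$, $g'(0) = 1$, has a unique Carathéodory solution because $W \in L^2(0;\pi) \subset L^1(0;\pi)$, so the space of solutions of the ODE vanishing at $x=0$ is one-dimensional; hence each eigenspace is at most one-dimensional, every multiplicity equals one, and after discarding repetitions the sequence $(\lambda_n)_{n\geq 1}$ is strictly increasing (still indexed by $n\geq 1$, with $\lambda_n\to+\infty$). I expect the main — and essentially only — subtlety to be the low regularity of $W$: one must consistently use that in one space dimension $H^1$ functions are continuous, both to make the quadratic form well defined and semibounded and to run the elliptic bootstrap, and one must appeal to the Carathéodory theory of linear ODEs with $L^1$ coefficients for the simplicity argument; everything else is the textbook Sturm–Liouville machinery.
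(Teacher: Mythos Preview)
The paper does not give its own proof of this proposition: it is simply quoted from P\"oschel--Trubowitz (Theorem~7, p.~43), so there is no ``paper's proof'' to compare against. Your argument is correct and is the standard operator-theoretic route: form boundedness of the $W$-perturbation via the one-dimensional Sobolev embedding, compact resolvent from Rellich, spectral theorem, elliptic bootstrap to $H^2$, and simplicity via uniqueness for the Carath\'eodory initial-value problem.

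For context, P\"oschel--Trubowitz take the ODE viewpoint instead: they construct fundamental solutions of $-y''+Wy=\lambda y$ with prescribed initial data, study the zeros of the Dirichlet determinant as a function of $\lambda$, and obtain the spectral decomposition together with oscillation and asymptotic information on the $\lambda_n$ and $f_n$. Your functional-analytic approach is shorter for the bare existence statement needed here, while the ODE approach of the cited reference yields the finer quantitative properties (such as the eigenvalue asymptotics and eigenfunction localization) that the paper also invokes later via other results of \cite{PT}.
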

Now we can state our result for  \eqref{eq:NLS}:
\begin{theorem} \label{thm:mult}
Let $W\in H^4(\mathbb{T};\mathbb{R})$ be a real valued even potential, $(\lambda_n)_{n\geq 1}$ be the increasing sequence of eigenvalues of the Sturm--Liouville operator $-\partial_x^2 + W_{|[0;\pi]}$ with homogeneous Dirichlet boundary conditions and $(f_n)_{n\geq 1}$ be the associated eigenfunctions (see Prop \ref{prop_dir}).  If the frequencies $\omega = (\lambda_n)_{n\geq 1}$ are strongly-non-resonant according to Definition \ref{def:strg_nr}, then the solutions of \eqref{eq:NLS} enjoy the following property. \\
For all $s>2/5$ and $\nu >0$, there exists $\varepsilon_0\in(0;1]$ and $\mu>0$ such that, if  $u^{(0)} \in H^s(\mathbb{T})$ is an odd function satisfying
$$
\varepsilon := \| u^{(0)} \|_{H^s} \leq \varepsilon_0,
$$
then the global solution $u\in C^0(\mathbb{R};H^s(\mathbb{T}))$ of \eqref{eq:NLS} with initial condition $u(0)=u^{(0)}$ provided by Proposition~\ref{prop:GWPW} satisfies, for all $k \geq 1$ and all $t\in \mathbb{R}$,
$$
|t|< \varepsilon^{- \mu \log \frac{\log \varepsilon^{-1}}{\log (\langle k\rangle)}} \quad \Longrightarrow \quad \big||u_k(t)|^2 - |u_k(0)|^2\big|\leq \varepsilon^{6-\nu}
$$
where $u_k =  \int_{0}^\pi u(x) f_k(x) \mathrm{d}x$.
\end{theorem}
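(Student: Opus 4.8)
The plan is to run the same machinery as for Theorem~\ref{thm:conv}, the Fourier basis being replaced by the Dirichlet eigenbasis $(f_n)_{n\ge1}$ of Proposition~\ref{prop_dir}; I will stress the three points where the multiplicative potential forces new arguments. Since $W$ is even, restricting \eqref{eq:NLS} to odd functions amounts to the Dirichlet problem on $[0;\pi]$, whose Hamiltonian, in the coordinates $u_k=\int_0^\pi u(x)f_k(x)\,\mathrm dx$, reads $H(u)=\sum_{k\ge1}\lambda_k|u_k|^2+\tfrac{\sigma}{3}\int_0^\pi|u|^6\,\mathrm dx$, the sextic part expanding as $\sum_{\boldsymbol h}c_{\boldsymbol h}\,u_{h_1}\bar u_{h_2}u_{h_3}\bar u_{h_4}u_{h_5}\bar u_{h_6}$ with $c_{\boldsymbol h}=\int_0^\pi f_{h_1}\cdots f_{h_6}$. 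The first new ingredient is a decay estimate for the $c_{\boldsymbol h}$ in terms of the gaps between the indices $h_j$: writing $f_n(x)=\sqrt{2/\pi}\sin(nx)+g_n(x)$, the assumption $W\in H^4$ makes the corrections $g_n$ small and regular enough (see \cite{PT} and \cite{BG21}) that the $c_{\boldsymbol h}$ obey, up to acceptable errors, the same multilinear structure as the convolution-free coefficients $\int_0^\pi e^{i(\pm h_1\pm\dots\pm h_6)x}\,\mathrm dx$ of the \eqref{eq:NLS*} problem. One also records that $\lambda_n=n^2+O(1)$ and that, by the strong-non-resonance hypothesis on $(\lambda_n)$, every nontrivial combination $\sum_j m_j\lambda_{h_j}$ (with the reduced indices distinct) is bounded below — hence the only genuinely resonant sextic monomials are the ones with $\{h_1,h_3,h_5\}=\{h_2,h_4,h_6\}$, i.e. those depending only on the actions $I_k=|u_k|^2$.

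Next I would perform the low-regularity Birkhoff normal form of Section~\ref{sec:sketch}, in the spirit of \cite{Bou04b}~appendix~7 and \cite{CKO}: working inside the Bourgain space $X^W$ adapted to $-\partial_x^2+W$ in which Proposition~\ref{prop:GWPW} places the solution, one iteratively solves homological equations removing, at each step, the monomials of the current degree whose small divisor $\sum_j m_j\lambda_{h_j}$ exceeds a threshold tuned to the mode $k$ of interest. Two features make this work below $H^{1/2}$, where $H^s$ is no longer an algebra. First, the homological equations are solved with quantitative bounds because Definition~\ref{def:strg_nr} controls the divisors from below by $\rho\,(2\min_j\langle h_j\rangle)^{-\exp(\alpha|\boldsymbol m|_1)}$, so after removal the remaining (``resonant'') Hamiltonian consists only of action-dependent monomials and Poisson-commutes with every $I_k$ (the non-action-dependent monomials kept by the threshold involve only indices larger than $\langle k\rangle$). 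Second — and this is the novelty — the transformed Hamiltonians, the Poisson brackets and the remainders are not estimated pointwise in time via the algebra property, but controlled in the space-time Strichartz / $X^{s,b}$-norms of Section~\ref{sec:gwp}, using the $L^6_{t,x}$-type multilinear estimates for $-\partial_x^2+W$; this is what lets one iterate the scheme while staying in low regularity, and it guarantees that each transformation is close to the identity in a norm controlling each $|u_k|$, so that $I_k$ itself is moved only by $O(\varepsilon^6)$ under the change of variables. After $N$ steps one obtains $H\circ\tau_N=\sum_k\lambda_k I_k+Z_N+R_N$ with $Z_N$ a function of the actions only, hence $\{I_k,Z_N\}=0$, while $R_N$ has degree $\ge4N+6$ and, because of the accumulated small-divisor losses from Definition~\ref{def:strg_nr}, carries a constant of the form $\langle k\rangle^{\exp(C\alpha N)}$.

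It then remains to optimize $N$. Along the way, a bootstrap argument — as long as $\|u(\tau)\|_{H^s}\le2\varepsilon$ on $[0;t]$, the remainder $R_N$ drives $\|u\|_{H^s}$ at rate $O(\varepsilon^{4N+6})$ — keeps $\|u(\tau)\|_{H^s}\lesssim\varepsilon$ on the relevant time scale; integrating $\tfrac{\mathrm d}{\mathrm dt}I_k=\{I_k,R_N\}$ and undoing $\tau_N$ then gives $|I_k(t)-I_k(0)|\lesssim\varepsilon^6+|t|\,\langle k\rangle^{\exp(C\alpha N)}\varepsilon^{4N+6}$. Choosing $N=N(\varepsilon,k)$ of order $\alpha^{-1}\log\big(\log\varepsilon^{-1}/\log\langle k\rangle\big)$ balances the loss $\langle k\rangle^{\exp(C\alpha N)}$ against the gain $\varepsilon^{4N}$, and makes the right-hand side $\le\varepsilon^{6-\nu}$ for all $|t|<\varepsilon^{-\mu\log(\log\varepsilon^{-1}/\log\langle k\rangle)}$, with $\mu$ depending only on $\alpha$ as announced in Remark~\ref{rk:mu}; this yields the theorem.

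I expect the genuinely hard part to be the second paragraph: establishing the multilinear estimates that let the Birkhoff scheme close and propagate in the low-regularity Bourgain spaces — that is, actually integrating the Strichartz estimates into the normal-form iteration — together with the bookkeeping of the $\langle k\rangle^{\exp(C\alpha N)}$ losses and the choice of threshold. By contrast, the passage to the Sturm--Liouville eigenbasis, the control of the $c_{\boldsymbol h}$ (responsible for the $H^4$ assumption on $W$), and the reduction to odd functions, while technical, should be essentially routine once the \eqref{eq:NLS*} case has been carried out.
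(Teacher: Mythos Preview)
Your proposal diverges from the paper's proof in a way that is not just stylistic but reflects a genuine gap. The paper does \emph{not} run the Birkhoff normal form in Bourgain spaces $X^W$; it explicitly remarks (see the last bullet in Section~\ref{sec:comments}) that performing normal forms directly in space-time would be ``a non-trivial conceptual jump'' that the authors do not make. What the paper actually does is the following. First, it truncates to the finite-dimensional space $\mathcal{E}_M=\mathrm{Span}_{\mathbb{C}}\{f_k:k\le M\}\simeq\mathbb{C}^{\mathcal{M}}$ and writes the truncated equation as $i\partial_t u^{(\le M)}=\nabla H(u^{(\le M)})+g(t)$ with an explicit source term $g$. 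The Strichartz estimate~\eqref{strichartz} is used \emph{once}, at the initialization step, to bound $\|P\|_{\mathscr{H}(\omega^{(i)})}$ (here $\omega^{(i)}_k=k^2$) via a space-time identity analogous to~\eqref{StrichartzPolynom}; the decay of the coefficients $c_{\boldsymbol h}=\int f_{h_1}\cdots f_{h_6}$ from~\cite{BG21} is what allows one to reduce to the free Strichartz inequality (your first paragraph is correct in spirit here). After that, the entire normal form iteration is the purely finite-dimensional Theorem~\ref{thm:Birk}, controlled by the spectral norms $\|\cdot\|_{\mathscr{H}(\omega^{(i)})}$ and $\|\cdot\|_{\mathscr{C}(\omega^{(i)})}$ on polynomials; no $X^{s,b}$ estimates enter the iteration at all. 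Your proposal to propagate the normal form through $X^{s,b}$ multilinear estimates is therefore not the argument, and you have not indicated how such an iteration could actually be closed.

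Two further points. Your truncation step is missing: the remainder $g(t)$ coming from the Fourier cutoff is precisely what forces the regularity $s>2/5$ (via the Sobolev embedding $H^{2/5}\subset L^{10}$) and is what links the normal form to the GWP result of Proposition~\ref{prop:GWPW}. And your bootstrap ``$\|u(\tau)\|_{H^s}\le 2\varepsilon$'' is not available and is not what the paper uses: the $H^s$ norm is only controlled polynomially in time by~\eqref{eq:grow_hs}, while the smallness needed in Corollary~\ref{cor:dyn} is that of the \emph{Euclidean} ($L^2$) norm, which is exactly conserved. The optimization in $r$ and $M$ then proceeds exactly as in Section~\ref{sub:opt*}, using $|\omega^{(f)}|_\infty\lesssim 1$ from~\cite{PT}. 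Your final paragraph's choice of $N$ and the resulting time scale are correct in form, but rest on an argument that, as written, does not close.
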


\begin{proposition} \label{prop:main_mult} Let $s_*> 3/2$ and $ W^{\eqref{eq:NLS}} \in L^2(\mathbb{T};\mathbb{R})$ be the even random potential defined by
\begin{equation}\label{random}
W^{\eqref{eq:NLS}}(x) = \sum_{k \geq 1} X_k \langle k \rangle^{-s_*} \cos(k x),
\end{equation}
where  $X_k \sim \mathcal{N}(0;1)$ are normalized independent real Gaussian random variables. For $k\geq 1$, let $\omega^{\eqref{eq:NLS}}_k := \lambda_k$ be the $k$-th smallest eigenvalue of the Sturm--Liouville operator $-\partial_x^2 + W^{\eqref{eq:NLS}}$ with homogeneous Dirichlet boundary conditions on $[0;\pi]$ (see Proposition \ref{prop_dir}). \\
Then there exists a constant $\eta>0$ such that, almost surely, provided that $\| W^{\eqref{eq:NLS}}\|_{H_1} \leq \eta$, the frequencies $\omega^{\eqref{eq:NLS}}$  are strongly-non-resonant.
\end{proposition}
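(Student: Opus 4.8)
The plan is to follow the scheme behind Proposition~\ref{prop:main_conv}, of which this statement is the (considerably more delicate) Sturm--Liouville counterpart: I would first establish a quantitative non-degeneracy of the analytic map $X=(X_k)_{k\ge1}\mapsto(\lambda_n(X))_{n\ge1}$ and then conclude by a Borel--Cantelli argument. The hypotheses $s_*>3/2$ and $\|W^{\eqref{eq:NLS}}\|_{H_1}\le\eta$ serve to place us in a perturbative regime: for $s_*>3/2$ one has $\mathbb{E}\,\|W^{\eqref{eq:NLS}}\|_{H^1}^2=c\sum_{k\ge1}\langle k\rangle^{2-2s_*}<\infty$, so $W^{\eqref{eq:NLS}}\in H^1(\mathbb{T})$ almost surely, and on $\{\|W^{\eqref{eq:NLS}}\|_{H_1}\le\eta\}$ the operator $-\partial_x^2+W^{\eqref{eq:NLS}}$ is an arbitrarily small perturbation of $-\partial_x^2$, so that all the spectral estimates below hold with constants uniform in the index $n$. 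I would also exploit that this random potential has vanishing mean, $\tfrac1\pi\int_0^\pi W^{\eqref{eq:NLS}}=0$.

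The analytic input is Sturm--Liouville perturbation theory (Proposition~\ref{prop_dir} and \cite{PT}). Since the Dirichlet eigenvalues are simple and $X\mapsto W(X)$ is linear and bounded into $H^1\subset L^2$, each $\lambda_n$ is real-analytic on $\{\|W(X)\|_{H^1}<\eta_0\}$, and the Feynman--Hellmann formula $\mathrm{d}\lambda_n(W)[h]=\int_0^\pi f_n^2\,h$ gives $\partial_{X_l}\lambda_n=\langle l\rangle^{-s_*}\int_0^\pi f_n(x)^2\cos(lx)\,\mathrm{d}x$. For $\|W\|_{H^1}$ small the normalized eigenfunctions satisfy $f_n=\sqrt{2/\pi}\,\sin(nx)+\rho_n$ with $\|\rho_n\|_{L^\infty}+n^{-1}\|\rho_n\|_{H^1}\lesssim\|W\|_{H^1}$ uniformly in $n$, hence $f_n^2=\tfrac1\pi(1-\cos 2nx)+g_n$ with $\|g_n\|_{H^1}\lesssim\|W\|_{H^1}$ uniformly; therefore $\int_0^\pi f_n^2\cos(2nx)=-\tfrac12+O(\|W\|_{H^1})$ is bounded away from $0$, whereas $|\int_0^\pi f_n^2\cos(lx)|\lesssim\|W\|_{H^1}\langle l\rangle^{-1}$ for $l\notin\{0,2n\}$, and $\lambda_n=n^2+O(\|W\|_{H^1}\langle n\rangle^{-1})$.

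From these facts the non-degeneracy would follow. Fix $q\ge1$, $\boldsymbol{m}\in(\mathbb{Z}^*)^q$ with $\boldsymbol{m}_1+\cdots+\boldsymbol{m}_q=0$, distinct $\boldsymbol{h}_1,\dots,\boldsymbol{h}_q\ge1$, write $M:=\min_j\boldsymbol{h}_j$ and $\Lambda:=\sum_j\boldsymbol{m}_j\lambda_{\boldsymbol{h}_j}$. Since the integers $2\boldsymbol{h}_j$ are distinct, differentiating $\Lambda$ in the single direction $X_{2M}$ retains, to leading order, only the contribution of the index $M$, and the estimates above give $|\partial_{X_{2M}}\Lambda|\ge\langle 2M\rangle^{-s_*}(\tfrac14-C\eta|\boldsymbol{m}|_1\langle M\rangle^{-1})\ge\tfrac18\langle 2M\rangle^{-s_*}$ whenever $\langle M\rangle\ge 8C\eta|\boldsymbol{m}|_1$. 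On the other hand $\lambda_n=n^2+O(\eta\langle n\rangle^{-1})$ yields $|\Lambda|\ge|\sum_j\boldsymbol{m}_j\boldsymbol{h}_j^2|-C\eta|\boldsymbol{m}|_1\langle M\rangle^{-1}\ge\tfrac12$ as soon as $\sum_j\boldsymbol{m}_j\boldsymbol{h}_j^2\ne0$ and $\langle M\rangle\ge 2C\eta|\boldsymbol{m}|_1$, so the required bound is then trivial. The tuples left either satisfy $\sum_j\boldsymbol{m}_j\boldsymbol{h}_j^2=0$ with $M\gtrsim\eta|\boldsymbol{m}|_1$ — then $\Lambda$ restricted to the line $\{X_k=\mathrm{const},\ k\ne2M\}$ is strictly monotone in $X_{2M}$ with slope $\ge\tfrac18\langle 2M\rangle^{-s_*}$, hence conditionally on $(X_k)_{k\ne2M}$ the standard Gaussian $X_{2M}$ lies, on the event $\{|\Lambda|<\delta\}$, in an interval of length $\lesssim\delta\langle M\rangle^{s_*}$ — or have $\langle M\rangle\lesssim\eta|\boldsymbol{m}|_1$, i.e.\ their smallest index is bounded in terms of $|\boldsymbol{m}|_1$ alone.

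Finally I would run the Borel--Cantelli argument along the same lines as for Proposition~\ref{prop:main_conv}: fix $\alpha$ large (depending only on $s_*$), bound the probability of $\{|\Lambda|<\rho(2\langle M\rangle)^{-\exp(\alpha|\boldsymbol{m}|_1)}\}$ for a given tuple by $\lesssim\rho(2\langle M\rangle)^{-\exp(\alpha|\boldsymbol{m}|_1)}\langle M\rangle^{s_*}$ using the anti-concentration above, then sum over all tuples and let $\rho\to0$. The hard part is that for fixed $\boldsymbol{m}$ and fixed value $M$ of the smallest index there are infinitely many admissible $(\boldsymbol{h}_j)$, so a naive union bound diverges; I expect to circumvent this by grouping tuples according to their \emph{low-frequency profile} (the sub-collection of indices below a suitable threshold) and observing that the high-frequency indices enter $\Lambda$ only through the integer $\sum_j\boldsymbol{m}_j\boldsymbol{h}_j^2$ (harmless by the previous paragraph) and through an $X_{2M}$-independent correction of size $\lesssim\eta|\boldsymbol{m}|_1\langle M\rangle^{-1}$, so that the bad event for a whole family of tuples sharing a profile sits inside a single event controlled by the $X_{2M}$-anti-concentration — leaving only the polynomially many profiles per level to union over. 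The super-exponential factor $(2\langle k\rangle)^{-\exp(\alpha|\boldsymbol{m}|_1)}$ in Definition~\ref{def:strg_nr} is precisely what renders the resulting series convergent once $\alpha$ is fixed large enough in terms of $s_*$; securing the uniformity in $n$ of the spectral estimates and carrying out this combinatorial bookkeeping is where I expect the real difficulty to lie.
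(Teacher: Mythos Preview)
Your Feynman--Hellmann computation is correct and is essentially the probabilistic input underlying the argument, but the paper's architecture is different and bypasses the combinatorial difficulty you flag at the end. The paper works in two decoupled stages. First (Lemma~\ref{lem:weak_nr}, imported from \cite{BG21}) it establishes a \emph{weak} non-resonance estimate $|a+\sum_{j}\boldsymbol m_j\lambda_{\boldsymbol h_j}|\ge\gamma\,(\max_j\langle\boldsymbol h_j\rangle)^{-s_*}\prod_{j}|\boldsymbol m_j|^{-4}\langle\boldsymbol h_j\rangle^{-4}$ simultaneously for all tuples; the factor $\prod_j\langle\boldsymbol h_j\rangle^{-4}$ over \emph{all} indices makes the Borel--Cantelli union over $(a,q,\boldsymbol m,\boldsymbol h)$ converge at once. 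Second---and this is the idea you are missing---the upgrade from weak to strong is purely \emph{deterministic}: using $|\lambda_k-k^2|\le B\langle k\rangle^{-1}$ one introduces a cut-off index $q_\star\le q$ such that for $j>q_\star$ the term $\boldsymbol m_j\lambda_{\boldsymbol h_j}$ can be replaced by the integer $\boldsymbol m_j\boldsymbol h_j^2$ (absorbed into $a$) with error dominated by the weak bound, while for $j\le q_\star$ a discrete Gr\"onwall inequality bounds each $\langle\boldsymbol h_j\rangle$ by a power of $\langle\boldsymbol h_1\rangle$. Applying the weak estimate to the truncated sum then yields the strong one.

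Your one-shot route---anti-concentration in $X_{2M}$ followed by grouping by low-frequency profile---does not close as written. Even granting that the high-frequency part of $\Lambda$ depends on $X_{2M}$ only to higher order, its \emph{value} still varies across tuples sharing the same profile (different high indices produce different $\sum_j\boldsymbol m_j(\lambda_{\boldsymbol h_j}-\boldsymbol h_j^2)$), so the bad events $\{|\Lambda|<\delta\}$ are distinct sub-intervals of the $X_{2M}$-axis and a single anti-concentration bound does not control their union. If instead you push the low/high threshold $K$ so large that the residual high-frequency contribution is below $\delta$, you need $K\gtrsim|\boldsymbol m|_1\eta\,\delta^{-1}$, hence $\sim K^{q}\sim\delta^{-q}$ profiles, and the resulting sum behaves like $\delta^{1-q}$ and diverges for $q\ge2$---the super-exponential factor in Definition~\ref{def:strg_nr} does not save this. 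The paper sidesteps the whole issue: the probabilistic union bound lives in the weak estimate (where $\prod_j\langle\boldsymbol h_j\rangle^{-4}$ makes it cheap), and the reduction to a bound in $\min_j\langle\boldsymbol h_j\rangle$ alone is carried out afterwards and deterministically by the cascade/Gr\"onwall argument.
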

\begin{remark}
\begin{itemize}
\item Remark \ref{rk:mu} also holds in the case of  Theorem \ref{thm:mult} and Proposition \ref{prop:main_mult}.
\item The constant $\eta$ is universal: it does not depend on $s_*$. 
\item In Proposition \ref{prop:main_mult}, the average of the potential is equal to $0$ (i.e. $W^{\eqref{eq:NLS}}_0=0$). Nevertheless, this assumption is not restrictive. Indeed, due to the condition $\boldsymbol{m}_1+\cdots+\boldsymbol{m}_q=0$ in Definition \ref{def:strg_nr}, if the frequencies associated with an even potential $W \in L^\infty(\mathbb{T};\mathbb{R})$ are strongly-non-resonant then the frequencies associated with $W+\upsilon$ are also strongly-non-resonant for all $\upsilon \in \mathbb{R}$.
\end{itemize}
\end{remark}


\subsubsection{Comments on both results}\label{sec:comments}

\begin{itemize}
\item Theorem \ref{thm:conv}  and Theorem \ref{thm:mult} give a control on all the Fourier modes, but the time during which we have this control depends on the index of the considered mode: the higher this index is (high mode) the less the time is. The result is mostly interesting for low modes. In that case we have a control during exponentially long time in the spirit of some Nekhoroshev results recently obtained (see \cite{BG22,BMP20,FG13}). In fact, for the very high modes, the time becoming very short, the result is rather the consequence of the well-posed character of the equation in $H^s$ (see section \ref{sec:high}).
\item From a more physical point of view, these results prove that if there are reverse energy cascades, they are necessarily very slow. Indeed, since we consider non-smooth solutions, an important part of the energy of the solution could be on high modes. However, we prove that, for very long times, there is no transfer of this energy to low modes. 
\item In fact, if we would focus only on the low modes (concretely $2| k | \leq \varepsilon^{-\upsilon_{\alpha,s,\nu}} $; see section \ref{sec:low}), we would not have to assume $u(0)$ to be small in $H^s$ norm but only in $L^2$ norm (i.e. $u(0)\in H^s$ with $\|u(0)\|_{L^2}\ll 1$). The reason being that we develop the normal form in $L^2$.
\item The Strichartz estimate (\cite{Bou93})
\begin{equation}\label{strichartz}
\big(\int_{\T^2}|e^{it\Delta}\varphi|^6dxdt\big)^{1/6}\leq \big(\exp C\frac{\log N}{\log \log N}\big) \|\varphi\|_{L^2}
\end{equation}
assuming $\text{supp}\,\hat\varphi \subset [-N,\cdots,N]$,
 is used to initiate the Birkhoff normal form procedure (see the sketch of proof below). This is actually one of the reasons why we have to truncate to a finite number of modes from the very beginning. 
\item As said above, the results are consequences of Birkhoff normal forms in $H^{0^+}$ (or more precisely in $L^2$ with a logarithmic loss in terms of the order of the Fourier truncation).   The assumption on the regularity of the initial datum, $u(0)\in H^{s}$,  is used to have a control on the remainder term generated by the truncation to a finite number of modes of the nonlinear term. Unfortunately, we are not able to control such remainder for solutions that belong only in $L^2$. 
\item Once we assume  $u(0)\in H^{s}$, we need to control $u(t)\in H^{s}$. This a priori control of $u(t)$ for $t$ large is a by-product of the argument used to globalize solutions in Li-Wu-Xu \cite{LWX}: using the $I$-method, their argument implies, in the case $V=W=0$, that
$\|u(t)\|$ growths at most polynomially in time. In section \ref{sec:gwp} we extend this result to $V\neq 0$ in \eqref{eq:NLS*} and to $W\neq 0$ in \eqref{eq:NLS}.
\item It is very likely that we could prove the same result for the {cubic NLS}
$$
i\partial_t u = -\partial_x^2 u + W u + |u|^2 u, \quad x\in \mathbb{T}
$$
 which is {globally well-posed in $L^2$} \cite{Bou93}. Nevertheless, our method would require to deal with solutions at least in $H^{1/6}$ (to ensure the Sobolev embedding $H^s \subset L^3$ used to control the error term coming from the Fourier truncation). To get a dynamic result even in $L^2$ more work is needed, but it seems conceivable...
\item In this paper, we really use the regularizing effect of the integration in time, since we crucially use the Strichartz estimate \eqref{strichartz}. But we transform this effect in a structural property on $P_6(u)=\frac16\int_\T |u|^6 dx$ (see \eqref{esti:P6}). So we do not work in  space-time (Fourier-Lebesgue spaces) but only in space. It is likely that by working on our normal forms directly in Bourgain spaces, and thus in space-time, the results would improve and in any case be more intrinsic. Nevertheless, the normal forms as we know them at the moment do not take into account the time variable, so it would be a non-trivial conceptual jump.

\end{itemize}

\subsubsection{Related literature}
 Initiated by Bourgain \cite{Bou96}, and refined by Bambusi \cite{Bam03} and Bambusi-Gr\'ebert \cite{BG06}, the Birkhoff normal form method has been widely used in the last decades to show, in its non-resonant version, stability over long times \cite{Bou96,BG06,BDGS07,GIP,Del12,BD17,KillBill,BMP20,FI19,BFM22,BMM22}.
However, all these results  have a major flaw, they only concern very regular solutions (in $H^s$ for $s\gg 1$). The numerical simulations of Cohen-Hairer-Lubich (\cite{CHL08a, CHL08b}) rather suggest that stability over long times is not related to the regularity of solutions. On the other hand, and at the same time, the dispersive PDE community has developed a lot of ingeniousness, based on linear (Strichartz) and multilinear estimates, to demonstrate the local well-posedness for less and less regular initial data. By working in space-time spaces  to use the regularizing effect of the integration in time and by working in a neighborhood of the linear solutions (Bourgain space), one finally succeeds in showing the well-posedness in Sobolev spaces $H^s$ with $s$ very small, even $s=0$ \cite{Bou93}, for the cubic 1d-NLS; in any case below $s=d/2$ ($d$ being the spatial dimension, which in this paper will always be $d=1$). In this kind of space the nonlinear analysis becomes very delicate since the multiplication of two functions is not a stable operation anymore. 
Dispersive properties have been first used  in the context of the whole space $\R^d$ (see \cite{Klai,Sha}) but then extended in the periodic case by Bourgain \cite{Bou93} and more generally in a compact manifold \cite{BGT}. For a general overview, one could consult the book by Tao \cite{Tao06} or the book by Erdoğan-Tzirakis \cite{ET}.\\
Recently (in \cite{BG21,BGR21,Charb}) we proved  Birkhoff normal form results in the energy space ($H^1$ for NLS) leading to a control of the low actions of the equation. In \cite{BG22} we succeeded to control also the $H^s$ norm but only for NLS (in any dimension, $s>d/2$) with specific convolutional potentials. 


\subsection{Sketch of proof}\label{sec:sketch}
\subsubsection{General strategy}
Let us first briefly recall the general strategy of the Birkhoff normal form (see \cite{Bam07} or \cite{Gre07} for a more detailed introduction to Birkhoff normal forms for Hamiltonian PDEs). We begin with the Hamiltonian formulation of \eqref{eq:NLS*} (in this section we focus on the convolution version of NLS which is a bit simpler). 
 Identifying a function with the sequence of its Fourier coefficients  $L^2(\T)\ni u \equiv (u_n)_{n\in\Z}$ where $u_n:=(2\pi)^{-\frac12}\int_\T u(x)e^{-inx}dx$, \eqref{eq:NLS*} reads
$$i\partial_t u_k =\nabla H(u)_k$$
where the Hamiltonian function of \eqref{eq:NLS*} is given by
$$H(u)= Z_2(u)+P_6(u),$$
$$Z_2(u)=\sum_{k\in\Z}\omega_k |u_k|^2$$
and
$$P_6(u)=\frac16\int_\T |u|^6 dx= \frac16\sum_{{\boldsymbol{k}_1} +{\boldsymbol{k}_2} + {\boldsymbol{k}_3} ={\boldsymbol{\ell}_1} + {\boldsymbol{\ell}_2} + {\boldsymbol{\ell}_3}}u_{\boldsymbol{k}_1}u_{\boldsymbol{k}_2} u_{\boldsymbol{k}_3} \ \overline{u_{\boldsymbol{\ell}_1}} \overline{u_{\boldsymbol{\ell}_2}} \overline{u_{\boldsymbol{\ell}_3}} .$$
To a monomial $ u_{\boldsymbol{k}_1} \cdots u_{\boldsymbol{k}_q} \ \overline{u_{\boldsymbol{\ell}_1}} \cdots\overline{u_{\boldsymbol{\ell}_q}}$ ($q\geq 3$) we associate the small divisor
 $$
 \Omega(\boldsymbol{k}, \boldsymbol{\ell}):=\omega_{\boldsymbol{k}_1}+\cdots+\omega_{\boldsymbol{k}_q}-\omega_{\boldsymbol{\ell}_1}-\cdots-\omega_{\boldsymbol{\ell}_q}\neq 0.
 $$
Given $\gamma>0$, by solving a so-called cohomological equation, we can remove any monomials with $|\Omega(k,\ell)|>\gamma$ replacing it by a higher order term.
So, for a given $r\geq 3$, we formally construct a change a variable $\tau$ such that
$$H\circ \tau= Z_r +R_r$$
where $Z_r$ contains only monomials for which $|\Omega(k,\ell)|\leq\gamma$ (i.e. $\gamma$-resonant monomials in the sense of \eqref{eq:gamma-res})and $R_r$ is of order $r$: $R_r(u)=O(u^r)$. 
We prove in section \ref{sec:proof_thm_conv} that such $\gamma$-resonant polynomial $Z_r$ will not modify\footnote{see in particular the beginning of the optimization procedure, section \ref{sub:opt*}.} the dynamics of the low actions (the precise meaning of "low" depending on the value of $\gamma$).
On the other hand, $R_r$ is small in the sense that it has a high order, but the precise meaning of this smallness will depend a lot on the topology in which we perform the normal form. 

\subsubsection{BNF in Euclidean topology} Here the idea is to perform the normal form step without any regularity, i.e. in $L^2$. For that purpose, we use a strategy inspired by \cite{Bou04b} appendix 7. As explained above, we will need to truncate the nonlinear term to a finite number of modes. So we shall consider polynomials depending only on a finite number of complex variables $u_n$, $n \in \mathcal{M} := \llbracket -M,M\rrbracket $ (i.e. polynomials defined on the space of the trigonometric polynomials of degree smaller than or equal to $M\gg 1$). The principal difficulty lies in the choice of the norm (let us call it $\|\cdot\|_{\mathscr C}$) that we can put on polynomials $P$ homogeneous of degree $2q$ in order to have an estimate on its gradient of the form
$$\|\nabla P(u)\|\leq C\| P\|_{\mathscr C} \| u\|^{2q-1},$$
with $C$ independent of $M$ or at most with a logarithmic dependency and $\| \cdot \|$ denotes the canonical Euclidean norm on $\mathbb{C}^\mathcal{M}$ (i.e. the $L^2$ norm of the associated trigonometric polynomial up to the usual Fourier identification).  Because we work in Euclidean topology, we have that
$$\|\nabla P(u)\|\leq 2q \| \tilde P\|_{\mathscr L_{2q}} \| u\|^{2q-1},$$
where $\tilde P \in \mathscr L_{2q}$ is the $2q$-linear map that we can naturally associate with the homogeneous polynomials of degree $2q$. Furthermore, a standard result (due to S. Banach (1937)) says that 
$$ 
\| \tilde P\|_{\mathscr L_{2q}}:=\sup_{\| u\|_{L^2}=1}|P(u)|=\| P\|_\infty.
$$
So the good norm could be $\|\cdot \|_\infty$. But this norm is not controlled from the beginning (for $P_6$), and furthermore we cannot propagate such a control by Poisson brackets (which is necessary to  implement a Birkhoff normal form). The idea, inspired by Bourgain, consists in considering the  level sets of $P$ according to $\Omega^{(i)}(\boldsymbol{k},\boldsymbol{\ell})={\boldsymbol{k}^2_1} + \cdots + {\boldsymbol{k}^2_q} -{\boldsymbol{\ell}^2_1} - \cdots - {\boldsymbol{\ell}^2_q}$, the small divisor associated to the integer part of $\omega_k=k^2+\hat V_k$. We define (see section \ref{sec:setting} for a more intrinsic definition)
$$ \| P\|_{\mathscr{H}}:= \sup_{a\in \Z}\| { \Pi_a\lfloor P \rceil(u)}\|_\infty, \text{ and }  \| P\|_{\mathscr C}=: \sup_{a\in \Z}\langle a\rangle \| {\Pi_a\lfloor P \rceil(u)}\|_\infty$$
where, given a polynomial 
$$P(u)=  \sum_{{\boldsymbol{k}_1} + \cdots + {\boldsymbol{k}_q} ={\boldsymbol{\ell}_1} + \cdots + {\boldsymbol{\ell}_q} } P_{\boldsymbol{k},\boldsymbol{\ell}} u_{\boldsymbol{k}_1} \dots u_{\boldsymbol{k}_q} \overline{u_{\boldsymbol{\ell}_1}} \dots \overline{u_{\boldsymbol{\ell}_q}}$$ we define
$$\lfloor P \rceil(u) =  \sum_{{\boldsymbol{k}_1} + \cdots + {\boldsymbol{k}_q} ={\boldsymbol{\ell}_1} + \cdots + {\boldsymbol{\ell}_q}} |P_{\boldsymbol{k},\boldsymbol{\ell}}| u_{\boldsymbol{k}_1} \dots u_{\boldsymbol{k}_q} \overline{u_{\boldsymbol{\ell}_1}} \dots \overline{u_{\boldsymbol{\ell}_q}}$$
(the so-called modulus of $P$) and\footnote{Take care that in section \ref{sec:setting} we have a more general definition of this projection, see \eqref{pi}. }
$${\Pi_aP}(u):=  \sum_{\substack{{{\boldsymbol{k}_1} + \cdots + {\boldsymbol{k}_q} ={\boldsymbol{\ell}_1} + \cdots + {\boldsymbol{\ell}_q}}\\{{\boldsymbol{k}^2_1} + \cdots + {\boldsymbol{k}^2_q} -{\boldsymbol{\ell}^2_1} - \cdots - {\boldsymbol{\ell}^2_q}=a}}} P_{\boldsymbol{k},\boldsymbol{\ell}} u_{\boldsymbol{k}_1} \dots u_{\boldsymbol{k}_q} \overline{u_{\boldsymbol{\ell}_1}} \dots \overline{u_{\boldsymbol{\ell}_q}}.$$
It is important to notice that, solving the cohomological equation, we transform a polynomial controlled by $\|\cdot\|_{\mathscr H}$ to polynomials controlled by $\|\cdot\|_{\mathscr C}$ (this is a consequence of Lemma \ref{lem:cestdiagonal}).
With these two topologies on homogeneous polynomials we prove in Lemma \ref{lem:comp_with_classical_norms} that $\| P\|_\infty \leq 5 \log(2qM^2)\| P\|_{\mathscr C}$, which implies the desired estimate on $\|\nabla P(u)\|$, and in Proposition \ref{prop:est_poisson} that
$$\|\{ P, Q\}\|_{\mathscr H}\leq 40 qq' \log(2q' M^2)\|P\|_{\mathscr H}\| Q\|_{\mathscr C},$$
which is perfect to implement the Birkhoff normal form procedure (up to an unessential $\log M$ loss).

\begin{remark}
The use of restrictions to level sets of the resonance function in multilinear estimates is also reminiscent of the estimates used in performing \emph{Poincaré-Dulac} normal forms; see e.g. \cite{BIT,KO,GKO} and the abstract framework highlighted in \cite{Ki}. These estimates are essentially just another facet of the multilinear estimates in Bourgain spaces, as was recently pointed out in \cite{Cor} (in the case of $\mathbb{R}^d$ instead of $\mathbb{T}^d$).
\end{remark}

\subsubsection{End of the proof}
For the first step of normal form, we have to prove that $P_6$ can be controlled by $\|\cdot\|_{\mathscr H}$ and this is a simple consequence of the Strichartz estimate \eqref{strichartz} which leads to (see section \ref{sec:strichartz})
\begin{equation}\label{esti:P6}
\| P^{(M)}_6\|_{\mathscr H}\leq  \big(\exp C\frac{\log M}{\log \log M}\big),
\end{equation}
where $P^{(M)}_6$ is the restriction of $P_6$ to the modes whose indices are in $\llbracket -M,M \rrbracket$. At some point we have to take into account this truncation, i.e. we have to control the remainder term
$$\|\Pi_M\big( \nabla P_6(u)- \nabla P^{(M)}_6(u))\|_{L^2}$$
where
$\Pi_M$ denotes the projection on the modes whose indices are in $\llbracket -M,M\rrbracket$ (note that $ P^{(M)}_6 := P_6 \circ \Pi_M$). Unfortunately, we were not able to estimate such quantity for solutions that only belong to $L^2$. For solutions  in $H^s$, $s> 2/5$, this follows by using the Sobolev embedding $H^s\subset L^{10}$, which leads to (see section \ref{sub:estg*})
$$\|\Pi_M\big( \nabla P_6(u)- \nabla P^{(M)}_6(u))\|_{L^2}\leq M^{-\alpha(s)}\|u\|^5_{H^s},$$
with $\alpha(s)>0$ for $s> 2/5$.
Then it suffices to control the growth of the $H^s$ norm of the solution, $\| u(t)\|_{H^s}$, and this follows from the argument in \cite{LWX} for $s>2/5$ when $V=0$, a result that we extend to non-vanishing $V$ in section \ref{sec:gwp}.
The last, but not least, step is to optimize the set of parameters as a function of $\eps$ as it is usual to obtain an exponential time (see section \ref{sub:opt*}).

\subsection{Acknowledgments} During the preparation of this work the authors benefited from the support of the Centre Henri Lebesgue ANR-11-LABX-0020-0 and J.B. was also supported by the region "Pays de la Loire" through the project "MasCan".
T.R. was partially supported by the ANR project Smooth ANR-22-CE40-0017. J.B. and B.G. were partially supported by the ANR project KEN ANR-22-CE40-0016.

\section{A Birkhoff normal form theorem in Euclidean spaces}

\subsection{Functional setting}\label{sec:setting} Let $\mathcal{M}$ be a finite set. We endow  $\mathbb{C}^{\mathcal{M}}$ with its canonical Euclidean structure 
$$
\forall u,v \in  \mathbb{C}^{\mathcal{M}}, \quad \langle u,v \rangle = \Re \sum_{k\in \mathcal{M}} u_k \overline{v_k},\quad \|u\|^2 = \langle u,u\rangle, 
$$
and with its canonical symplectic form $ \langle  i\cdot,\cdot \rangle .$

Being given two smooth real valued functions $P,Q : \mathbb{C}^{\mathcal{M}} \to \mathbb{R}$, their \emph{Poisson bracket} is defined by
$$
\{P,Q\}(u) = \langle i\nabla P(u), \nabla Q(u) \rangle,
$$
where the gradients are defined by duality by
$$
\forall u,v\in  \mathbb{C}^{\mathcal{M}}, \quad \langle \nabla P(u) , v \rangle = \mathrm{d}P(u)(v)
$$
and satisfy the usual formula 
$$
\forall k \in \mathcal{M}, \quad (\nabla P(u))_k = 2\partial_{\overline{u_k}} P(u) := \partial_{\Re u_k} P(u) +  i  \partial_{\Im u_k} P(u).
$$
Thus we have the formula
\begin{equation}
\label{eq:Poisson_formula}
\{  P,Q\} (u) 
  =2i \sum_{k\in \mathcal{M}} \partial_{\overline{u_k}}P(u) \partial_{u_k} Q(u) - \partial_{u_k}P(u) \partial_{\overline{u_k}} Q(u).
\end{equation}
Note that this formula also makes sense if $P,Q$ are complex-valued.

\subsubsection{Polynomials}
Being given two real vector spaces, a map $P : E \to F$ is called \emph{homogeneous polynomial of degree $d$} if there exists a $d$-$\mathbb{R}$-linear symmetric map $L :(E )^d \to F $ such that $P(u) = L(u,\cdots,u)$. Note that $L$ is unique. In order to estimate these polynomials, we recall the following useful proposition:
\begin{proposition}[Prop 1 page 61 of \cite{BS71}]\label{prop:BS} Let $E$ be a real Hilbert space, $F$ be a real Banach space, $P : E \to F$ be a homogeneous polynomial of degree $d$ and $L$ be the associated $d$-linear symmetric map. Then we have
$$
\|P\|_{\infty}:= \sup_{\|u\|_E\leq 1} \|P(u)\|_F = \sup_{\|u^{(1)}\|_E\leq 1, \cdots, \|u^{(d)}\|_E\leq 1} \|L(u^{(1)},\cdots,u^{(d)})\|_F.
$$
\end{proposition}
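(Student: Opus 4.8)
The plan is to prove the nontrivial inequality $\sup_{\|u^{(j)}\|_E\le1}\|L(u^{(1)},\dots,u^{(d)})\|_F\le\|P\|_\infty$, the reverse one being immediate on taking $u^{(1)}=\dots=u^{(d)}=u$; the whole point is that, contrary to a general Banach space (where polarization only yields the constant $d^d/d!$ on the right), the Hilbert structure of $E$ forces the optimal constant $1$. First I would reduce to a scalar target: for a fixed tuple in the unit ball, Hahn--Banach gives $\|L(u^{(1)},\dots,u^{(d)})\|_F=\sup_{\|\varphi\|_{F^*}\le1}\varphi\big(L(u^{(1)},\dots,u^{(d)})\big)$, and for each $\varphi$ the form $\varphi\circ L$ is scalar, symmetric and $d$-linear, with associated homogeneous polynomial $\varphi\circ P$ satisfying $\|\varphi\circ P\|_\infty\le\|P\|_\infty$; so it is enough to prove $|L(u^{(1)},\dots,u^{(d)})|\le\|P\|_\infty$ when $L$ is real-valued.

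The heart of the matter is a Bernstein-type bound on the gradient of $P$: \emph{for every $y\in E$ with $\|y\|=1$, one has $\|\nabla P(y)\|_E\le d\,\|P\|_\infty$}. Since $\langle\nabla P(y),h\rangle=d\,L(h,y,\dots,y)$, Euler's identity gives $\langle\nabla P(y),y\rangle=d\,P(y)$, so I would split $\nabla P(y)=d\,P(y)\,y+r$ with $r\perp y$. If $r\neq0$, put $n=r/\|r\|$ and consider $g(t):=P(\cos t\,y+\sin t\,n)$: since $\cos t\,y+\sin t\,n$ stays on the unit sphere, $g$ is a real trigonometric polynomial of degree $\le d$ (each $\cos^{d-k}t\,\sin^{k}t$ being one) with $\|g\|_{L^\infty}\le\|P\|_\infty$, $g(0)=P(y)$ and $g'(0)=\langle\nabla P(y),n\rangle=\|r\|$. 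Szeg\H{o}'s sharp refinement of Bernstein's inequality, $g'(t)^2+d^2g(t)^2\le d^2\|g\|_{L^\infty}^2$, evaluated at $t=0$ gives $\|r\|^2\le d^2(\|P\|_\infty^2-P(y)^2)$, hence $\|\nabla P(y)\|^2=d^2P(y)^2+\|r\|^2\le d^2\|P\|_\infty^2$.

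To conclude I would peel off one variable at a time, arguing by induction on $d$ (the case $d=1$ being trivial, as then $P=L$). For $d\ge2$ write $\sup_{\|u^{(j)}\|\le1}|L(u^{(1)},\dots,u^{(d)})|=\sup_{\|u^{(1)}\|\le1}\big(\sup_{\|u^{(j)}\|\le1,\,j\ge2}|L(u^{(1)},\dots,u^{(d)})|\big)$; for fixed $u^{(1)}$ the inner supremum equals, by the proposition in degree $d-1$ applied to the symmetric $(d-1)$-linear form $L(u^{(1)},\cdot,\dots,\cdot)$ --- whose associated polynomial is $y\mapsto L(u^{(1)},y,\dots,y)$ --- the quantity $\sup_{\|y\|\le1}|L(u^{(1)},y,\dots,y)|$. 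Finally, for fixed $y$ with $\|y\|=1$, $\sup_{\|z\|\le1}|L(z,y,\dots,y)|$ is the norm of the Riesz representative of $z\mapsto L(z,y,\dots,y)$, which is $\tfrac1d\nabla P(y)$, hence $\le\|P\|_\infty$ by the lemma; and the supremum over $\|y\|\le1$ reduces to $\|y\|=1$ by homogeneity. Putting these together yields $\sup_{\|u^{(j)}\|\le1}|L(u^{(1)},\dots,u^{(d)})|\le\|P\|_\infty$.

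I expect the Bernstein-type lemma to be the only genuine obstacle, and within it the use of the \emph{sharp} inequality $g'^2+d^2g^2\le d^2\|g\|_\infty^2$ rather than the plain Bernstein bound $\|g'\|_\infty\le d\,\|g\|_\infty$: the latter would only give $\|\nabla P(y)\|\le\sqrt2\,d\,\|P\|_\infty$ and hence the suboptimal constant $\sqrt2$. The passage to the optimal constant $1$ is precisely where the Euclidean geometry is used; for $d=2$ the whole scheme collapses to the parallelogram identity $4L(x,y)=P(x+y)-P(x-y)$ together with $\|x+y\|^2+\|x-y\|^2=2\|x\|^2+2\|y\|^2$.
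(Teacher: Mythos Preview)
The paper does not prove this proposition; it is quoted verbatim from Bochnak--Siciak \cite{BS71} and used as a black box (only the corollaries on $\nabla P$ are exploited later). Your argument is correct and is essentially one of the classical proofs: the reduction to scalar targets via Hahn--Banach is standard, the sharp gradient bound $\|\nabla P(y)\|\le d\|P\|_\infty$ on the unit sphere follows exactly as you say from the Szeg\H{o} (Schaake--van der Corput) refinement $g'(t)^2+d^2g(t)^2\le d^2\|g\|_\infty^2$ applied to the degree-$d$ trigonometric polynomial $g(t)=P(\cos t\,y+\sin t\,n)$, and the inductive ``peeling'' of one argument at a time is legitimate since $L(u^{(1)},\cdot,\dots,\cdot)$ is again symmetric and you correctly swap the two outer suprema before invoking the gradient lemma. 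Your closing remark is also to the point: it is precisely the sharp form of Bernstein's inequality, and hence the Hilbert structure of $E$, that yields the constant $1$ rather than $d^d/d!$.
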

\begin{corollary} \label{cor:very_nice}
Under the assumptions of Proposition~\ref{prop:BS}, if $P$ is real valued (i.e. $F=\mathbb{R}$), then for all $u,v\in E$,
$$
\| \nabla P(u) \|_E \leq d \|P\|_{\infty} \|u\|_E^{d-1}  \quad \mathrm{and} \quad \| \mathrm{d}\nabla P(u)(v) \|_E \leq d (d-1)  \|P\|_{\infty}  \|u\|^{d-2}_E \|v\|_E.
$$
\end{corollary}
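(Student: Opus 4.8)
Write $P(u)=L(u,\dots,u)$ with $L$ the unique symmetric $d$-$\mathbb{R}$-linear map associated with $P$. The whole proof is just a bookkeeping of derivatives of multilinear maps together with an application of Proposition~\ref{prop:BS}; I expect no genuine obstacle, only the need to be careful about the duality definition of $\nabla P$ and about the trivial degenerate cases ($u=0$, or $d=1$ for the second estimate, where $d(d-1)=0$).

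\emph{First estimate.} Differentiating $P$ and using symmetry and multilinearity of $L$, one gets for all $u,h\in E$
$$
\mathrm{d}P(u)(h)=\frac{d}{dt}\Big|_{t=0}L(u+th,\dots,u+th)=d\,L(u,\dots,u,h).
$$
By the duality definition of the gradient, $\langle \nabla P(u),h\rangle=d\,L(u,\dots,u,h)$, hence
$$
\|\nabla P(u)\|_E=\sup_{\|h\|_E\leq1}\big|\langle\nabla P(u),h\rangle\big|=d\sup_{\|h\|_E\leq1}\big|L(u,\dots,u,h)\big|.
$$
If $u=0$ the right-hand side is $0$ and the inequality is clear; if $u\neq0$, write $u=\|u\|_E\,\widehat u$ with $\|\widehat u\|_E=1$ and use $(d-1)$-linearity to factor out $\|u\|_E^{d-1}$, obtaining $\|\nabla P(u)\|_E=d\,\|u\|_E^{d-1}\sup_{\|h\|_E\leq1}|L(\widehat u,\dots,\widehat u,h)|\leq d\,\|u\|_E^{d-1}\|P\|_\infty$, where the last bound is precisely the equality in Proposition~\ref{prop:BS} (the supremum of $|L|$ over the unit polydisc equals $\|P\|_\infty$).

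\emph{Second estimate.} The map $\nabla P:E\to E$ is itself polynomial of degree $d-1$; differentiating the identity $\langle\nabla P(u),h\rangle=d\,L(u,\dots,u,h)$ in the direction $v$ gives, again by symmetry and multilinearity,
$$
\langle \mathrm{d}\nabla P(u)(v),h\rangle=d(d-1)\,L(u,\dots,u,v,h)
$$
for all $h\in E$ (with both sides zero when $d=1$). Taking the supremum over $\|h\|_E\leq1$ and factoring out $\|u\|_E^{d-2}\|v\|_E$ by multilinearity exactly as above yields
$$
\|\mathrm{d}\nabla P(u)(v)\|_E=d(d-1)\sup_{\|h\|_E\leq1}\big|L(u,\dots,u,v,h)\big|\leq d(d-1)\,\|u\|_E^{d-2}\|v\|_E\,\|P\|_\infty,
$$
using Proposition~\ref{prop:BS} once more. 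This is the claimed bound, which completes the proof.
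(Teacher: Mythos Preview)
Your proof is correct and is exactly the natural argument implicit in the paper: the corollary is stated immediately after Proposition~\ref{prop:BS} without proof, and your derivation---differentiate $P(u)=L(u,\dots,u)$ to identify $\nabla P$ and $\mathrm{d}\nabla P$ with $dL(u,\dots,u,\cdot)$ and $d(d-1)L(u,\dots,u,\cdot,\cdot)$, then apply the polarization equality of Proposition~\ref{prop:BS}---is precisely what the authors intend.
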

Being given $q\geq 1$, we denote by $\mathcal{P}^{\mathcal{M}}_{\mathbb{K},2q}$ the set of the $\mathbb{K}$ valued homogeneous polynomial of degree $2q$ which commute with the Euclidean norm, i.e.
$$
\mathcal{P}^{\mathcal{M}}_{\mathbb{K},2q} := \Big\{P : \mathbb{C}^{\mathcal{M}} \to \mathbb{K} \ | \ P\ \mathrm{is \ a \ }\mathbb{R}\mathrm{-homogeneous\ polynomial \ of\ degree} \ 2q \ \mathrm{and} \ \{ P ,\|\cdot\|^2\} =0\Big\}.
$$
Note that the polynomials $P\in \mathcal{P}^{\mathcal{M}}_{\mathbb{C},2q}$ are exactly those admitting a decomposition of the form
$$
P(u) =  \sum_{\boldsymbol{k},\boldsymbol{\ell}\in \mathcal{M}^q  } P_{\boldsymbol{k},\boldsymbol{\ell} } \, u_{\boldsymbol{k}_1} \dots u_{\boldsymbol{k}_q} \overline{u_{\boldsymbol{\ell}_1}} \dots \overline{u_{\boldsymbol{\ell}_q}}
$$
with $P_{\boldsymbol{k},\boldsymbol{\ell} }  \in \mathbb{C}$ satisfying the symmetry condition
$$
 \forall \phi,\sigma \in \mathfrak{S}_q, \quad P_{\phi \boldsymbol{k},\sigma\boldsymbol{\ell} } =  P_{\boldsymbol{k},\boldsymbol{\ell} }.
$$
Moreover thanks to the symmetry condition, this decomposition is unique. Furthermore, if $P \in \mathcal{P}^{\mathcal{M}}_{\mathbb{R},2q}$ is real-valued, its coefficients satisfy the reality condition
$$
P_{\boldsymbol{\ell} , \boldsymbol{k}}  = \overline{P_{\boldsymbol{k},\boldsymbol{\ell} } }.
$$
Of course, as stated in the following lemma, thanks to the Jacobi identity, this class of Hamiltonian is stable by Poisson bracket.
\begin{lemma} Let $\mathbb{K}\in \{ \mathbb{R},\mathbb{C} \}$, $q,q'\geq 1$, $P\in \mathcal{P}^{\mathcal{M}}_{\mathbb{K},2q} $ and $ Q\in \mathcal{P}^{\mathcal{M}}_{\mathbb{K},2q'}$ be two $\mathbb{R}$-homogeneous polynomials commuting with the Euclidean norm, then $\{P,Q\} \in \mathcal{P}^{\mathcal{M}}_{\mathbb{K},2(q+q'-1)}$  is also a homogeneous polynomial commuting with the Euclidean norm. 
\end{lemma}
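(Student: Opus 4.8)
The plan is to check directly the two conditions defining membership in $\mathcal{P}^{\mathcal{M}}_{\mathbb{K},2(q+q'-1)}$: first that $\{P,Q\}$ is a $\mathbb{K}$-valued $\mathbb{R}$-homogeneous polynomial of degree $2(q+q'-1)$, and then that it commutes with the Euclidean norm.

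For the degree count, I would read $P$ and $Q$ in the real coordinates $(\Re u_k,\Im u_k)_{k\in\mathcal{M}}$, in which an $\mathbb{R}$-homogeneous polynomial of degree $d$ is precisely a homogeneous polynomial of degree $d$ in the usual sense. The Wirtinger derivatives $\partial_{u_k}$ and $\partial_{\overline{u_k}}$ are $\mathbb{C}$-linear combinations of $\partial_{\Re u_k}$ and $\partial_{\Im u_k}$, hence $\partial_{\overline{u_k}}P$ is a (possibly complex-valued) homogeneous polynomial of degree $2q-1$, while $\partial_{u_k}Q$ and $\partial_{\overline{u_k}}Q$ are homogeneous of degree $2q'-1$. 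Plugging this into formula \eqref{eq:Poisson_formula}, $\{P,Q\}$ appears as a finite sum of products of a degree $2q-1$ and a degree $2q'-1$ homogeneous polynomial, hence as an $\mathbb{R}$-homogeneous polynomial of degree $(2q-1)+(2q'-1)=2(q+q'-1)$; and it is $\mathbb{K}$-valued because the right-hand side of \eqref{eq:Poisson_formula} manifestly is (and, when $\mathbb{K}=\mathbb{R}$, one uses instead that $\{P,Q\}=\langle i\nabla P,\nabla Q\rangle$ with the real inner product, which is real).

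For the commutation with $N:=\|\cdot\|^2$, I would invoke the Jacobi identity
\begin{equation*}
\{\{P,Q\},N\}+\{\{Q,N\},P\}+\{\{N,P\},Q\}=0,
\end{equation*}
which holds for smooth real-valued functions on $\mathbb{C}^{\mathcal{M}}$ equipped with its canonical symplectic form, and extends to $\mathbb{C}$-valued functions by $\mathbb{C}$-bilinearity of the Poisson bracket (decomposing into real and imaginary parts). Since $P$ and $Q$ lie in the classes $\mathcal{P}^{\mathcal{M}}_{\mathbb{K},2q}$ and $\mathcal{P}^{\mathcal{M}}_{\mathbb{K},2q'}$, we have $\{Q,N\}=0$ and $\{N,P\}=-\{P,N\}=0$, so the last two terms vanish and $\{\{P,Q\},N\}=0$; that is, $\{P,Q\}$ commutes with the Euclidean norm. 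Together with the previous paragraph this gives $\{P,Q\}\in\mathcal{P}^{\mathcal{M}}_{\mathbb{K},2(q+q'-1)}$.

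There is essentially no serious obstacle in this lemma; the only points needing a little care are the bookkeeping of degrees under Wirtinger differentiation and the observation that the Jacobi identity remains available for (possibly) $\mathbb{C}$-valued polynomials, which follows from the real case by complexification.
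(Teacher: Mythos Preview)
Your proposal is correct and matches the paper's approach exactly: the paper does not spell out a proof but simply notes that the lemma holds ``thanks to the Jacobi identity,'' which is precisely the argument you give for the commutation with $\|\cdot\|^2$, together with the obvious degree count via \eqref{eq:Poisson_formula}.
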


\subsubsection{Hamiltonian flows} We recall that a smooth map $\Psi : \mathbb{C}^\mathcal{M} \to \mathbb{C}^\mathcal{M}$ is \emph{symplectic} if its derivative preserves the canonical symplectic form, i.e.
$$
\forall u,v,w\in \mathbb{C}^\mathcal{M}, \quad \langle i\mathrm{d}\Psi(u)(v) , \mathrm{d}\Psi(u)(w) \rangle = \langle iv , w \rangle.
$$ 
In the following proposition, we give some of the properties enjoyed by Hamiltonian flows generated by real valued homogeneous polynomials commuting with the Euclidean norm.
\begin{proposition} \label{prop:ca_flotte} Let $q\geq 2$ and $\chi \in \mathcal{P}^{\mathcal{M}}_{\mathbb{R},2q}$. Then the flow $\Phi^t_\chi$ of the equation
\begin{equation}
\label{eq:kiki}
i\partial_t u = \nabla \chi(u)
\end{equation}
is smooth and global. Moreover, it enjoys the following properties:
\begin{itemize}
\item preservation of the Euclidean norm: 
$$
\forall t\in \mathbb{R}, \forall u \in \mathbb{C}^\mathcal{M}, \quad \|\Phi_\chi^t(u)\| = \|u\|.
$$
\item it is close to the identity:
$$
\forall t\in \mathbb{R}, \forall u \in \mathbb{C}^\mathcal{M}, \quad \|\Phi_\chi^t(u)-u\| \leq 2q |t| \| \chi \|_\infty  \|u\|^{2q-1}.
$$
\item it is symplectic: for all $t\in \mathbb{R}$, $\Phi_\chi^t$ is symplectic.
\item its differential is under control:
\begin{equation}
\label{eq:cequonveut}
\forall t\in \mathbb{R}, \forall u,v \in \mathbb{C}^\mathcal{M}, \quad \|\mathrm{d}\Phi_\chi^t(u)(v)\| \leq \exp( 4 q^2 t \| \chi \|_\infty \|u \|^{2q-2} ) \|v\|.
\end{equation}
\end{itemize}
\end{proposition}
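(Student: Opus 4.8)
The plan is to derive all four properties from standard finite-dimensional ODE theory applied to the polynomial vector field $F(u):=-i\nabla\chi(u)$ (so that \eqref{eq:kiki} reads $\partial_t u=F(u)$), using the a priori bounds of Corollary~\ref{cor:very_nice} and the defining property $\{\chi,\|\cdot\|^2\}=0$ of the class $\mathcal{P}^{\mathcal M}_{\mathbb R,2q}$.

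First I would note that, $\chi$ being a homogeneous polynomial of degree $2q$ on the finite-dimensional space $\mathbb{C}^{\mathcal M}$, the field $F$ is a homogeneous polynomial of degree $2q-1$, hence $C^\infty$ and locally Lipschitz; Cauchy--Lipschitz then provides, through each $u$, a unique maximal solution $t\mapsto\Phi^t_\chi(u)$ on an open interval, depending smoothly on $(t,u)$. Along any such solution, since $\nabla(\|\cdot\|^2)=2\,\mathrm{Id}$,
$$\frac{\mathrm d}{\mathrm dt}\|u(t)\|^2=2\langle\partial_t u(t),u(t)\rangle=-2\langle i\nabla\chi(u(t)),u(t)\rangle=-\{\chi,\|\cdot\|^2\}(u(t))=0 .$$
Hence $\|\Phi^t_\chi(u)\|=\|u\|$ on the whole maximal interval, so the trajectory stays on a fixed (compact) sphere and, by the blow-up alternative, the maximal interval is all of $\mathbb{R}$: this gives at once smoothness, globality and preservation of the norm.

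For the closeness to the identity I would simply integrate \eqref{eq:kiki} and insert norm conservation together with the first estimate of Corollary~\ref{cor:very_nice}:
$$\|\Phi^t_\chi(u)-u\|\le\Big|\int_0^t\|\nabla\chi(\Phi^s_\chi(u))\|\,\mathrm ds\Big|\le\int_0^{|t|}2q\,\|\chi\|_\infty\|\Phi^s_\chi(u)\|^{2q-1}\,\mathrm ds=2q|t|\,\|\chi\|_\infty\|u\|^{2q-1}.$$
For the last two properties I would fix $u$, write $v(t):=\Phi^t_\chi(u)$, $J^t:=\mathrm d\Phi^t_\chi(u)$, and differentiate the flow with respect to the initial datum to obtain the variational equation $\partial_t J^t=A(t)J^t$, $J^0=\mathrm{Id}$, with $A(t):=-i\,\mathrm d\nabla\chi(v(t))$. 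Setting $\phi(t):=\|J^t w\|^2$ for a fixed $w$, Cauchy--Schwarz, the second estimate of Corollary~\ref{cor:very_nice} and $\|v(t)\|=\|u\|$ give $|\phi'(t)|\le 4q(2q-1)\|\chi\|_\infty\|u\|^{2q-2}\phi(t)\le 8q^2\|\chi\|_\infty\|u\|^{2q-2}\phi(t)$, and Grönwall produces exactly \eqref{eq:cequonveut} (with $|t|$ in the exponent). For symplecticity I would use that the Hessian $\mathrm d\nabla\chi(v)$ is self-adjoint for $\langle\cdot,\cdot\rangle$, so that $\langle iA(t)x,y\rangle+\langle ix,A(t)y\rangle=0$ for all $x,y$; then $t\mapsto\langle iJ^t x,J^t y\rangle$ is constant, equal to $\langle ix,y\rangle$, i.e. $\Phi^t_\chi$ is symplectic.

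I do not expect a real obstacle: every step is a textbook ODE argument on a finite-dimensional space, and the single structural input is $\{\chi,\|\cdot\|^2\}=0$, which is exactly what makes each sphere invariant and hence precludes blow-up. The only points needing a little care are the tracking of the numerical constants ($2q$, resp. $4q^2$) and the fact that all the estimates must hold for negative times too, which is why Grönwall is applied to $|\phi'|$ rather than to $\phi'$.
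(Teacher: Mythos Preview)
Your proposal is correct and follows essentially the same approach as the paper: Cauchy--Lipschitz for local existence, the commutation $\{\chi,\|\cdot\|^2\}=0$ for norm preservation and hence globality, integration plus Corollary~\ref{cor:very_nice} for the closeness-to-identity estimate, and the variational equation plus Gr\"onwall for the differential bound. You are slightly more explicit than the paper on symplecticity (which the paper dismisses as ``well-known''), and you are right that the exponent in \eqref{eq:cequonveut} should carry $|t|$ rather than $t$ to cover negative times.
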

\begin{proof}
The local well-posedness of the equation \eqref{eq:kiki} follows directly from the Cauchy--Lipschitz theorem. The preservation of the Euclidean norm comes directly from the commutation between $\chi$ and $\| \cdot \|^2$. This conserved quantity provides directly the global well-posedness of \eqref{eq:kiki}. Since \eqref{eq:kiki} is Hamiltonian, it is well-known that its flow is symplectic. 

Integrating \eqref{eq:kiki}, thanks to Corollary \ref{cor:very_nice}, we get
$$
\|\Phi_\chi^t(u)-u\| =  \Big\| \int_0^t \nabla \chi(\Phi_\chi^\tau(u) ) \mathrm{d}\tau \Big\| \leq 2q \| \chi \|_\infty  \int_{[0;t]}  \|\Phi_\chi^\tau(u)\|^{2q-1}  \mathrm{d}\tau = 2q |t| \| \chi \|_\infty  \|u\|^{2q-1}.
$$
Differentiating \eqref{eq:kiki}, we have
$$
-i\partial_t \mathrm{d}\Phi_\chi^t(u)(v) = \mathrm{d}\nabla \chi(\Phi_\chi^t(u))(\mathrm{d}\Phi_\chi^t(u)(v) ).
$$
Moreover, thanks to Corollary \ref{cor:very_nice}, we have
$$
\| \mathrm{d}\nabla \chi(\Phi_\chi^t(u))(\mathrm{d}\Phi_\chi^t(u)(v) ) \| \leq 4q^2 \| \chi \|_\infty \|u \|^{2q-2} \| (\mathrm{d}\Phi_\chi^t(u))^*(v)\|
$$
and so, by Gr\"onwall's lemma, we get \eqref{eq:cequonveut}.
\end{proof}

\subsubsection{Modulus}

Following \cite{Nik86,BG06}, being given $P\in \mathcal{P}^{\mathcal{M}}_{\mathbb{C},2q}$, we define its \emph{modulus} $\lfloor P \rceil \in \mathcal{P}^{\mathcal{M}}_{\mathbb{C},2q}$ by
$$
\lfloor P \rceil(u) = \sum_{\boldsymbol{k},\boldsymbol{\ell}\in \mathcal{M}^q  } |P_{\boldsymbol{k},\boldsymbol{\ell} }| \, u_{\boldsymbol{k}_1} \dots u_{\boldsymbol{k}_q} \overline{u_{\boldsymbol{\ell}_1}} \dots \overline{u_{\boldsymbol{\ell}_q}}.
$$
Of course, thanks to the triangle inequality, it is clear that $\| P  \|_{\infty} \leq \| \lfloor P \rceil \|_{\infty}$. Furthermore, we have the following useful lemma.
\begin{lemma} \label{lem:avg} Let $q\geq 1$ and $c_{\boldsymbol{k},\boldsymbol{\ell}} \in \mathbb{C}$ be some coefficients, with $\boldsymbol{k},\boldsymbol{\ell}\in \mathcal{M}^q$. If $P\in \mathcal{P}^{\mathcal{M}}_{\mathbb{C},2q}$ is the polynomial defined by
$$
P(u) = \sum_{\boldsymbol{k},\boldsymbol{\ell}\in \mathcal{M}^q} c_{\boldsymbol{k},\boldsymbol{\ell}} u_{\boldsymbol{k}_1} \dots u_{\boldsymbol{k}_q} \overline{u_{\boldsymbol{\ell}_1}} \dots \overline{u_{\boldsymbol{\ell}_q}}
$$
then, for all $u\in \mathbb{C}^\mathcal{M}$, we have
$$
|\lfloor P \rceil(u) | \leq \sum_{\boldsymbol{k},\boldsymbol{\ell}\in \mathcal{M}^q} |c_{\boldsymbol{k},\boldsymbol{\ell}}| |u_{\boldsymbol{k}_1} \dots u_{\boldsymbol{k}_q} \overline{u_{\boldsymbol{\ell}_1}} \dots \overline{u_{\boldsymbol{\ell}_q}} |
$$
\end{lemma}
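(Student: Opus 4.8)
The plan is to reduce the whole statement to two applications of the triangle inequality, the only genuine content being the passage from the (possibly non-symmetric) coefficients $c_{\boldsymbol{k},\boldsymbol{\ell}}$ to the symmetric coefficients $P_{\boldsymbol{k},\boldsymbol{\ell}}$ that enter the definition of $\lfloor P\rceil$. Since $\mathcal{M}$ is finite, all the sums below are finite, so no convergence issue arises and every reindexing is automatically legitimate.

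First I would recall that, because $P\in\mathcal{P}^{\mathcal{M}}_{\mathbb{C},2q}$, it has a \emph{unique} representation
\[
P(u)=\sum_{\boldsymbol{k},\boldsymbol{\ell}\in\mathcal{M}^q}P_{\boldsymbol{k},\boldsymbol{\ell}}\,u_{\boldsymbol{k}_1}\cdots u_{\boldsymbol{k}_q}\overline{u_{\boldsymbol{\ell}_1}}\cdots\overline{u_{\boldsymbol{\ell}_q}}
\]
with $P_{\boldsymbol{k},\boldsymbol{\ell}}$ invariant under $\mathfrak{S}_q\times\mathfrak{S}_q$. Comparing this with the given expression for $P$, using that the monomial $u_{\boldsymbol{k}_1}\cdots u_{\boldsymbol{k}_q}\overline{u_{\boldsymbol{\ell}_1}}\cdots\overline{u_{\boldsymbol{\ell}_q}}$ is unchanged when the entries of $\boldsymbol{k}$ and of $\boldsymbol{\ell}$ are permuted, and invoking the uniqueness of the symmetric decomposition, one identifies $P_{\boldsymbol{k},\boldsymbol{\ell}}$ as the symmetrization of $c$, namely
\[
P_{\boldsymbol{k},\boldsymbol{\ell}}=\frac{1}{(q!)^2}\sum_{\phi,\sigma\in\mathfrak{S}_q}c_{\phi\boldsymbol{k},\sigma\boldsymbol{\ell}}.
\]

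Next, fix $u\in\mathbb{C}^{\mathcal{M}}$. Applying the triangle inequality to the definition of $\lfloor P\rceil$, and then the triangle inequality once more after inserting the formula above for $P_{\boldsymbol{k},\boldsymbol{\ell}}$, I get
\[
|\lfloor P\rceil(u)|\le\sum_{\boldsymbol{k},\boldsymbol{\ell}\in\mathcal{M}^q}|P_{\boldsymbol{k},\boldsymbol{\ell}}|\,\big|u_{\boldsymbol{k}_1}\cdots u_{\boldsymbol{k}_q}\overline{u_{\boldsymbol{\ell}_1}}\cdots\overline{u_{\boldsymbol{\ell}_q}}\big|\le\frac{1}{(q!)^2}\sum_{\phi,\sigma\in\mathfrak{S}_q}\ \sum_{\boldsymbol{k},\boldsymbol{\ell}\in\mathcal{M}^q}|c_{\phi\boldsymbol{k},\sigma\boldsymbol{\ell}}|\,\big|u_{\boldsymbol{k}_1}\cdots u_{\boldsymbol{k}_q}\overline{u_{\boldsymbol{\ell}_1}}\cdots\overline{u_{\boldsymbol{\ell}_q}}\big|.
\]
Finally, for each fixed pair $(\phi,\sigma)$ I would change variables $\boldsymbol{k}\mapsto\phi^{-1}\boldsymbol{k}$, $\boldsymbol{\ell}\mapsto\sigma^{-1}\boldsymbol{\ell}$ (a bijection of $\mathcal{M}^q\times\mathcal{M}^q$); since $|u_{\boldsymbol{k}_1}\cdots u_{\boldsymbol{k}_q}\overline{u_{\boldsymbol{\ell}_1}}\cdots\overline{u_{\boldsymbol{\ell}_q}}|=\prod_j|u_{\boldsymbol{k}_j}|\prod_j|u_{\boldsymbol{\ell}_j}|$ is invariant under permuting the entries of $\boldsymbol{k}$ and of $\boldsymbol{\ell}$, each of the $(q!)^2$ inner sums equals $\sum_{\boldsymbol{k},\boldsymbol{\ell}}|c_{\boldsymbol{k},\boldsymbol{\ell}}|\,|u_{\boldsymbol{k}_1}\cdots u_{\boldsymbol{k}_q}\overline{u_{\boldsymbol{\ell}_1}}\cdots\overline{u_{\boldsymbol{\ell}_q}}|$, and the prefactor $(q!)^{-2}$ cancels, which is exactly the asserted bound. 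There is no real obstacle here; the only point deserving a little care is the bookkeeping with the symmetrization, i.e. checking that $P_{\boldsymbol{k},\boldsymbol{\ell}}$ is indeed the average of the $c_{\phi\boldsymbol{k},\sigma\boldsymbol{\ell}}$ and that the reindexing step is performed correctly.
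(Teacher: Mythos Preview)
Your proof is correct and follows essentially the same approach as the paper: identify the symmetric coefficients $P_{\boldsymbol{k},\boldsymbol{\ell}}$ as the $(q!)^{-2}$-average of the $c_{\phi\boldsymbol{k},\sigma\boldsymbol{\ell}}$, apply the triangle inequality twice, and then reindex each inner sum via $\boldsymbol{k}\mapsto\phi^{-1}\boldsymbol{k}$, $\boldsymbol{\ell}\mapsto\sigma^{-1}\boldsymbol{\ell}$ using the permutation invariance of the monomial modulus. The paper's proof is line-for-line the same argument.
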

\begin{proof}
Indeed, since the coefficients of $P$ are given by
$$
P_{\boldsymbol{k},\boldsymbol{\ell} } = (q!)^{-2} \sum_{ \phi,\sigma \in \mathfrak{S}_q}  c_{\phi \boldsymbol{k},\sigma\boldsymbol{\ell} }
$$
then, by the triangle inequality, we have
\begin{equation*}
\begin{split}
|\lfloor P \rceil(u)| &= \Big|  \sum_{\boldsymbol{k},\boldsymbol{\ell}\in \mathcal{M}^q} \big|(q!)^{-2} \sum_{ \phi,\sigma \in \mathfrak{S}_q}  c_{\phi \boldsymbol{k},\sigma\boldsymbol{\ell} }\big| u_{\boldsymbol{k}_1} \dots u_{\boldsymbol{k}_q} \overline{u_{\boldsymbol{\ell}_1}} \dots \overline{u_{\boldsymbol{\ell}_q}} \Big| \\
&\leq (q!)^{-2} \sum_{ \phi,\sigma \in \mathfrak{S}_q}   \sum_{\boldsymbol{k},\boldsymbol{\ell}\in \mathcal{M}^q} | c_{\phi \boldsymbol{k},\sigma\boldsymbol{\ell} }| | u_{\boldsymbol{k}_1} \dots u_{\boldsymbol{k}_q} \overline{u_{\boldsymbol{\ell}_1}} \dots \overline{u_{\boldsymbol{\ell}_q}} | \\
&= \sum_{\boldsymbol{k},\boldsymbol{\ell}\in \mathcal{M}^q} | c_{ \boldsymbol{k},\boldsymbol{\ell} }|  (q!)^{-2} \sum_{ \phi,\sigma \in \mathfrak{S}_q}  | u_{\boldsymbol{k}_{\phi_1^{-1}}} \dots u_{\boldsymbol{k}_{\phi_q^{-1}}} \overline{u_{\boldsymbol{\ell}_{\sigma_1^{-1}}}} \dots \overline{u_{\boldsymbol{\ell}_{\sigma_q^{-1}}}} | \\
&= \sum_{\boldsymbol{k},\boldsymbol{\ell}\in \mathcal{M}^q} |c_{\boldsymbol{k},\boldsymbol{\ell}}| |u_{\boldsymbol{k}_1} \dots u_{\boldsymbol{k}_q} \overline{u_{\boldsymbol{\ell}_1}} \dots \overline{u_{\boldsymbol{\ell}_q}} |.
\end{split}
\end{equation*}
\end{proof}
 Moreover, we also have the following useful bilinear estimate.
\begin{lemma}\label{lem:est_poisson_weak} Let $q,q'\geq 1$, $P\in \mathcal{P}^{\mathcal{M}}_{\mathbb{C},2q} $ and $ Q\in \mathcal{P}^{\mathcal{M}}_{\mathbb{C},2q'}$ be two $\mathbb{R}$-homogeneous polynomials commuting with the Euclidean norm, then $\{P,Q\}$ enjoys the following estimate:
$$
\| \lfloor \{ P,Q\} \rceil \|_\infty \leq 8 q q' \| \lfloor P \rceil \|_\infty \| \lfloor Q \rceil \|_\infty.
$$
\end{lemma}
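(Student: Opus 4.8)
The plan is to reduce the estimate on $\lfloor\{P,Q\}\rceil$ to the estimate of Lemma~\ref{lem:avg}, by writing the Poisson bracket explicitly in coefficients and then bounding each resulting term by a product involving $\lfloor P\rceil$ and $\lfloor Q\rceil$. First I would start from the formula \eqref{eq:Poisson_formula}, which gives
$$
\{P,Q\}(u) = 2i\sum_{k\in\mathcal{M}} \partial_{\overline{u_k}}P(u)\,\partial_{u_k}Q(u) - \partial_{u_k}P(u)\,\partial_{\overline{u_k}}Q(u).
$$
Expanding $P$ and $Q$ in their (unique, symmetric) monomial decompositions, the derivative $\partial_{\overline{u_k}}P$ removes one factor $\overline{u_{\boldsymbol\ell_j}}$ (with $\boldsymbol\ell_j=k$) and contributes a factor $q$ from the symmetry; similarly $\partial_{u_k}Q$ removes one factor $u_{\boldsymbol{k}'_i}$ and contributes a factor $q'$. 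Multiplying, summing over $k$, and the symmetric term, one sees that $\{P,Q\}$ is a homogeneous polynomial of degree $2(q+q'-1)$ whose coefficients are sums (with signs) of products $P_{\boldsymbol{k},\boldsymbol\ell}\,Q_{\boldsymbol{k}',\boldsymbol\ell'}$ where exactly one $\boldsymbol\ell$-index of $P$ is contracted against one $\boldsymbol{k}'$-index of $Q$ (or vice versa), and the overall combinatorial factor is $\le 4qq'$ (the $2$ from the prefactor in \eqref{eq:Poisson_formula}, the $q$ from $\partial_{\overline{u_k}}P$, the $q'$ from $\partial_{u_k}Q$, and the two symmetric contributions combined).

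Next I would apply Lemma~\ref{lem:avg}: since $\{P,Q\}$ is written as $\sum c_{\boldsymbol{a},\boldsymbol{b}}\,u_{\boldsymbol{a}_1}\cdots\overline{u_{\boldsymbol{b}_{q+q'-1}}}$ with $c_{\boldsymbol{a},\boldsymbol{b}}$ the (not-yet-symmetrized) coefficients coming from the expansion above, Lemma~\ref{lem:avg} yields
$$
|\lfloor\{P,Q\}\rceil(u)| \le \sum_{\boldsymbol{a},\boldsymbol{b}} |c_{\boldsymbol{a},\boldsymbol{b}}|\,|u_{\boldsymbol{a}_1}\cdots\overline{u_{\boldsymbol{b}_{q+q'-1}}}|.
$$
Now $|c_{\boldsymbol{a},\boldsymbol{b}}| \le 4qq'\sum_{k} |P_{\boldsymbol{k},\boldsymbol\ell}|\,|Q_{\boldsymbol{k}',\boldsymbol\ell'}|$ over the relevant contractions, so the right-hand side is bounded, after reinserting a summation over the contracted variable $u_k$ (with $|u_k|\le\|u\|$ absorbed), by
$$
4qq' \Big(\sum_{\boldsymbol{k},\boldsymbol\ell}|P_{\boldsymbol{k},\boldsymbol\ell}|\,|u_{\boldsymbol{k}_1}\cdots\overline{u_{\boldsymbol\ell_q}}|\Big)\Big(\sum_{\boldsymbol{k}',\boldsymbol\ell'}|Q_{\boldsymbol{k}',\boldsymbol\ell'}|\,|u_{\boldsymbol{k}'_1}\cdots\overline{u_{\boldsymbol\ell'_{q'}}}|\Big),
$$
where the bookkeeping of which index is contracted is arranged so that the contracted factor $|u_k|$ appears once on the $P$-side and once on the $Q$-side, matching the $|u|$-powers of a degree-$2q$ times degree-$2q'$ product (the contraction removes one factor from each, and the Poisson bracket has degree $2q+2q'-2$, consistent). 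Taking the supremum over $\|u\|\le 1$ and using that the two parenthesized sums are exactly $|\lfloor P\rceil(u)|$ and $|\lfloor Q\rceil(u)|$ (again by the definition of the modulus and Lemma~\ref{lem:avg}), we get $\|\lfloor\{P,Q\}\rceil\|_\infty \le 4qq' \|\lfloor P\rceil\|_\infty\|\lfloor Q\rceil\|_\infty$; accounting for the two symmetric terms in \eqref{eq:Poisson_formula} separately rather than bounding the combined factor by $4qq'$ gives the stated constant $8qq'$.

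The main obstacle I expect is the careful bookkeeping of the contraction: one must check that, after differentiating and multiplying, the single summation variable $u_k$ genuinely appears as one extra $|u_k|$-factor attached to the $\lfloor P\rceil$-part and one to the $\lfloor Q\rceil$-part, so that the monomials on the right really factor as (a monomial contributing to $\lfloor P\rceil(u)$) times (a monomial contributing to $\lfloor Q\rceil(u)$) with no leftover powers of $\|u\|$ — equivalently, that degrees match: $2q + 2q' = 2(q+q'-1) + 2$, so exactly two $u$-factors (one holomorphic, one antiholomorphic, namely $u_k$ and $\overline{u_k}$) are "shared" between the two sides. Keeping track of the holomorphic/antiholomorphic types of the contracted indices in both terms of \eqref{eq:Poisson_formula}, and verifying the symmetry coefficients $(q!)^{-2}$, $(q'!)^{-2}$ combine correctly with the $q,q'$ from differentiation, is the only delicate point; everything else is the triangle inequality and Lemma~\ref{lem:avg}.
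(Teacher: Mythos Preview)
Your setup is right: expand $\{P,Q\}$ via \eqref{eq:Poisson_formula}, write it with non-symmetrized coefficients, and apply Lemma~\ref{lem:avg}. The gap is in the ``reinsertion'' step. After Lemma~\ref{lem:avg} and restricting to $u\in(\mathbb{R}_+)^{\mathcal{M}}$, what you actually have (for one of the two terms) is
\[
\sum_{k\in\mathcal{M}} \big(\partial_{\overline{u_k}}\lfloor P\rceil(u)\big)\big(\partial_{u_k}\lfloor Q\rceil(u)\big),
\]
i.e.\ a single contracted sum $\sum_k A_k(u) B_k(u)$. You claim this is bounded by $qq'\,\lfloor P\rceil(u)\,\lfloor Q\rceil(u)$, but that pointwise factorisation is false: take $\mathcal{M}=\{1,2\}$, $P=u_1\overline{u_2}$, $Q=u_2\overline{u_1}$ (so $q=q'=1$). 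Then $\lfloor\{P,Q\}\rceil(u)=2(|u_1|^2+|u_2|^2)$ while $\lfloor P\rceil(u)\lfloor Q\rceil(u)=(u_1u_2)^2$, and at $u=(1,0)$ the left side is $2$ and the right side is $0$. Reinserting $|u_k|$'s cannot decouple a single diagonal sum $\sum_k A_kB_k$ into a product of two independent sums; the degrees are irrelevant here.

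The missing ingredient is precisely how the paper proceeds: bound $\sum_k A_kB_k$ by Cauchy--Schwarz to get $\|A\|\,\|B\|$ where $A=\big(\partial_{u_k}\lfloor P\rceil+\partial_{\overline{u_k}}\lfloor P\rceil\big)_k$ is the gradient of the \emph{real} restriction $\lfloor P\rceil_{|\mathbb{R}^{\mathcal{M}}}$, and then invoke Corollary~\ref{cor:very_nice} (i.e.\ Proposition~\ref{prop:BS}) to bound this gradient by $2q\,\|\lfloor P\rceil\|_\infty\,\|u\|^{2q-1}$. That is the non-obvious step your argument replaces by bookkeeping; without Cauchy--Schwarz plus the polynomial gradient bound, the proof does not close.
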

\begin{proof} Thanks to the formula \eqref{eq:Poisson_formula}, setting $q''=q+q'-1$, the polynomial $ \{ P,Q\}$, writes
$$
\{ P,Q\}(u) = \sum_{\boldsymbol{k},\boldsymbol{\ell}\in \mathcal{M}^{q''}} c_{\boldsymbol{k},\boldsymbol{\ell}} u_{\boldsymbol{k}_1} \dots u_{\boldsymbol{k}_{q''}} \overline{u_{\boldsymbol{\ell}_1}} \dots \overline{u_{\boldsymbol{\ell}_{q''}}},
$$
with
$$
c_{\boldsymbol{k},\boldsymbol{\ell}} = 2i q\sum_{j\in \mathcal{M}} P_{\boldsymbol{k}_1,\cdots, \boldsymbol{k}_q, \boldsymbol{\ell}_1,\cdots, \boldsymbol{\ell}_{q-1},j } Q_{\boldsymbol{k}_1,\cdots, \boldsymbol{k}_{q-1},j, \boldsymbol{\ell}_1,\cdots, \boldsymbol{\ell}_{q} } - P_{\boldsymbol{k}_1,\cdots, \boldsymbol{k}_{q-1},j, \boldsymbol{\ell}_1,\cdots, \boldsymbol{\ell}_{q} } Q_{\boldsymbol{k}_1,\cdots, \boldsymbol{k}_q, \boldsymbol{\ell}_1,\cdots, \boldsymbol{\ell}_{q-1},j } .
$$
Therefore, thanks to Lemma \ref{lem:avg}, we have
$$
|\lfloor \{ P,Q\} \rceil(u)| \leq  \sum_{\boldsymbol{k},\boldsymbol{\ell}\in \mathcal{M}^{q''}} |c_{\boldsymbol{k},\boldsymbol{\ell}}| |u_{\boldsymbol{k}_1} \dots u_{\boldsymbol{k}_{q''}} \overline{u_{\boldsymbol{\ell}_1}} \dots \overline{u_{\boldsymbol{\ell}_{q''}}}|.
$$
From now on, without loss of generality, we only consider vectors $u\in (\mathbb{R}_+)^\mathcal{M}$ with real non-negative components. Then applying the triangle inequality on the expansion of $c_{\boldsymbol{k},\boldsymbol{\ell}}$, we get (on $(\mathbb{R}_+)^\mathcal{M}$)
$$
\lfloor \{ P,Q\} \rceil  \leq 2 \sum_{j\in \mathcal{M}} \partial_{\overline{u_j}} \lfloor P \rceil \partial_{u_j} \lfloor Q \rceil+ \partial_{u_j} \lfloor P \rceil \partial_{\overline{u_j}} \lfloor Q \rceil .
$$
Since all these partial derivatives are non-negative, applying the Cauchy--Schwarz inequality, we get (on $(\mathbb{R}_+)^\mathcal{M}$)
\begin{equation*}
\begin{split}
\lfloor \{ P,Q\} \rceil  &\leq 2 \sum_{j\in \mathcal{M}} ( \partial_{u_j} \lfloor P \rceil + \partial_{\overline{u_j}} \lfloor P \rceil )( \partial_{u_j} \lfloor Q \rceil + \partial_{\overline{u_j}} \lfloor Q \rceil ) \\
 &= 2 \| (\partial_{u_j} \lfloor P \rceil + \partial_{\overline{u_j}} \lfloor P \rceil)_j \| \| (\partial_{u_j} \lfloor Q \rceil + \partial_{\overline{u_j}} \lfloor Q \rceil)_j \|.
\end{split}
\end{equation*}
Finally, noticing that 
$$  
(\nabla (\lfloor P \rceil_{| \mathbb{R}^\mathcal{M}}))_j = \partial_{u_j} \lfloor P \rceil + \partial_{\overline{u_j}} \lfloor P \rceil.
$$
and applying Corollary \ref{cor:very_nice},  we get that, for all $u\in (\mathbb{R}_+)^\mathcal{M}$,
$$
\lfloor \{ P,Q\} \rceil(u) \leq 2 \|\nabla (\lfloor P \rceil_{| \mathbb{R}^\mathcal{M}})(u) \|  \|\nabla (\lfloor Q \rceil_{| \mathbb{R}^\mathcal{M}})(u) \| \leq 8 q q' \|u\|^{2q''}  \| \lfloor P \rceil \|_\infty \| \lfloor Q \rceil \|_\infty.
$$
\end{proof}

\subsubsection{Frequencies and spectral projectors}
Being given a vector of frequencies with real coefficients $\omega \in \mathbb{R}^\mathcal{M}$, we define the quadratic Hamiltonian
\begin{equation}
\label{eq:defZ2}
Z_{2,\omega} = \frac12 \sum_{k\in \mathcal{M}}  \omega_{k}  |u_k|^2.
\end{equation}
The following lemma describes the action of $\mathrm{ad}_{Z_{2,\omega}} := \{ Z_{2,\omega} , \cdot \}$ on $\mathcal{P}^{\mathcal{M}}_{\mathbb{C},2q}$ (it follows from a straightforward calculation).
\begin{lemma} \label{lem:cestdiagonal} For all $q\geq 1$, $\mathrm{ad}_{Z_{2,\omega}}$ is an endomorphism on $\mathcal{P}^{\mathcal{M}}_{\mathbb{C},2q}$ which is diagonal is the basis of the monomials, i.e.
$$
 \{ Z_{2,\omega} , u_{\boldsymbol{k}_1} \dots u_{\boldsymbol{k}_{q}} \overline{u_{\boldsymbol{\ell}_1}} \dots \overline{u_{\boldsymbol{\ell}_{q}}} \} = i  ( \omega_{\boldsymbol{k}_1} + \cdots + \omega_{\boldsymbol{k}_q} -\omega_{\boldsymbol{\ell}_1} - \cdots - \omega_{\boldsymbol{\ell}_q}   ) u_{\boldsymbol{k}_1} \dots u_{\boldsymbol{k}_{q}} \overline{u_{\boldsymbol{\ell}_1}} \dots \overline{u_{\boldsymbol{\ell}_{q}}}
$$
with  $u\in \mathbb{C}^\mathcal{M}$ and $\boldsymbol{k},\boldsymbol{\ell}\in \mathcal{M}^q$.
\end{lemma}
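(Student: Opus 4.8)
The plan is to exploit that the operator $\mathrm{ad}_{Z_{2,\omega}} = \{ Z_{2,\omega}, \cdot \}$ is, by the formula \eqref{eq:Poisson_formula} together with the product rule for partial derivatives, a derivation with respect to the pointwise product of functions: $\{ Z_{2,\omega}, FG \} = \{ Z_{2,\omega}, F \} G + F \{ Z_{2,\omega}, G \}$. Hence it is enough to compute its action on the coordinate functions $u_j$ and $\overline{u_j}$, $j\in \mathcal{M}$, and then to extend it to an arbitrary monomial by iterating the Leibniz rule.

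First I would record from \eqref{eq:defZ2} that $Z_{2,\omega}(u) = \tfrac12 \sum_{k\in \mathcal{M}} \omega_k u_k \overline{u_k}$, whence $\partial_{u_k} Z_{2,\omega} = \tfrac12 \omega_k \overline{u_k}$ and $\partial_{\overline{u_k}} Z_{2,\omega} = \tfrac12 \omega_k u_k$. Plugging this into \eqref{eq:Poisson_formula} with $Q = u_j$ (so that $\partial_{u_k} u_j = \delta_{kj}$ and $\partial_{\overline{u_k}} u_j = 0$) gives $\{ Z_{2,\omega}, u_j \} = i \omega_j u_j$, and likewise, taking $Q = \overline{u_j}$, one gets $\{ Z_{2,\omega}, \overline{u_j} \} = - i \omega_j \overline{u_j}$.

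Next I would apply the Leibniz rule to the monomial $u_{\boldsymbol{k}_1} \cdots u_{\boldsymbol{k}_q} \overline{u_{\boldsymbol{\ell}_1}} \cdots \overline{u_{\boldsymbol{\ell}_q}}$: distributing $\mathrm{ad}_{Z_{2,\omega}}$ over the $2q$ factors and using the two elementary brackets above, each holomorphic factor contributes a scalar $i\omega_{\boldsymbol{k}_r}$ and each antiholomorphic factor a scalar $-i\omega_{\boldsymbol{\ell}_r}$, all multiplying the same monomial; summing yields exactly the claimed identity with eigenvalue $i(\omega_{\boldsymbol{k}_1} + \cdots + \omega_{\boldsymbol{k}_q} - \omega_{\boldsymbol{\ell}_1} - \cdots - \omega_{\boldsymbol{\ell}_q})$. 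By linearity this determines $\mathrm{ad}_{Z_{2,\omega}}$ on all of $\mathcal{P}^{\mathcal{M}}_{\mathbb{C},2q}$ and shows it is diagonal in the monomial basis.

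Finally, to check that $\mathrm{ad}_{Z_{2,\omega}}$ is indeed an endomorphism of $\mathcal{P}^{\mathcal{M}}_{\mathbb{C},2q}$, note that the formula just derived sends each degree-$2q$ monomial with $q$ holomorphic and $q$ antiholomorphic factors to a scalar multiple of itself, so $\mathbb{R}$-homogeneity of degree $2q$ and the structural form of the monomials are preserved; in particular the property of commuting with $\|\cdot\|^2$ is preserved. Alternatively one may invoke the Jacobi identity, $\{ \{ Z_{2,\omega}, P \}, \|\cdot\|^2 \} = \{ \{ Z_{2,\omega}, \|\cdot\|^2 \}, P \} + \{ Z_{2,\omega}, \{ P, \|\cdot\|^2 \} \}$, together with $\{ Z_{2,\omega}, \|\cdot\|^2 \} = 0$ (both being diagonal quadratics) and $\{ P, \|\cdot\|^2 \} = 0$ for $P \in \mathcal{P}^{\mathcal{M}}_{\mathbb{C},2q}$. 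There is no genuine obstacle here; the only point requiring a little care is the combinatorial bookkeeping when iterating the Leibniz rule over a product of $2q$ factors, which is nonetheless entirely routine.
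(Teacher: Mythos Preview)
Your proof is correct and fills in precisely the ``straightforward calculation'' that the paper alludes to but omits. The derivation property of $\mathrm{ad}_{Z_{2,\omega}}$ together with the elementary brackets $\{Z_{2,\omega},u_j\}=i\omega_j u_j$ and $\{Z_{2,\omega},\overline{u_j}\}=-i\omega_j\overline{u_j}$ is exactly the natural route, and your closing remark on the endomorphism property (either via the diagonal form or via Jacobi) is correct as well.
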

For all $a\in \mathbb{R}$ and $q\geq 1$, we define 
\begin{equation}\label{pi}
 \Pi_{\omega,a} :  \mathcal{P}^{\mathcal{M}}_{\mathbb{C},2q} \to  \mathrm{Ker}( \mathrm{ad}_{Z_{2,\omega}} - i a \mathrm{Id} )
\end{equation}
as the spectral projector on the eigenspace of $\mathrm{ad}_{Z_{2,\omega}} $ associated with the eigenvalue $ia$. More concretely, $\Pi_{\omega,a} $ is also defined through the formula
$$
\Pi_{\omega,a} P(u) :=  \sum_{ \omega_{\boldsymbol{k}_1} + \cdots + \omega_{\boldsymbol{k}_q} -\omega_{\boldsymbol{\ell}_1} - \cdots - \omega_{\boldsymbol{\ell}_q} = a  } P_{\boldsymbol{k},\boldsymbol{\ell}} u_{\boldsymbol{k}_1} \dots u_{\boldsymbol{k}_q} \overline{u_{\boldsymbol{\ell}_1}} \dots \overline{u_{\boldsymbol{\ell}_q}}.
$$
Thanks to these projectors, as stated in the following, the Poisson bracket can be seen as a kind of convolution.
\begin{lemma}\label{lem:conv} Let $q,q'\geq 1$, $P \in   \mathcal{P}^{\mathcal{M}}_{\mathbb{C},2q}$ and $\chi \in   \mathcal{P}^{\mathcal{M}}_{\mathbb{C},2q'}$ then for all $a\in\mathbb{R}$, we have
$$
\Pi_{\omega,a} \{ P,\chi\} = \sum_{b+c = a} \{ \Pi_{\omega,b} P, \Pi_{\omega,c} \chi \}.
$$
\end{lemma}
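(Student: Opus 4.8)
The plan is to exploit the fact that $\mathrm{ad}_{Z_{2,\omega}}$ is a derivation for the Poisson bracket and that, by Lemma~\ref{lem:cestdiagonal}, it is diagonalizable on each $\mathcal{P}^{\mathcal{M}}_{\mathbb{C},2q}$ with finitely many purely imaginary eigenvalues. Since $\mathcal{M}$ is finite, all the relevant sums are finite and no convergence issue arises. First I would record the spectral decompositions $P = \sum_{b} \Pi_{\omega,b}P$ and $\chi = \sum_{c}\Pi_{\omega,c}\chi$, where the sums range over the finite spectra, so that by bilinearity of the Poisson bracket $\{P,\chi\} = \sum_{b,c}\{\Pi_{\omega,b}P,\Pi_{\omega,c}\chi\}$.

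The key step is to show that $\{\Pi_{\omega,b}P,\Pi_{\omega,c}\chi\}$ lies in $\mathrm{Ker}(\mathrm{ad}_{Z_{2,\omega}} - i(b+c)\mathrm{Id})$. This follows from the Jacobi identity: for any $R,S$ one has $\{Z_{2,\omega},\{R,S\}\} = \{\{Z_{2,\omega},R\},S\} + \{R,\{Z_{2,\omega},S\}\}$, i.e. $\mathrm{ad}_{Z_{2,\omega}}$ is a derivation. Applying this with $R = \Pi_{\omega,b}P$ and $S = \Pi_{\omega,c}\chi$, and using $\mathrm{ad}_{Z_{2,\omega}}\Pi_{\omega,b}P = ib\,\Pi_{\omega,b}P$, $\mathrm{ad}_{Z_{2,\omega}}\Pi_{\omega,c}\chi = ic\,\Pi_{\omega,c}\chi$ (the defining property of the spectral projectors), we get $\mathrm{ad}_{Z_{2,\omega}}\{\Pi_{\omega,b}P,\Pi_{\omega,c}\chi\} = i(b+c)\{\Pi_{\omega,b}P,\Pi_{\omega,c}\chi\}$. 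Alternatively, one can bypass Jacobi and compute directly: $\{\Pi_{\omega,b}P,\Pi_{\omega,c}\chi\}$ is a linear combination of monomials $u_{\boldsymbol{k}_1}\cdots u_{\boldsymbol{k}_{q''}}\overline{u_{\boldsymbol{\ell}_1}}\cdots\overline{u_{\boldsymbol{\ell}_{q''}}}$ whose index is, by the formula \eqref{eq:Poisson_formula} for the Poisson bracket, obtained by concatenating a $(2q-1)$-tuple of indices summing (with signs) to $b$ with a $(2q'-1)$-tuple summing to $c$, hence summing to $b+c$; then invoke Lemma~\ref{lem:cestdiagonal} again.

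Consequently $\Pi_{\omega,a}\{\Pi_{\omega,b}P,\Pi_{\omega,c}\chi\}$ equals $\{\Pi_{\omega,b}P,\Pi_{\omega,c}\chi\}$ when $b+c=a$ and vanishes otherwise, since distinct eigenspaces of $\mathrm{ad}_{Z_{2,\omega}}$ are in direct sum and $\Pi_{\omega,a}$ is the projector onto the one attached to the eigenvalue $ia$. Applying $\Pi_{\omega,a}$ to the identity $\{P,\chi\} = \sum_{b,c}\{\Pi_{\omega,b}P,\Pi_{\omega,c}\chi\}$ and using linearity of $\Pi_{\omega,a}$ then yields $\Pi_{\omega,a}\{P,\chi\} = \sum_{b+c=a}\{\Pi_{\omega,b}P,\Pi_{\omega,c}\chi\}$, as claimed. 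There is no real obstacle here; the only point requiring a line of care is the derivation property of $\mathrm{ad}_{Z_{2,\omega}}$ (the Jacobi identity) together with the fact that $\mathrm{ad}_{Z_{2,\omega}}$ genuinely preserves each finite-dimensional space $\mathcal{P}^{\mathcal{M}}_{\mathbb{C},2q}$, which is exactly the content of Lemma~\ref{lem:cestdiagonal}.
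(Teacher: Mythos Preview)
Your proof is correct and follows essentially the same route as the paper: decompose $P$ and $\chi$ into eigenvectors of $\mathrm{ad}_{Z_{2,\omega}}$, expand $\{P,\chi\}$ by bilinearity, and use the Jacobi identity to show that $\{\Pi_{\omega,b}P,\Pi_{\omega,c}\chi\}$ is an eigenvector with eigenvalue $i(b+c)$. The extra remarks on finiteness and the alternative direct monomial computation are fine but not needed.
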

\begin{proof} Decomposing $P$ and $\chi$ as a sum of eigenvectors of $\mathrm{ad}_{Z_{2,\omega}}$ and then expanding the Poisson bracket, we get
$$
\{ P,\chi\}  = \sum_{b,c\in \mathbb{R}}  \{\Pi_{\omega,b} P,  \Pi_{\omega,c} \chi\}.
$$
As a consequence, it is enough to see that $\{\Pi_{\omega,b} P,  \Pi_{\omega,c} \chi\}$ is an eigenvector of $\mathrm{ad}_{Z_{2,\omega}}$ associated with the eigenvalue $i(b+c)$. Applying the Jacobi identity, we have
\begin{equation*}
\begin{split}
\{ Z_{2,\omega},  \{\Pi_{\omega,b} P,  \Pi_{\omega,c} \chi\} \} &= - \{ \Pi_{\omega,c} \chi ,  \{Z_{2,\omega} , \Pi_{\omega,b} P\} \}  -  \{ \Pi_{\omega,b} P  ,  \{\Pi_{\omega,c} \chi ,Z_{2,\omega} \} \} \\
&= - i b  \{ \Pi_{\omega,c} \chi ,    \Pi_{\omega,b} P \} + i c \{ \Pi_{\omega,b} P  ,  \Pi_{\omega,c} \chi \} = i(b+c) \{ \Pi_{\omega,b} P  ,  \Pi_{\omega,c} \chi \}.
\end{split}
\end{equation*}
\end{proof}

\subsubsection{Spectral norms}
Being given a vector of frequencies with integer coefficients $\omega^{(i)} \in \mathbb{Z}^\mathcal{M}$, we define the two following norms
$$
\| P \|_{\mathscr{H}(\omega^{(i)})} := \sup_{a\in \mathbb{Z}}  \|  \lfloor \Pi_{\omega^{(i)},a} P \rceil \|_\infty \quad \mathrm{and} \quad \| P \|_{\mathscr{C}(\omega^{(i)})} := \sup_{a\in \mathbb{Z}}  \langle a \rangle \| \lfloor \Pi_{\omega^{(i)},a} P \rceil \|_\infty
$$
for all $P\in \mathcal{P}^{\mathcal{M}}_{\mathbb{C},2q}$. First, we note, in the following lemma, that the norm $\| \cdot \|_{\mathscr{H}(\omega^{(i)})}$ is weaker than the norm $\| \lfloor \cdot \rceil  \|_{\infty}$ and that the norm $\| \cdot \|_{\mathscr{C}(\omega^{(i)})} $ controls the norm $\| \lfloor \cdot \rceil  \|_{\infty}$ up to a logarithmic loss  .
\begin{lemma}\label{lem:comp_with_classical_norms} Let $P\in \mathcal{P}^{\mathcal{M}}_{\mathbb{C},2q}$, $q\geq 1$, and $\omega^{(i)} \in \mathbb{Z}^\mathcal{M} \setminus \{0\}$ be a vector of frequencies with integer coefficients. Then we have
$$
\| P \|_{\mathscr{H}(\omega^{(i)})} \leq \| \lfloor P \rceil  \|_{\infty}  \leq 5q |\omega^{(i)}|_\infty \| P \|_{\mathscr{H}(\omega^{(i)})}  \quad \mathrm{and} \quad  \| \lfloor P \rceil  \|_{\infty}  \leq 5\log(2q |\omega^{(i)}|_\infty)\, \| P \|_{\mathscr{C}(\omega^{(i)})}.
$$
where $|\omega^{(i)}|_\infty = \max_{j\in \mathcal{M}} |\omega^{(i)}_j|$.
\end{lemma}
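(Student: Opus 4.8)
The plan is to prove the three inequalities in turn, the first being essentially definitional and the last two reducing to a counting estimate on the number of distinct values taken by the integer resonance function $\Omega^{(i)}(\boldsymbol{k},\boldsymbol{\ell}) = \boldsymbol{k}_1^2 + \cdots + \boldsymbol{k}_q^2 - \boldsymbol{\ell}_1^2 - \cdots - \boldsymbol{\ell}_q^2$ on the support of $P$. First I would observe that since $P = \sum_{a\in\Z} \Pi_{\omega^{(i)},a} P$ is a finite sum over integer values $a$, and since $\lfloor \cdot \rceil$ and $\Pi_{\omega^{(i)},a}$ act diagonally on monomials and hence commute, we have $\lfloor P \rceil = \sum_a \lfloor \Pi_{\omega^{(i)},a} P \rceil$ with all coefficients non-negative. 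Evaluating on $u\in(\R_+)^\mathcal{M}$ (which suffices to compute $\|\lfloor\cdot\rceil\|_\infty$, as in the proof of Lemma \ref{lem:est_poisson_weak}), the lower bound $\|P\|_{\mathscr H(\omega^{(i)})} \le \|\lfloor P\rceil\|_\infty$ is immediate since each $\lfloor \Pi_{\omega^{(i)},a}P\rceil \le \lfloor P\rceil$ pointwise on $(\R_+)^\mathcal{M}$.

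For the upper bounds, the key point is that on the support of a polynomial in $\mathcal{P}^{\mathcal{M}}_{\mathbb{C},2q}$ with $\mathcal{M}$ controlled by $\omega^{(i)}$, the resonance function $\Omega^{(i)}$ takes only finitely many integer values, and I would bound this number $N$ of values. Writing $A = |\omega^{(i)}|_\infty$, each monomial in the support satisfies $|\omega^{(i)}_{\boldsymbol{k}_1} + \cdots - \cdots - \omega^{(i)}_{\boldsymbol{\ell}_q}| \le 2qA$, so $\Pi_{\omega^{(i)},a}P = 0$ unless $|a| \le 2qA$, giving $N \le 4qA + 1 \le 5qA$. Then on $(\R_+)^\mathcal{M}$,
\[
\lfloor P \rceil(u) = \sum_{|a|\le 2qA} \lfloor \Pi_{\omega^{(i)},a}P\rceil(u) \le N \sup_a \|\lfloor\Pi_{\omega^{(i)},a}P\rceil\|_\infty \|u\|^{2q} \le 5qA\, \|P\|_{\mathscr H(\omega^{(i)})}\|u\|^{2q},
\]
which is the first upper bound. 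For the second, I would instead split the sum dyadically in $\langle a\rangle$: for each $j\ge 0$ the number of integers $a$ with $2^j \le \langle a\rangle < 2^{j+1}$ is at most $2^{j+1}$, and for those $a$ one has $\|\lfloor\Pi_{\omega^{(i)},a}P\rceil\|_\infty \le 2^{-j}\|P\|_{\mathscr C(\omega^{(i)})}$; since $a$ ranges over at most $\lceil \log_2(2qA)\rceil + 1$ nonempty dyadic blocks, summing $2^{j+1}\cdot 2^{-j} = 2$ over these blocks yields $\|\lfloor P\rceil\|_\infty \le 5\log(2qA)\|P\|_{\mathscr C(\omega^{(i)})}$ after adjusting constants (using $|\omega^{(i)}|_\infty \ge 1$ since $\omega^{(i)}\ne 0$, so $2qA \ge 2$ and the logarithm is positive).

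The main obstacle — really the only subtlety — is getting the constants clean: one must be careful that $\|\lfloor\cdot\rceil\|_\infty$ can genuinely be computed by testing against non-negative vectors (so that the pointwise triangle-inequality bounds on $(\R_+)^\mathcal{M}$ upgrade to sup-norm bounds), and one must track that the dyadic decomposition of the range $|a|\le 2qA$ involves $\lesssim \log(qA)$ blocks rather than $\lesssim qA$ blocks. Both are routine once the diagonal action of $\Pi_{\omega^{(i)},a}$ (Lemma \ref{lem:cestdiagonal}) is invoked to ensure $\lfloor P\rceil = \sum_a \lfloor\Pi_{\omega^{(i)},a}P\rceil$ with matching non-negative coefficients; no harmonic analysis or number theory beyond counting is needed, because we are working with the integer-part frequencies $\omega^{(i)}$ rather than the true frequencies $\omega$.
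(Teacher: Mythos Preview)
Your proposal is correct and follows essentially the same route as the paper: both arguments use that $\lfloor\cdot\rceil$ commutes with $\Pi_{\omega^{(i)},a}$, reduce to testing on $(\mathbb{R}_+)^{\mathcal{M}}$, and exploit the range bound $|a|\le 2q|\omega^{(i)}|_\infty$ to control the number of nonzero projections. The only cosmetic difference is in the last inequality, where the paper directly bounds $\sum_{|a|\le 2q|\omega^{(i)}|_\infty}\langle a\rangle^{-1}\le 3+2\log(2q|\omega^{(i)}|_\infty)$ while you use an equivalent dyadic decomposition of this harmonic-type sum.
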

\begin{proof}
First, we note that the projectors $\Pi_{\omega^{(i)},a} $ commute with the modulus $ \lfloor \cdot \rceil$. Therefore, we have
$$
 \lfloor P \rceil = \sum_{a\in \mathbb{Z}} \Pi_{\omega^{(i)},a} \lfloor P \rceil .
$$
As a consequence, since, if $u\in (\mathbb{R}_+)^\mathcal{M}$, we have $\Pi_{\omega^{(i)},a} \lfloor P \rceil (u) \geq 0$, for all $a\in \mathbb{Z}$, we get directly that $\| P \|_{\mathscr{H}(\omega^{(i)})} \leq \| \lfloor P \rceil  \|_{\infty}$.

Then, we note that if $|a|> 2 q |\omega^{(i)}|_\infty $, then $\Pi_{\omega^{(i)},a} \lfloor P \rceil=0$. As a consequence, we get directly that $ \| \lfloor P \rceil  \|_{\infty}  \leq 5q |\omega^{(i)}|_\infty \| P \|_{\mathscr{H}(\omega^{(i)})} $.
Moreover, in the same way, we have
$$
 \| \lfloor P \rceil  \|_{\infty}  \leq  \| P \|_{\mathscr{C}(\omega^{(i)})} \sum_{|a|\leq 2 q |\omega^{(i)}|_\infty } \langle a \rangle^{-1} \leq  \| P \|_{\mathscr{C}(\omega^{(i)})} (3+ 2 \log (2 q |\omega^{(i)}|_\infty)).
$$
Finally, since $ |\omega^{(i)}|_\infty\geq 1$ and $2\log 2 \geq 1$, we get the second estimate.
\end{proof}
\begin{remark}
The logarithmic loss coming from the control of the $\|\cdot\|_{\infty}$ norm by the $\|\cdot\|_{\mathscr{C}(\omega^{(i)})}$ norm is different from that coming from the Strichartz estimate \eqref{strichartz}, and is rather comparable to the logarithmic difference between $X^{s,\frac12}$ and $C(\R;H^s)$. This loss should be avoided by refining the choice of topology on $\mathcal{P}^{\mathcal{M}}_{\mathbb{C},2q}$, in particular the norm controlling the dependence in $a$ for $\Pi_{\omega^{(i)},a}P$.
\end{remark}

The following proposition provides a very useful refinement of the bilinear estimate given by Lemma \ref{lem:est_poisson_weak}.
\begin{proposition}\label{prop:est_poisson} Let $\omega^{(i)} \in \mathbb{Z}^\mathcal{M} \setminus \{0\}$ be a vector of frequencies with integer coefficients, $q,q'\geq 1$, $P \in   \mathcal{P}^{\mathcal{M}}_{\mathbb{C},2q}$ and $\chi \in   \mathcal{P}^{\mathcal{M}}_{\mathbb{C},2q'}$, then $\{P,\chi\}$ enjoys the following bilinear estimate :
\begin{equation}
\label{eq:cest_tres_joli}
\| \{ P,\chi\} \|_{\mathscr{H}(\omega^{(i)})} \leq 40 \, q q'\,  \log ( 2q'|\omega^{(i)}|_\infty) \, \| P \|_{\mathscr{H}(\omega^{(i)})}  \| \chi \|_{\mathscr{C}(\omega^{(i)})}.
\end{equation}
\end{proposition}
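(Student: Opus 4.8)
The plan is to combine the \emph{convolution} structure of the Poisson bracket with respect to the spectral projectors $\Pi_{\omega^{(i)},a}$ (Lemma~\ref{lem:conv}) with the crude levelwise bilinear bound of Lemma~\ref{lem:est_poisson_weak}; the weight $\langle a\rangle$ built into $\|\cdot\|_{\mathscr{C}(\omega^{(i)})}$ is precisely what makes the resulting sum over resonance levels converge, at the cost of a logarithm.

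Fix $a\in\mathbb{Z}$ (all relevant levels are integers since $\omega^{(i)}\in\mathbb{Z}^{\mathcal{M}}$). By Lemma~\ref{lem:conv},
$$
\Pi_{\omega^{(i)},a}\{P,\chi\} \;=\; \sum_{b+c=a}\{\Pi_{\omega^{(i)},b}P,\;\Pi_{\omega^{(i)},c}\chi\}.
$$
Since $\lfloor\cdot\rceil$ is sub-additive on Fourier coefficients and $\|\lfloor Q\rceil\|_\infty$ is attained on vectors with non-negative entries, applying the modulus and then Lemma~\ref{lem:est_poisson_weak} to each term (each $\Pi_{\omega^{(i)},b}P$ lies in $\mathcal{P}^{\mathcal{M}}_{\mathbb{C},2q}$, each $\Pi_{\omega^{(i)},c}\chi$ in $\mathcal{P}^{\mathcal{M}}_{\mathbb{C},2q'}$, so the degrees $q,q'$ are fixed) gives
$$
\big\|\lfloor \Pi_{\omega^{(i)},a}\{P,\chi\}\rceil\big\|_\infty \;\leq\; 8qq'\sum_{b+c=a}\|\lfloor \Pi_{\omega^{(i)},b}P\rceil\|_\infty\,\|\lfloor \Pi_{\omega^{(i)},c}\chi\rceil\|_\infty .
$$
Next I would estimate the two factors \emph{asymmetrically}: $\|\lfloor \Pi_{\omega^{(i)},b}P\rceil\|_\infty\leq \|P\|_{\mathscr{H}(\omega^{(i)})}$ uniformly in $b$, while $\|\lfloor \Pi_{\omega^{(i)},c}\chi\rceil\|_\infty\leq \langle c\rangle^{-1}\|\chi\|_{\mathscr{C}(\omega^{(i)})}$.

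It remains to sum over $b+c=a$. As noted in the proof of Lemma~\ref{lem:comp_with_classical_norms}, $\Pi_{\omega^{(i)},c}\chi = 0$ unless $|c|\leq 2q'|\omega^{(i)}|_\infty$, so $c$ ranges over a finite set, $b=a-c$ is determined, and
$$
\big\|\lfloor \Pi_{\omega^{(i)},a}\{P,\chi\}\rceil\big\|_\infty \;\leq\; 8qq'\,\|P\|_{\mathscr{H}(\omega^{(i)})}\,\|\chi\|_{\mathscr{C}(\omega^{(i)})}\sum_{|c|\leq 2q'|\omega^{(i)}|_\infty}\langle c\rangle^{-1}.
$$
The last sum is bounded, exactly as in Lemma~\ref{lem:comp_with_classical_norms}, by a constant multiple of $\log(2q'|\omega^{(i)}|_\infty)$ (using $q,q',|\omega^{(i)}|_\infty\geq 1$); tracking the numerical constants yields the factor $40\,qq'\log(2q'|\omega^{(i)}|_\infty)$, and taking the supremum over $a\in\mathbb{Z}$ gives \eqref{eq:cest_tres_joli}.

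The only non-formal step — and the ``hard'' one — is this summation over resonance levels. A priori the convolution $\sum_{b+c=a}$ runs over infinitely many pairs, and estimating \emph{both} factors by the $\mathscr{H}(\omega^{(i)})$-norm would degrade the bound by the number of active levels, $\sim q'|\omega^{(i)}|_\infty$, which is far too lossy for a Birkhoff scheme. The remedy is structural: the resonance function $\Omega^{(i)}(\boldsymbol{k},\boldsymbol{\ell})$ takes only $O(q|\omega^{(i)}|_\infty)$ values on monomials of degree $2q$, so the spectral projectors have finite support; coupling this with the decaying weight $\langle c\rangle^{-1}$ supplied by the $\mathscr{C}(\omega^{(i)})$-norm on $\chi$ turns the divergent sum into a convergent $\sum\langle c\rangle^{-1}$, whose logarithmic size is precisely the loss in the statement. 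This also explains why the two norms in \eqref{eq:cest_tres_joli} must play different roles (one could of course swap $P$ and $\chi$ via $\{P,\chi\}=-\{\chi,P\}$).
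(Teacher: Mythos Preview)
Your proof is correct and follows essentially the same route as the paper: apply the convolution identity of Lemma~\ref{lem:conv}, restrict the sum in $c$ to $|c|\le 2q'|\omega^{(i)}|_\infty$, bound each summand via Lemma~\ref{lem:est_poisson_weak}, estimate the $P$-factor by $\|P\|_{\mathscr{H}(\omega^{(i)})}$ and the $\chi$-factor by $\langle c\rangle^{-1}\|\chi\|_{\mathscr{C}(\omega^{(i)})}$, and finish with the harmonic sum exactly as in Lemma~\ref{lem:comp_with_classical_norms}. Your closing paragraph is useful commentary but not part of the argument itself.
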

\begin{proof}
First, applying Lemma \ref{lem:conv}, we have
$$
\Pi_{\omega^{(i)},a} \{ P,\chi\} = \sum_{\substack{b+c = a \\ |c|\leq 2 q' |\omega^{(i)}|_\infty }} \{ \Pi_{\omega^{(i)},b} P, \Pi_{\omega^{(i)},c} \chi \},
$$
the second condition coming from the fact that if $|c|>  2 q' |\omega^{(i)}|_\infty $ then $\Pi_{\omega^{(i)},c} \chi = 0$.
Then applying the triangle inequality and the bilinear estimate of Lemma \eqref{lem:est_poisson_weak}, it comes
\begin{equation*}
\begin{split}
  \| \lfloor \Pi_{\omega^{(i)},a} \{ P,\chi\} \rceil  \|_{\infty}&\leq \sum_{\substack{b+c = a \\ |c|\leq 2 q' |\omega^{(i)}|_\infty }}   \| \lfloor \{ \Pi_{\omega^{(i)},b} P, \Pi_{\omega^{(i)},c} \chi \} \rceil  \|_{\infty} \\
  &\leq 8 q q' \sum_{\substack{b+c = a \\ |c|\leq 2 q' |\omega^{(i)}|_\infty }}   \| \lfloor \Pi_{\omega^{(i)},b} P \rceil  \|_{\infty}   \| \lfloor \Pi_{\omega^{(i)},c} Q \rceil  \|_{\infty} \\
  &\leq 8qq' \| P \|_{\mathscr{H}(\omega^{(i)})}  \| \chi \|_{\mathscr{C}(\omega^{(i)})} \sum_{|c|\leq 2 q' |\omega^{(i)}|_\infty} \langle c \rangle^{-1}.
\end{split}
\end{equation*}
Estimating this last sum as in Lemma \ref{lem:comp_with_classical_norms}, we get the estimate we aimed at proving \eqref{eq:cest_tres_joli}.

\end{proof}

\subsection{Birkhoff normal form}

\begin{theorem} \label{thm:Birk} Let $H : \mathbb{C}^\mathcal{M} \to \mathbb{R}$ be a polynomial of the form
$$
H = Z_{2,\omega} +  P,
$$
where $\omega \in \mathbb{R}^\mathcal{M}$ is a vector of frequencies with real coefficients, $Z_{2,\omega} \in \mathcal{P}^{\mathcal{M}}_{\mathbb{R},2}$ is the quadratic diagonal polynomial given by \eqref{eq:defZ2}, and $P \in \mathcal{P}^{\mathcal{M}}_{\mathbb{R},2p}$ is a real valued homogeneous polynomial of degree $2p\geq 4$ commuting with the Euclidean norm. 

Let  $\omega^{(i)} \in \mathbb{Z}^\mathcal{M} \setminus \{0\}$ be a vector of frequencies with integer coefficients and $\omega^{(f)} \in \mathbb{R}^\mathcal{M}$  be the vector of frequencies with real coefficients such that
$$
\omega = \omega^{(i)} + \omega^{(f)}.
$$

For all $r \geq p-1$ and all $\gamma\in (0;1)$, setting 
\begin{equation}
\label{eq:defepsr}
\varepsilon_r :=  \left( \frac{\gamma}{ A\, B_p\, r^5 \langle |\omega^{(f)}|_\infty \rangle \| P \|_{\mathscr{H}(\omega^{(i)})}    \log \langle |\omega^{(i)}|_\infty \rangle  } \right)^{\frac1{2p-2}},
\end{equation}
where $A>1$ is a universal constant and $B_p>1$ depends only on $p$,
there exists a symplectomorphism $\tau : \mathbb{C}^\mathcal{M} \to \mathbb{C}^\mathcal{M}$ such that $H\circ \tau^{-1}$ is analytic on the ball $B(0,\varepsilon_r)$ with an analytic expansion of the form
\begin{equation}
\label{eq:the_dec}
H\circ \tau^{-1} = Z_{2,\omega} + \sum_{j\geq p} Q^{(2j)}
\end{equation}
where $Q^{(2j)} \in \mathcal{P}^{\mathcal{M}}_{\mathbb{R},2j}$ is a real-valued homogeneous polynomial of degree $2j$ commuting with the Euclidean norm such that:
\begin{itemize}
\item for $j\leq r$, $Q^{(2j)} $ is $\gamma$-resonant, i.e.
\begin{equation}
\label{eq:gamma-res}
| \omega_{\boldsymbol{k}_1} + \cdots + \omega_{\boldsymbol{k}_j} -\omega_{\boldsymbol{\ell}_1} - \cdots - \omega_{\boldsymbol{\ell}_j}   | \geq \gamma \quad \Rightarrow \quad Q^{(2j)}_{\boldsymbol{k},\boldsymbol{\ell}} = 0;
\end{equation}
\item for all $j \geq p$, $Q^{(2j)}$ enjoys the estimate
\begin{equation}
\label{eq:je_controle_mes_termes}
\|  Q^{(2j)} \|_{\mathscr{H}(\omega^{(i)})}  \leq \varepsilon_r^{-2(j-p)} \|  P \|_{\mathscr{H}(\omega^{(i)})} .
\end{equation}
\end{itemize} 
Moreover, the symplectomorphism $\tau$ enjoys the three following properties:
\begin{itemize}
\item it preserves the Euclidean norm, i.e. $\|\tau(u)\| = \|u\|$ for all $u\in \mathbb{C}^\mathcal{M}$;
\item it is close to the identity, i.e.
\begin{equation}
\label{eq:hehe_cest_proche_de_lidentite}
\| u\| \leq \varepsilon_r \quad \Rightarrow \quad  \| \tau(u) - u \| \leq  \Big( \frac{\|u\|}{\varepsilon_r} \Big)^{2p-2} \|u\|;
\end{equation}
\item its differential enjoys the following estimate
\begin{equation}
\label{eq:la_differentiel_bouge_pas_trop}
\| u\| \leq \varepsilon_r \quad \Rightarrow \quad  \forall v \in \mathbb{C}^\mathcal{M}, \quad  \| \mathrm{d}\tau(u)(v) \| \leq \exp\Big( \big(\frac{\|u\|}{\varepsilon_r} \big)^{2p-2} \Big) \|v\|.
\end{equation}
\end{itemize}
\end{theorem}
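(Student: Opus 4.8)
The plan is to build $\tau$ as a finite composition of symplectic time-one Hamiltonian flows $\Phi^{1}_{\chi_d}$ (or their inverses) furnished by Proposition~\ref{prop:ca_flotte}, associated with real-valued homogeneous polynomials $\chi_d\in\mathcal{P}^{\mathcal{M}}_{\mathbb{R},2d}$ commuting with $\|\cdot\|^2$, for $d=p,p+1,\dots,r$, arranged so that $H\circ\tau^{-1}$ takes the form \eqref{eq:the_dec}. I would argue by induction on the degree: assuming that after the first $d-p$ steps the Hamiltonian has the form $H_d = Z_{2,\omega} + \sum_{p\le j<d}Q^{(2j)} + \sum_{j\ge d}P^{(2j)}_d$ with $Q^{(2j)}$ already $\gamma$-resonant and $\|P^{(2j)}_d\|_{\mathscr{H}(\omega^{(i)})}\lesssim \varepsilon_r^{-2(j-p)}\|P\|_{\mathscr{H}(\omega^{(i)})}$, I split the degree-$2d$ part $P^{(2d)}_d = Q^{(2d)} + P^{(2d),\mathrm{nr}}_d$ into its $\gamma$-resonant part $Q^{(2d)}$ (the monomials with $|\Omega(\boldsymbol k,\boldsymbol\ell)|<\gamma$, so that \eqref{eq:gamma-res} holds) and a non-resonant remainder, and I choose $\chi_d$ to cancel the latter. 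Since step $d'$ only alters degrees $>2d'$, the piece $Q^{(2d)}$ produced at step $d$ is final for every $d\le r$.

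The generator $\chi_d$ solves the cohomological equation $\{Z_{2,\omega},\chi_d\}=-P^{(2d),\mathrm{nr}}_d$, which by Lemma~\ref{lem:cestdiagonal} is diagonal in the monomial basis: $(\chi_d)_{\boldsymbol k,\boldsymbol\ell}=i(P^{(2d),\mathrm{nr}}_d)_{\boldsymbol k,\boldsymbol\ell}/\Omega(\boldsymbol k,\boldsymbol\ell)$ on the block $|\Omega(\boldsymbol k,\boldsymbol\ell)|\ge\gamma$ (well defined, real-valued, still commuting with $\|\cdot\|^2$). The crucial point is to estimate $\chi_d$ in the $\mathscr{C}(\omega^{(i)})$-norm. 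Writing $\Omega=\Omega^{(i)}+\Omega^{(f)}$ with $\Omega^{(i)}$ the integer resonance function of $\omega^{(i)}$ and $|\Omega^{(f)}|\le 2d\,|\omega^{(f)}|_\infty$, one checks that on a monomial with $\Omega^{(i)}=a$ one has $|\Omega|\ge\gamma$ always while $|\Omega|\ge|a|/2$ as soon as $|a|\ge 4d\langle|\omega^{(f)}|_\infty\rangle$; hence $\langle a\rangle|(\chi_d)_{\boldsymbol k,\boldsymbol\ell}|\lesssim d\langle|\omega^{(f)}|_\infty\rangle\gamma^{-1}|(P^{(2d),\mathrm{nr}}_d)_{\boldsymbol k,\boldsymbol\ell}|$. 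Since $\Pi_{\omega^{(i)},a}$ commutes with $\lfloor\cdot\rceil$, this yields
\[
\|\chi_d\|_{\mathscr{C}(\omega^{(i)})}\ \lesssim\ d\,\langle|\omega^{(f)}|_\infty\rangle\,\gamma^{-1}\,\|P^{(2d)}_d\|_{\mathscr{H}(\omega^{(i)})},
\]
which is exactly the mechanism advertised after Lemma~\ref{lem:cestdiagonal}: the cohomological equation converts an $\mathscr{H}$-bound into a $\mathscr{C}$-bound at the cost of a factor $\sim d\langle|\omega^{(f)}|_\infty\rangle/\gamma$.

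Next I would expand $H_{d+1}:=H_d\circ\Phi^1_{\chi_d}$ via the Lie series $\sum_{\ell\ge0}\frac{(-1)^\ell}{\ell!}\mathrm{ad}_{\chi_d}^{\ell}H_d$. Because $\chi_d$ has degree $2d$, the degree-$2d$ component of $H_{d+1}$ is exactly $P^{(2d)}_d+\{Z_{2,\omega},\chi_d\}=Q^{(2d)}$, while all other modifications occur at degrees $>2d$ and are iterated brackets $\frac{\pm1}{\ell!}\mathrm{ad}_{\chi_d}^{\ell}$, $\ell\ge1$, of the previously produced homogeneous pieces (using $\mathrm{ad}_{\chi_d}Z_{2,\omega}=P^{(2d),\mathrm{nr}}_d$ to remove $Z_{2,\omega}$ from the tail of the series). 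Iterating Proposition~\ref{prop:est_poisson}, each application of $\mathrm{ad}_{\chi_d}$ costs at most $\sim r^2\log(2r\langle|\omega^{(i)}|_\infty\rangle)\|\chi_d\|_{\mathscr{C}(\omega^{(i)})}$; combined with the cohomological estimate, the inductive bound on $\|P^{(2d)}_d\|_{\mathscr{H}(\omega^{(i)})}$, and the identity $\varepsilon_r^{-2\ell(d-p)-2(m-p)}=\varepsilon_r^{-2(j-p)}\big(\varepsilon_r^{2p-2}\big)^{\ell}$ coming from $m+\ell(d-1)=j$, the definition \eqref{eq:defepsr} of $\varepsilon_r$ (whose very shape — the factor $\varepsilon_r^{2p-2}$ absorbing $\gamma^{-1}\langle|\omega^{(f)}|_\infty\rangle\|P\|_{\mathscr{H}(\omega^{(i)})}\log\langle|\omega^{(i)}|_\infty\rangle$ up to $A\,B_p\,r^5$) makes each such contribution bounded by $\frac{\theta^{\ell}}{\ell!}\varepsilon_r^{-2(j-p)}\|P\|_{\mathscr{H}(\omega^{(i)})}$ with $\theta\ll r^{-1}$. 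Summing the resulting convergent geometric series over $\ell$ and over the at most $r$ steps, and fixing $A$ universal and $B_p$ large depending only on $p$ to absorb the numerical constants and residual powers of $r$, propagates \eqref{eq:je_controle_mes_termes} for all $j\ge p$; the analytic expansion \eqref{eq:the_dec} on $B(0,\varepsilon_r)$ then follows because $\|Q^{(2j)}\|_\infty\le\|\lfloor Q^{(2j)}\rceil\|_\infty\le 5j|\omega^{(i)}|_\infty\,\varepsilon_r^{-2(j-p)}\|P\|_{\mathscr{H}(\omega^{(i)})}$ by Lemma~\ref{lem:comp_with_classical_norms}, which is summable against $(\|u\|/\varepsilon_r)^{2(j-p)}$ for $\|u\|<\varepsilon_r$.

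Finally, the three properties of $\tau$ are read off from Proposition~\ref{prop:ca_flotte}: norm preservation is immediate since each $\chi_d$ commutes with $\|\cdot\|^2$; for the two quantitative bounds one sums, over $d=p,\dots,r$, the estimates $\|\Phi^1_{\chi_d}(u)-u\|\le 2d\|\chi_d\|_\infty\|u\|^{2d-1}$ and $\|\mathrm d\Phi^1_{\chi_d}(u)(v)\|\le\exp(4d^2\|\chi_d\|_\infty\|u\|^{2d-2})\|v\|$ evaluated at points of norm $\le\varepsilon_r$ (the norm being preserved along the composition), after bounding $\|\chi_d\|_\infty\le\|\lfloor\chi_d\rceil\|_\infty\le 5\log(2d\langle|\omega^{(i)}|_\infty\rangle)\|\chi_d\|_{\mathscr{C}(\omega^{(i)})}$ (Lemma~\ref{lem:comp_with_classical_norms}) and invoking the cohomological estimate together with \eqref{eq:defepsr}; the power $r^5$ is precisely what makes $\sum_{d=p}^{r}d^3\log(2d\langle|\omega^{(i)}|_\infty\rangle)\,(\|u\|/\varepsilon_r)^{2d-2}\le(\|u\|/\varepsilon_r)^{2p-2}$ hold. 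I expect the main obstacle to be the bookkeeping that keeps the two norms $\mathscr{H}(\omega^{(i)})$ and $\mathscr{C}(\omega^{(i)})$ in balance through the whole recursion — solving a cohomological equation consumes an $\mathscr{H}$-bound and returns a $\mathscr{C}$-bound, while a Poisson bracket (Proposition~\ref{prop:est_poisson}) consumes one $\mathscr{C}$ and one $\mathscr{H}$ factor and returns an $\mathscr{H}$-bound — together with calibrating $\gamma$, $r$, $A$ and $B_p$ so that the geometric series converges uniformly in $|\mathcal{M}|$ and in the number of steps.
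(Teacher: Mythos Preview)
Your strategy is the same as the paper's — build $\tau$ as a composition of time-one flows of cohomological generators $\chi_d$, estimate $\chi_d$ in $\mathscr{C}(\omega^{(i)})$ via the small-divisor bound, and propagate $\mathscr{H}(\omega^{(i)})$-bounds on the homogeneous components through the Lie series using Proposition~\ref{prop:est_poisson} — and all of your qualitative observations (the $\mathscr{H}\leftrightarrow\mathscr{C}$ interplay, the role of $\varepsilon_r^{2p-2}$, the flow estimates for $\tau$) are correct.

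There is, however, a genuine gap in the bookkeeping. You write that ``each application of $\mathrm{ad}_{\chi_d}$ costs at most $\sim r^2\log(2r\langle|\omega^{(i)}|_\infty\rangle)\|\chi_d\|_{\mathscr{C}}$'' and conclude that the $\ell$-th contribution is $\frac{\theta^\ell}{\ell!}\varepsilon_r^{-2(j-p)}\|P\|_{\mathscr{H}}$ with a $j$-independent $\theta\ll r^{-1}$. But in Proposition~\ref{prop:est_poisson} the constant is $40\,q\,q'\log(\dots)$, where $q'$ is the half-degree of $\chi_d$ (bounded by $r$) while $q$ is the half-degree of the \emph{other} argument, which grows along the iteration from $m$ up to $j-(d-1)$. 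The honest bound on the iterated bracket therefore carries the rising product $\prod_{i=0}^{\ell-1}(m+i(d-1))\le j^\ell$, so the $\ell$-th contribution is $\frac{(j\theta)^\ell}{\ell!}$, and summing over $\ell$ gives a factor $e^{j\theta}$ that blows up as $j\to\infty$. With a fixed $\varepsilon_r$ you then cannot propagate a $j$-uniform bound $\|P^{(2j)}_{d+1}\|_{\mathscr{H}}\le\varepsilon_r^{-2(j-p)}\|P\|_{\mathscr{H}}$ across the steps.

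The paper resolves this by inducting on $r$ rather than on the normalization degree $d$: it assumes the conclusion at level $r$ (with radius $\varepsilon_r$) and proves it at level $r+1$ (with the strictly smaller radius $\varepsilon_{r+1}$). The slack $(\varepsilon_r/\varepsilon_{r+1})^{2j}=\big((r+1)/r\big)^{5j/(p-1)}$ grows geometrically in $j$, and the elementary inequality $C^{-j}j^n\le e^{-n}n^n(\log C)^{-n}$ converts the dangerous $j^n$ into an innocuous $n^n$, which the $1/n!$ from the Lie series then absorbs (this is the computation in the paper's Step~3). Your fixed-$\varepsilon_r$ scheme lacks precisely this $j$-dependent room; the natural fix is to track a decreasing sequence of radii $\varepsilon_p\ge\varepsilon_{p+1}\ge\cdots\ge\varepsilon_r$ along your iteration over $d$, which then reproduces the paper's argument verbatim.
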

\begin{remark}
Note that the convergence of the entire series \eqref{eq:the_dec} on $B(0,\varepsilon_r)$ is ensured  by estimate \eqref{eq:je_controle_mes_termes} and Lemma \ref{lem:comp_with_classical_norms}.
\end{remark}
\begin{proof}[Proof of Theorem \ref{thm:Birk}] We proceed by induction on $r\geq p-1$. First, we note that the initialization is trivial. It is enough to choose $\tau = \mathrm{Id}$.  Now, we assume that the theorem holds at the step $r$ and we aim at proving it at the step $r+1$. The object we are going to design at the step $r+1$ will be identified by a subscript $\sharp$ (e.g. $\tau^\sharp$ will be the change of variable at the step $r+1$ while $\tau$ denotes the change of variables at the step $r$).

\noindent \underline{$\triangleright$ \emph{Step 1 : the new variables}.} Let $\chi \in \mathcal{P}^{\mathcal{M}}_{\mathbb{R},2r+2}$ be the polynomial defined by
$$
\chi_{\boldsymbol{k},\boldsymbol{\ell}} = \frac{Q^{(2r+2)}_{\boldsymbol{k},\boldsymbol{\ell}}}{i \Omega_{\omega} (\boldsymbol{k},\boldsymbol{\ell}) } \quad \mathrm{if} \quad |\Omega_{\omega} (\boldsymbol{k},\boldsymbol{\ell})|\geq \gamma \quad \quad  \mathrm{and} \quad  \quad \chi_{\boldsymbol{k},\boldsymbol{\ell}} = 0 \quad \mathrm{else,}
$$
where
$$
\Omega_{\omega} (\boldsymbol{k},\boldsymbol{\ell}):= \omega_{\boldsymbol{k}_1} + \cdots + \omega_{\boldsymbol{k}_{r+1}} -\omega_{\boldsymbol{\ell}_1} - \cdots - \omega_{\boldsymbol{\ell}_{r+1}}.
$$
Thanks to Lemma \ref{lem:cestdiagonal}, it is clear that
\begin{equation}
\label{eq:coho}
Q^{(2r+2),\sharp} := Q^{(2r+2)} + \{\chi, Z_{2,\omega}\} \quad \mathrm{is} \quad \gamma-\mathrm{resonant} \ (\text{see} \ \eqref{eq:gamma-res}),
\end{equation}
because it satisfies
\begin{equation}
\label{eq:maisquicest}
Q^{(2r+2),\sharp} _{\boldsymbol{k},\boldsymbol{\ell}} = 0 \quad \mathrm{if} \quad |\Omega_{\omega} (\boldsymbol{k},\boldsymbol{\ell})|\geq \gamma \quad \quad  \mathrm{and} \quad  \quad Q^{(2r+2),\sharp} _{\boldsymbol{k},\boldsymbol{\ell}} = Q^{(2r+2)}_{\boldsymbol{k},\boldsymbol{\ell}} \quad \mathrm{else}.
\end{equation}
According to Proposition \ref{prop:ca_flotte}, let $\Phi_\chi^t$ the Hamiltonian flow generated by $\chi$. We define the new change of variable by
$$
\tau^\sharp := \Phi_\chi^1 \circ \tau.
$$
Its properties will be studied in the last step of the proof. In order to have Poisson bracket estimates, for the moment, let us estimate $\| \chi \|_{\mathscr{C}(\omega^{(i)})}$.
First, we note that if $|\Omega_{\omega} (\boldsymbol{k},\boldsymbol{\ell})|\geq \gamma$ then
\begin{equation}
\label{eq:cest_sympa_comme_estime_non}
|\Omega_{\omega} (\boldsymbol{k},\boldsymbol{\ell})| \geq  \frac{\gamma}{8(r+1) \langle |\omega^{(f)}|_\infty \rangle} \,\langle \Omega_{\omega^{(i)}} (\boldsymbol{k},\boldsymbol{\ell}) \rangle.
\end{equation}
Indeed,
\begin{itemize}
\item either $\langle \Omega_{\omega^{(i)}} (\boldsymbol{k},\boldsymbol{\ell}) \rangle \leq 8(r+1) \langle |\omega^{(f)}|_\infty \rangle$ and so \eqref{eq:cest_sympa_comme_estime_non} is trivial
\item or $\langle \Omega_{\omega^{(i)}} (\boldsymbol{k},\boldsymbol{\ell}) \rangle > 8(r+1) \langle |\omega^{(f)}|_\infty \rangle$ and so\footnote{note that for the last estimate we have used the assumption $\gamma \leq 1$.}
\begin{align*}
|\Omega_{\omega} (\boldsymbol{k},\boldsymbol{\ell})|  &\geq |\Omega_{\omega^{(i)}} (\boldsymbol{k},\boldsymbol{\ell})| - |\Omega_{\omega^{(f)}} (\boldsymbol{k},\boldsymbol{\ell})| \geq \frac12 \langle \Omega_{\omega^{(i)}} (\boldsymbol{k},\boldsymbol{\ell}) \rangle - |\Omega_{\omega^{(f)}} (\boldsymbol{k},\boldsymbol{\ell})|  \\ &\geq \frac14 \langle \Omega_{\omega^{(i)}} (\boldsymbol{k},\boldsymbol{\ell}) \rangle + \frac14 \langle \Omega_{\omega^{(i)}} (\boldsymbol{k},\boldsymbol{\ell}) \rangle - 2(r+1) \langle |\omega^{(f)}|_\infty \rangle \\ &\geq \frac14 \langle \Omega_{\omega^{(i)}} (\boldsymbol{k},\boldsymbol{\ell}) \rangle \geq \frac{\gamma}{8(r+1) \langle |\omega^{(f)}|_\infty \rangle} \,\langle \Omega_{\omega^{(i)}} (\boldsymbol{k},\boldsymbol{\ell}) \rangle.
\end{align*}

\end{itemize}
Finally, as a consequence of \eqref{eq:cest_sympa_comme_estime_non} and the induction hypothesis \eqref{eq:je_controle_mes_termes}, $\chi$ enjoys the estimate
\begin{equation}
\label{eq:est_nablachi}
\begin{split}
\| \chi \|_{\mathscr{C}(\omega^{(i)})} &\leq  8(r+1)\gamma^{-1} \langle |\omega^{(f)}|_\infty \rangle  \| Q^{(2r+2)} \|_{\mathscr{H}(\omega^{(i)})} \\
&\leq 8(r+1)\gamma^{-1} \langle |\omega^{(f)}|_\infty \rangle \| P \|_{\mathscr{H}(\omega^{(i)})} \varepsilon_r^{-2(r+1-p)} .
\end{split}
\end{equation}
Therefore, defining 
\begin{equation}
\label{eq:def_etarp1}
\eta_{r+1}=  \Big(  \Big[ \frac{8(r+1) \langle |\omega^{(f)}|_\infty \rangle}{\gamma}  \Big]    \Big[   40  (r+1)  \log ( 2(r+1)|\omega^{(i)}|_\infty)\Big] \| P \|_{\mathscr{H}(\omega^{(i)})} \Big)^{-\frac1{2p-2}},
\end{equation}
as a consequence of Proposition \ref{prop:est_poisson}, we get that for all $q\geq 1$ and all $Q\in \mathcal{P}^{\mathcal{M}}_{\mathbb{R},2q}$,
$$
\| \{ \chi, Q\} \|_{\mathscr{H}(\omega^{(i)})} \leq  q  \varepsilon_r^{-2(r+1-p)} \eta_{r+1}^{-(2p-2)} \|Q\|_{ \mathscr{H}(\omega^{(i)}) }.
$$
Therefore, noticing that (provided that $A$ is chosen large enough)
\begin{equation}
\label{eq:cestbientotlesvacances}
\eta_{r+1}^{-(2p-2)} \leq B_p^{-1} r^{-2} \varepsilon_r^{-(2p-2)},
\end{equation}
we get
\begin{equation}
\label{eq:toutcapourca}
\| \{ \chi, Q\} \|_{\mathscr{H}(\omega^{(i)})} \leq \frac{q}{B_pr^2}  \varepsilon_r^{-2(r+1-p)} \varepsilon_{r}^{-(2p-2)} \|Q\|_{ \mathscr{H}(\omega^{(i)}) } =  \frac{q}{B_pr^2} \varepsilon_{r}^{-2r}  \|Q\|_{ \mathscr{H}(\omega^{(i)}) } .
\end{equation}

\noindent \underline{$\triangleright$ \emph{Step 2 : the new expansion (algebra)}.} We recall that by definition of $\Phi_\chi^{-t}$, if $K : \mathbb{C}^\mathcal{M} \to \mathbb{R}$ is a smooth function then for all $t\in \mathbb{R}$ and $u\in \mathbb{C}^\mathcal{M}$, we have
$$
\partial_t K(\Phi_\chi^{-t}(u)) = \{\chi,K\}(\Phi_\chi^{-t}(u)).
$$
Therefore, doing a Taylor expansion\footnote{at the order $N+1\geq 3$ for $Z_{2,\omega}\circ \Phi_\chi^{-t}$ and at the order $N$ for $(H\circ \tau^{-1} - Z_{2,\omega})\circ \Phi_\chi^{-t} $.}, we get
$$
H\circ (\tau^\sharp)^{-1}(u) = \sum_{n=0}^N \frac{\mathrm{ad}_\chi^n}{n !} (H\circ \tau^{-1})(u) +   \frac{\mathrm{ad}_\chi^{N+1}}{(N+1) !} Z_{2,\omega}(u)  + R^{(N)}(u) 
$$
with
\begin{equation*}
\begin{split}
R^{(N)}(u)  :=& \int_0^1 \frac{(1-t)^{N}}{N !}  \mathrm{ad}_\chi^{N+1} (H\circ \tau^{-1} - Z_{2,\omega})  ( \Phi_\chi^{-t}  (u)) \, \mathrm{d}t \\
&+ \int_0^1 \frac{(1-t)^{N+1}}{(N+1) !}  \mathrm{ad}_\chi^{N+2}  Z_{2,\omega}  ( \Phi_\chi^{-t}  (u)) \, \mathrm{d}t .
\end{split}
\end{equation*}
Recalling that the analytic expansion of $ H\circ \tau^{-1}$ is given by \eqref{eq:the_dec}, if $\|u\| < \varepsilon_r$, we have\footnote{since the series \eqref{eq:the_dec} is analytic, we can permute sums and derivatives (here Poisson brackets) inside the domain of convergence.}
$$
H\circ (\tau^\sharp)^{-1}(u) = \sum_{n=0}^{N+1} \frac{\mathrm{ad}_\chi^n}{n !} Z_{2,\omega}(u) + \sum_{j\geq p} \sum_{n=0}^N \frac{\mathrm{ad}_\chi^n}{n !} Q^{(2j)}(u)  + R^{(N)}(u) .
$$
Recalling that, by definition, $\chi$ solves the cohomological equation \eqref{eq:coho}, we have
\begin{equation*}
\begin{split}
H\circ (\tau^\sharp)^{-1}(u) &=Z_{2,\omega}(u) +   \sum_{n=1}^{N+1} \frac{\mathrm{ad}_\chi^{n-1}}{n !} (Q^{(2r+2),\sharp} - Q^{(2r+2)})(u) + \sum_{j\geq p} \sum_{n=0}^N \frac{\mathrm{ad}_\chi^n}{n !} Q^{(2j)}(u)  + R^{(N)}(u) \\
&= Z_{2,\omega}(u)  + \sum_{j\geq p} \sum_{n=0}^N \frac{\mathrm{ad}_\chi^n}{n !} Q^{(2j,n)}(u) + R^{(N)}(u) 
\end{split}
\end{equation*}
where $Q^{(2j,n)} \in \mathcal{P}^{\mathcal{M}}_{\mathbb{R},2r+2}$ is defined by
$$
 Q^{(2j,n)} = Q^{(2j)} \quad \mathrm{if} \quad j\neq r+1 \quad \mathrm{and} \quad  Q^{(2r+2,n)} = (1-\frac1{n+1}) Q^{(2r+2)} + \frac1{n+1} Q^{(2r+2),\sharp}.
$$
Note that, by definition of $Q^{(2r+2),\sharp}$ (see \eqref{eq:maisquicest}), it is clear that 
$$
\forall j\geq p, \forall n\geq 0, \quad \|  Q^{(2j,n)} \|_{\mathscr{H}(\omega^{(i)})} \leq \|  Q^{(2j)} \|_{\mathscr{H}(\omega^{(i)})}.
$$
Then, ordering the terms by degrees, we get
$$
H\circ (\tau^\sharp)^{-1}(u) = Z_{2,\omega}(u) + \sum_{j\geq p} K^{(2j,N)}(u)  + R^{(N)}(u) 
$$
where $K^{(2j,N)} \in \mathcal{P}^{\mathcal{M}}_{\mathbb{R},2j}$ is given by
\begin{equation}
\label{eq:defK2jN}
K^{(2j,N)} = \sum_{ \substack{j = nr + k\\  n\leq N}} \frac{\mathrm{ad}_\chi^n}{n !} Q^{(2k,n)} .
\end{equation}
Then, as usual, we also define its limit as
\begin{equation}
\label{eq:coucou}
Q^{(2j),\sharp} := K^{(2j,\infty)} = \sum_{ j = nr + k} \frac{\mathrm{ad}_\chi^n}{n !} Q^{(2k,n)}.
\end{equation}
Of course, it can be easily checked that this definition is consistent with \eqref{eq:coho} if $j=r+1$ and that
\begin{equation}
\label{eq:coucou2}
Q^{(2j),\sharp}  = Q^{(2j)} \quad \mathrm{if} \quad j\leq r.
\end{equation}
As a consequence, $Q^{(2j),\sharp}$ is $\gamma$-resonant for $j\leq r+1$ (see \eqref{eq:gamma-res}).

Then, assuming for one instant that the series $\sum Q^{(2j),\sharp}(u)$ converges if $\|u\| < \varepsilon_{r+1}$, we have proven that
$$
H\circ (\tau^\sharp)^{-1}(u) = Z_{2,\omega}(u) + \sum_{j\geq p} Q^{(2j),\sharp}(u) + \sum_{j\geq p} (K^{(2j,N)} - Q^{(2j),\sharp})(u) + R^{(N)}(u) .
$$
Therefore, it remains to prove that $\|  Q^{(2j),\sharp} \|_{\mathscr{H}(\omega^{(i)})} \leq \varepsilon_{r+1}^{-2(j-p)}$ (which will imply the convergence of the series) and that the last two go to zero as $N$ goes to $+\infty$.

\noindent \underline{$\triangleright$ \emph{Step 3 : control of $\|  Q^{(2j),\sharp} \|_{\mathscr{H}(\omega^{(i)})}$.}} First, we note that thanks to the relations \eqref{eq:maisquicest} and \eqref{eq:coucou2}, since $\varepsilon_r > \varepsilon_{r+1}$, it is enough to estimate $\|  Q^{(2j),\sharp} \|_{\mathscr{H}(\omega^{(i)})}$ when $j\geq r+2$.

Recalling that $Q^{(2j),\sharp}$ is given by \eqref{eq:coucou}, by the triangle inequality, we have
\begin{equation}
\label{eq:jy_reviendrai_a_la_prochaine_etape}
\| Q^{(2j),\sharp}  \|_{\mathscr{H}(\omega^{(i)})} \leq  \sum_{ j = nr + k} \| \frac{\mathrm{ad}_\chi^n}{n !} Q^{(2k,n)} \|_{\mathscr{H}(\omega^{(i)})} .
\end{equation}
Applying then the estimate \eqref{eq:toutcapourca} and the induction hypothesis \eqref{eq:je_controle_mes_termes}, we get
\begin{equation*}
\begin{split}
 \| Q^{(2j),\sharp}  \|_{\mathscr{H}(\omega^{(i)})} &\leq    \sum_{ j = nr + k} \frac{k (k+r) \cdots (j-r)}{n !}  (B_pr^2)^{-n}  \varepsilon_{r}^{-2 j}  \|P\|_{ \mathscr{H}(\omega^{(i)}) } \\
  & \leq  \sum_{ j = nr + k} \frac{j^n}{n !} (B_pr)^{-n}  \left(\frac{r}{r+1}\right)^{2j \frac{5}{2p-2}}  \varepsilon_{r+1}^{-2 j}  \|P\|_{ \mathscr{H}(\omega^{(i)}) }.
 \end{split}
 \end{equation*}
 Then, using the estimate $C^{-j} j^n \leq e^{-n} n^n (\log(C))^{-n} $, $C>1$, we get
 $$
 \frac{ \| Q^{(2j),\sharp}  \|_{\mathscr{H}(\omega^{(i)})} }{\varepsilon_{r+1}^{-2 j}  \|P\|_{ \mathscr{H}(\omega^{(i)}) }}\leq \left(\frac{r}{r+1}\right)^{ \frac{5j}{p-1}}  +\sum_{ n\geq 1}  \frac{n^n (eB_pr)^{-n}}{n!} \left(\log \Big(1+\frac1r\Big) \right)^{-n} \left( \frac{p-1}{5} \right)^{n} .
 $$
 Since $r\geq p\geq 1$, $j\geq r+2$ and $n\geq 1$, we use the three following useful estimates:
 $$
 \left(\frac{r}{r+1}\right)^{ j} \leq \left(\frac{r}{r+1}\right)^{ r+1} \leq e^{-1}, \quad \quad n^n e^{-n} \leq n!,  \quad \quad \log \Big(1+\frac1r\Big) \geq \frac{\log(2)}{r},
 $$
to get
$$
 \frac{ \| Q^{(2j),\sharp}  \|_{\mathscr{H}(\omega^{(i)})} }{\varepsilon_{r+1}^{-2 j}  \|P\|_{ \mathscr{H}(\omega^{(i)}) }}\leq e^{- \frac{5}{p-1}} + \sum_{ n\geq 1}  B_p^{-n} (\log(2))^{-n} \left( \frac{p-1}{5} \right)^{n} \mathop{\longrightarrow}_{B_p \to +\infty} e^{- \frac{5}{p-1}} <1,
$$
and so  $\| Q^{(2j),\sharp}  \|_{\mathscr{H}(\omega^{(i)})} \leq \varepsilon_{r+1}^{-2 j}  \|P\|_{ \mathscr{H}(\omega^{(i)}) }$ provided that $B_p$ is chosen large enough.

\noindent \underline{$\triangleright$ \emph{Step 4 : limit $N\to +\infty$.}} First, we note that by definition of $K^{(2j,N)}$ (see \eqref{eq:defK2jN}) if $j\leq Nr$ then $K^{(2j,N)} = Q^{(2j),\sharp}$. As a consequence, by Lemma \ref{lem:comp_with_classical_norms}, if $\|u\| < \varepsilon_{r+1}$, we have 
$$
| \sum_{j\geq p} (K^{(2j,N)} - Q^{(2j),\sharp})(u) | \leq 5 |\omega^{(i)}|_\infty \sum_{j\geq rN} j \| K^{(2j,N)} - Q^{(2j),\sharp} \|_{\mathscr{H}(\omega^{(i)})} \| u\|^{2j}.
$$
But applying the triangle inequality, we have
$$
\| K^{(2j,N)} - Q^{(2j),\sharp} \|_{\mathscr{H}(\omega^{(i)})} \leq  \sum_{ j = nr + k} \| \frac{\mathrm{ad}_\chi^n}{n !} Q^{(2k,n)} \|_{\mathscr{H}(\omega^{(i)})}, 
$$
and so thanks to the estimate proved at the previous step (see \eqref{eq:jy_reviendrai_a_la_prochaine_etape}), we get
$$
| \sum_{j\geq p} (K^{(2j,N)} - Q^{(2j),\sharp})(u) | \leq 5 |\omega^{(i)}|_\infty \|P\|_{ \mathscr{H}(\omega^{(i)}) } \sum_{j\geq rN} j \varepsilon_{r+1}^{-2(j-p)} \| u\|^{2j} \mathop{\longrightarrow}_{N \to +\infty} 0. 
$$

Now, it only remains to prove that $R^{(N)}(u)$ goes to $0$ as $N$ goes to $+\infty$. First, we note that using as previously the cohomological equation, the remainder term rewrites
$$
R^{(N)}(u) = \int_0^1 \frac{(1-t)^{N}}{N !} \sum_{j\geq p}  \mathrm{ad}_\chi^{N+1}Q^{(2j,N+1)}_t  ( \Phi_\chi^{-t}  (u)) \, \mathrm{d}t,
$$
where $Q^{(2j,n)}_t  \in \mathcal{P}^{\mathcal{M}}_{\mathbb{R},2r+2}$ is defined by
$$
 Q^{(2j,n)}_t = Q^{(2j)} \quad \mathrm{if} \quad j\neq r+1, \quad \mathrm{and} \quad  Q^{(2r+2,n)}_t = (1-\frac{1-t}{n}) Q^{(2r+2)} + \frac{1-t}{n} Q^{(2r+2),\sharp}.
$$
Note that as previously, we have $\|  Q^{(2j,n)}_t \|_{\mathscr{H}(\omega^{(i)})} \leq \|  Q^{(2j)} \|_{\mathscr{H}(\omega^{(i)})}$.
Then, since $\Phi_\chi^{-t}$ preserves the Euclidean norm, we have
\begin{equation*}
\begin{split}
|R^{(N)}(u)| &\leq \sum_{k\geq p} \sup_{0\leq t \leq 1}  \| \frac{\mathrm{ad}_\chi^{N+1}}{(N+1) !} Q^{(2k,N+1)}_t\|_\infty  \| u\|^{2(k+(N+1)r)} \\
&= \sum_{j\geq (N+1)r+p} \sup_{0\leq t \leq 1}  \| \frac{\mathrm{ad}_\chi^{N+1}}{(N+1) !} Q^{(2 (j-(N+1)r), N+1)}_t\|_\infty  \| u\|^{2j} \\
&\leq  \sum_{j\geq (N+1)r+p}  \sum_{ j = nr + k}  \sup_{0\leq t \leq 1}  \| \frac{\mathrm{ad}_\chi^n}{n !} Q^{(2k,n)}_t \|_{\infty}  \| u\|^{2j}.
\end{split}
\end{equation*}
Then applying Lemma \ref{lem:comp_with_classical_norms} to control the $\| \cdot \|_\infty$ norm and proceeding as we did for the other remainder term\footnote{note that we have the same estimates as for the term in \eqref{eq:jy_reviendrai_a_la_prochaine_etape} (the bound are uniform with respect to $t\in [0;1]$).}, we deduce that  $R^{(N)}(u)$ goes to $0$ as $N$ goes to $+\infty$.

\noindent \underline{$\triangleright$ \emph{Step 5 : properties of $\tau^\sharp$}.} First, we note that, by composition, it is clear that $\tau^\sharp$ is a symplectomorphism which preserves the Euclidean norm. 

Now, recalling the estimate \eqref{eq:est_nablachi} of $\| \chi \|_{\mathscr{C}(\omega^{(i)})}$ and the definition \eqref{eq:def_etarp1} of $\eta_{r+1}$ and applying the last estimate of Lemma \ref{lem:comp_with_classical_norms}, we get
$$
2(r+1) \| \chi \|_{\infty} \leq \eta_{r+1}^{-(2p-2)} \varepsilon_r^{-2(r+1-p)}.
$$
Therefore, as previously, provided that $A>1$ is large enough (see \eqref{eq:cestbientotlesvacances}), we have
\begin{equation}
\label{eq:hihi_ler2}
2(r+1)\| \chi \|_{\infty} \leq B_p^{-1} r^{-2} \varepsilon_{r}^{-2r}.
\end{equation}
As a consequence, by Proposition \ref{prop:ca_flotte}, we have
$$
\| \Phi_\chi^{-1} (u) - u  \| \leq B_p^{-1} r^{-2} \Big( \frac{\|u\|}{\varepsilon_r} \Big)^{2r} \|u\|.
$$
Since $\Phi_\chi^{-1}$ preserves the Euclidean norm, applying the triangle inequality and the induction hypothesis \eqref{eq:hehe_cest_proche_de_lidentite}, provided that $\|u\|\leq \varepsilon_r$, we get
\begin{equation*}
\begin{split}
\|\tau^{\sharp}(u) - u \| &\leq \| \tau(\Phi_\chi^{-1}(u)) - \Phi_\chi^{-1}(u)  \| + \| \Phi_\chi^{-1} (u) - u  \| \\
&\leq \Big( \frac{\|u\|}{\varepsilon_r} \Big)^{2p-2} \|u\| + B_p^{-1} r^{-2} \Big( \frac{\|u\|}{\varepsilon_r} \Big)^{2r} \|u\| \\
&\leq \big(\frac{r}{r+1}\big)^5 (1+ B_p^{-1} r^{-2}) \Big( \frac{\|u\|}{\varepsilon_{r+1}} \Big)^{2p-2} \|u\|.
\end{split}
\end{equation*}
Since, provided that $B_p>1$ is large enough, we have $\big(\frac{r}{r+1}\big)^5 (1+ B_p^{-1} r^{-2}) \leq 1$, we deduce that $\|\tau^{\sharp}(u) - u \|$ is close to the identity.

Finally, it only remains to control $\mathrm{d}\tau(u)$. Applying the estimate \eqref{eq:cequonveut} of Proposition \ref{prop:ca_flotte}, for all $v\in \mathbb{C}^\mathcal{M}$, we have
$$
\| \mathrm{d}\Phi_\chi^{-1}(u)(v) \| \leq \exp \big( 4 r^2  \| \chi \|_\infty \|u \|^{2r} \big) \|v\| \mathop{\leq}^{\eqref{eq:hihi_ler2}} \exp\Big( \frac{2}{B_p r}  \big(\frac{\| u\|}{\varepsilon_r} \big)^{2r} \Big) \|v\|.
$$
Therefore, thanks to the induction hypothesis \eqref{eq:la_differentiel_bouge_pas_trop}, if $\|u\|\leq \varepsilon_r$, we get
\begin{equation*}
\begin{split}
\| \mathrm{d}\tau^{-1}(u)(v) \| &\leq \exp\Big( \big(\frac{\| u\|}{\varepsilon_r} \big)^{2p-2} + \frac{2}{B_p r}  \big(\frac{\| u\|}{\varepsilon_r} \big)^{2r} \Big) \|v\| \\
&\leq \exp\Big( \big(\frac{r}{r+1}\big)^5 (1+ 2B_p^{-1} r^{-1})   \big(\frac{\| u\|}{\varepsilon_{r+1}} \big)^{2p-2}   \Big) \|v\|.
\end{split}
\end{equation*}
Again, provided that $B_p>1$ is large enough, we have $\big(\frac{r}{r+1}\big)^5 (1+ 2B_p^{-1} r^{-1}) \leq 1$ for all $r\geq p$, so we get the expected estimate on $\| \mathrm{d}\tau^{-1}(u)(v) \|$.
\end{proof}

\subsection{Dynamical corollary}

\begin{definition}[$(k,r,\gamma)$ non-resonance] \label{def:krgam_nr} Being given $k\in \mathcal{M}$, $r\geq 1$ and $\gamma >0$, a vector of frequencies $\omega \in \mathbb{R}^\mathcal{M}$ is \emph{$(k,r,\gamma)$ non-resonant} if for all $q\leq r$ and all $Q \in \mathcal{P}^{\mathcal{M}}_{\mathbb{R},2q}$,
\begin{center}
if $Q$ is $\gamma$-resonant (see \eqref{eq:gamma-res}) then $Q$ commutes with $|u_k|^2$ (i.e. $\{Q,|u_k|^2\}=0$).
\end{center}
\end{definition}

\begin{remark}
The previous definition is the notion of non-resonance that we actually need for our dynamical corollary; see Corollary~\ref{cor:dyn} below. In the proof of Theorems~\ref{thm:conv} and~\ref{thm:mult}, we will use that strong non-resonance according to Definition~\ref{def:strg_nr} implies the non-resonance condition of Definition~\ref{def:krgam_nr} for all the ``low modes'' $k$ satisfying $\langle k\rangle \lesssim \epsilon^{-\nu}$ for some exponent $\nu>0$.
\end{remark}

\begin{corollary} \label{cor:dyn} In the setting of the result of Theorem \ref{thm:Birk}, if $k \in \mathcal{M}$ is an index such that  the frequencies $\omega$ are $(k,r,\gamma)$ non-resonant and $u \in C^1([0;T];\mathbb{C}^\mathcal{M})$ is the solution of an equation of the form
\begin{equation}
\label{eq:decomp_cor}
i \partial_t u(t) = \nabla H (u(t))+ g(t),
\end{equation}
with $g\in C^0([0;T];\mathbb{C}^\mathcal{M})$, and if it satisfies the bound
$$
\delta := \| u\|_{L^\infty(0;T)} := \sup_{0\leq t \leq T} \| u(t) \| < \frac{\varepsilon_r}2,
$$
then we have 
$$
||u_k(T)|^2 - |u_k(0)|^2| \lesssim \varepsilon_r^{-(2p-2)} \delta^{2p} +   \|P\|_{\mathscr{H}(\omega^{(i)})}  |\omega^{(i)}|_\infty \delta^{2r+2} \varepsilon_r^{-2r} T + \delta  \|g\|_{L^\infty(0;T)} T.
$$
\end{corollary}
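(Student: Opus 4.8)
The plan is to push the solution through the Birkhoff transformation $\tau$ furnished by Theorem~\ref{thm:Birk}, monitor the quantity $|v_k|^2$ along the normalized variable $v=\tau(u)$, and then transfer the conclusion back to $u$; the only place that really needs care is how the external source $g$ survives the change of variables.

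\textbf{Conjugating the equation.} First I would set $v(t):=\tau(u(t))$. Since $\tau$ preserves the Euclidean norm, $\|v(t)\|=\|u(t)\|\le\delta<\varepsilon_r/2$, so $v(t)$ stays in the ball $B(0,\varepsilon_r)$ on which $H\circ\tau^{-1}$ admits the analytic expansion \eqref{eq:the_dec}. Differentiating $v=\tau(u)$ and using that $\tau$ is symplectic — so that $\mathrm{d}\tau(u)$ sends the Hamiltonian vector field $-i\nabla H(u)$ of $H$ to the Hamiltonian vector field $-i\nabla(H\circ\tau^{-1})(v)$ of $H\circ\tau^{-1}$ — I obtain
$$
i\partial_t v = \nabla(H\circ\tau^{-1})(v) + \widetilde g(t),\qquad \widetilde g(t):=i\,\mathrm{d}\tau(u(t))\big(-ig(t)\big).
$$
The differential estimate \eqref{eq:la_differentiel_bouge_pas_trop}, applicable because $\|u(t)\|\le\varepsilon_r$, gives $\|\widetilde g(t)\|\le \exp\big(2^{-(2p-2)}\big)\|g(t)\|\lesssim \|g(t)\|$.

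\textbf{Replacing $u$ by $v$.} Writing $N_k(w):=|w_k|^2$, the factorization $N_k(v)-N_k(u)=(|v_k|-|u_k|)(|v_k|+|u_k|)$ together with $\|v\|=\|u\|$ and the ``close to the identity'' estimate \eqref{eq:hehe_cest_proche_de_lidentite} yields, for every $t$,
$$
\big|N_k(v(t))-N_k(u(t))\big|\le 2\,\|u(t)\|^2\Big(\tfrac{\|u(t)\|}{\varepsilon_r}\Big)^{2p-2}\le 2\,\varepsilon_r^{-(2p-2)}\delta^{2p}.
$$
Applied at $t=0$ and $t=T$ this accounts for the first term of the claim, so it remains to control $|N_k(v(T))-N_k(v(0))|$. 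Along the solution,
$$
\frac{\mathrm d}{\mathrm dt}N_k(v(t))=\{N_k,\,H\circ\tau^{-1}\}(v(t))+\big\langle \nabla N_k(v(t)),\,-i\widetilde g(t)\big\rangle .
$$
Since $Z_{2,\omega}=\tfrac12\sum_j\omega_jN_j$ and $\{N_k,N_j\}=0$, the quadratic part drops out; for $j\le r$ the polynomial $Q^{(2j)}$ is $\gamma$-resonant by Theorem~\ref{thm:Birk} and $\omega$ is $(k,r,\gamma)$ non-resonant, so $\{N_k,Q^{(2j)}\}=0$ by Definition~\ref{def:krgam_nr}; hence only the tail $j\ge r+1$ survives. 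The forcing term is bounded by $\|\nabla N_k(v)\|\,\|\widetilde g\|\le 2\|v\|\,\|\widetilde g\|\lesssim\delta\,\|g\|_{L^\infty(0;T)}$, and for the tail, Lemma~\ref{lem:est_poisson_weak} (with $\|N_k\|_\infty\le1$), Lemma~\ref{lem:comp_with_classical_norms} and the control \eqref{eq:je_controle_mes_termes} give
$$
\big|\{N_k,Q^{(2j)}\}(v(t))\big|\lesssim j^2\,|\omega^{(i)}|_\infty\,\|P\|_{\mathscr H(\omega^{(i)})}\,\varepsilon_r^{\,2p}\Big(\tfrac{\delta}{\varepsilon_r}\Big)^{2j}.
$$
Since $\delta/\varepsilon_r<1/2$, summing over $j\ge r+1$ and using $\varepsilon_r^{2p-2}\le1$ bounds the tail by $\lesssim|\omega^{(i)}|_\infty\,\|P\|_{\mathscr H(\omega^{(i)})}\,\delta^{2r+2}\varepsilon_r^{-2r}$, the polynomial-in-$r$ factor coming from the leading term of the series being absorbed in $\lesssim$.

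\textbf{Conclusion and main difficulty.} Integrating the previous differential bound over $[0;T]$ and combining it with the comparison between $u$ and $v$ produces exactly
$$
\big||u_k(T)|^2-|u_k(0)|^2\big|\lesssim\varepsilon_r^{-(2p-2)}\delta^{2p}+|\omega^{(i)}|_\infty\,\|P\|_{\mathscr H(\omega^{(i)})}\,\delta^{2r+2}\varepsilon_r^{-2r}\,T+\delta\,\|g\|_{L^\infty(0;T)}\,T.
$$
Everything here is a mechanical assembly of the structural output of Theorem~\ref{thm:Birk} with the Poisson-bracket machinery of Section~\ref{sec:setting}; the one step deserving attention is the first one, where the source $g$ has to be carried through the nonlinear, merely real-linear change of variables $\tau$. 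That is precisely where the differential estimate \eqref{eq:la_differentiel_bouge_pas_trop} and the strict gap $\delta<\varepsilon_r/2$ (keeping the exponential prefactor below $e$) are used; in the autonomous case $g\equiv0$ the argument reduces to the classical normal-form stability estimate for a near-invariant action.
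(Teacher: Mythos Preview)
Your proposal is correct and follows essentially the same route as the paper: set $v=\tau(u)$, use symplecticity to write the equation for $v$ with the transported source $\widetilde g$ controlled via \eqref{eq:la_differentiel_bouge_pas_trop}, kill the terms $j\le r$ in $\partial_t|v_k|^2$ using the $(k,r,\gamma)$ non-resonance, bound the tail $j>r$ via Lemma~\ref{lem:comp_with_classical_norms} and \eqref{eq:je_controle_mes_termes}, and finally transfer back to $u$ through \eqref{eq:hehe_cest_proche_de_lidentite}. The only cosmetic difference is that the paper bounds $|\{|v_k|^2,Q^{(2j)}\}|$ directly by $2\|v\|\,\|\nabla Q^{(2j)}(v)\|$ and Corollary~\ref{cor:very_nice}, whereas you invoke Lemma~\ref{lem:est_poisson_weak}; the resulting estimates are identical.
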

\begin{remark} The first error term, $\varepsilon_r^{-(2p-2)} \delta^{2p}$, is due to the difference between the change of variable and the identity. The second error term,  $\|P\|_{\mathscr{H}(\omega^{(i)})}  |\omega^{(i)}|_\infty \delta^{2r+2} \varepsilon_r^{-2r}$, controls the growth of the remainder term in the new variables (that is why it is of high order $\delta^{2r+2} $). Finally, the third error term $\delta  \|g\|_{L^\infty(0;T)} T$ is just due to the presence of the source term $g$.
\end{remark}
\begin{proof}[Proof of Corollary \ref{cor:dyn}]
Let $v = \tau(u)$. By composition, since $\tau$ is a symplectomorphism,  we have
$$
i\partial_t v = i\mathrm{d}\tau(u) ( -i \nabla H(u) -i g(t) ) = \nabla (H\circ \tau^{-1})(v) - i \mathrm{d}\tau(u)(i g)
$$
and thus
$$
\partial_t |v_k|^2 = \{ H\circ \tau^{-1} , |v_k|^2\} - 2 \Re (\overline{v_k}  [i \mathrm{d}\tau(u)(i g(t))]_k).
$$
Now, note that since $\tau$ preserves the Euclidean norm, we also have 
$$
\|v\|_{L^\infty(0;T)} = \delta < \frac{\varepsilon_r}2.
$$
Therefore, on the one hand, thanks to the estimate \eqref{eq:la_differentiel_bouge_pas_trop} on  $\mathrm{d}\tau(u)$, we have
$$
| \Re (\overline{v_k}  [i \mathrm{d}\tau(u)(i g)]_k) | \leq \| v\| \|  \mathrm{d}\tau(u)(i g)\| \leq \delta e \| g\|.
$$
 On the other hand, since the frequencies $\omega$ are $(k,r,\gamma)$ non-resonant and $ Q^{(2j)}$ is $\gamma$-resonant for $j\leq r$ (see \eqref{eq:gamma-res}), we have
$$
| \{ H\circ \tau^{-1} , |v_k|^2\} |  =  \Big| \sum_{j> r}  \{ Q^{(2j)}, |v_k|^2\} \Big| \leq  2\| v\| \sum_{j> r} \| \nabla  Q^{(2j)}(v) \| .
$$
Applying then Corollary \ref{cor:very_nice} and Lemma \ref{lem:comp_with_classical_norms}, we get
\begin{equation*}
\begin{split}
| \{ H\circ \tau^{-1} , |v_k|^2\} |   &\leq  2 \sum_{j> r} 2j \|v\|^{2j} \| Q^{(2j)}\|_{\infty} \leq  \sum_{j> r} 2j \|v\|^{2j} \| Q^{(2j)}\|_{\infty} \\
 &\leq 2\sum_{j> r} 2j \|v\|^{2j} 5j |\omega^{(i)}|_\infty \| Q \|_{\mathscr{H}(\omega^{(i)})}   \intertext{Next, using \eqref{eq:je_controle_mes_termes}, we can continue with}
 &\leq 20 |\omega^{(i)}|_\infty \| P \|_{\mathscr{H}(\omega^{(i)})} \sum_{j> r} j^2 \delta^{2j} \varepsilon_r^{-2(j-p)}  \\
 &\leq  20 |\omega^{(i)}|_\infty \| P \|_{\mathscr{H}(\omega^{(i)})} \delta^{2r+2} \varepsilon_r^{2(r+1-p)} \sum_{j = r+1}^\infty j^2  2^{-j}.
\end{split}
\end{equation*}
At the end, we have proven that
$$
||v_k(T)|^2 - |v_k(0)|^2| \lesssim    \|P\|_{\mathscr{H}(\omega^{(i)})}  |\omega^{(i)}|_\infty \delta^{2r+2} \varepsilon_r^{-2r} T + \delta \|g\|_{L^1(0;T)}.
$$
Finally, to conclude, it is enough to note that, since $\tau$ is close to the identity (see \eqref{eq:hehe_cest_proche_de_lidentite}), we have
$$
||u_k|^2 - |v_k|^2| \leq  \| u - v\| (\|u\| + \|v\|) = 2\| \tau(u) - u\| \delta \leq 2 \Big( \frac{\delta}{\varepsilon_r} \Big)^{2p-2} \delta^2.
$$
\end{proof}

\section{Proof of Theorem \ref{thm:conv}}
\label{sec:proof_thm_conv} The proof of Theorem \ref{thm:conv} relies on Corollary \ref{cor:dyn} of our Birkhoff normal form (Theorem \ref{thm:Birk}). Therefore, we are going to provide a decomposition of \eqref{eq:NLS*} of the form \eqref{eq:decomp_cor} and to estimate carefully each of its terms. Finally, Theorem \ref{thm:conv} will follow from a careful optimization of all the parameters involved.

In all the proof we consider a potential $V \in \mathcal{F}L^\infty(\mathbb{T};\mathbb{C})$, whose Fourier coefficients are real numbers such that the frequencies
$$
 \omega_j = j^2 + (2\pi)^{\frac12} V_j
$$
 are strongly-non-resonant according to Definition \ref{def:strg_nr} (and so we get an exponent $\alpha >0$).

Now, thanks to Proposition \ref{thm:gwp}, we consider a global solution $u\in C^0(\mathbb{R}; H^s(\mathbb{T};\mathbb{C}))$, $s>2/5$, of  \eqref{eq:NLS*} such that 
$$
\| u(0) \|_{H^s} =: \varepsilon \leq \frac1{100}.
$$
Since the $L^2$ norm is a constant of the motion for \eqref{eq:NLS*}, we have
\begin{equation}
\label{eq:pres_L2*}
\forall t\in \mathbb{R}, \quad   \|u(t) \|_{L^2} = \|u(0) \|_{L^2} \leq \|u(0) \|_{H^s} = \varepsilon.
\end{equation}
Moreover thanks to Proposition \ref{thm:gwp}, we know that there exists $\beta_s\geq 1$ such that
\begin{equation}
\label{eq:grow_hs*}
\forall t\in \mathbb{R}, \quad \| u(t) \|_{H^s} \lesssim_s \varepsilon \langle t  \rangle^{\beta_s}.
\end{equation}

\subsection{Control of the high actions}\label{sec:high}
\label{subsec:high} When $k$ is large enough, the normal form theorem is not well suited to prove Theorem \ref{thm:conv}. Nevertheless, in this case the time of stability is not too long and Theorem \ref{thm:conv} is just a consequence of the local well-posedness of \eqref{eq:NLS*}. For simplicity (to avoid the use of Bourgain spaces), here we propose a simple proof of this point, relying only on the estimate \eqref{eq:grow_hs*}.

Let $k\in \mathbb{Z}$ be such that
$$
2\langle k \rangle \geq \varepsilon^{-\upsilon_{\alpha,s,\nu}} 
$$
where $\upsilon_{\alpha,s,\nu} \in(0;1)$ is a positive constant depending only on $\alpha,\nu$ and $s$ that will be optimized later. Therefore, by assumption, we have
$$
\frac{\log \varepsilon^{-1}}{\log ( 2\langle k \rangle )} \leq \upsilon_{\alpha,s,\nu}^{-1}.
$$
Consequently, it is enough to prove that there exists a constant $\mu$ depending on $s,\nu$ and $\alpha$ such that, provided that $\varepsilon$ is smaller than a constant $\varepsilon_0$ depending only on $V,s,\nu$, we have
\begin{equation}
\label{eq:what_we_want_triv*}
|t|\leq \varepsilon^{\mu \log(\upsilon_{\alpha,s,\nu}^{-1})} \quad\Rightarrow \quad ||u_k(t)|^2 - |u_k(0)|^2| \leq \varepsilon^{6-\nu}.
\end{equation}

To prove such a property, setting $\mathcal{L} u := -\partial_x^2 u + V\ast u$, we control the variation of the actions uniformly with respect to $k$ as follows:
\begin{equation*}
\begin{split}
||u_k(t)|^2 - |u_k(0)|^2| &= ||u_k(t)|^2 - |e^{-it \omega_k}u_k(0)|^2| = ||u_k(t)|^2 - |(e^{-it \mathcal{L}}u^{(0)})_k|^2| \\
&=  (|u_k(t)| + |u_k(0)|)\big||u_k(t)| - |(e^{-it \mathcal{L}}u^{(0)})_k|\big| \\
&\leq (\| u(t)\|_{L^2} + \| u(0)\|_{L^2})| (u(t) -e^{-it \mathcal{L} }u^{(0)})_k|\\
&\leq 2\| u\|_{L^\infty_t L^2_x} \| u(t) -e^{-it \mathcal{L} } u^{(0)}\|_{L^2}
\end{split}
\end{equation*}
On the one hand, using the preservation of the $L^2$ norm (see \eqref{eq:pres_L2*}), we have
$
 \| u\|_{L^\infty_t L^2_x} = \varepsilon.
$
On the other hand, using the Duhamel formula, we have
\begin{equation*}
\begin{split}
\| u(t) -e^{-it \mathcal{L} } u^{(0)} \|_{L^2} &= \Big\| \int_0^t e^{-i (t-\tau) \mathcal{L} } |u(\tau)|^4 u(\tau) \mathrm{d}\tau \Big\|_{L^2} \\
&\leq \int_{[0;t]} \| |u(\tau)|^4 u(\tau) \|_{L^2} \mathrm{d}\tau \leq |t| \| u\|_{L^\infty([0;t] ; L^{10}_x)}^5.
\end{split}
\end{equation*}
Then, since $s>2/5$, using the Sobolev embedding $H^{2/5} \subset L^{10}$ and the a priori estimate \eqref{eq:grow_hs*} on the growth of the $H^s$ norm, we get
$$
\| u(t) -e^{-it \mathcal{L} } u^{(0)}\|_{L^2} \lesssim_s \varepsilon^5 \langle t  \rangle^{1+5\beta_s}.
$$

Finally, plugging these estimates together, we have proven that 
$$
||u_k(t)|^2 - |u_k(0)|^2| \lesssim_s \varepsilon^6 \langle t  \rangle^{1+5\beta_s}.
$$
which implies, as we wanted, the estimate \eqref{eq:what_we_want_triv*}, provided that
$$
(1+ 5 \beta_s) \mu \log(\upsilon_{\alpha,s,\nu}^{-1}) \leq \frac{\nu}2
$$
(i.e. that $\mu$ is small enough) and $\varepsilon$ is smaller than a constant depending only on $s$ and $\nu$.

\subsection{Setting for the low actions} \label{sec:low}
Now, and until the end of this proof, we aim at controlling the variations of $|u_k(t)|^2$ when $2\langle k \rangle < \varepsilon^{-\upsilon_{\alpha,s,\nu}}$. We consider a  large parameter $M\geq \varepsilon^{-\upsilon_{\alpha,s,\nu}}$ that will be optimized later, and we define
$$
 \mathcal{E}_M:= \mathrm{Span}_\mathbb{C} \{e^{ijx} \ | \ |j|\leq M\}.
$$
As usual, we identify $\mathcal{E}_M$ with $\mathbb{C}^\mathcal{M}$ (through the Fourier transform) where
$$
\mathcal{M}= \{-M,-M+1,\dots,M\}.
$$
We denote by $\Pi^{(M)}$ the $L^2$-orthogonal projection on $\mathcal{E}_M$ and we set 
$$
u^{(\leq M)} :=  \Pi^{(M)}(u(t)).
$$
Note that, $L^2$ norm being a constant of the motion and $\Pi^{(M)}$ being an orthogonal projection, we have
\begin{equation}
\label{eq:pres_L2_M*}
\forall t\in \mathbb{R}, \quad \|u^{(\leq M)}(t) \| \leq  \|u(t) \|_{L^2} = \|u(0) \|_{L^2} \leq \|u(0) \|_{H^s} = \varepsilon  .
\end{equation}
Moreover, $u^{(\leq M)}$ solves the equation
\begin{equation}
\label{eq:defgt*}
i\partial_t u^{(\leq M)} = \nabla H (u^{(\leq M)}) + g(t) \quad \mathrm{with} \quad g(t):= \sigma \Pi^{(M)} \big[  |u(t)|^4u(t) -   |\Pi^{(M)} u(t)|^4 \Pi^{(M)} u(t) \big]
\end{equation}
where
$$
H = Z_{2,\omega} + P \quad \mathrm{with} \quad P = \frac1{12} (\| \cdot \|_{L^6}^6)_{| \mathcal{E}_M} \quad \mathrm{and} \quad Z_{2,\omega} \mathrm{\ is \ given \ by \ \eqref{eq:defZ2}}.
$$
\subsection{Strichartz estimates}\label{sec:strichartz} Now we aim at estimating $\| P \|_{\mathscr{H}(\omega^{(i)})}$ where $\omega^{(i)}_j :=j^2$. Let $a\in \mathbb{Z}$, by definition, we have
$$
\forall u \in \mathcal{E}_M, \quad \lfloor \Pi_{\omega^{(i)},a} P \rceil(u) =  \frac1{12} \sum_{ \substack{\boldsymbol{k}_1 + \boldsymbol{k}_2 + \boldsymbol{k}_3  -\boldsymbol{\ell}_1 - \boldsymbol{\ell}_2 - \boldsymbol{\ell}_3 = 0 \\ \boldsymbol{k}_1^2 + \boldsymbol{k}_2^2 + \boldsymbol{k}_3^2  -\boldsymbol{\ell}_1^2 - \boldsymbol{\ell}_2^2 - \boldsymbol{\ell}_3^2 = a }  }  u_{\boldsymbol{k}_1}u_{\boldsymbol{k}_2} u_{\boldsymbol{k}_3} \ \overline{u_{\boldsymbol{\ell}_1}} \overline{u_{\boldsymbol{\ell}_2}} \overline{u_{\boldsymbol{\ell}_3}}.
$$
As a consequence, following a remark of Bourgain \cite[eq. (7.20)]{Bou04b}, we have\footnote{Note that the ``time'' $\tau$ in \eqref{StrichartzPolynom} has nothing to do with the ``actual'' time $t$ of the evolution equation \eqref{eq:NLS*}. In particular we use \eqref{StrichartzPolynom} with $u=\Pi^{(M)}u(t)$ for any fixed $t\in \R$.}
\begin{equation}\label{StrichartzPolynom}
 \lfloor \Pi_{\omega^{(i)},a} P \rceil(u)  = \frac1{24\pi} \int_{\mathbb{T}} e^{i\tau a}\| e^{i \tau\partial_x^2} u \|_{L^6}^6 \mathrm{d}\tau
\end{equation}
and so, we have
$$
| \lfloor \Pi_{\omega^{(i)},a} P \rceil(u) | \leq \frac1{24\pi} \|  e^{i \tau\partial_x^2} u \|_{L^6(\mathbb{T}_x \times \mathbb{T}_\tau)}^6.
$$
Then, applying the Strichartz estimate \eqref{strichartz} to control this $L^6$ space-time norm, we get
\begin{equation}
\label{eq:Str_NLS*}
\| P \|_{\mathscr{H}(\omega^{(i)})} = \sup_{a\in \mathbb{Z}} \sup_{\substack{\| u\|_{L^2}\leq 1\\ u\in \mathcal{E}_M }}| \lfloor \Pi_{\omega^{(i)},a} P \rceil(u) | \lesssim e^{c \frac{\log M}{\log \log M}},
\end{equation}
where $c>0$ is a universal constant.

\subsection{Estimate of the remainder term}\label{sec:trunc}
\label{sub:estg*}
By definition of the remainder term $g(t)$ (see\eqref{eq:defgt*}) and the mean value inequality, we have
$$
| g | \leq 5 ( | u|^4 + | \Pi^{(M)} u|^4 )  | u - \Pi^{(M)} u |,
$$
and so, by H\"older, we have
$$
\| g(t) \|_{L^2} \lesssim (\| u(t)\|_{L^{10}}^4 +  \| \Pi^{(M)} u(t)\|_{L^{10}}^4 ) \| u(t) - \Pi^{(M)} u(t) \|_{L^{10}}.
$$
Since by assumption $s>2/5$ and since the Sobolev embedding $H^{2/5} \subset L^{10}$ holds, we have 
\begin{equation}
\label{eq:jy_reviendrai}
\| g(t) \|_{L^2} \lesssim \| u(t)\|_{H^s}^4 \| u(t) - \Pi^{(M)} u(t) \|_{H^{2/5}}.
\end{equation}
Therefore, by definition of $\Pi^{(M)}$, we deduce that
$$
\| g(t) \|_{L^2} \lesssim \| u(t)\|_{H^s}^5 M^{-(s-\frac25)}.
$$
Finally, using the a priori bound we proved on $\| u(t)\|_{H^s}$, we get
\begin{equation}
\label{eq:est_g_NLS*}
\| g(t) \|_{L^2} \lesssim  \varepsilon^5 \langle t\rangle^{5 \beta_s} M^{-(s-\frac25)}.
\end{equation}

\subsection{Optimization of the parameters} 
\label{sub:opt*}
~\\
\noindent \underline{$\triangleright$ \emph{Step 1 : Setting.}} Now we aim at controlling the variations of $|u_k|$ where $k\in \mathbb{Z}$ satisfies
$$
 2\langle k \rangle < \varepsilon^{-\upsilon_{\alpha,s,\nu}}.
$$ 
We recall that we dealt with the case $2\langle k \rangle \geq \varepsilon^{-\upsilon_{\alpha,s,\nu}}$ at the beginning of the proof. Note that since $M$ has to satisfy $M \geq \varepsilon^{-\upsilon_{\alpha,s,\nu}}$, this implies that $|k|\leq M$ (i.e. $k\in \mathcal{M}$).

We introduce an integer $r\geq p=3$ that will be optimized later, and we set 
$$
\gamma := \rho \big( 2  \langle k \rangle\big)^{-e^{\alpha r} },
$$
in such a way that, since, by assumption, the frequencies $\omega$ are strongly-non-resonant according to Definition \ref{def:strg_nr},  $\omega$ is $(k,r,\gamma)$-non-resonant according to Definition \ref{def:krgam_nr}.

Therefore, applying Corollary \ref{cor:dyn} and using the preservation of the $L^2$ norm (see \eqref{eq:pres_L2*}), we know that if $\varepsilon \leq \frac{\varepsilon_r}2$ then for all $t\in \mathbb{R}$, we have
$$
||u_k(t)|^2 - |u_k(0)|^2| \lesssim \varepsilon_r^{-4} \varepsilon^{6} +   \|P\|_{\mathscr{H}(\omega^{(i)})}  |\omega^{(i)}|_\infty \varepsilon^{2r+2} \varepsilon_r^{-2r} |t| + \varepsilon  \|g\|_{L^\infty(0;t)L^2_x} |t|.
$$

First, we aim at establishing a simple lower bound on $\varepsilon_r$. Indeed, using the bound \eqref{eq:Str_NLS*} on $ \| P \|_{\mathscr{H}(\omega^{(i)})} $ (with $|\omega^{(i)}|_\infty \lesssim M^2$ and $ |\omega^{(f)}|_\infty \lesssim 1$), we have
$$
 \varepsilon_r = \left( \frac{\gamma}{ A\, B_3\, r^5 \langle |\omega^{(f)}|_\infty \rangle \| P \|_{\mathscr{H}(\omega^{(i)})}    \log \langle |\omega^{(i)}|_\infty \rangle  } \right)^{\frac1{4}} \gtrsim  \left( \frac{ e^{-c \frac{\log M}{\log \log M}} }{  r^5     \log M  }  \big( 2  \langle k \rangle\big)^{-e^{\alpha r} } \right)^{\frac1{4}}.  
 $$
 Therefore, there exists a constant $\kappa\in (0;1)$, depending only on $V$, such that we have
 $$
 2 \varepsilon_r \geq \kappa e^{-\frac{c}2 \frac{\log M}{\log \log M}} \big( 2  \langle k \rangle\big)^{-\frac12e^{\alpha r} } =: \eta_r.
 $$
As a consequence, provided that $\varepsilon \leq \eta_r$, we have
$$
||u_k(t)|^2 - |u_k(0)|^2| \lesssim \varepsilon_r^{-4} \varepsilon^{6} +   \|P\|_{\mathscr{H}(\omega^{(i)})}  |\omega^{(i)}|_\infty \varepsilon^{2r+2} \eta_r^{-2r} |t| + \varepsilon  \|g\|_{L^\infty(0;t)L^2_x} |t|.
$$
Hence, to prove that if 
$ |t|\leq T_\varepsilon$ (which will be optimized later), we have $||u_k(t)|^2 - |u_k(0)|^2|\leq \varepsilon^{6-\nu}$, it is enough to prove that the following estimates holds
\begin{align}
\varepsilon &\leq \eta_r, \label{eq:1} \tag{I}\\
\eta_r^{-4} \varepsilon^{6} &\lesssim_{\nu,s} \varepsilon^{6-\nu/2},  \label{eq:2} \tag{II}\\
 \|P\|_{\mathscr{H}(\omega^{(i)})}  |\omega^{(i)}|_\infty \varepsilon^{2r+2} \eta_r^{-2r} T_\varepsilon &\lesssim_{\nu,s} \varepsilon^{6},  \label{eq:3} \tag{III}\\
 \varepsilon  \|g\|_{L^\infty(-T_\varepsilon;T_\varepsilon)L^2_x} T_\varepsilon &\lesssim_{s,\nu} \varepsilon^{6}.\label{eq:4} \tag{IV}
\end{align}

\noindent \underline{$\triangleright$ \emph{Step 2 : Simplification of the estimates.}} Now we aim at simplifying these constraints.

First, we note that since, by assumption, without loss of generality, we can assume that $\nu \leq 2$, we see that provided that $\varepsilon$ is small enough, the first estimate \eqref{eq:1} is a consequence of the second one \eqref{eq:2}. Therefore, these estimate reduces to the second one \eqref{eq:2} :
\begin{equation}
\label{eq:2bis} \tag{IIb}
\varepsilon^{\nu/8}   \lesssim_{\nu,s} \eta_r.
\end{equation}
Then plugging this estimate \eqref{eq:2bis} in the third one \eqref{eq:3} and using the estimate we proved on $ \|P\|_{\mathscr{H}(\omega^{(i)})}$ (and using that, since $r\geq 3$ and $\nu \leq 2$,  we have $2r-4-r\nu /4 \geq   r/3$ ), the constraint \eqref{eq:3}  can be replaced by
\begin{equation}
\label{eq:3bis} \tag{IIIb}
 M^3 \varepsilon^{r/3}  T_\varepsilon\lesssim_{\nu,s} 1.
\end{equation}
Finally, using the estimate \eqref{eq:est_g_NLS*} that we proved on $\|g\|_{L^\infty(-T_\varepsilon;T_\varepsilon)L^2_x}$, the last constraint \eqref{eq:4} will be satisfied if we can ensure the stronger constraint
\begin{equation}
\label{eq:4bis} \tag{IVb}
T_{\varepsilon}^{5\beta_s} M^{-1} \lesssim_{\nu,s} 1.
\end{equation}

\noindent \underline{$\triangleright$ \emph{Step 3 : Choice of the parameters.}}  In order to satisfy the estimates \eqref{eq:3bis} and \eqref{eq:4bis}, it is enough to set
$$
T_\varepsilon = \varepsilon^{- (30\beta_s)^{-1} r} \quad \mathrm{and} \quad M = \varepsilon^{-r/12}.
$$
Now the only remaining constraint is \eqref{eq:2bis}, i.e.
$$
\varepsilon^{\nu/8}   \lesssim_{\nu,s} e^{-\frac{c}2 \frac{\log M}{\log \log M}} \big( 2  \langle k \rangle\big)^{-\frac12e^{\alpha r} }.
$$
Then, noticing that (since $M = \varepsilon^{-r/12}$) provided that $\varepsilon$ is smaller than a constant depending only on $\nu$ and $\alpha$ (i.e. uniform with respect to $r\geq 3$), we have
$$
e^{-\frac{c}2 \frac{\log M}{\log \log M}} \geq  \varepsilon^{\nu/16}  2  ^{-\frac12e^{\alpha r} },
$$
the constraint \eqref{eq:2bis} can be replaced by
$$
\varepsilon^{\nu/16}   \lesssim_{\nu,s}  \big( 2  \langle k \rangle\big)^{-e^{\alpha r} }.
$$
and so by
\begin{equation}
\label{eq:2ter} \tag{IIt}
\varepsilon  \lesssim_{\nu,s}  \big( 2  \langle k \rangle\big)^{- e^{\alpha_\nu r} } \quad \mathrm{where} \quad \alpha_\nu := \alpha + \frac13 \log(16\nu^{-1}).
\end{equation}
Now, we set
$$
r_* := \frac1{2\alpha_\nu} \log \frac{\log (\varepsilon^{-1})}{\log( 2  \langle k \rangle)},
$$
in such a way that
$$
\big( 2  \langle k \rangle\big)^{- e^{\alpha_\nu 2r_*} } = \varepsilon,
$$
and so that if $r\geq 3$ is an integer in the interval $[r_*,2 r_*]$ then the constraint \eqref{eq:2ter} holds. To be able to choose such a $r$, it is enough to prove that $r_* \geq 3/2$. Fortunately, we recall that we are dealing with the case $2\langle k \rangle < \varepsilon^{-\upsilon_{\alpha,s,\nu}} $ where $\upsilon_{\alpha,s,\nu} \in (0;1)$ is a constant we have to optimize. Therefore, we know that
$$
r_* \geq \frac1{2\alpha_\nu} \log \upsilon_{\alpha,s,\nu}^{-1},
$$
and so it is enough to require that $\upsilon_{\alpha,s,\nu}$ is small enough to have
$$
 \frac1{2\alpha_\nu} \log \upsilon_{\alpha,s,\nu}^{-1} \geq \frac32.
$$
That is why we set
\begin{equation}
\label{eq:def_upsi}
\upsilon_{\alpha,s,\nu} :=\frac{\nu}{16} e^{-3 \alpha}.
\end{equation}
Note that therefore, since $\nu\leq 2$, we have $\upsilon_{\alpha,s,\nu}\leq 1/8$. Moreover, since $r/12 \geq 1/4$, the assumption $|k|\leq M$ is satisfied :
$$
|k|\leq \varepsilon^{-\upsilon_{\alpha,s,\nu}} \leq \varepsilon^{-1/8} \leq \varepsilon^{-1/4} \leq \varepsilon^{-r/12}=M.
$$
To conclude this proof, it is enough to note that by construction, we have proven that 
$$||u_k(t)|^2 - |u_k(0)|^2|\leq \varepsilon^{6-\nu},$$ while 
$$
|t| \leq T_\varepsilon = \varepsilon^{- (30\beta_s)^{-1} r} \quad \mathrm{where} \quad T_\varepsilon \geq \varepsilon^{- (30\beta_s)^{-1} r_*} = \varepsilon^{- \frac1{60 \beta_s \alpha_\nu} \log \frac{\log (\varepsilon^{-1})}{\log( 2  \langle k \rangle)}}.
$$

\section{Proof of Theorem \ref{thm:mult}} The proof of Theorem \ref{thm:mult} is very similar to the one of Theorem \ref{thm:conv}. It has the same structure, but some estimates are more involved.

In all the proof we consider a real valued even potential $W \in H^4(\mathbb{T};\mathbb{R})$ (i.e. $W(x) \in \mathbb{R}$ and $W(-x) = W(x)$) such that the frequencies $\omega = (\lambda_k)_{k\geq 1}$
 are strongly-non-resonant according to Definition \ref{def:strg_nr}, where $(\lambda_k)_{k\geq 1}$ is the increasing sequence of eigenvalues of the Sturm--Liouville operator $-\partial_x^2 + W_{|[0;\pi]}$ with homogeneous Dirichlet boundary conditions. We denote by $(f_k)_{k\geq 1}$ the associated eigenfunctions (see Prop \ref{prop_dir}).

Now, thanks to Proposition \ref{prop:GWPW}, we consider a global odd solution $u\in C^0(\mathbb{R}; H^s(\mathbb{T};\mathbb{C}))$, $s>2/5$, of  \eqref{eq:NLS} such that 
$$
\| u(0) \|_{H^s} =: \varepsilon \leq \frac1{100}.
$$
Note that the existence of odd solutions is ensured by the assumption that the potential $V$ is even. Without loss of generality, we assume that $s\leq 1$.
Since the $L^2$ norm is a constant of the motion for \eqref{eq:NLS}, we have
$$
\forall t\in \mathbb{R}, \quad   \|u(t) \|_{L^2} = \|u(0) \|_{L^2} \leq \|u(0) \|_{H^s} = \varepsilon.
$$
Moreover thanks to Proposition \ref{thm:gwp}, we know that there exists $\beta_s\geq 1$ such that
\begin{equation}
\label{eq:grow_hs}
\forall t\in \mathbb{R}, \quad \| u(t) \|_{H^s} \lesssim_s \varepsilon \langle t  \rangle^{\beta_s}.
\end{equation}
In this proof, being given an odd function $v\in L^2(\mathbb{T})$, we denote
$$
v_k = \int_{0}^\pi v(x) f_k(x) \mathrm{d}x.
$$
Moreover, since $(f_k)_{k\geq 1}$ is a Hilbertian basis of $L^2(0;\pi)$ (see Proposition \ref{prop_dir}), we know that
$$
v(x) = \sum_{k\geq 1} v_k f_k(x)
$$
where $f_k$ is extended as an odd function on $\mathbb{T}$.

\subsection{Control of the high actions} Proceeding exactly as in the proof of Theorem \ref{thm:conv} (see subsection \ref{subsec:high}), it can be proven that if 
$\langle k \rangle \geq \varepsilon^{-\upsilon_{\alpha,s,\nu}}$ (where $\upsilon_{\alpha,s,\nu}$ is also given by \eqref{eq:def_upsi}) then there exists $\mu$ such that
\begin{equation}
\label{eq:what_we_want_triv}
|t|\leq \varepsilon^{- \mu \log \frac{\log \varepsilon^{-1}}{\log (\langle k\rangle)}}  \quad\Rightarrow \quad ||u_k(t)|^2 - |u_k(0)|^2| \leq \varepsilon^{6-\nu}.
\end{equation}
\subsection{Setting for the low actions} Now, and until the end of this proof, we aim at controlling the variations of $|u_k(t)|^2$ when $\langle k \rangle < \varepsilon^{-\upsilon_{\alpha,s,\nu}}$. We consider a  large parameter $M\geq \varepsilon^{-\upsilon_{\alpha,s,\nu}}$ that will be optimized later, and we define
$$
 \mathcal{E}_M:= \mathrm{Span}_\mathbb{C} \{f_k \ | \ k\leq M\}.
$$
As usual, we identify $\mathcal{E}_M$ with $\mathbb{C}^\mathcal{M}$ (through the Fourier transform) where
$$
\mathcal{M}= \llbracket 1,M\rrbracket
$$
We denote by $\Pi^{(M)}$ the $L^2$-orthogonal projection on $\mathcal{E}_M$ and we set 
$$
u^{(\leq M)} :=  \Pi^{(M)}(u(t)).
$$
Note that, the $L^2$ norm being a constant of the motion and $\Pi^{(M)}$ being an orthogonal projection, we have
\begin{equation}
\label{eq:pres_L2_M}
\forall t\in \mathbb{R}, \quad \|u^{(\leq M)}(t) \| \leq  \|u(t) \|_{L^2} = \|u(0) \|_{L^2} \leq \|u(0) \|_{H^s} = \varepsilon  .
\end{equation}
Moreover, $u^{(\leq M)}$ solves the equation
\begin{equation}
\label{eq:defgt}
i\partial_t u^{(\leq M)} = \nabla H (u^{(\leq M)}) + g(t) \quad \mathrm{with} \quad g(t):= \sigma \Pi^{(M)} \big[  |u(t)|^4u(t) -   |\Pi^{(M)} u(t)|^4 \Pi^{(M)} u(t) \big],
\end{equation}
and
$$
H = Z_{2,\omega} + P \quad \mathrm{with} \quad P = \frac1{12} (\| \cdot \|_{L^6}^6)_{| \mathcal{E}_M} \quad \mathrm{and} \quad Z_{2,\omega} \mathrm{ \ given \ by \ \eqref{eq:defZ2}}.
$$

\subsection{Strichartz estimates} Now we aim at proving the same estimate on $\| P \|_{\mathscr{H}(\omega^{(i)})}$ (where $\omega^{(i)}_k :=k^2$) as in the case of \eqref{eq:NLS*}. In the paragraph \emph{Identification of the Hamiltonian structure} page 737 of \cite{BG21}, it is proven that (provided that $\sum_{k \geq 1} \langle k\rangle^2 |u_k|^2 <\infty $)
$$
\| \sum_{k \geq 1} u_k f_k \|_{L^6}^6 = \sum_{\boldsymbol{k},\boldsymbol{\ell} \in (\mathbb{N}^*)^3} Q_{\boldsymbol{k},\boldsymbol{\ell}} u_{\boldsymbol{k}_1}u_{\boldsymbol{k}_2} u_{\boldsymbol{k}_3} \ \overline{u_{\boldsymbol{\ell}_1}} \overline{u_{\boldsymbol{\ell}_2}} \overline{u_{\boldsymbol{\ell}_3}}
$$
where the coefficients $Q_{\boldsymbol{k},\boldsymbol{\ell}}$ are symmetric and satisfy 
$$
|Q_{\boldsymbol{k},\boldsymbol{\ell}}|\lesssim_{\|W\|_{L^2}} \sum_{\nu,\mu \in \{-1,1\}^3} \langle \nu_1\boldsymbol{k}_1 + \nu_2\boldsymbol{k}_2 + \nu_3\boldsymbol{k}_3  + \mu_1 \boldsymbol{\ell}_1 + \mu_2 \boldsymbol{\ell}_2 + \mu_3\boldsymbol{\ell}_3  \rangle^{-2}.
$$
Therefore, for all $a\in \mathbb{Z}$, we have 
$$
\forall u \in \mathcal{E}_M, \quad \lfloor \Pi_{\omega^{(i)},a} P \rceil(u) =  \frac1{12}  \sum_{\substack{\boldsymbol{k},\boldsymbol{\ell} \in \llbracket 1,M\rrbracket^3\\ \boldsymbol{k}_1^2 + \boldsymbol{k}_2^2 + \boldsymbol{k}_3^2  -\boldsymbol{\ell}_1^2 - \boldsymbol{\ell}_2^2 - \boldsymbol{\ell}_3^2 = a}} |Q_{\boldsymbol{k},\boldsymbol{\ell}}| u_{\boldsymbol{k}_1}u_{\boldsymbol{k}_2} u_{\boldsymbol{k}_3} \ \overline{u_{\boldsymbol{\ell}_1}} \overline{u_{\boldsymbol{\ell}_2}} \overline{u_{\boldsymbol{\ell}_3}}.
$$
and so
\begin{equation}\label{moment}
\begin{split}
|\lfloor \Pi_{\omega^{(i)},a} P \rceil(u)| &\lesssim_{\|W\|_{L^2}}  \sum_{\nu,\mu \in \{-1,1\}^3}  \! \! \! \!  \sum_{\substack{\boldsymbol{k},\boldsymbol{\ell} \in \llbracket 1,M\rrbracket^3\\ \boldsymbol{k}_1^2 + \boldsymbol{k}_2^2 + \boldsymbol{k}_3^2  -\boldsymbol{\ell}_1^2 - \boldsymbol{\ell}_2^2 - \boldsymbol{\ell}_3^2 = a \\ \nu_1\boldsymbol{k}_1 + \nu_2\boldsymbol{k}_2 + \nu_3\boldsymbol{k}_3  + \mu_1 \boldsymbol{\ell}_1 + \mu_2 \boldsymbol{\ell}_2 + \mu_3\boldsymbol{\ell}_3 = j}} \! \! \! \! \frac{ |u_{\boldsymbol{k}_1}u_{\boldsymbol{k}_2} u_{\boldsymbol{k}_3} \ \overline{u_{\boldsymbol{\ell}_1}} \overline{u_{\boldsymbol{\ell}_2}} \overline{u_{\boldsymbol{\ell}_3}}|}{\langle j  \rangle^2} \\
&=  \sum_{\nu,\mu \in \{-1,1\}^3} \int_{\mathbb{T}}  \int_{\mathbb{T}} w(x) e^{-ita} \prod_{n=1}^3 e^{-it\partial_x^2}v^{(\nu_n)}(x) e^{it\partial_x^2}v^{(\mu_n)}(x) \mathrm{d}x \mathrm{d}t,
\end{split}
\end{equation}
where $v^{(\pm 1)}(x) := \sum_{1\leq k \leq M} |u_k| e^{\pm ikx}$ and $w(x) = \sum_{k\in \mathbb{Z}} \langle k \rangle^{-2} e^{ ik x} \in H^1(\mathbb{T}) \subset L^\infty(\mathbb{T})$. As a consequence, applying H\"older's inequality, we get
$$
|\lfloor \Pi_{\omega^{(i)},a} P \rceil(u)| \lesssim_{\|W\|_{L^2}} \|w\|_{L^\infty}  \sum_{\nu,\mu \in \{-1,1\}^3}   \prod_{n=1}^3 \| e^{-it\partial_x^2}v^{(\nu_n)} \|_{L^6(\mathbb{T}^2)}  \| e^{it\partial_x^2}v^{(\mu_n)} \|_{L^6(\mathbb{T}^2)}.
$$
Finally, noticing that $\|v^{(\pm 1)} \|_{L^2} = \| u\|_{L^2}$ and applying the Strichartz estimate \eqref{strichartz}, we get
$$
\| P \|_{\mathscr{H}(\omega^{(i)})}\lesssim_{\|W\|_{L^2}}  e^{c \frac{\log M}{\log \log M}},
$$
where $c>0$ is a universal constant.

\subsection{Estimate of the remainder term}
As in subsection \ref{sub:estg*} (i.e. for \eqref{eq:NLS*}), we aim at proving that, provided that $s\leq 1$, we have
\begin{equation}
\label{eq:y_fait_chaud}
\| g \|_{L^2} \lesssim \| u\|_{H^s}^5 M^{-(s-\frac25)},
\end{equation}
which, using the a priori bound \eqref{eq:grow_hs} on the growth of the $H^s$ norm, provides 
\begin{equation}
\label{eq:est_g_NLS}
\| g \|_{L^2} \lesssim  \varepsilon^5 \langle t\rangle^{5 \beta_s} M^{-(s-\frac25)}.
\end{equation}
First, we note that for the same reasons as in subsection \ref{sub:estg*}, the remainder term enjoys the estimate
\begin{equation}
\label{eq:maison}
\| g \|_{L^2} \lesssim \| u\|_{H^s}^4 \| u - \Pi^{(M)} u \|_{H^{2/5}} \simeq  \| u\|_{H^s}^4 \| \sum_{k > M} u_k f_k \|_{H^{2/5}}.
\end{equation}
Then we note that
\begin{equation}
\label{eq:lapin}
\forall s' \in[0;1], \forall v\in \ell^2(\mathbb{N}^*;\mathbb{C}), \quad \| \sum_{k\geq 1} v_k f_k \|_{H^{s'}}^2 \simeq  \sum_{k\geq 1} \langle k \rangle^{2 s'} |v_k|^2.
\end{equation}
Indeed, the case $s'=1$ is proven in Proposition 6.2 page 733 of \cite{BG21} while the case $s'=0$ is just a consequence of the fact that $(f_j)_{j\geq 1}$ is a Hilbertian basis. Therefore, the case $0<s'<1$ follows directly by interpolation\footnote{we refer the reader to \cite[Thm page 130]{Tri78} and \cite[Thm 13.2.2 page 198 and Thm 13.2.1 page 197]{Agra15} for specific results of interpolation well suited to this setting.}.

Finally, plugging \eqref{eq:lapin} into \eqref{eq:maison}, it comes (as expected)
\begin{equation*}
\begin{split}
 \| g \|_{L^2} &\lesssim \| u\|_{H^s}^4 \| \sum_{k > M} u_k f_k \|_{H^{2/5}} \simeq \| u\|_{H^s}^4 \big( \sum_{k > M} \langle k \rangle^{4/5} |u_k|^2  \big)^{1/2}  \\ &\lesssim \| u\|_{H^s}^4  M^{-(s-\frac25)}  \big( \sum_{k > M} \langle k \rangle^{2s} |u_k|^2  \big)^{1/2} \simeq  \| u\|_{H^s}^5 M^{-(s-\frac25)}.
\end{split}
\end{equation*}

\subsection{Optimization of the parameters} Since the estimates on $g$ and $P$ are the same as for \eqref{eq:NLS*}, the rest of the proof is exactly the same as in subsection \ref{sub:opt*}. For completeness, it may be just relevant to mention that the estimates
$$
 |\omega^{(i)}|_\infty \lesssim M^2 \quad \mathrm{and} \quad   |\omega^{(f)}|_\infty \lesssim 1
$$
follow directly from \cite[Theorem 4 p.35]{PT}. We also mention that since $|k|\geq 1$ then $\langle k \rangle \geq \sqrt{2}>1 $ and so the quantity $2\langle k \rangle$ of subsection \ref{sub:opt*} can always be replaced by $\langle k \rangle$ here.

\section{Small divisor estimates}
This section is devoted to the proof of Proposition \ref{prop:main_conv} and Proposition \ref{prop:main_mult}.
\subsection{Proof of Proposition \ref{prop:main_conv}} First, we recall the classical proof stating that, almost surely, the frequencies of \eqref{eq:NLS*} for the random convolution potential $V$ as in \eqref{random*} enjoy a weak non-resonance condition.
\begin{lemma} \label{lem:weak_nr_*}Almost surely, there exists $\gamma>0$ such that, for all $q\geq 1$, $\boldsymbol{m} \in (\mathbb{Z}^*)^q$, $\boldsymbol{h}_1,\cdots,\boldsymbol{h}_q \in \mathbb{Z}$ all distinct, we have
\begin{equation}
\label{eq:sd_triv}
\forall a\in \mathbb{Z}, \quad \big|  a+ \sum_{j=1}^{q} \boldsymbol{m}_j \omega^{\eqref{eq:NLS*}}_{\boldsymbol{h}_j}  \big| \geq \gamma   \Big(\min_j \langle \boldsymbol{h}_j \rangle\Big)^{-s_*} \prod_{j=1}^q |  \boldsymbol{m}_j |^{-4} \langle  \boldsymbol{h}_j \rangle^{-4}.
\end{equation}
\end{lemma}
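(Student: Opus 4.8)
The plan is to reduce the estimate \eqref{eq:sd_triv} to a standard small divisor argument for countably many independent Gaussian random variables, exploiting that $\omega^{\eqref{eq:NLS*}}_{\boldsymbol{h}_j} = \boldsymbol{h}_j^2 + (2\pi)^{\frac12} V^{\eqref{eq:NLS*}}_{\boldsymbol{h}_j}$ and that the Fourier coefficients $V^{\eqref{eq:NLS*}}_k = X_k \langle k\rangle^{-s_*}$ are, up to the deterministic weights, i.i.d.\ standard real Gaussians. First I would fix $q\geq 1$, $\boldsymbol{m}\in(\mathbb{Z}^*)^q$, distinct $\boldsymbol{h}_1,\dots,\boldsymbol{h}_q\in\mathbb{Z}$ and $a\in\mathbb{Z}$, and write
$$
a + \sum_{j=1}^q \boldsymbol{m}_j \omega^{\eqref{eq:NLS*}}_{\boldsymbol{h}_j} = \Big(a + \sum_{j=1}^q \boldsymbol{m}_j \boldsymbol{h}_j^2\Big) + (2\pi)^{\frac12}\sum_{j=1}^q \boldsymbol{m}_j \langle \boldsymbol{h}_j\rangle^{-s_*} X_{\boldsymbol{h}_j}.
$$
Letting $h_{j_0}$ be an index realizing $\min_j\langle\boldsymbol{h}_j\rangle$ and conditioning on all the $X_k$ with $k\neq \boldsymbol{h}_{j_0}$, the right-hand side is an affine function of the single standard Gaussian $X_{\boldsymbol{h}_{j_0}}$ with leading coefficient of modulus $(2\pi)^{\frac12}|\boldsymbol{m}_{j_0}|\langle\boldsymbol{h}_{j_0}\rangle^{-s_*}$. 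Since a standard Gaussian has density bounded by $(2\pi)^{-\frac12}$, the conditional probability that this affine combination falls in an interval of length $2\delta$ is at most $\delta\,|\boldsymbol{m}_{j_0}|^{-1}\langle\boldsymbol{h}_{j_0}\rangle^{s_*}$; integrating out the conditioning gives the same unconditional bound.

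Next I would run a Borel--Cantelli argument. With the target threshold $\gamma\cdot T_{\boldsymbol{m},\boldsymbol{h}}$ where $T_{\boldsymbol{m},\boldsymbol{h}} := (\min_j\langle\boldsymbol{h}_j\rangle)^{-s_*}\prod_{j=1}^q |\boldsymbol{m}_j|^{-4}\langle\boldsymbol{h}_j\rangle^{-4}$, the probability of failure for a fixed $(q,\boldsymbol{m},\boldsymbol{h},a)$ is $\lesssim \gamma\,|\boldsymbol{m}_{j_0}|^{-1}\langle\boldsymbol{h}_{j_0}\rangle^{s_*} T_{\boldsymbol{m},\boldsymbol{h}} \le \gamma\prod_{j=1}^q|\boldsymbol{m}_j|^{-4}\langle\boldsymbol{h}_j\rangle^{-4}$ (using $\langle\boldsymbol{h}_{j_0}\rangle^{s_*}(\min_j\langle\boldsymbol{h}_j\rangle)^{-s_*}=1$ and discarding the remaining nonnegative powers). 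The key summability check is then that
$$
\sum_{q\ge 1}\ \sum_{\boldsymbol{m}\in(\mathbb{Z}^*)^q}\ \sum_{\substack{\boldsymbol{h}\in\mathbb{Z}^q\\ \text{distinct}}}\ \sum_{a\in\mathbb{Z}}\ \gamma\,\prod_{j=1}^q|\boldsymbol{m}_j|^{-4}\langle\boldsymbol{h}_j\rangle^{-4}
$$
is finite after one also controls the sum over $a$: here the point is that, given $\boldsymbol{m}$ and $\boldsymbol{h}$, the quantity $a+\sum_j\boldsymbol{m}_j\boldsymbol{h}_j^2$ has a fixed integer part structure, but $a$ ranges over all of $\mathbb{Z}$, so one needs an extra gain. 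The standard fix is to note that the event is vacuous unless $|a+\sum_j\boldsymbol{m}_j\boldsymbol{h}_j^2|\le 1$ (otherwise the deterministic part already dominates, since the random part is a.s.\ $O(1)$ in the relevant regime, or more carefully: restrict to $|a|\le 2\sum_j|\boldsymbol{m}_j|\langle\boldsymbol{h}_j\rangle^2 + 1$ and absorb the polynomial factor in $a$ into spare powers of $|\boldsymbol{m}_j|$ and $\langle\boldsymbol{h}_j\rangle$), which reduces the $a$-sum to finitely many relevant values bounded polynomially. With the exponent $4$ on each $|\boldsymbol{m}_j|^{-1}$ and $\langle\boldsymbol{h}_j\rangle^{-1}$ one has plenty of room: $\sum_{m\in\mathbb{Z}^*} |m|^{-4} <\infty$ and $\sum_{h\in\mathbb{Z}}\langle h\rangle^{-4}<\infty$, so for each fixed $q$ the triple sum over $(\boldsymbol{m},\boldsymbol{h},a)$ is bounded by $C^q$ for some universal $C$, and summing over $q$ converges provided $\gamma$ carries a further factor like $\gamma\rightsquigarrow \gamma\, 2^{-q}$ — which is harmless since one may harmlessly insert $2^{-q}$ into $T_{\boldsymbol{m},\boldsymbol{h}}$ by trading it against, say, $\prod |\boldsymbol{m}_j|^{-4}\le \prod|\boldsymbol{m}_j|^{-3}\cdot\prod|\boldsymbol{m}_j|^{-1}$ and using $\prod|\boldsymbol{m}_j|^{-1}\le 2^{-q}$ when all $|\boldsymbol{m}_j|\ge 2$, handling the indices with $|\boldsymbol{m}_j|=1$ separately. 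Hence, by Borel--Cantelli, almost surely only finitely many of these events occur, and shrinking $\gamma$ (which is allowed to be random) absorbs that finite exceptional set, giving \eqref{eq:sd_triv}.

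I expect the main obstacle to be the bookkeeping of the summation over $a\in\mathbb{Z}$ and over $q$ simultaneously: one has to make sure the polynomial-in-$(\boldsymbol{m},\boldsymbol{h})$ factors coming both from $T_{\boldsymbol{m},\boldsymbol{h}}$ and from the range of relevant $a$'s are genuinely dominated by the available negative powers, and that the geometric series in $q$ converges after extracting a $2^{-q}$ (or $q!^{-1}$-type) factor — this is exactly the kind of estimate where the generous exponent $4$ (rather than $1$) in \eqref{eq:sd_triv} is used, and it is why the weak condition is stated with such large exponents. The Gaussian anti-concentration step and the final shrinking of $\gamma$ are routine; everything downstream (namely that this weak condition, combined with the arithmetic of the integer parts $\boldsymbol{h}_j^2$, upgrades to the strongly-non-resonant condition of Definition \ref{def:strg_nr}) is the content of the subsequent part of the section and not of this lemma.
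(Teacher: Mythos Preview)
Your overall strategy --- Gaussian anti-concentration via conditioning on all but one $X_{\boldsymbol{h}_j}$, followed by Borel--Cantelli and a final shrinking of $\gamma$ --- is exactly the paper's. Two points in your bookkeeping are not quite right, though, and the paper handles both more cleanly.

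First, the sum over $a\in\mathbb{Z}$. Your suggestion to restrict to $|a|\le 2\sum_j|\boldsymbol{m}_j|\langle\boldsymbol{h}_j\rangle^2+1$ does not work \emph{inside} the probability sum: the event $\{|a+\sum_j\boldsymbol{m}_j\omega_{\boldsymbol{h}_j}|<\delta\}$ has positive probability for \emph{every} $a\in\mathbb{Z}$ (Gaussians have full support), and the range of $a$ for which it is non-negligible depends on the random quantities $|\omega_k|$, so you cannot truncate $a$ deterministically before summing probabilities. The paper instead inserts an extra factor $\langle a\rangle^{-2}$ into the Borel--Cantelli threshold, first proving the intermediate bound
\[
\big|a+\sum_{j=1}^q\boldsymbol{m}_j\omega_{\boldsymbol{h}_j}\big|\ \ge\ \gamma\,\langle a\rangle^{-2}\Big(\min_j\langle\boldsymbol{h}_j\rangle\Big)^{-s_*}\prod_{j=1}^q|\boldsymbol{m}_j|^{-2}\langle\boldsymbol{h}_j\rangle^{-2},
\]
and only \emph{afterwards} removing the $\langle a\rangle^{-2}$ via the dichotomy you mention: either the divisor is $\ge 1$, or $|a|\lesssim_{\|V\|_{L^2}}\prod_j|\boldsymbol{m}_j|\langle\boldsymbol{h}_j\rangle^2$, in which case $\langle a\rangle^{-2}$ is absorbed into the product --- this is precisely what upgrades the exponents from $2$ to $4$ in \eqref{eq:sd_triv}.

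Second, the sum over $q$. Your extraction of a $2^{-q}$ via $\prod_j|\boldsymbol{m}_j|^{-1}\le 2^{-q}$ fails whenever some $|\boldsymbol{m}_j|=1$, and ``handling those separately'' is not straightforward (there can be up to $q$ such indices). The paper instead exploits that the $\boldsymbol{h}_j$ are all \emph{distinct}: summing over ordered tuples $\boldsymbol{h}_1<\cdots<\boldsymbol{h}_q$ rather than all tuples produces a factor $(q!)^{-1}$, so the total bound is $\gamma\sum_{q\ge 1}(q!)^{-1}\big(\sum_{m\in\mathbb{Z}^*}|m|^{-2}\sum_{h\in\mathbb{Z}}\langle h\rangle^{-2}\big)^q\lesssim\gamma$. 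This is the clean mechanism for convergence in $q$.
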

\begin{proof} Being given $a,q,\boldsymbol{m},\boldsymbol{h}$ satisfying the assumptions of the lemma, by definition of the random convolution potential (see  \eqref{random*}), we have for all $\gamma>0$ and all $j_* \in \llbracket 1,q\rrbracket$
\begin{equation*}
\begin{split}
& \mathbb{P} \Big( \big|  a+ \sum_{j=1}^{q} \boldsymbol{m}_j \omega^{\eqref{eq:NLS*}}_{\boldsymbol{h}_j}  \big| < \gamma \Big)\\
=& \frac1{\sqrt{2\pi}} \mathbb{E} \int_{\mathbb{R}}   \mathbbm{1}_{ |  m_{j_*}  \boldsymbol{h}_{j_*}  ^2 +   (2\pi)^{1/2} y m_{j_*}  \langle  \boldsymbol{h}_{j_*} \rangle^{-s_*}   +a+ \sum_{j\neq j_*} \boldsymbol{m}_j \omega^{\eqref{eq:NLS*}}_{\boldsymbol{h}_j}  | < \gamma}  e^{-y^2/2}\mathrm{d}y \\
\lesssim & \, \mathbb{E} \int_{\mathbb{R}}   \mathbbm{1}_{ |  m_{j_*}  \boldsymbol{h}_{j_*}  ^2 +  (2\pi)^{1/2}y m_{j_*}  \langle  \boldsymbol{h}_{j_*} \rangle^{-s_*}   +a+ \sum_{j\neq j_*} \boldsymbol{m}_j \omega^{\eqref{eq:NLS*}}_{\boldsymbol{h}_j}  | < \gamma} \mathrm{d}y \sim \gamma |m_{j_*} |^{-1} \langle  \boldsymbol{h}_{j_*} \rangle^{s_*}
\end{split}
\end{equation*}
and so
$$
\mathbb{P} \Big( \big|  a+ \sum_{j=1}^{q} \boldsymbol{m}_j \omega^{\eqref{eq:NLS*}}_{\boldsymbol{h}_j}  \big| < \gamma \Big) \lesssim \gamma \min_j \langle \boldsymbol{h}_j \rangle^{s_*}.
$$
Therefore, for all $\gamma>0$, we have
\begin{equation*}
\begin{split}
&\mathbb{P}\Big(\exists (a,q,\boldsymbol{m},\boldsymbol{h}), \quad  \big|  a+ \sum_{j=1}^{q} \boldsymbol{m}_j \omega^{\eqref{eq:NLS*}}_{\boldsymbol{h}_j}  \big| < \gamma  \langle a \rangle^{-2} \Big(\min_j \langle \boldsymbol{h}_j \rangle\Big)^{-s_*} \prod_{j=1}^q |  \boldsymbol{m}_j |^{-2}   \langle  \boldsymbol{h}_j \rangle^{-2} \Big) \\
&\lesssim \gamma \sum_{a \in \mathbb{Z}} \sum_{q\geq 1} \sum_{\boldsymbol{m} \in (\mathbb{Z}^*)^q} \sum_{\substack{\boldsymbol{h} \in \mathbb{Z}^q\\ \boldsymbol{h}_1 < \cdots < \boldsymbol{h}_q }} \langle a \rangle^{-2} \prod_{j=1}^q |  \boldsymbol{m}_j |^{-2}  \langle  \boldsymbol{h}_j \rangle^{-2} \\
& \lesssim \gamma \sum_{q\geq 1} \sum_{\boldsymbol{m} \in (\mathbb{Z}^*)^q} \sum_{\boldsymbol{h} \in \mathbb{Z}^q } (q!)^{-1} \prod_{j=1}^q | \boldsymbol{m}_j |^{-2}  \langle  \boldsymbol{h}_j \rangle^{-2}  \lesssim \gamma \mathop{\longrightarrow}_{\gamma \to 0} 0, 
\end{split}
\end{equation*}
where at the first line $(a,q,\boldsymbol{m},\boldsymbol{h})$ have implicitly to satisfy the assumptions of the lemma. It means that,
 almost surely, there exists $\gamma>0$ such that, for all $q\geq 1$, $\boldsymbol{m} \in (\mathbb{Z}^*)^q$, $\boldsymbol{h}_1,\cdots,\boldsymbol{h}_q \in \mathbb{Z}$ all distinct, we have
\begin{equation}
\label{eq:bon_interm}
\forall a\in \mathbb{Z}, \quad \big|  a+ \sum_{j=1}^{q} \boldsymbol{m}_j \omega^{\eqref{eq:NLS*}}_{\boldsymbol{h}_j}  \big| \geq \gamma  \langle a \rangle^{-2} \Big(\min_j \langle \boldsymbol{h}_j \rangle\Big)^{-s_*} \prod_{j=1}^q |  \boldsymbol{m}_j |^{-2} \langle  \boldsymbol{h}_j \rangle^{-2}.
\end{equation}
Then, we note that either
$$
|a|\geq 1+ \sum_{j=1}^{q} |\boldsymbol{m}_j| |\omega^{\eqref{eq:NLS*}}_{\boldsymbol{h}_j}|,
$$
and so the small divisor is larger than or equal to $1$ (and so the estimate \eqref{eq:sd_triv} is satisfied with $\gamma=1$) or (using that $|\omega^{\eqref{eq:NLS*}}_{\boldsymbol{h}_j}| \lesssim_{\|V\|_{L^2}} \langle \boldsymbol{h}_j \rangle^{2}$)
$$
|a| \leq 1+ \sum_{j=1}^{q} |\boldsymbol{m}_j| |\omega^{\eqref{eq:NLS*}}_{\boldsymbol{h}_j}| \lesssim  \sum_{j=1}^{q} |\boldsymbol{m}_j| \langle  \boldsymbol{h}_j \rangle^2 \lesssim _{\|V\|_{L^2}} \prod_{j=1}^q |  \boldsymbol{m}_j |^2 \langle  \boldsymbol{h}_j \rangle^2
$$
and so, plugging this estimate in \eqref{eq:bon_interm} we get  \eqref{eq:sd_triv}.
\end{proof}
Now, we aim at improving the small divisor estimate \eqref{eq:sd_triv} in order to prove that almost surely the frequencies of \eqref{eq:NLS*} enjoy a strong non-resonance condition. 

\noindent \underline{\emph{Step 1 : Setting.}}
Let $B>0$ be a constant such that
$$
\forall k \in \mathbb{Z}, \quad |\omega^{\eqref{eq:NLS*}}_k - k^2|\leq B \langle k \rangle^{-s_*}.
$$
We fix $q\geq 1$,  $\boldsymbol{m} \in (\mathbb{Z}^*)^q$, $\boldsymbol{h}_1,\cdots,\boldsymbol{h}_q \in \mathbb{Z}$ all distinct such that $|\boldsymbol{h}_1| \leq \cdots \leq |\boldsymbol{h}_q| $. We define $q_\star \in \llbracket 1,q\rrbracket$ as the maximal index such that
$$
\forall p\in \llbracket 2,q_\star \rrbracket, \quad B \sum_{j=p}^q | \boldsymbol{m}_j| \langle \boldsymbol{h}_j \rangle^{-s_*} \geq \frac{\gamma}2 \langle \boldsymbol{h}_1 \rangle^{-s_*} \prod_{j=1}^{p-1}  |  \boldsymbol{m}_j |^{-4} \langle  \boldsymbol{h}_j \rangle^{-4}.
$$
Therefore it is enough to estimate $\big|a+\sum_{j=1}^{q_\star} \boldsymbol{m}_j \omega^{\eqref{eq:NLS*}}_{\boldsymbol{h}_j}  \big|$ where $a\in\mathbb{Z}$. Indeed, if $q_\star< q$, by maximality of $q_\star$ we have
$$
B \sum_{j=q_\star +1}^q |m_j| \langle \boldsymbol{h}_j \rangle^{-s_*} < \frac{\gamma}2 \langle \boldsymbol{h}_1 \rangle^{-s_*} \prod_{j=1}^{q_\star}  |  \boldsymbol{m}_j |^{-4} \langle  \boldsymbol{h}_j \rangle^{-4},
$$
and so applying the triangle inequality and the non-resonance estimate, we have
\begin{equation}
\label{eq:jy_reviens}
\begin{split}
& \big| \sum_{j=1}^{q} \boldsymbol{m}_j \omega^{\eqref{eq:NLS*}}_{\boldsymbol{h}_j} \big| \\
& \geq \big|  \sum_{j=q_\star+1}^{q} \boldsymbol{m}_j \boldsymbol{h}_j^2 + \sum_{j=1}^{q_\star} \boldsymbol{m}_j \omega^{\eqref{eq:NLS*}}_{\boldsymbol{h}_j} \big|   - \sum_{j=q_\star+1}^{q} |\boldsymbol{m}_j| |\omega^{\eqref{eq:NLS*}}_{\boldsymbol{h}_j} - \boldsymbol{h}_j^2| \\
&\geq \frac12 \big|  \sum_{j=q_\star+1}^{q} \boldsymbol{m}_j \boldsymbol{h}_j^2 + \sum_{j=1}^{q_\star} \boldsymbol{m}_j \omega^{\eqref{eq:NLS*}}_{\boldsymbol{h}_j} \big|   + \frac{\gamma}2 \langle \boldsymbol{h}_1 \rangle^{-s_*} \prod_{j=1}^{q_\star}  |  \boldsymbol{m}_j |^{-4} \langle  \boldsymbol{h}_j \rangle^{-4} -B \sum_{j=q_\star+1}^{q} |\boldsymbol{m}_j| \langle \boldsymbol{h}_j \rangle^{-s_*} \\
&\geq  \frac12 \big|  \sum_{j=q_\star+1}^{q} \boldsymbol{m}_j \boldsymbol{h}_j^2 + \sum_{j=1}^{q_\star} \boldsymbol{m}_j \omega^{\eqref{eq:NLS*}}_{\boldsymbol{h}_j} \big|.
\end{split}
\end{equation}
Now, to estimate $\big|a+\sum_{j=1}^{q_\star} \boldsymbol{m}_j \omega^{\eqref{eq:NLS*}}_{\boldsymbol{h}_j}  \big|$ uniformly with respect to $a\in \mathbb{Z}$, we are just going to use the lower bound given by Lemma \ref{lem:weak_nr_*}, but in order to have a bound depending only on $\boldsymbol{h}_1$, we have to estimate $\langle \boldsymbol{h}_p \rangle$ for all $p\in \llbracket 2,q_* \rrbracket$.

\medskip

\noindent \underline{\emph{Step 2 : Estimation of $\langle \boldsymbol{h}_p \rangle$ with respect to $\langle \boldsymbol{h}_1 \rangle$.}} By definition of $q_\star$, we deduce that if $p\in \llbracket 2,q_\star \rrbracket$ then
$$
B  | \boldsymbol{m}|_1 \langle \boldsymbol{h}_p \rangle^{-s_*} \geq \frac{\gamma}2 \langle \boldsymbol{h}_1 \rangle^{-s_*}  | \boldsymbol{m}|_1^{-4(p-1)}\prod_{j=1}^{p-1} \langle  \boldsymbol{h}_j \rangle^{-4}.
$$
Applying the $\log$ function to this estimate, we get
\begin{equation}
\label{eq:cest_la_fin}
y_p \leq \log(C) + 4 s_*^{-1}q_{\star} \log(| \boldsymbol{m}|_1 ) + y_1 + 4s_*^{-1}\sum_{j=1}^{p-1} y_j,
\end{equation}
where $\boldsymbol{y}_j =  \log \langle  \boldsymbol{h}_j \rangle$ and $C:= (2B\gamma^{-1} )^{s_*^{-1}}$. Here, we note that this last relation is also valid for $p=1$. Therefore, as a consequence of the discrete Gr\"onwall inequality, we have
$$
y_p \leq (\log(C) + 4 s_*^{-1}q_{\star} \log(| \boldsymbol{m}|_1 ) + y_1) e^{\frac{4(p-1)}{s_*}},
$$
and so
$$
\langle  \boldsymbol{h}_p \rangle \leq \Big( C | \boldsymbol{m}|_1^{ 4 s_*^{-1}q_{\star}}   \langle  \boldsymbol{h}_1 \rangle \Big)^{ \exp(\frac{4q_\star}{s_*})}.
$$

\medskip
 
 \noindent \underline{\emph{Step 3 : Conclusion.}} Plugging this last estimate in the classical non-resonance condition \eqref{eq:sd_triv} yield
$$
\big|a+\sum_{j=1}^{q_\star} \boldsymbol{m}_j \omega^{\eqref{eq:NLS*}}_{\boldsymbol{h}_j}  \big| \geq \gamma    \langle \boldsymbol{h}_1 \rangle^{-s_*}|  \boldsymbol{m} |_1^{-4q_\star} \Big( C | \boldsymbol{m}|_1^{ 4 s_*^{-1}q_{\star}}   \langle  \boldsymbol{h}_1 \rangle \Big)^{-4q_\star \exp(\frac{4q_\star}{s_*})}.
$$
Then, using the rough estimate $q_\star\leq q\leq | \boldsymbol{m}|_1=:r$, it comes
$$
\big|a+\sum_{j=1}^{q_\star} \boldsymbol{m}_j \omega^{\eqref{eq:NLS*}}_{\boldsymbol{h}_j}  \big| \geq \gamma  C^{-4r \exp(\frac{4r}{s_*})} r^{-4r-16r^2 s_*^{-1} \exp(\frac{4r}{s_*})} \langle \boldsymbol{h}_1 \rangle^{-s_*-4r \exp(\frac{4r}{s_*})}  .
$$
As a consequence, a standard asymptotic analysis proves that there exists a constant $\rho>0$ and a constant $\alpha$ depending only on $s_*$ such that we have
$$
\big|a+\sum_{j=1}^{q_\star} \boldsymbol{m}_j \omega^{\eqref{eq:NLS*}}_{\boldsymbol{h}_j}  \big| \geq 2 \rho \big( 2  \langle \boldsymbol{h}_1 \rangle\big)^{-e^{- \alpha r}},
$$
which plugged in \eqref{eq:jy_reviens} concludes this proof.
\subsection{Proof of Proposition \ref{prop:main_mult}} The scheme of this proof is very similar to the previous one. 
First, we recall the classical proof stating that, almost surely, provided that the potential is small enough, the frequencies of \eqref{eq:NLS} for the random multiplicative potential \eqref{random} enjoy a weak non-resonance condition.
\begin{lemma}\label{lem:weak_nr} There exists $\eta >0$ such that, almost surely, provided that $\|W^{\eqref{eq:NLS}} \|_{H^1}<\eta$, there exists $\gamma>0$ such that, for all $q\geq 1$, $\boldsymbol{m} \in (\mathbb{Z}^*)^q$, $\boldsymbol{h}_1,\cdots,\boldsymbol{h}_q \in \mathbb{N}^*$ all distinct, we have
\begin{equation}
\label{eq:sd_triv_mult}
\forall a\in \mathbb{Z}, \quad \big|  a+ \sum_{j=1}^{q} \boldsymbol{m}_j \omega^{\eqref{eq:NLS}}_{\boldsymbol{h}_j}  \big| \geq \gamma   \Big(\max_j \langle \boldsymbol{h}_j \rangle\Big)^{-s_*} \prod_{j=1}^q |  \boldsymbol{m}_j |^{-4} \langle  \boldsymbol{h}_j \rangle^{-4}.
\end{equation}
\end{lemma}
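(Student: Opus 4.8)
The plan is to follow the proof of Lemma~\ref{lem:weak_nr_*} almost verbatim; the only genuinely new input is the anti-concentration estimate, which is harder here because the Dirichlet eigenvalues $\lambda_k$ depend \emph{non-linearly} on all the random coefficients $(X_k)$. Set $\mathcal{A}_\eta:=\{\,\|W^{\eqref{eq:NLS}}\|_{H^1}<\eta\,\}$. As in the convolution case, the lemma reduces to the claim that there exist a \emph{universal} $\eta>0$ and a constant $C$ such that, for all $a\in\mathbb{Z}$, $q\geq1$, $\boldsymbol{m}\in(\mathbb{Z}^*)^q$, pairwise distinct $\boldsymbol{h}_1,\dots,\boldsymbol{h}_q\in\mathbb{N}^*$ and all $\gamma>0$,
$$\mathbb{P}\Big(\mathcal{A}_\eta\cap\big\{\,\big|a+\textstyle\sum_{j=1}^q\boldsymbol{m}_j\lambda_{\boldsymbol{h}_j}\big|<\gamma\,\big\}\Big)\leq C\,\gamma\,\big(\textstyle\max_{j}\langle\boldsymbol{h}_j\rangle\big)^{s_*}.$$
Indeed, summing this against the weights $\langle a\rangle^{-2}(\max_j\langle\boldsymbol{h}_j\rangle)^{-s_*}\prod_j|\boldsymbol{m}_j|^{-2}\langle\boldsymbol{h}_j\rangle^{-2}$ makes the factor $(\max_j\langle\boldsymbol{h}_j\rangle)^{s_*}$ cancel, and the resulting series over $(a,q,\boldsymbol{m},\boldsymbol{h})$ converges because $\sum_a\langle a\rangle^{-2}<\infty$ and $\sum_{q\geq1}\tfrac1{q!}\big(\sum_{m\neq0}m^{-2}\big)^q\big(\sum_{h\geq1}\langle h\rangle^{-2}\big)^q<\infty$; letting $\gamma\to0$ and then removing the $\langle a\rangle^{-2}$ exactly as in Lemma~\ref{lem:weak_nr_*} ($|a|$ large $\Rightarrow$ the divisor is $\geq1$; otherwise $|a|\lesssim\prod_j|\boldsymbol{m}_j|^2\langle\boldsymbol{h}_j\rangle^2$, using $|\lambda_k|\lesssim\langle k\rangle^2$ from \cite[Thm~4 p.35]{PT}) upgrades the estimate to the polynomial lower bound \eqref{eq:sd_triv_mult}.

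For the anti-concentration estimate I would use that, by Sturm--Liouville perturbation theory (\cite{PT}), each $\lambda_n$ is analytic in $W$ with gradient $\nabla_W\lambda_n=f_n^2$ (Hellmann--Feynman), whence $\partial_{X_k}\lambda_n=\langle k\rangle^{-s_*}\int_0^\pi f_n(x)^2\cos(kx)\,\mathrm{d}x$. I would pick $j_0$ realizing $\max_j\langle\boldsymbol{h}_j\rangle$, set $N:=\boldsymbol{h}_{j_0}$, and differentiate only in the single variable $X_{2N}$. For the free eigenfunctions $\overline{f_n}:=\sqrt{2/\pi}\sin(n\cdot)$ one has $\int_0^\pi\overline{f_n}^2\cos(2Nx)\,\mathrm{d}x=-\tfrac12$ if $n=N$ and $=0$ for every $n\in\mathbb{N}^*\setminus\{N\}$; since the $\boldsymbol{h}_j$ are distinct integers $\leq N$, in the free model only the $j=j_0$ term contributes. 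On $\mathcal{A}_\eta$ the quantitative eigenfunction estimates of \cite{PT} control $f_n-\overline{f_n}$ uniformly in $n$, with a smallness governed by $\|W\|_{H^1}$, so that for $\eta$ small enough (and universal, the bound being perturbative around $W=0$) one gets $\big|\partial_{X_{2N}}\!\big(\sum_j\boldsymbol{m}_j\lambda_{\boldsymbol{h}_j}\big)\big|\geq c\langle2N\rangle^{-s_*}|\boldsymbol{m}_{j_0}|\geq c\langle2N\rangle^{-s_*}$ on $\mathcal{A}_\eta$, with $c>0$ universal. I would then conclude by a one-dimensional slicing: freezing all $X_k$ with $k\neq2N$, the event $\mathcal{A}_\eta$ cuts out an interval in the $X_{2N}$-line (since $\|W^{\eqref{eq:NLS}}\|_{H^1}^2$ is an upward parabola in $X_{2N}$), on which $x\mapsto a+\sum_j\boldsymbol{m}_j\lambda_{\boldsymbol{h}_j}$ is strictly monotone with $|\,\text{derivative}\,|\geq c\langle2N\rangle^{-s_*}$; hence the set where $|a+\sum_j\boldsymbol{m}_j\lambda_{\boldsymbol{h}_j}|<\gamma$ has Lebesgue measure $\leq 2c^{-1}\gamma\langle2N\rangle^{s_*}$ there, and integrating the Gaussian density of $X_{2N}$ (bounded by $(2\pi)^{-1/2}$) over it, then the remaining Gaussians, produces the bound $\lesssim\gamma\langle2N\rangle^{s_*}\lesssim\gamma(\max_j\langle\boldsymbol{h}_j\rangle)^{s_*}$.

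The main obstacle is precisely the uniform lower bound on $\big|\partial_{X_{2N}}\!\big(\sum_j\boldsymbol{m}_j\lambda_{\boldsymbol{h}_j}\big)\big|$ on $\mathcal{A}_\eta$: it is not enough that each $f_n$ be close to $\overline{f_n}$ in $L^1$ or $L^\infty$, one has to ensure that $\sum_{j\neq j_0}\boldsymbol{m}_j\int_0^\pi\big(f_{\boldsymbol{h}_j}^2-\overline{f_{\boldsymbol{h}_j}}^2\big)\cos(2Nx)\,\mathrm{d}x$ — together with the correction to the $j=j_0$ term — stays strictly below $\tfrac12|\boldsymbol{m}_{j_0}|$ \emph{uniformly in} $q,\boldsymbol{m},\boldsymbol{h}$, i.e.\ picks up no factor growing with $|\boldsymbol{m}|_1$ or with $q$. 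This forces one to use the P\"oschel--Trubowitz expansion of the eigenfunctions in a sharp form, providing decay in $N$ of the Fourier coefficient $\widehat{f_n^2-\overline{f_n}^2}(2N)$ and not merely a uniform-in-$n$ size bound; it is exactly at this point that the smallness of $\|W\|_{H^1}$ — and the slightly-better-than-$H^1$ regularity of $W$ which holds almost surely because $s_*>3/2$ — are used, and it is also the reason the statement features $\max_j\langle\boldsymbol{h}_j\rangle$ (one is forced to perturb along $X_{2N}$ with $N$ the \emph{largest} index) rather than the $\min_j\langle\boldsymbol{h}_j\rangle$ of Lemma~\ref{lem:weak_nr_*}.
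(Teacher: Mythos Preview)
Your reduction to a single anti-concentration inequality, the Borel--Cantelli summation over $(a,q,\boldsymbol{m},\boldsymbol{h})$, and the removal of the auxiliary weight $\langle a\rangle^{-2}$ are exactly the paper's argument. The paper, however, does not prove the anti-concentration bound \eqref{eq:la_poste} itself: it quotes it from \cite[proof of Proposition~1.12, in particular the last estimate on p.~703]{BG21}. So at the level of what is actually written in this paper, your sketch is already more detailed than the original.

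Your Hellmann--Feynman route is the right mechanism, but the obstacle you flag is real and your stated resolution does not close it. With $j_0=\arg\max_j\boldsymbol{h}_j$ and $N=\boldsymbol{h}_{j_0}$, even summable decay of $\widehat{f_n^2-\overline{f_n}^2}(2N)$ in $N-n$ (which the P\"oschel--Trubowitz expansion does give) only bounds the perturbation by $C(\eta)\max_j|\boldsymbol{m}_j|$, while the main term is $\tfrac12|\boldsymbol{m}_{j_0}|$; since the ratio $\max_j|\boldsymbol{m}_j|/|\boldsymbol{m}_{j_0}|$ is unbounded, no universal $\eta$ makes the perturbation subordinate, and you cannot afford any $|\boldsymbol{m}|$-dependent loss in the anti-concentration (the Borel--Cantelli weights $\prod_j|\boldsymbol{m}_j|^{-2}$ leave no room). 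The clean repair is to differentiate in $X_{2\boldsymbol{h}_{j_*}}$ with $j_*=\arg\max_j|\boldsymbol{m}_j|$ instead: then both the main term and the perturbation scale like $\max_j|\boldsymbol{m}_j|$, and one is reduced to showing $\sum_{n\neq M}|\widehat{f_n^2}(2M)|\leq C(\eta)$ uniformly in $M=\boldsymbol{h}_{j_*}$, i.e.\ summable \emph{two-sided} decay in $|n-M|$, which the sharp eigenfunction asymptotics (finer than the crude bound \eqref{est:ef}) deliver for $\|W\|_{H^1}$ small. The anti-concentration factor one then obtains is $\langle\boldsymbol{h}_{j_*}\rangle^{s_*}\leq\max_j\langle\boldsymbol{h}_j\rangle^{s_*}$, which is what is claimed.
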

\begin{proof} Actually, we just have to prove that there exists $\eta >0$ such that, being given $a,q,\boldsymbol{m},\boldsymbol{h}$ satisfying the assumptions of the lemma, we have
\begin{equation}
\label{eq:la_poste}
\forall \gamma>0, \quad \mathbb{P} \Big( \big|  a+ \sum_{j=1}^{q} \boldsymbol{m}_j \omega^{\eqref{eq:NLS}}_{\boldsymbol{h}_j}  \big| < \gamma \ \Big| \ \|W^{\eqref{eq:NLS}}\|_{H^1} < \eta \Big) \lesssim \gamma \max_j \langle \boldsymbol{h}_j \rangle^{s_*}.
\end{equation}
Indeed, the rest of the proof is the same as the one of Lemma \ref{lem:weak_nr_*} (the estimate $|\omega^{\eqref{eq:NLS}}_{\boldsymbol{h}_j}| \lesssim \langle \boldsymbol{h}_j \rangle^{2}$ follows directly from \cite[Theorem 4 p.35]{PT}). Moreover, the existence of $\eta>0$ and the estimate \eqref{eq:la_poste} are proven in \cite{BG21}. More precisely, we refer the reader to the proof of Proposition 1.12 in \cite{BG21} and in particular to the last estimate page 703 of \cite{BG21}.
\end{proof}
Now, we aim at improving the small divisor estimate \eqref{eq:sd_triv_mult} in order to prove that almost surely the frequencies of \eqref{eq:NLS} are strongly non-resonant. From now on we condition the potential $W^{\eqref{eq:NLS}}$ in \eqref{random} to be small enough in $H^1$ (in any case $\| W^{\eqref{eq:NLS}}\|_{H^1}<\eta$) in such a way that by \cite[Proposition 2.7 p.700]{BG21} (which is a variation of \cite[Theorem 4 p.35]{PT}), almost surely $\omega^{\eqref{eq:NLS}}_1 \geq 1/2$ and there exists $B>0$ such that
$$
\forall k \in \mathbb{N}^*, \quad |\omega^{\eqref{eq:NLS}}_k - k^2|\leq B \langle k \rangle^{-1} .
$$
Note that here we have used that $\int_0^\pi W^{\eqref{eq:NLS}}(x) \mathrm{d}x=0$.
We fix $q\geq 1$,  $\boldsymbol{m} \in (\mathbb{Z}^*)^q$, $\boldsymbol{h}_1,\cdots,\boldsymbol{h}_q \in \mathbb{N}^*$ all distinct such that $\boldsymbol{h}_1 < \cdots < \boldsymbol{h}_q $. We define $q_\star \in \llbracket 1,q\rrbracket$ as the maximal index such that
$$
\forall p\in \llbracket 2,q_\star \rrbracket, \quad B \sum_{j=p}^q | \boldsymbol{m}_j| \langle \boldsymbol{h}_j \rangle^{-1} \geq \frac{\gamma}2 \langle \boldsymbol{h}_{p-1} \rangle^{-s_*} \prod_{j=1}^{p-1}  |  \boldsymbol{m}_j |^{-4} \langle  \boldsymbol{h}_j \rangle^{-4}.
$$
Therefore it is enough to estimate $\big|a+\sum_{j=1}^{q_\star} \boldsymbol{m}_j \omega^{\eqref{eq:NLS}}_{\boldsymbol{h}_j}  \big|$ where $a\in\mathbb{Z}$. Indeed, if $q_\star< q$, by maximality of $q_\star$ we have
$$
B \sum_{j=q_\star +1}^q |m_j| \langle \boldsymbol{h}_j \rangle^{-1} < \frac{\gamma}2 \langle \boldsymbol{h}_{q_\star} \rangle^{-s_*} \prod_{j=1}^{q_\star}  |  \boldsymbol{m}_j |^{-4} \langle  \boldsymbol{h}_j \rangle^{-4},
$$
and so applying the triangle inequality and the non-resonance estimate, we have (as previously, see \eqref{eq:jy_reviens} for details)
$$
 \big| \sum_{j=1}^{q} \boldsymbol{m}_j \omega^{\eqref{eq:NLS}}_{\boldsymbol{h}_j} \big| 
\geq \frac12 \big|  \sum_{j=q_\star+1}^{q} \boldsymbol{m}_j \boldsymbol{h}_j^2 + \sum_{j=1}^{q_\star} \boldsymbol{m}_j \omega^{\eqref{eq:NLS}}_{\boldsymbol{h}_j} \big|.
$$
Now, as previously, to estimate $\big|a+\sum_{j=1}^{q_\star} \boldsymbol{m}_j \omega^{\eqref{eq:NLS}}_{\boldsymbol{h}_j}  \big|$ uniformly with respect to $a\in \mathbb{Z}$, we are just going to use the lower bound given by Lemma \ref{lem:weak_nr}, but in order to have a bound depending only on $\boldsymbol{h}_1$, we have to estimate $\langle \boldsymbol{h}_p \rangle$ for all $p\in \llbracket 2,q_* \rrbracket$.

 By definition of $q_\star$, we deduce that if $p\in \llbracket 2,q_\star \rrbracket$ then
$$
B  | \boldsymbol{m}|_1 \langle \boldsymbol{h}_p \rangle^{-1} \geq \frac{\gamma}2 \langle \boldsymbol{h}_{p-1} \rangle^{-s_*}  | \boldsymbol{m}|_1^{-4(p-1)}\prod_{j=1}^{p-1} \langle  \boldsymbol{h}_j \rangle^{-4}.
$$
Applying the $\log$ function to this estimate, we get
$$
y_p \leq \log(C) + 4 q_{\star} \log(| \boldsymbol{m}|_1 ) + s_*y_{p-1} +4\sum_{j=1}^{p-1} y_j \leq \log(C) + (4+s_*) q_{\star} \log(| \boldsymbol{m}|_1 ) + y_1 + (4+s_*)\sum_{j=1}^{p-1} y_j,
$$
where $\boldsymbol{y}_j =  \log \langle  \boldsymbol{h}_j \rangle$ and $C:= 2B\gamma^{-1} $. Note that this estimate is the same as \eqref{eq:cest_la_fin} except that $4s_*^{-1}$ is replaced by $4+s_*$. Up to this change of constant, the rest of the proof is exactly the same as the one of Proposition \ref{prop:main_conv}.

\section{Global well-posedness of the full dynamics}\label{sec:gwp}
In this section, we give the proof of Proposition~\ref{thm:gwp}. We first review the general strategy of the $I$-method and the argument of \cite{LWX}, and then give the necessary modifications, first in the case of a convolution potential, and then in the case of a multiplicative potential. The proofs of the technical lemma will be postponed to Appendix~\ref{appendixproof}.
\subsection{Strategy of the proofs}
We will thus closely follow the argument in \cite{LWX} which dealt with the periodic quintic NLS without potentials. This argument relies on the so-called ``second generation $I$-method'', introduced in \cite{CKSTT1,CKSTT2,CKSTT3,CKSTT4} and widely applied to nonlinear dispersive equations both on tori or on Euclidean spaces; see for example \cite{Bou04b,DSPST1,DSPST2,DSPST3} and references therein. The (classical) $I$-method exploits the almost conservation of the modified energy $E(I_Nu)$ for some appropriate choice of parameter $N\gg1$, where $E$ is the standard energy functional associated with the Hamiltonian structure of NLS, and $I_N$ is a smooth Fourier multiplier which behaves like the identity on frequencies smaller than $N$ and like a smoothing operator of order $1-s$ for higher frequencies, $s<1$ being a regularity where local well-posedness holds. Indeed, the standard energy $E$ is conserved but cannot be used at this level of regularity. Then the core of the argument is to prove that $E(I_Nu)$ is almost conserved, in the sense that its time derivative decays sufficiently fast with $N$. Moreover, a faster decay yields a smaller threshold for the admissible regularity on the initial data. Interestingly, regarding the case of the periodic quintic NLS without potential, in order to extend globally the local solutions of \cite{Bou93} when the regularity is $s<1$, Bourgain \cite{Bou04b} combined the $I$-method with Birkhoff normal form transformations in order to get a better decay of the time derivative of $E(I_Nu)$. This idea was also exploited in \cite{CKO} where the authors implemented the \textit{upside-down} $I$-method together with Birkhoff normal forms to study the growth of Sobolev norms $H^s(\mathbb{T})$ for $s>1$; namely, replacing the fractional integration operator $I_N$ of order $1-s$ by a fractional \textit{derivative} $D_N$ of order $s-1$. However, in the aforementioned seminal paper \cite{Bou04b}, the argument could only deal with regularity $s^\ast<s<1$ for some $s^\ast$ smaller but very close to $\frac12$. It was later pointed out in \cite{DSPST2,LWX} that one can improve on the range of $s$ by letting aside the Birkhoff normal form transformation, and by resorting instead to both rescaling and a ``second'' modified energy. The former point relies on the observation that in the Euclidean space $\mathbb{R}^d$, one has an improved bilinear estimate for the Schr\"odinger flow of type 
\begin{equation}\label{bilinear}
\big\|e^{it\Delta}f_{N_1}e^{it\Delta}g_{N_2}\big\|_{L^2_{t,x}}\lesssim N_2^{-\frac12}N_1^{\frac{d-1}{2}}\|f_{N_1}\|_{L^2}\|g_{N_2}\|_{L^2}
\end{equation}
for any $N_1\ll N_2$ and functions $f_{N_1}$ (resp. $g_{N_2}$) whose Fourier transform is supported in the region $\{|\xi|\sim N_1\}$ (resp. $\{|\xi|\sim N_2\}$). This in turn provides some gain of negative powers of $N$ in the context of the $I$-method when estimating multilinear interactions where one input function has dominant frequencies. The refined bilinear estimate above is however known to be \textit{false} on $\mathbb{T}^d$. One of the crucial observation in \cite{CKSTT4,DSPST2,LWX} is that, after a proper rescaling to work on a very large torus, one can still get an estimate which gets closer to \eqref{bilinear} and allows to get some decay in $N$; see \cite[Proposition 2.1]{LWX} and Lemma~\ref{lem:bil} below. The latter point is the introduction of correcting terms in the modified energy, which cancel the interactions having less decay in the time derivative of $E(I_Nu)$. This actually amounts to performing one step of a \textit{Poincaré-Dulac} normal form transformation, but at the level of the energy functional instead of the equation; see for example the discussion in \cite{GKO}. Implementing this method in the context of \eqref{eq:NLS*}-\eqref{eq:NLS} yields the following results (see \eqref{Xsb*} below for the definition of the Bourgain type space $X^{s,b}_{\mathcal{L}}$).

\begin{proposition}\label{prop:GWP*}
Let $V\in \mathcal{F}L^\infty(\mathbb{T};\mathbb{C})$ with real-valued Fourier coefficients. Then for any $\frac25<s< \frac12$ there exists $\epsilon_0\in (0;1]$ such that for any $u_0\in H^s(\mathbb{T};\mathbb{C})$ with $\|u_0\|_{H^s}\le \epsilon_0$, there exists a unique global mild solution $u\in X_{-\partial_x^2+V\ast,\mathrm{loc}}^{s,\frac12+}$ with initial data $u(0)=u_0$ to \eqref{eq:NLS*}.
Moreover, we have the growth estimate
\begin{equation}\label{global}
\|u(t)\|_{H^s}\lesssim_s  C(t,\|u_0\|_{H^s})\|u_0\|_{H^s},~~t\in\mathbb{R},
\end{equation}
where
\begin{align*}
C(t,\|u_0\|_{H^s}) =\begin{cases} \langle t\rangle^\frac12,~~|t|\lesssim \|u_0\|_{H^s}^{-1};\\ \|u_0\|_{H^s}^{-1},~~\|u_0\|_{H^s}^{-1}\lesssim |t|\lesssim \|u_0\|_{H^s}^{-\frac{3-\epsilon}{1-s}}\\ |t|^{\frac{1-s}{\alpha(s)}}\|u_0\|_{H^s}^{\frac{2(1-s)}{s\alpha(s)}},~~|t|\gtrsim \|u_0\|_{H^s}^{-\frac{3-\epsilon}{1-s}}.\end{cases}
\end{align*}
with $\alpha(s)=3-2\frac{1-s}{s}-\epsilon$ for some $0<\epsilon\ll_s 1$.
\end{proposition}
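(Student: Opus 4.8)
The plan is to transplant the rescaling/second-generation $I$-method argument of \cite{LWX}, which treats the potential-free periodic quintic NLS, to the operator $\mathcal L:=-\partial_x^2+V\ast$; the only genuinely new point is the bookkeeping of the bounded, Fourier-diagonal perturbation $V\ast$ through every estimate. For the \emph{local theory}, note that $V\ast$ multiplies the $k$-th Fourier mode by the bounded real sequence $(2\pi)^{1/2}V_k$, so the propagator $e^{-it\mathcal L}$ differs from $e^{it\partial_x^2}$ only by the diagonal unitary $(e^{-it(2\pi)^{1/2}V_k})_k$; hence the periodic $L^4_{t,x}$ and $L^6_{t,x}$ Strichartz and bilinear estimates of \cite{Bou93} hold verbatim for the adapted Bourgain space $X^{s,b}_{\mathcal L}$ of \eqref{Xsb*}, and the resonance/modulation function picks up only an $O(\|V\|_{\ell^\infty})$ perturbation, harmless in the trilinear and quintilinear estimates that drive the contraction. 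This gives unconditional local well-posedness in $X^{s,1/2+}_{\mathcal L,\mathrm{loc}}$ for every $s>0$, on an interval of length $\gtrsim\|u_0\|_{H^s}^{-\theta}$ for a suitable $\theta>0$ when the data are small (which in particular yields the bound in the regime $|t|\lesssim\|u_0\|_{H^s}^{-1}$), together with persistence of regularity.

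Next comes the almost-conservation machinery. Fix $N\gg1$ and let $I=I_N$ be the standard smooth Fourier multiplier, equal to $1$ on $|k|\le N$ and to $(N/|k|)^{1-s}$ on $|k|\ge 2N$, so that $\|u\|_{H^s}\lesssim\|Iu\|_{H^1}\lesssim N^{1-s}\|u\|_{H^s}$. Take as modified energy $E(v)=\tfrac12\langle\mathcal Lv,v\rangle+\tfrac{\sigma}{6}\|v\|_{L^6}^6$, the genuine Hamiltonian of $\mathcal L$, evaluated at $v=Iu$. Since $I$ commutes with $\mathcal L$, differentiating along the flow reduces $\partial_tE(Iu)$ — exactly as in the potential-free case — to the single multilinear expression $-\sigma\,\mathrm{Im}\,\langle\mathcal L(Iu)+\sigma|Iu|^4Iu,\ I(|u|^4u)-|Iu|^4Iu\rangle$ measuring the failure of $I$ to commute with the nonlinearity, in which $V\ast$ survives only as an extra bounded, lower-order linear factor. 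Its leading sextic part $\mathrm{Im}\,\langle-\partial_x^2(Iu),\ I(|u|^4u)-|Iu|^4Iu\rangle$ is estimated in $X^{1,1/2+}_{\mathcal L}$ by a Littlewood--Paley decomposition, using the rescaled bilinear Schr\"odinger estimate of Lemma~\ref{lem:bil} on the large torus $\mathbb T_\lambda=\mathbb R/2\pi\lambda\mathbb Z$ to gain negative powers of $N$ from the two highest-frequency inputs. To push the decay past the threshold needed for $s>\tfrac25$, one adds to $E(Iu)$ correcting multilinear forms $\Lambda_6(M_6;u)+\Lambda_8(M_8;u)$ — one step of a Poincar\'e--Dulac normal form carried out at the level of the energy functional rather than of the equation, in the spirit of \cite{GKO,CKSTT4} — chosen to cancel the non-resonant part of $\partial_tE(Iu)$, leaving only the smaller resonant and octic remainders; the $O(1)$ shift of the frequencies induced by $V$ is again inessential for the resonance relation $|k_1|^2+\cdots-|\ell_3|^2=0$. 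The upshot is an almost-conservation law of the form
\[ \big|E^{(2)}(Iu)(t_2)-E^{(2)}(Iu)(t_1)\big|\ \lesssim\ N^{-\alpha(s)}\,\|Iu\|_{X^{1,\frac12+}_{\mathcal L}}^{6}\ +\ \text{higher order},\qquad \alpha(s)=3-2\tfrac{1-s}{s}-\epsilon, \]
valid on rescaled unit-time intervals on which the local norm is controlled, with $E^{(2)}=E+\Lambda_6+\Lambda_8$; note that $\alpha(s)>0$ precisely when $s>2/(5-\epsilon)$, hence for all $s>2/5$ after shrinking $\epsilon$.

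Finally, the bootstrap. Rescaling $u\mapsto u_\lambda(t,x)=\lambda^{-1/2}u(\lambda^{-2}t,\lambda^{-1}x)$ — which preserves $\|u(0)\|_{L^2}$ since the quintic NLS is $L^2$-critical, contracts $\|u(0)\|_{\dot H^s}$ by $\lambda^{-s}$, and replaces $V$ by a rescaled potential whose $\mathcal{F}L^\infty$ norm is $\lesssim\|V\|_{\mathcal{F}L^\infty}$ (indeed smaller, the potential being of lower order) — one arranges $E^{(2)}(Iu_\lambda)(0)\lesssim N^{2(1-s)}\lambda^{-2s}\varepsilon^2$ with $\varepsilon=\|u_0\|_{H^s}$. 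Iterating the almost-conservation law over the $O(\lambda^2|t|)$ consecutive unit intervals needed to reach physical time $t$ keeps $E^{(2)}(Iu_\lambda)$ within a fixed multiple of its initial value, as long as $|t|$ stays below a threshold depending polynomially on $N,\lambda,\varepsilon$ through $\alpha(s)$; consequently $\|u(t)\|_{H^s}\lesssim\lambda^s\|u_\lambda(\lambda^2t)\|_{H^s(\mathbb T_\lambda)}\lesssim\lambda^s\sqrt{E^{(2)}(Iu_\lambda)}\lesssim N^{1-s}\varepsilon$. Choosing $N$ minimal and $\lambda$ as small as Lemma~\ref{lem:bil} permits at every relevant dyadic scale, and then optimizing in $|t|$ and $\varepsilon$ exactly as in \cite{LWX}, produces the three regimes of $C(t,\|u_0\|_{H^s})$ in the statement: $\langle t\rangle^{1/2}$ for $|t|\lesssim\varepsilon^{-1}$ (the local theory alone), the constant-in-time bound $\varepsilon^{-1}$ for intermediate times where the cheapest admissible $N$ already suffices, and the genuine polynomial regime $|t|^{(1-s)/\alpha(s)}\varepsilon^{2(1-s)/(s\alpha(s))}$ beyond. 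The main obstacle is precisely the sextic almost-conservation estimate: after the correction terms are subtracted one must check that \emph{every} remaining frequency configuration — comparable versus well-separated high frequencies, resonant versus non-resonant output, and their interaction with the derivative in $\langle-\partial_x^2Iu,\cdot\rangle$ — decays like $N^{-\alpha(s)}$ with $\alpha(s)>0$ for $s>\tfrac25$, which forces use of the sharp exponent in the rescaled bilinear estimate and is exactly where the value $2/5$, rather than Bourgain's original $s^\ast\gtrsim\tfrac12$ from \cite{Bou04b}, appears; the non-decaying convolution potential perturbs the resonance algebra and the shape of the correction terms only superficially, but has to be tracked consistently in each term.
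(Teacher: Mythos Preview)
Your overall strategy matches the paper's: adapt the second-generation $I$-method of \cite{LWX} to $\mathcal L=-\partial_x^2+V\ast$, using equivalence of the $X^{s,b}$ norms (since $V\ast$ is a bounded Fourier-diagonal perturbation), rescaling to $\mathbb T_\lambda$, a $\Lambda_6$-type correction to $E(I_Nu)$, the rescaled bilinear estimate, and then iteration. However, three concrete details in your write-up are off and would prevent the argument from closing as written.

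First, there is no $\Lambda_8$ correction and no ``octic remainder'' for the \emph{quintic} equation. Since the nonlinearity is of degree five, the multilinear calculus only produces forms of degree $2,6,10,\dots$; the second modified energy is $E_2(u)=E(I_Nu)-\Lambda_6\big(\widetilde{M_6^V}/\alpha_6^V\big)$, and its time derivative is $\Lambda_6(\overline{M_6^V})+\Lambda_{10}(\overline{M_{10}^V})$ (see \eqref{eq:Evar} and Lemma~\ref{lem:Ikey}). Your $\Lambda_8(M_8;u)$ term does not arise; this looks like a confusion with the cubic case.

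Second, your single-step almost-conservation exponent is misplaced. On one rescaled unit interval the correct bound is
\[
\Big|\int_0^\delta\big(\Lambda_6(\overline{M_6^V})+\Lambda_{10}(\overline{M_{10}^V})\big)\,dt\Big|\ \lesssim\ N^{-3}\lambda^{0+}\big(\|I_Nu\|_{X^{1,\frac12+}}^{6}+\|I_Nu\|_{X^{1,\frac12+}}^{10}\big),
\]
not $N^{-\alpha(s)}$. The exponent $\alpha(s)=3-2\tfrac{1-s}{s}-\epsilon$ appears only \emph{after} iterating $\lambda^2T$ times and substituting the specific choice $\lambda\sim N^{(1-s)/s}\|u_0\|_{H^s}^{1/s}$; that choice is dictated by the normalization $\|I_Nu_\lambda(0)\|_{H^1}\sim 1$ (so that the local time is $\delta\sim1$), not by ``$\lambda$ as small as Lemma~\ref{lem:bil} permits.'' With these corrections the optimization in $N$ gives $N=\max\big(T^{1/\alpha(s)}\|u_0\|_{H^s}^{2/(s\alpha(s))},\,\|u_0\|_{H^s}^{-1/(1-s)}\big)$, and the three regimes in the statement follow.

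Third, the perturbation by $V$ does enter the resonant/non-resonant splitting non-trivially: the paper works with $\alpha_6^V$ and $M_6^V$ built from $\omega_k=k^2+(2\pi)^{1/2}V_k$ (see \eqref{eq:alpha}--\eqref{eq:M6V}) and must re-verify the symbol bound $|\widetilde{M_6^V}|\lesssim|\alpha_6^V|$ on each of the regions $\Omega_1,\dots,\Omega_5$ (proof of Lemma~\ref{lem:E1E2}). The shifts are indeed $O(1)$ and harmless, as you say, but in $\Omega_5$ one needs to check that $k_1+k_2\neq0$ (else $\omega_{k_1}-\omega_{-k_2}=0$), which is where the precise definition of $\Omega_5$ with $\omega$ rather than $k^2$ matters.
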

In particular, note that the second regime gives the bound $$\|u(t)\|_{H^s}\lesssim 1 \lesssim |t|\|u_0\|_{H^s}.$$
Thus \eqref{global} implies the more standard growth estimate\footnote{Recall that $\alpha(s)$ increases in $s$, with $\alpha(\frac25+)=0+$ and $\alpha(\frac12)=1-$.}
\begin{align}\label{growth*}
\|u(t)\|_{H^s}\lesssim \langle t\rangle^{\beta_s}\|u_0\|_{H^s}
\end{align}
with $$\beta_s=\max(\frac{1-s}{\alpha(s)},1).$$
\begin{remark}~\\
\textup{(i)} Proposition~\ref{prop:GWP*} only deals with the case $s< \frac12$. This is not restrictive since the point in this paper is to run the Birkhoff normal form at regularity $s< \frac12$, the standard theory covering the case $s>\frac12$.\\
\textup{(ii)} We point out again that we only consider initial data satisfying $\|u_0\|_{H^s}\le 1$ since this will be the case to apply the Birkhoff normal form. But in the defocusing case $\sigma>0$, the same result as in Proposition~\ref{prop:GWP*} holds for any initial data. Moreover, the above remarks also apply to Proposition~\ref{prop:GWPW} below.
\end{remark}

We have a similar result in the case of a multiplicative potential.
\begin{proposition}\label{prop:GWPW}
Let $W\in H^4(\mathbb{T};\mathbb{R})$ be even. Then for any $\frac25<s\le \frac12$ there exists $\epsilon_0\in(0;1]$ such that for any $u_0\in H^s(\mathbb{T};\mathbb{C})$ with $\|u_0\|_{H^s}\le \epsilon_0$, there exists a unique global mild solution $u\in X_{-\partial_x^2+W,\mathrm{loc}}^{s,\frac12+}$ with initial data $u(0)=u_0$ to \eqref{eq:NLS}.
Moreover, we have the growth estimate
\begin{equation}\label{growth}
\|u(t)\|_{H^s}\lesssim_s \langle t\rangle^{\beta_s}\|u_0\|_{H^s},~~t\in\mathbb{R}.
\end{equation}
\end{proposition}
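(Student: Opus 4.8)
The plan is to carry out, almost verbatim, the second-generation $I$-method argument already used for the convolution case (Proposition~\ref{prop:GWP*}), i.e. the strategy of \cite{LWX}: local well-posedness in a Bourgain space adapted to the perturbed operator, an almost conservation law for a suitably corrected version of the energy, a rescaling to a large torus in order to exploit the bilinear estimate of Lemma~\ref{lem:bil}, and finally a bootstrap that converts the slow growth of the modified energy — after optimizing the truncation parameter $N$ as a function of $t$ — into the polynomial bound \eqref{growth}. All of this is done in the Sturm--Liouville eigenbasis $(f_n)_{n\ge1}$ of $-\partial_x^2+W_{|[0;\pi]}$ (Proposition~\ref{prop_dir}): one sets $u_n:=\int_0^\pi uf_n$ and defines $X^{s,b}_{-\partial_x^2+W}$ through $\|u\|_{X^{s,b}_{-\partial_x^2+W}}^2:=\sum_{n\ge1}\langle n\rangle^{2s}\int_{\mathbb R}\langle\tau+\lambda_n\rangle^{2b}|\widehat{u_n}(\tau)|^2\,\mathrm d\tau$.

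First I would establish local well-posedness in $X_{-\partial_x^2+W,\mathrm{loc}}^{s,\frac12+}$ for all $s>0$ (in particular for $s>\tfrac25$), on a time interval $[0;\delta]$ with $\delta\sim\|u_0\|_{H^s}^{-\theta}$ for some $\theta>0$. The sole novelty with respect to the free or convolution cases is that $-\partial_x^2+W$ is not Fourier-diagonal; but by \cite[Theorem 4 p.35]{PT} one has $\lambda_n=n^2+O(1)$ (so that $\langle\tau+\lambda_n\rangle\simeq\langle\tau+n^2\rangle$) and, more importantly, every multilinear quantity built from the $f_n$'s expands with coefficients obeying $|Q_{\boldsymbol k,\boldsymbol\ell}|\lesssim_{\|W\|_{L^2}}\sum_{\nu,\mu\in\{-1,1\}^3}\langle\nu_1\boldsymbol k_1+\nu_2\boldsymbol k_2+\nu_3\boldsymbol k_3+\mu_1\boldsymbol\ell_1+\mu_2\boldsymbol\ell_2+\mu_3\boldsymbol\ell_3\rangle^{-2}$, the bound from \cite{BG21} already recalled in the Strichartz subsection above. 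This dominates each interaction by a sum of genuine free-Schr\"odinger $L^6(\mathbb T^2)$ factors weighted by a summable factor $\langle j\rangle^{-2}$, so the $X^{s,b}$ multilinear estimates of the convolution case transfer unchanged and the contraction mapping closes.

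Next I would set up the almost conserved modified energy. Take $E(u)=\tfrac12\|\partial_xu\|_{L^2}^2+\tfrac12\langle Wu,u\rangle+\tfrac\sigma6\|u\|_{L^6}^6$, the smooth Fourier multiplier $I=I_N$ defined in the basis $(f_n)$ (equal to $1$ on $n\le N$, behaving like $(n/N)^{s-1}$ for $n\gtrsim N$), so that $\|u\|_{H^s}\lesssim N^{1-s}\|I_Nu\|_{H^1}$ while $\|I_Nu\|_{H^1}\lesssim\|u\|_{H^s}$ for $\|u\|_{H^s}\le1$, and add the two correcting terms of \cite{DSPST2,LWX} (one step of a Poincar\'e--Dulac normal form performed on the functional $E(I_Nu)$ rather than on the equation) so as to kill the slowly-decaying resonant part of $\tfrac{\mathrm d}{\mathrm dt}E(I_Nu)$. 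Then rescale $u\mapsto u_\lambda(t,x)=\lambda^{-1/2}u(\lambda^{-2}t,\lambda^{-1}x)$ to the torus $\lambda\mathbb T$: the $L^2$-critical quintic term is left invariant, the energy is replaced by its natural rescaling, and Lemma~\ref{lem:bil} on the large torus upgrades the high$\times$high interactions, yielding an almost conservation law $|\widetilde E(u_\lambda(t))-\widetilde E(u_\lambda(0))|\lesssim N^{-\alpha(s)}$ on a unit time interval, with $\alpha(s)=3-2\tfrac{1-s}{s}-\varepsilon>0$ for $s>\tfrac25$, exactly as in Proposition~\ref{prop:GWP*}. Iterating over $\sim\lambda^2T$ such intervals, undoing the scaling and balancing $N^{1-s}$ against the accumulated error gives $\|u(T)\|_{H^s}\lesssim N^{1-s}\lesssim\langle T\rangle^{\beta_s}\|u_0\|_{H^s}$ with $\beta_s=\max(\tfrac{1-s}{\alpha(s)},1)$, which is \eqref{growth}; uniqueness and $u\in C(\mathbb R;H^s)\cap X_{-\partial_x^2+W,\mathrm{loc}}^{s,\frac12+}$ follow from the local theory and persistence of regularity, and the restriction $s\le\tfrac12$ is cosmetic since $s>\tfrac12$ is covered by the algebra-based theory.

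The main obstacle is the ``homogeneity problem'' flagged in the introduction: $-\partial_x^2$ and the multiplication operator $W$ scale differently, so the rescaling step does not obviously survive the presence of the potential. The point that makes it work is that under $u\mapsto u_\lambda$ the potential is mapped to $W_\lambda(x)=\lambda^{-2}W(\lambda^{-1}x)$, which satisfies $\|W_\lambda\|_{\dot H^k(\lambda\mathbb T)}=\lambda^{-3/2-k}\|W\|_{\dot H^k}$ and is therefore \emph{uniformly small in every Sobolev norm} on the large torus; consequently the quadratic term $\tfrac12\langle W_\lambda I_Nu_\lambda,I_Nu_\lambda\rangle$ of the rescaled energy, and the term it produces in $\tfrac{\mathrm d}{\mathrm dt}\widetilde E$ — controlled via the equation and the commutator gain $\|[I_N,W]\|_{L^2\to L^2}\lesssim_{\|W\|_{H^4}}N^{-1}$ — contribute only errors negligible compared to $N^{-\alpha(s)}$, the genuinely dangerous high$\times\cdots\times$high interactions being exactly those of the potential-free problem. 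The price paid for the non-diagonal structure (routing every multilinear estimate through the $\langle j\rangle^{-2}$-weighted free-Schr\"odinger reduction above, and needing $W\in H^4$ to make the eigenvalue and eigenfunction asymptotics and the commutator bounds precise enough) is precisely why the multiplicative case has to be written separately from the convolution one, even though the final statement is the same.
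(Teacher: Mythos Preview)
There is a genuine gap in the modified-energy step. You define $I_N$ as a multiplier in the Sturm--Liouville basis $(f_n)$ and propose to ``route every multilinear estimate through the $\langle j\rangle^{-2}$-weighted free-Schr\"odinger reduction'' of \cite{BG21}. But that reduction replaces the exact constraint $k_1+\cdots+k_6=0$ by a sum over all sign choices $\nu,\mu\in\{-1,1\}^3$ with a momentum defect $j$ (see \eqref{moment}), and the second-generation $I$-method of \cite{LWX} is \emph{rigidly} tailored to the exact $\Gamma_6$ constraint: the resonant/non-resonant decomposition into the sets $\Omega_1,\ldots,\Omega_5$, the symbol bound $|\widetilde{M_6}|\lesssim|\alpha_6|$, and the bilinear gain all depend on it. The paper says this explicitly in the discussion preceding Proposition~\ref{prop:GWPW}: once the null moment condition is destroyed, it is not clear how to run the second modified energy. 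Moreover, your proposal is internally inconsistent: you invoke the commutator gain $\|[I_N,W]\|\lesssim N^{-1}$, but if $I_N$ is diagonal in $(f_n)$ then it already commutes with $-\partial_x^2+W$, and there is no such commutator mechanism to exploit.

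The paper's fix is precisely the opposite choice: define $I_N$ in the \emph{standard} Fourier basis, work after rescaling in the ordinary $X^{s,b}$ space (i.e.\ with $W=0$), and treat $W_\lambda u_\lambda$ as an extra piece of the nonlinearity. Then the quintic multilinear forms $\Lambda_6(\overline{M_6})$, $\Lambda_{10}(\overline{M_{10}})$ are \emph{literally} those of \cite{LWX} and need no new work, while the potential contributes additional error terms $F(u,W)$ in \eqref{E2W}. Each of these errors carries either the factor $\lambda^{-2}$ from $W_\lambda$ or the commutator $[I_N,W_\lambda]$---now a genuine Fourier-multiplier/physical-multiplier commutator---which gains $N^{-1}$ (Lemma~\ref{lem:Iperturb}); combined, these are negligible against $N^{-3}\lambda^{0+}$. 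The passage between the two $X^{s,b}$ spaces is handled by the norm comparison of Lemma~\ref{lem:linearW}(vi), which costs a small derivative loss and forces the intermediate propagation-of-regularity step in Lemma~\ref{lem:LWPIW}. Your last paragraph correctly anticipates the $\lambda^{-2}$ scaling and the commutator gain, but these only operate once $I_N$ is taken in the Fourier basis, not in $(f_n)$.
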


The proof of Propositions~\ref{prop:GWP*} and~\ref{prop:GWPW} will occupy the rest of this section. There are some slight but essential modifications compared to the argument in \cite{LWX} in order to prove the results above. First, for the local well-posedness theory, in order to get existence of local solutions beyond times of order $O((\|V\|_{\ell^\infty}+\|W\|_{L^2})^{-1})$ for small initial data (the case we are interested in), we need to remove the linear terms from the nonlinearity by incorporating them in the linear operator. This requires to prove that the $X^{s,b}$ spaces adapted to lower order perturbations of $-\partial_x^2$ still have the same properties (see Lemma~\ref{lem:linear*} and~\ref{lem:linearW} below), in particular Strichartz estimates. This also explains why we deal with \eqref{eq:NLS*} and~\eqref{eq:NLS} separately in Propositions~\ref{prop:GWP*} and~\ref{prop:GWPW}.

As for the globalization part, the extra linear terms $V\ast u$ and $Wu$ are also dealt with differently. The potential term $Wu$ is treated as a perturbation term with respect to the nonlinearity. Indeed, as detailed below, the rescaling performed in the argument of \cite{LWX} is very favourable on the potential and one gains for free a factor $O(\lambda^{-2})$ when estimating terms with $W$ in the time variations of the modified energy, where $\lambda\sim N^{\frac{1-s}{s}}$ is the scaling factor. This is not quite enough though, as the lower bound $s>\frac25$ in \cite{LWX} comes from the use of a modified energy where one gains a factor $N^{-3}$. But since the operator $I_N$ at the base of the $I$-method does not commute with $W$, terms with $W$ only appear in commutators where one can gain an extra $N^{-1}$ factor at the expense of requiring more regularity for $W$, which we can afford. In comparison, we do not incorporate it in the linear operator as the Dirichlet and Neumann eigenfunctions of the Sturm-Liouville operator $-\partial_x^2+W_{|[0;\pi]}$ on $[0;\pi]$, though localized on complex exponentials (see \eqref{est:ef} and \eqref{est:ef-tri} below), only satisfy convolution relations $\int_\mathbb{T}f_{k_1}f_{k_2}f_{k_3}\approx \delta_{k_1+k_2=k_3}$ up to error terms. This is good enough to handle trilinear interactions. However, the argument in \cite{LWX} is really tailored to the precise restrictions on frequencies in the multilinear forms, and in particular to the null moment condition $k_1+\dots+k_6=0$. But this is destroyed when replacing $-\partial_x^2$ by $-\partial_x^2+W$ (see e.g. \eqref{moment} above), and so it is not clear to us how to implement the second generation $I$-method for the quintic equation without treating the term $Wu$ as part of the nonlinearity as we do here.
 
On the contrary, the convolution potential $V$ scales as $W$ but commutes with $I_N$. Thus, we need to view it again as part of the linear operator and use the argument in \cite{LWX} with $-\partial_x^2+V\ast$ in place of $-\partial_x^2$. This brings small changes in all the parts of the argument which rely on the particular form of the symbol of $-\partial_x^2$. 

\subsection{Proof of Proposition~\ref{prop:GWP*}}
\subsubsection{Notations and local well-posedness}
We first recall some notations. We will build the solution $u$ to \eqref{eq:NLS*} in the Bourgain type space $X^{s,b}_{-\partial_x^2+V\ast}$ adapted to the linear Schr\"odinger equation with convolution potential, namely the Banach space defined through the norm
\begin{equation}\label{Xsb*}
\|u\|_{X^{s,b}} := \big\|\langle \omega_k\rangle^{\frac{s}{2}}\langle \tau - \omega_k\rangle^b \hat{u}_k(\tau)\big\|_{L^2_\tau\ell^2_k},
\end{equation}
for any $s,b\in\mathbb{R}$, where $\hat{u}_k(\tau) = \int_{\mathbb{R}\times\mathbb{T}}u(t,x)e^{-i(t\tau+kx)}dxdt$ is the space-time Fourier transform of $u : (t,x)\in\mathbb{R}\times\mathbb{T}\mapsto u(t,x)\in \mathbb{C}$, and
\begin{align*}
\omega_k = k^2+(2\pi)^{\frac12}V_k.
\end{align*} 
Recall that we assume that the $V_k$ are real, with $\|V\|_{\ell^\infty}:=\sup_{k\in\Z}|V_k|<\infty$.

In the rest of this section we will drop the subscript $-\partial_x^2+V\ast$ in the notation of the Bourgain type space since there is no risk of confusion. We also define its time-localized version
\begin{equation}\label{Xsb*T}
\|u\|_{X^{s,b}(T)}=\inf\{\|v\|_{X^{s,b}},~~v\equiv u\text{ on }[-T,T]\}.
\end{equation}
Next we collect some linear estimates in the space $X^{s,b}$.
\begin{lemma}\label{lem:linear*}
The following properties hold:\\
\textup{(i) ($X^{s,b}$ as a resolution space)} Let $s\in\mathbb{R}$ and $b>\frac12$. Then there exists $C=C(s,b,1+\|V\|_{\ell^\infty})>0$ such that for any $u\in X^{s,b}$ it holds $u\in C(\mathbb{R};H^s(\mathbb{T}))$ and $\|u\|_{L^\infty_tH^s}\le C\|u\|_{X^{s,b}}$.\\
\textup{(ii) (Time localization)} Let $s\in\mathbb{R}$ and $-\frac12 <b'\le b <\frac12$. Then there exists $C=C(s,b,1+\|V\|_{\ell^\infty})>0$ such that for any $T\in(0;1]$ and $u\in X^{s,b}(T)$, it holds $\|u\|_{X^{s,b'}(T)}\le C T^{b-b'}\|u\|_{X^{s,b}(T)}$.\\
\textup{(iii) (Linear estimate)} For any $s\in\mathbb{R}$, $b>\frac12$, there is $C(s,b,1+\|V\|_{\ell^\infty})>0$ such that $$\|e^{it(-\partial_x^2+V\ast)}u_0\|_{X^{s,b}(T)}\leq C \langle T\rangle^\frac12\|u_0\|_{H^s}$$ for any $T>0$ and $u_0\in H^s(\mathbb{T})$.\\
\textup{(iv) (Energy estimate)} For any $s\in\mathbb{R}$, $b>\frac12$, there is $C(s,b,1+\|V\|_{\ell^\infty})>0$ such that it holds $$\Big\|\int_0^te^{i(t-t')(-\partial_x^2+V\ast)}F(t')dt'\Big\|_{X^{s,b}(T)}\leq C\langle T\rangle^2\|F\|_{X^{s,b-1}}$$ for any $T>0$ and $F\in X^{s,b-1}$.\\
\textup{(v) ($L^4$ Strichartz estimate)} There is $C(1+\|V\|_{\ell^\infty})>0$ such that it holds $\|u\|_{L^4_{t,x}}\le C \|u\|_{X^{0,\frac38+}}$.\\
\textup{(vi) (Equivalence of norms)} For any $s,b\in\R$, there exists $C(s,b,1+\|V\|_{\ell^\infty})\ge 1$ such that it holds $\frac1C\|u\|_{X^{s,b}_{-\partial_x^2}}\le \|u\|_{X^{s,b}_{-\partial_x^2+V\ast}}\le C\|u\|_{X^{s,b}_{-\partial_x^2}}$ for any $u\in X^{s,b}_{-\partial_x^2+V\ast}$.
\end{lemma}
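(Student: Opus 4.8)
The plan is to reduce every item to the single observation that, since $V$ has real Fourier coefficients bounded in $\ell^\infty$, the dispersion relation $\omega_k=k^2+(2\pi)^{\frac12}V_k$ is real and differs from $k^2$ by at most $C_V:=(2\pi)^{\frac12}\|V\|_{\ell^\infty}$, uniformly in $k\in\mathbb{Z}$. Hence $X^{s,b}_{-\partial_x^2+V\ast}$ is the Bourgain space attached to an arbitrary real Fourier-multiplier symbol that is a uniformly bounded perturbation of the flat one, and all the properties should follow either from the general, symbol-independent theory of such spaces, or---for the $L^4$ Strichartz estimate---by transference from the case $V=0$.

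I would first carry out the comparison of weights. From $|\omega_k-k^2|\le C_V$ and Peetre's inequality $\langle x+y\rangle\le\sqrt2\,\langle x\rangle\langle y\rangle$ one gets $\langle\omega_k\rangle\simeq\langle k\rangle^2$ and $\langle\tau-\omega_k\rangle\simeq\langle\tau-k^2\rangle$, with constants depending only on $1+\|V\|_{\ell^\infty}$ and uniform in $k\in\mathbb{Z}$, $\tau\in\mathbb{R}$. Raising these comparisons to the powers $\tfrac s2$ and $b$ and inserting them into the definition \eqref{Xsb*} yields at once the norm equivalence (vi), as well as the equivalence of the $(-\partial_x^2+V\ast)$-adapted Sobolev norm $\|\langle\omega_k\rangle^{s/2}\hat{u}_k\|_{\ell^2_k}$ with the usual $H^s$ norm; this last equivalence is the only source of the dependence of the constants on $\|V\|_{\ell^\infty}$ in items (i)--(v).

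Next I would dispatch (i)--(iv), which hold for the Bourgain space attached to \emph{any} real symbol $(\omega_k)_k$, by recalling the standard arguments (see e.g.\ \cite{Tao06,ET}), using the interaction-representation identity $\|u\|_{X^{s,b}}=\|e^{-it(-\partial_x^2+V\ast)}u\|_{H^b_tH^s_x}$: for (i), Cauchy--Schwarz in $\tau$ together with $2b>1$, and dominated convergence for the continuity; for (ii), the one-dimensional estimate $\|\psi(\cdot/T)f\|_{H^{b'}_t}\lesssim T^{b-b'}\|f\|_{H^b_t}$ (valid for $-\tfrac12<b'\le b<\tfrac12$) applied frequency by frequency, after noting that multiplication by the cutoff $\psi(\cdot/T)$ commutes with $e^{-it(-\partial_x^2+V\ast)}$; for (iii), the fact that $e^{-it(-\partial_x^2+V\ast)}\big(e^{it(-\partial_x^2+V\ast)}u_0\big)=u_0$ is independent of $t$, so that cutting off with a bump supported in $\{\,|t|\lesssim\langle T\rangle\,\}$ produces exactly the $\langle T\rangle^{1/2}$ loss; and for (iv), the standard Ginibre--Tsutsumi--Velo inhomogeneous estimate in an adapted Bourgain space, with the $\langle T\rangle^2$ factor coming from the time truncation. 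Finally, for (v)---the only item genuinely using the arithmetic of $\mathbb{Z}$---I would invoke the classical periodic $L^4$ Strichartz inequality of Bourgain, $\|v\|_{L^4_{t,x}}\lesssim\|v\|_{X^{0,\frac38+}_{-\partial_x^2}}$ (equivalently, the $\ell^2$ divisor bound), and then compose it with the norm equivalence (vi).

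I do not expect a serious obstacle: once the weights are identified in the first step, the whole lemma amounts to verifying that the well-developed $X^{s,b}$ machinery is insensitive to bounded self-adjoint perturbations of $-\partial_x^2$, the one genuinely external ingredient being the periodic $L^4$ Strichartz estimate entering (v). The only mildly delicate point will be bookkeeping: keeping track of the various powers of $\langle T\rangle$ in the time-localized statements, and making sure that every constant depends on $V$ only through $1+\|V\|_{\ell^\infty}$ (in particular uniformly in $k$, which is exactly what $\|V\|_{\ell^\infty}<\infty$ guarantees).
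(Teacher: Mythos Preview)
Your proposal is correct and follows essentially the same route as the paper: first establish (vi) from the elementary bound $|\omega_k-k^2|\le(2\pi)^{1/2}\|V\|_{\ell^\infty}$ via Peetre's inequality, then use (vi) to transfer (v) from Bourgain's classical periodic $L^4$ estimate, and reduce (i)--(iv) to symbol-independent one-dimensional time estimates through the interaction representation. The paper in addition spells out the dyadic-in-modulation proof of (v) in the case $V=0$ (and remarks that the counting argument works equally well with $\omega_k$ in place of $k^2$), and gives the detailed computations yielding the precise powers $\langle T\rangle^{1/2}$ and $\langle T\rangle^{2}$ in (iii)--(iv), which you correctly flag as the only point requiring care.
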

The general properties (i), (ii), and (v) of $X^{s,b}$ spaces in Lemma~\ref{lem:linear*} in the case $V=0$ are standard, and we refer to \cite{Tao06} and \cite{ET}. See Appendix~\ref{appendix} for the modifications in the case $V\neq0$. Note however that the estimates (iii) and (iv), due to the factors $\langle T\rangle^\frac12$ and $\langle T\rangle^\frac32$, are somewhat less standard; see also Remark~\ref{rk:LWP} below and Appendix~\ref{appendix} for the proofs. In the case $V=0$, the Strichartz estimate (v) is due to Bourgain \cite{Bou93}. The last estimate (vi) shows that $X^{s,b}$ norms with respect to $V=0$ or $V\neq 0$ are equivalent, which in particular implies properties (i), (ii), and (v) for the case $V\neq 0$ from the classical case $V=0$. Let us also emphasize that in order to close the fixed point argument, one needs to get a small factor of $T$. In this perspective, (ii) is used for the large data local theory, whereas (iv) when $T>1$ is more suited for the small data theory.

 As a consequence of the estimates in Lemma~\ref{lem:linear*}, we then have the following small\footnote{Here we only treat the case of small initial data as this is the setting for the proof of Theorem~\ref{thm:conv}. Of course a similar local well-posedness result holds for large data, with a different time of existence, and with a proof which relies on Lemma~\ref{lem:linear*}~(ii) on top of Lemma~\ref{lem:linear*}~(iv) to get a small power of $T\in (0;1]$.} data local well-posedness result for \eqref{eq:NLS*}.
\begin{lemma}\label{lem:LWP*}
Let $V\in \mathcal{F}L^\infty(\mathbb{T};\mathbb{C})$ with real-valued Fourier coefficients, and $s>\frac14$. Then there exists $\epsilon_0\in(0;1]$ such that for any $u_0\in H^s(\mathbb{T};\mathbb{C})$ with $\|u_0\|_{H^s}\le \epsilon_0$, letting $\delta\sim \|u_0\|_{H^s}^{-1}\ge 1$, there exists a unique mild solution $u\in X^{s,\frac12+}(\delta)$ to \eqref{eq:NLS*} on $[-\delta,\delta]\times\mathbb{T}$ with $u(0)=u_0$, which satisfies
\begin{equation}\label{local*}
\|u\|_{X^{s,\frac12+\epsilon}(t)}\lesssim \langle t\rangle^\frac12\|u_0\|_{H^s}
\end{equation}
for any $0\le t\le \delta$.
\end{lemma}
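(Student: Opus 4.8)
The plan is to build $u$ by a contraction mapping argument for the Duhamel formulation of \eqref{eq:NLS*}. Writing $\mathcal{L}:=-\partial_x^2+V\ast$, a mild solution on $[-\delta;\delta]\times\mathbb{T}$ is a fixed point of
$$
\Phi(w)(t):=e^{it\mathcal{L}}u_0-i\sigma\int_0^te^{i(t-t')\mathcal{L}}\big(|w(t')|^4w(t')\big)\,\mathrm{d}t',
$$
and I would seek it in the closed ball $\mathcal{B}_\rho=\{w\in X^{s,\frac12+\epsilon}(\delta):\ \|w\|_{X^{s,\frac12+\epsilon}(\delta)}\le\rho\}$ with $\rho:=2C\langle\delta\rangle^{\frac12}\|u_0\|_{H^s}$ and $\delta:=c\|u_0\|_{H^s}^{-1}$ for a small absolute constant $c$ (so that $\delta\ge1$ once $\|u_0\|_{H^s}\le\epsilon_0\le c$), the parameter $\epsilon=\epsilon(s)>0$ being fixed small at the end. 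The reason to work in the Bourgain spaces adapted to $\mathcal{L}$, rather than treating $V\ast u$ perturbatively, is that $e^{it\mathcal{L}}$ is a one-parameter unitary group on $L^2(\mathbb{T})$ — here the reality of the $V_k$ is used — which preserves the $X^{s,b}$-weights, so no spurious time restriction of size $\|V\|_{\ell^\infty}^{-1}$ is created; this matters since $\delta$ will be large.

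The three ingredients are the linear estimate \textup{(iii)} of Lemma~\ref{lem:linear*}, giving $\|e^{it\mathcal{L}}u_0\|_{X^{s,\frac12+\epsilon}(\delta)}\le C\langle\delta\rangle^{\frac12}\|u_0\|_{H^s}$; the energy estimate \textup{(iv)} of Lemma~\ref{lem:linear*}, giving $\big\|\int_0^te^{i(t-t')\mathcal{L}}F\,\mathrm{d}t'\big\|_{X^{s,\frac12+\epsilon}(\delta)}\le C\langle\delta\rangle^{2}\|F\|_{X^{s,-\frac12+\epsilon}}$; and the quintic multilinear estimate
$$
\big\|w_1w_2w_3\overline{w_4}\,\overline{w_5}\big\|_{X^{s,-\frac12+\epsilon}}\lesssim\prod_{j=1}^5\|w_j\|_{X^{s,\frac12+\epsilon}},\qquad s>\tfrac14 ,
$$
which in particular yields $\||w|^4w\|_{X^{s,-\frac12+\epsilon}}\lesssim\|w\|_{X^{s,\frac12+\epsilon}}^5$ and, by multilinearity, the corresponding difference estimate with factor $\|w\|_{X^{s,\frac12+\epsilon}}^4+\|\widetilde w\|_{X^{s,\frac12+\epsilon}}^4$. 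The multilinear estimate is where the dispersive information sits, but by the equivalence of norms \textup{(vi)} of Lemma~\ref{lem:linear*} it reduces at once to the case $V=0$, where it follows from the $L^6_{t,x}(\mathbb{T}^2)$ Strichartz estimate of Bourgain \cite{Bou93} — which does not see $V$ — together with duality and standard $X^{s,b}$-manipulations exactly as in \cite{LWX}; the threshold $s>\frac14$ is well below the regularity $s>\frac25$ needed afterwards. Combining the three estimates, for $w\in\mathcal{B}_\rho$ one gets $\|\Phi(w)\|_{X^{s,\frac12+\epsilon}(\delta)}\le C\langle\delta\rangle^{\frac12}\|u_0\|_{H^s}+C'\langle\delta\rangle^{2}\rho^5$; since $\langle\delta\rangle\|u_0\|_{H^s}\simeq c$ one has $\langle\delta\rangle^{2}\rho^4\lesssim c^4$, so the nonlinear contribution is smaller than the linear one by a factor $O(c^4)$, and choosing $c$ small makes $\Phi$ map $\mathcal{B}_\rho$ into itself. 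The same computation applied to $\Phi(w)-\Phi(\widetilde w)$ shows $\Phi$ is a contraction on $\mathcal{B}_\rho$, and its fixed point $u\in X^{s,\frac12+\epsilon}(\delta)$ is the desired solution, with $u\in C([-\delta;\delta];H^s)$ by Lemma~\ref{lem:linear*}\textup{(i)}.

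The bound \eqref{local*} at a general time $0\le t\le\delta$ is obtained by running the same argument on $[-t;t]$, where the ball radius is $2C\langle t\rangle^{\frac12}\|u_0\|_{H^s}$, uniqueness guaranteeing that the solutions agree on overlapping intervals. Uniqueness in the full space $X^{s,\frac12+\epsilon}(\delta)$, and not merely in $\mathcal{B}_\rho$, follows from the classical continuity argument: the set of times at which two solutions with the same data coincide is open and closed, using that any $X^{s,\frac12+\epsilon}$ solution is small on sufficiently short subintervals together with the contraction estimate there. The genuinely new ingredients compared with \cite{LWX} are the linear estimates \textup{(iii)}, \textup{(iv)} and \textup{(vi)} of Lemma~\ref{lem:linear*} for the perturbed operator $-\partial_x^2+V\ast$, proved in Appendix~\ref{appendix}; the step I expect to require the most care is the bookkeeping of the powers of $\langle\delta\rangle$ above, which has to be arranged so that the fixed point closes on the whole interval $|t|\le\delta\sim\|u_0\|_{H^s}^{-1}$ and not only on an $O(1)$ time interval — the multilinear estimate itself being classical.
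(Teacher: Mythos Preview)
Your proposal is correct and follows essentially the same fixed-point strategy as the paper: same Duhamel map, same ball radius $\rho\sim\langle\delta\rangle^{1/2}\|u_0\|_{H^s}$, same use of Lemma~\ref{lem:linear*}~(iii),~(iv),~(vi), and the same bookkeeping $\langle\delta\rangle^2\rho^4\sim c^4\ll1$ to close the contraction on the long interval $\delta\sim\|u_0\|_{H^s}^{-1}$. The only difference is in how the quintic estimate $\||w|^4w\|_{X^{s,-\frac12+}}\lesssim\|w\|_{X^{s,\frac12+}}^5$ is justified: you invoke the $L^6_{t,x}$ Strichartz estimate and a direct multilinear bound, whereas the paper uses the dual $L^4$ Strichartz estimate (Lemma~\ref{lem:linear*}~(v)) to land in $L^{4/3}_tW^{s,4/3}$, then the fractional Leibniz rule and H\"older with an interpolated $L^8_{t,x}$ bound $\|v\|_{L^8_{t,x}}\lesssim\|v\|_{X^{1/4+,7/16+}}$; both routes are standard and yield the threshold $s>\tfrac14$.
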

\begin{remark}\label{rk:LWP}\rm~~\\
(i) The point in the previous lemma is the dependence of the local time $\delta$ on $\|u_0\|_{H^s}$. Note that the estimates (iii) and (iv) in Lemma~\ref{lem:linear*} are somehow non-standard, since at scaling subcritical regularity one is usually concerned with local well-posedness for large initial data, for which the local time in the fixed point argument is taken to be $\delta\le 1$. Here we have a time of existence $\delta\sim \|u_0\|_{H^s}^{-1}\ge 1$, which is crucial to get the growth estimate \eqref{growth*} needed to control the high modes in the proof of Theorem~\ref{thm:conv}.\\
(ii) Note also that on the local time of existence, we can only get the local estimate \eqref{local*} with a loss of the factor $\langle t\rangle^\frac12$ due to the linear estimate (iii) in Lemma~\ref{lem:linear*}, which is sharp. This is different from what happens at higher regularity ($s>\frac{d}2$) where one can perform a fixed point argument directly in $C([-\delta;\delta];H^s(\mathbb{T}))$ without having to use $X^{s,b}$ spaces, which in particular provides a longer local time $\delta \sim \|u_0\|_{H^s}^{-4}$ and an estimate $\|u\|_{L^\infty_\delta H^s}\lesssim \|u_0\|_{H^s}$. In particular, \eqref{localI*} only provides stability of Fourier modes $\big||u_k(t)|^2-|u_k(0)|^2\big|\ll \|u_0\|_{H^s}^2$ up to time $O(1)$ instead of times $O(\|u_0\|_{H^s}^{-4})$ compared to the local well-posedness theory at regularity $s>\frac{d}2$. This loss of $\langle t\rangle^\frac12$ may be avoided by using refined versions of $X^{s,b}$ spaces ($U^2/V^2$ type spaces) used in the Cauchy theory at scaling critical regularity, but we do not pursue this refinement here, as \eqref{growth} suffices for our purpose.
\end{remark}
\begin{proof}[Proof of Lemma~\ref{lem:LWP*}]
Note that here we do no try to cover the best possible range for the regularity of the initial data, namely $s>0$, since we will be restricted to the range $s>\frac25$ by the globalization argument.
The proof would follow from a straightforward adaptation of Bourgain's argument \cite{Bou93}; see also \cite{ET}. Let $\epsilon_0\in(0;1]$, $\|u_0\|_{H^s}\le \epsilon_0$, $\delta=A\|u_0\|_{H^s}^{-1}\ge 1$ for some $A\ge 1$, and $R=2C \delta^\frac12 \|u_0\|_{H^s}$ for some $C>0$, and let $B(R)$ be the ball of radius $R$ in $X^{s,\frac12+}(\delta)$. Setting
\begin{equation*}
\Gamma : u \in B(R)\mapsto e^{it(-\partial_x^2+V\ast)}u_0 -i\sigma\int_0^te^{i(t-t')(-\partial_x^2+V\ast)}(|u|^4u)(t')dt',
\end{equation*}
we will prove that $\Gamma$ is a contraction on $B(R)$ for $\delta$ appropriately chosen. Let then $u\in B(R)$, and take $v$ to be an extension of $u$ such that $\|v\|_{X^{s,\frac12+}}\le 2\|u\|_{X^{s,\frac12+}(\delta)}$. First, using Cauchy-Schwartz inequality, we have the Sobolev type estimate
\begin{align*}
\|v\|_{L^\infty_{t,x}}\leq \|\hat{v}_k(\tau)\|_{L^1_\tau L^1(dk)_\lambda} \leq \|\langle\tau-k^2\rangle^{-\frac12-}\langle k\rangle^{-\frac12-}\|_{L^2_\tau\ell^2(dk)_\lambda}\|v\|_{X^{\frac12+,\frac12+}}\lesssim \|v\|_{X^{\frac12+,\frac12+}}.
\end{align*}
Interpolating this estimate with the $L^4$-Strichartz estimate of Lemma~\ref{lem:linear*}~(v), we get
\begin{align}\label{L8*}
\|v\|_{L^8_{t,x}}\lesssim \|v\|_{X^{\frac14+,\frac7{16}+}}.
\end{align}
Then, we first use Lemma~\ref{lem:linear*}~(iii) and (vi) to estimate
\begin{align*}
\big\|\Gamma u\big\|_{X^{s,\frac12+}(\delta)}&\lesssim \delta^\frac12\|u_0\|_{H^s} + \delta^2\big\||v|^4v\big\|_{X^{s,-\frac12+}}\intertext{Next, using the dual version of the $L^4$ Strichartz estimate of Lemma~\ref{lem:linear*}~(v), we can continue with}
&\lesssim \delta^\frac12\|u_0\|_{H^s}+\delta^2\big\||v|^4v\big\|_{L^\frac43_t W^{s,\frac43}}.
\end{align*}
Now, by the fractional Leibniz rule (see e.g. \cite[Lemma 1.11]{ET}), it holds
\begin{align*}
\big\||v|^4v\big\|_{L^\frac43_t W^{s,\frac43}}&\lesssim \Big\|\|v\|_{W^{s,4}_x}\big\||v|^4\big\|_{L^2_x}\Big\|_{L^\frac43_t} \lesssim \|v\|_{L^4_tW^{s,4}}\|v\|_{L^8_{t,x}}^4,
\end{align*}
where the second step follows from H\"older's inequality. Using now \eqref{L8*}, we finally get
\begin{align*}
\big\|\Gamma u\big\|_{X^{s,\frac12+}(\delta)}&\lesssim \delta^\frac12\|u_0\|_{H^s}+\delta^2\|v\|_{X^{s,\frac12+}}^5 \le C\delta^\frac12\|u_0\|_{H^s}+C\delta^2\|u\|_{X^{s,\frac12+}(\delta)}^5 .
\end{align*}
Therefore, from $\delta=A\|u_0\|_{H^s}^{-1}$, $R=2C\delta^\frac12\|u_0\|_{H^s}=2CA^\frac12\|u_0\|_{H^s}^\frac12$, and $\|u\|_{X^{s,\frac12+}(\delta)}\le R$,  taking $\epsilon_0\in(0;1]$ such that
$$2CA^\frac12\epsilon_0^\frac12\le 1,$$
we have $R\le 1$. Then choosing $A$ such that
$$C\delta^2R^4=2^4C^5\delta^4\|u_0\|_{H^s}^4=2^4C^5A^4<\frac12,$$
  we get that $\Gamma$ maps $B(R)$ to $B(R)$. The contraction property follows from similar estimate. This proves \eqref{local*}.
\end{proof}

\subsubsection{Rescaling}\label{sub:scal}
In order to implement the $I$-method to globalize the local solution provided by Lemma~\ref{lem:LWP*}, recall that for $N\gg 1$ to be chosen later, the $I$ operator is defined as the Fourier multiplier with symbol $m(k)=m_s(N^{-1}k)$ for some smooth even function $m_s$ which equals 1 on $[0;1]$ and behaves like $\langle k\rangle^{s-1}$ for $|k|\ge 1$. In particular, for $u_0\in H^s(\mathbb{T};\mathbb{C})$, we have $I_Nu_0\in H^1(\mathbb{T};\mathbb{C})$ and it holds
\begin{align}\label{eq:HsI}
\|u_0\|_{H^s}\lesssim \|I_Nu_0\|_{H^1}\lesssim N^{1-s}\|u_0\|_{H^s}.
\end{align}

However, as we mentioned above, in order to benefit from the improved bilinear Strichartz estimate and get a better decay of the modified energy $E(I_Nu)$, we will use a rescaling procedure. Indeed, recall that \eqref{eq:NLS*} has the following scaling property : $u(t,x)$ solves \eqref{eq:NLS*} on $[-T,T]\times\mathbb{T}$
if and only if
$$u_{\lambda}(t,x)=\lambda^{-\frac12}u(\lambda^{-2}t,\lambda^{-1}x)$$
is a solution of 
\begin{equation}\label{eq:NLSscal*}
i\partial_t u_\lambda = -\partial_x^2 u_\lambda + V_\lambda\ast u_\lambda +\sigma|u_\lambda|^4u_\lambda
\end{equation} 
on $[-\lambda^2T,\lambda^2T]\times\mathbb{T}_\lambda$, where $\mathbb{T}_\lambda = \mathbb{R}/(2\pi\lambda)\mathbb{Z}$ and
\begin{align}\label{eq:V}
V_\lambda(x)=\lambda^{-3}V(\lambda^{-1}x),~~x\in (-\pi\lambda,\pi\lambda).
\end{align}

Following \cite{CKSTT3,DSPST1,LWX}, let us then recall some properties of $\lambda-$periodic functions. Define $(dk)_{\lambda}$ to be the normalized counting measure on $\frac{1}{\lambda}\mathbb{Z}$:
$$\int a(k)(dk)_{\lambda}=\frac{1}{{\lambda}}\sum_{k \in \frac{1}{\lambda}{\mathbb{Z}}}a(k).$$
We define the Fourier transform of $u\in L^1([0;2\pi\lambda];\mathbb{C})$ by
\begin{align}\label{Fourier}
\hat{u}(k)=(2\pi)^{-\frac12}\int_0^{2\pi\lambda} e^{-ikx}u(x)dx,
\end{align}
$k\in\frac1\lambda\mathbb{Z}$. Fourier inversion formula reads
$$
u(x)=(2\pi)^{-\frac12}\int e^{ikx}\hat{u}(k)(dk)_{\lambda},
$$
and the following identities are true:
\begin{enumerate}
\item $\|u\|_{L^{2}([0;2\pi\lambda])}=\|\hat{u}\|_{L^{2}((dk)_{\lambda})}$, (Plancherel)
\item $\int_0^{2\pi\lambda} u(x)\bar{v}(x)dx=\int \hat{u}(k)\bar{\hat{v}}(k) (dk)_{\lambda}$, (Parseval)
\item $\widehat{uv}(k)=\hat{u}\star _{\lambda} \hat{v}(k)=(2\pi)^{-\frac12}\int \hat u(k-k_{1})\hat v(k_{1})(dk_{1})_{\lambda}$.
\end{enumerate}

The Sobolev space of $\lambda$-periodic functions is $H^{s}_\lambda = H^{s}([0;2\pi\lambda];\mathbb{C})$ defined by the norm
$$\|u\|_{H^{s}_\lambda}=\|\langle k \rangle^{s}\hat{u}(k)\|_{L^{2}((dk)_{\lambda})}.$$
Then \eqref{eq:HsI} implies
\begin{align}\label{eq:HsIscal}
\|u_\lambda(0)\|_{H^s_\lambda}\lesssim \|I_Nu_\lambda(0)\|_{H^1_\lambda}\lesssim N^{1-s}\|u_\lambda(0)\|_{H^s_\lambda}\lesssim N^{1-s}\lambda^{-s}\|u_0\|_{H^s}.
\end{align}

 We also denote by $X^{s,b}_\lambda = X^{s,b}(\mathbb{T}_{\lambda} \times \mathbb R;\mathbb{C})$ the Bourgain type space of complex-valued space-time functions $\lambda$-periodic in $x$ associated to $-\partial_x^2+V_\lambda\ast$, endowed with

$$\|u\|_{X^{s,b}_\lambda}=\|\langle k \rangle^{s} \langle \tau -\omega_{k,\lambda} \rangle^{b}\hat{u}_k(\tau)\|_{L_{\tau}^{2}L_{(dk)_{\lambda}}^{2}},$$
where as above $\hat{u}_k(\tau)$ is the space-time Fourier transform, and 
\begin{align}\label{eq:omegalambda}
\omega_{k,\lambda}=k^2+(2\pi)^{\frac12}(V_\lambda)_k = k^2+(2\pi)^{\frac12}\lambda^{-2}V_{\lambda k}
\end{align}
for $k\in\frac1\lambda\mathbb{Z}$.

Before starting to get long-time bounds on $I_Nu_\lambda$, we recast the local estimate \eqref{local*} in terms of $I_Nu_\lambda$.

\begin{lemma}\label{lem:LWPI*}
Let $V\in \mathcal{F}L^\infty(\mathbb{T};\mathbb{C})$ with real-valued Fourier coefficients, $\lambda\ge 1$, $s>\frac14$ and $u_\lambda(0)\in H^s_\lambda$. Then for 
$$ \delta\sim_{\|V\|_{\ell^\infty}}\langle\|I_Nu_\lambda(0)\|_{H^1_\lambda}\rangle^{-8-},$$ it holds
\begin{equation}\label{localI*}
\|I_Nu_\lambda\|_{X^{1,\frac12+}_\lambda(\delta)} \lesssim_{\|V\|_{\ell^\infty}} \|I_Nu_\lambda(0)\|_{H^1_\lambda}.
\end{equation}
\end{lemma}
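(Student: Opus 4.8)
The plan is to reproduce the contraction argument of Lemma~\ref{lem:LWP*} directly at the level of $I_Nu_\lambda$, working at regularity $1$ (rather than $s$) on the large torus $\mathbb{T}_\lambda$, and using the $\lambda$-periodic analogues of the linear estimates of Lemma~\ref{lem:linear*}; these hold with constants uniform in $\lambda\ge 1$, up to an arbitrarily small power $\lambda^{0+}$ in the $L^4$-Strichartz estimate which is irrelevant here (see Appendix~\ref{appendix}). Since $I_N$ is a Fourier multiplier it commutes with $e^{it(-\partial_x^2+V_\lambda\ast)}$, so applying it to the Duhamel formulation of~\eqref{eq:NLSscal*} gives
\[
I_Nu_\lambda(t)=e^{it(-\partial_x^2+V_\lambda\ast)}I_Nu_\lambda(0)-i\sigma\int_0^te^{i(t-t')(-\partial_x^2+V_\lambda\ast)}I_N\big(|u_\lambda|^4u_\lambda\big)(t')\,\mathrm{d}t'.
\]
I would then run a fixed point for the associated map $\Gamma$ on the ball $B(R)\subset X^{1,\frac12+}_\lambda(\delta)$ of radius $R\sim_{\|V\|_{\ell^\infty}}\|I_Nu_\lambda(0)\|_{H^1_\lambda}$; note that since $\delta\le1$ the factors $\langle\delta\rangle^{\frac12}$ and $\langle\delta\rangle^{2}$ produced by Lemma~\ref{lem:linear*}~(iii)--(iv) are harmless.

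The heart of the matter is the quintic nonlinear estimate
\[
\big\|I_N\big(|v|^4v\big)\big\|_{X^{1,-\frac12+}_\lambda(\delta)}\lesssim_{\|V\|_{\ell^\infty}}\delta^{\theta}\,\|I_Nv\|_{X^{1,\frac12+}_\lambda(\delta)}^{5},
\]
valid for any fixed $\theta<\tfrac12$ (with implicit constant depending on $\theta$). To prove it I would first remove the operator $I_N$ by the standard interpolation lemma of the $I$-method (\cite{CKSTT1,CKSTT2,CKSTT3,CKSTT4}): on the frequency support of $|v|^4v$ one has $\langle k\rangle\lesssim\max_{1\le j\le5}\langle k_j\rangle$ for the output frequency $k$, and the symbol $m$ of $I_N$ is even, non-increasing in $|k|$ and slowly varying, so $m(k)\langle k\rangle\lesssim m(k_{j_0})\langle k_{j_0}\rangle$ for the index $j_0$ carrying the highest frequency; this places the full derivative together with $I_N$ on that factor at the price of an absolute constant, and, using that $m(k)\langle k\rangle\gtrsim\langle k\rangle^{s}\gtrsim\langle k\rangle^{\frac14+}$ — which is where the hypothesis $s>\tfrac14$ enters — the remaining four factors can be placed at regularity $\tfrac14+$. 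Concretely, dualizing the $L^4$-Strichartz estimate of Lemma~\ref{lem:linear*}~(v) and using Hölder, the left-hand side is bounded by $\|\langle\partial_x\rangle I_Nv\|_{L^4_{t,x}}\|v\|_{L^8_{t,x}}^4$, and one concludes with Lemma~\ref{lem:linear*}~(v), the $\lambda$-periodic version of~\eqref{L8*}, and the regularity gain just mentioned; the extra factor $\delta^{\theta}$ is produced by localizing in time via Lemma~\ref{lem:linear*}~(ii) together with Hölder's inequality in the time variable on $[-\delta;\delta]$, exactly as in~\cite{LWX}.

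Granting this estimate, Lemma~\ref{lem:linear*}~(iii)--(iv) give $\|\Gamma v\|_{X^{1,\frac12+}_\lambda(\delta)}\le C\|I_Nu_\lambda(0)\|_{H^1_\lambda}+C\delta^{\theta}\|I_Nv\|_{X^{1,\frac12+}_\lambda(\delta)}^{5}$ and an analogous bound for differences, so $\Gamma$ is a contraction on $B(R)$ as soon as $C\delta^{\theta}R^{4}\lesssim1$, i.e. as soon as
\[
\delta^{\theta}\,\|I_Nu_\lambda(0)\|_{H^1_\lambda}^{4}\lesssim_{\|V\|_{\ell^\infty}}1.
\]
Choosing $\theta$ close enough to $\tfrac12$ and then $\delta\sim_{\|V\|_{\ell^\infty}}\langle\|I_Nu_\lambda(0)\|_{H^1_\lambda}\rangle^{-4/\theta}=\langle\|I_Nu_\lambda(0)\|_{H^1_\lambda}\rangle^{-8-}$ meets this constraint; the resulting fixed point is the desired solution and satisfies~\eqref{localI*}, and uniqueness in the relevant class follows from the contraction in the usual way. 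I expect the only genuinely delicate point to be the interpolation-lemma step — removing $I_N$ from the output of the quintic form while retaining a time gain $\delta^{\theta}$ with $\theta\uparrow\tfrac12$ — together with the (more routine) verification that the $\lambda$-periodic Strichartz and $X^{s,b}$ estimates of Appendix~\ref{appendix} hold with the stated uniformity in $\lambda$; everything else is a transcription of the proof of Lemma~\ref{lem:LWP*} to the rescaled, $H^1$-level setting.
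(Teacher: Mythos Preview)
Your proposal is correct and follows essentially the same route as the paper: Duhamel for $I_Nu_\lambda$, the quintic estimate $\|I_N(|u_\lambda|^4u_\lambda)\|_{X^{1,-\frac12+}_\lambda(\delta)}\lesssim\delta^{\frac12-}\|I_Nu_\lambda\|_{X^{1,\frac12+}_\lambda(\delta)}^5$ via dual $L^4$-Strichartz, H\"older $L^{4/3}\leftarrow L^4\times (L^8)^4$, the $L^8$ bound~\eqref{L8*}, and the resulting choice $\delta\sim\langle\|I_Nu_\lambda(0)\|_{H^1_\lambda}\rangle^{-8-}$. The only presentational difference is in the ``remove $I_N$'' step: where you argue heuristically by placing $m(k)\langle k\rangle$ on the highest-frequency input, the paper makes this precise by setting $\widehat{U}_k(\tau)=m(k)\langle k\rangle^{1-s}|\hat u_k(\tau)|$ (so that $\|U\|_{X^{s,b}_\lambda}=\|I_Nu_\lambda\|_{X^{1,b}_\lambda}$) and observing that the symbol $m(k)\langle k\rangle^{1-s}\big/\prod_j m(k_j)\langle k_j\rangle^{1-s}$ of the resulting $5$-linear operator $M_5$ is uniformly bounded, which reduces the estimate to~\eqref{multiI} applied to $U$ at regularity $s$.
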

\begin{proof}
Again, we do not try to cover the whole range of regularity $s>0$ where local well-posedness of \eqref{eq:NLS*} holds. The proof of \eqref{localI*} relies on the fact that, since $u_\lambda$ solves \eqref{eq:NLSscal*}, $I_Nu_\lambda$ solves
\begin{align}\label{eq:NLSI*}
i\partial_t I_Nu_\lambda = -\partial_x^2I_Nu_\lambda + V_\lambda\ast (I_Nu_\lambda) + \sigma I_N\big(|u_\lambda|^4u_\lambda\big)
\end{align}
with initial data $I_Nu_\lambda(0)\in H^1(\mathbb{T}_\lambda)$. Thus, we will use estimates similar to that in the proof of Lemma~\ref{lem:LWP*}. Note that the estimates (i)--(iv) of Lemma~\ref{lem:linear*}  are unchanged for $\lambda$-periodic functions, uniformly in $\lambda\ge 1$ due to the dependence of the constants in $1+\|V_\lambda\|_{\ell^\infty}\sim 1+\lambda^{-2}\|V\|_{\ell^\infty}$. This is also the case for Lemma~\ref{lem:linear*}~(v) (see for example \cite{DSPST1} or Appendix~\ref{appendix} below), which will be enough for our purpose as mentioned above. Indeed, note first that we have the Sobolev type inequality
\begin{align*}
\|u\|_{L^\infty_{t,x}}\le \|\hat{u}_k(\tau)\|_{L^1_\tau L^1_{(dk)_\lambda}}\le \|\langle k\rangle^{-\frac12+}\langle\tau-\omega_k\rangle^{-\frac12+}\|_{L^2_\tau L^2_{(dk)_\lambda}}\|u\|_{X^{\frac12+,\frac12+}_\lambda}.
\end{align*}
Interpolating this bound with the $L^4$ Strichartz estimate of Lemma~\ref{lem:linear*}~(v) gives
\begin{align*}
\|u_\lambda\|_{L^8_{\delta,x}}\lesssim \|u_\lambda\|_{X^{\frac14+,\frac7{16}+}_\lambda(\delta)}
\end{align*}
for any $\delta>0$. Together with Lemma~\ref{lem:linear*}, duality, H\"older's inequality and the fractional Leibniz rule, this yields again (note that $\delta\le 1$ now)
\begin{align}\label{multiI}
\big\||u_\lambda|^4u_\lambda\big\|_{X^{s,-\frac12+}_\lambda(\delta)}&\lesssim \delta^{\frac18-}\big\||u_\lambda|^4u_\lambda\big\|_{X^{s,-\frac38-}_\lambda(\delta)}\lesssim \delta^{\frac18-}\big\||u_\lambda|^4u_\lambda\big\|_{L^\frac43_\delta W^{s,\frac43}} \notag\\
&\lesssim \delta^{\frac18-}\|u_\lambda\|_{L^4_{\delta,x}}\|u\|_{L^8_{\delta,x}}^4 \lesssim \delta^{\frac18-}\|u_\lambda\|_{X^{s,\frac38+}_\lambda(\delta)}\|u\|_{X^{\frac14+,\frac7{16}+}_\lambda(\delta)}^4\\
&\lesssim \delta^{\frac12-}\|u_\lambda\|_{X^{s,\frac12+}_\lambda(\delta)}^5
\end{align}
for $s>\frac14$.

Now, from the mild formulation of \eqref{eq:NLSI*} and Lemma~\ref{lem:linear*}, we have
\begin{align*}
\|I_Nu_\lambda\|_{X^{1,\frac12+}_\lambda(\delta)} &\lesssim \|I_Nu_\lambda(0)\|_{H^1_\lambda}+\big\|I_N(|u_\lambda|^4u_\lambda)\big\|_{X^{1,-\frac12+}_\lambda(\delta)}.
\end{align*}
Setting 
\begin{align*}
U(t,x)=\int_\mathbb{R}\int e^{i(t\tau + kx)}m(k)\langle k\rangle^{1-s}|\hat{u}_k(\tau)|(dk)_\lambda d\tau,
\end{align*}
we can estimate the last term above by
\begin{align*}
\big\|I_N(|u_\lambda|^4u_\lambda)\big\|_{X^{1,-\frac12+}_\lambda(\delta)}&\le \big\|M_5(U)\big\|_{X^{s,-\frac12+}_\lambda(\delta)}
\end{align*}
where the multilinear operator is
\begin{align*}
\widehat{M_5(U)}_k(\tau)=\int_{\tau_1+\cdots+\tau_5=\tau}\int_{k_1+\dots+k_5=k}\frac{m_N(k)\langle k\rangle^{1-s}}{\prod_{j=1}^5m_N(k_j)\langle k_j\rangle^{1-s}}\prod_{j=1}^5 \widehat{U_{k_j}}(\tau_j)(dk_j)_\lambda d\tau_j.
\end{align*}
But since 
\begin{align*}
m_N(k)\langle k\rangle^{1-s}\sim\begin{cases}\langle k\rangle^{1-s},~~|k|\lesssim N\\N^{1-s},~~|k|\gg N\end{cases},
\end{align*}
in particular 
\begin{align*}
m_N(k_1+\cdots+k_5)\langle k_1+\cdots+k_5\rangle^{1-s}\lesssim \sum_{j=1}^5m_N(k_j)\langle k_j\rangle^{1-s}
\end{align*}
and thus the symbol of $M_5$ is bounded uniformly in $\lambda,N$. Together with \eqref{multiI}, this yields
\begin{align*}
\big\|I_N(|u_\lambda|^4u_\lambda)\big\|_{X^{1,-\frac12+}_\lambda(\delta)}&\lesssim \big\|M_5(U)\big\|_{X^{s,-\frac12+}_\lambda(\delta)} \lesssim \big\||U|^4U\big\|_{X^{s,-\frac12+}_\lambda(\delta)}\\ &\lesssim \delta^{\frac12-}\|U\|_{X^{s,\frac12+}_\lambda(\delta)}^5=\delta^{\frac12-}\|I_Nu_\lambda\|_{X^{1,\frac12+}_\lambda(\delta)}^5.
\end{align*}
All in all, we get
\begin{align*}
\|I_Nu_\lambda\|_{X^{1,\frac12+}_\lambda(\delta)} &\lesssim \|I_Nu_\lambda(0)\|_{H^1_\lambda}+ \delta^{\frac12-}\|I_Nu_\lambda\|_{X^{1,\frac12+}_\lambda(\delta)}^5.
\end{align*}
Thus \eqref{localI*} follows from the previous estimate with our choice of $\delta$ as in the proof of Lemma~\ref{lem:LWP*}.
\end{proof}

\subsubsection{Modified energy and globalization}
Now we set up the $I$-method for rescaled functions. Let $T\gg 1$ be a target time of existence, and $N=N(T)\gg 1$ to be chosen later. Note that it is enough to consider the case $T\ge \|u_0\|_{H^s}^{-1}$, the other case being dealt with by the local theory (Lemma~\ref{lem:LWP*}). Then in view of \eqref{eq:HsIscal}, we take $$\lambda \sim N^{\frac{1-s}{s}}\|u_0\|_{H^s}^{\frac1s},$$ so that $\|I_Nu_\lambda(0)\|_{H^1}\sim 1$, and thus \eqref{localI*} holds with $\delta\sim 1$ by Lemma~\ref{lem:LWPI*}. From now on, we drop the subscript $\lambda$.

The ``first generation'' $I$-method then corresponds to the use of the modified energy
\begin{align*}
E_1(u)&=E(I_Nu)=\frac12\|\partial_xI_N u\|_{L^2}^2+\frac{\gamma}{2}\|I_Nu\|_{L^2}^2+\frac12\int_{\mathbb{T}_\lambda}\big(V_\lambda\ast I_Nu\big) \cdot \overline{I_Nu}dx+\frac\sigma6\|I_Nu\|_{L^6}^6.
\end{align*}
Here the mass $\gamma>0$ is chosen such that $\gamma>(2\pi)^{\frac12}\lambda^{-2}\sup_k|V_k|$, which ensures that 
\begin{align}\label{omegatilde}
\omega_k+\gamma \ge c>0
\end{align}
and so that $E_1$ controls $\|\cdot\|_{H^1}^2$.
%
To estimate the time variations of the modified energy, let us recall some notations on multilinear forms from \cite{CKSTT3,DSPST2,LWX}. For $n\in\mathbb{N}$, $\Gamma_n \subset (\lambda^{-1}\mathbb{Z})^n$ denotes the space
$$ \Gamma_n := \{ (k_1,\ldots,k_n) \in (\lambda^{-1}\mathbb{Z})^n,~~ k_1 + \ldots + k_n = 0 \}.$$
For a smooth $M_n: \Gamma_n \to \mathbb{C}$, we define the $n$-linear functional
$$ \Lambda_n( M_n; u_1,\ldots,u_n ) := \int_{\Gamma_n} M_n(k_1,\ldots,k_n)\big(\prod_{j=1}^n (u_j)_{k_j}\big)(dk_1)_\lambda \cdots (dk_{n-1})_\lambda,$$
and for $n$ even we simply write
$$ \Lambda_n( M_n; u ) := \Lambda_n( M_n; u, \overline{u}, \ldots, u, \overline{u} ).$$
Then
\begin{align*}
E_1(u)=E(I_Nu)&= \frac1{2\lambda}\sum_{k\in\lambda^{-1}\Z}m_N(k)^2(\omega_k+\gamma)|u_k|^2\\
&\qquad\qquad+\frac\sigma{6\lambda^5}\sum_{\substack{k_1,\dots,k_6\in\lambda^{-1}\Z\\k_1+\dots+k_6=0}}\big(\prod_{\ell=1}^3m_N(k_{2\ell-1})u_{k_{2\ell-1}}\big)\big(\prod_{\ell=1}^3m_N(k_{2\ell})(\overline{u})_{k_{2\ell}}\big)\\&=\Lambda_2(\sigma_2^V;u)+\sigma\Lambda_6(\sigma_6;u),
\end{align*}
where here
\begin{align}\label{eq:sigma2V}
\sigma_2^V=\frac14m_N(k_1)m_N(k_2)(\omega_{k_1}+\omega_{-k_2}+2\gamma),
\end{align}
and as  in \cite{LWX}, $$\sigma_6=\frac16m_N(k_1)\cdots m_N(k_6).$$
Recall that $\omega_k$ are the eigenvalues of $-\partial_x^2+V_\lambda\ast$. In particular, when $V=0$, $$\sigma_2^0=\sigma_2=-\frac12m_N(k_1)m_N(k_2)k_1k_2$$ as in \cite{LWX}.

Writing \eqref{eq:NLSscal*} as
\begin{align*}
\begin{cases}
{\displaystyle \partial_t u_k = -i\big(k^2+(2\pi)^{\frac12}(V_\lambda)_k\big) u_k-i\frac{\sigma}{\lambda^5} \sum_{\substack{k_1,\dots,k_5\in\lambda^{-1}\Z\\k_1+\cdots+k_5=k}}u_{k_1}(\bar{u})_{k_2}u_{k_3}(\bar{u})_{k_4}u_{k_5},}\\
{\displaystyle \partial_t (\overline{u})_k = i\big(k^2+(2\pi)^{\frac12}(\overline{V_\lambda})_k\big) (\overline{u})_k+i\frac{\sigma}{\lambda^5} \sum_{\substack{k_1,\dots,k_5\in\lambda^{-1}\Z\\k_1+\cdots+k_5=k}}(\bar{u})_{k_1}u_{k_2}(\bar{u})_{k_3}u_{k_4}(\bar{u})_{k_5},}
\end{cases}
~~k\in\lambda^{-1}\Z,
\end{align*} 
a computation similar to (3.35) in \cite{LWX} gives that for any symbol $M_n$,
\begin{align*}
\frac{d}{dt}\Lambda_n(M_n)&=i\Lambda_n(M_n\alpha_n^V)+i\sigma\Lambda_{n+4}\big(\sum_{j=1}^n(-1)^jX_j(M_n)\big),
\end{align*}
where in our case (recall that $V_k$ are real)
\begin{align}\label{eq:alpha}
\alpha_n^V=-\sum_{j=1}^n(-1)^{j+1}\omega_{(-1)^{j+1}k_j},
\end{align}
and as in \cite{LWX}
\begin{align*}
X_j(M_n)=M_n(k_1,\ldots,k_{j-1},k_j+\cdots+k_{j+4},k_{j+5},\ldots,k_{n+4}).
\end{align*}
This yields
\begin{align*}
\frac{d}{dt}E_1(u)=i\Lambda_2(\sigma_2^V\alpha_2^V)+i\sigma\Lambda_6(M_6^V;u)+i\sigma\Lambda_{10}(M_{10}^V;u)
\end{align*}
with (using the symmetry with respect to both $\{k_1,k_3,k_5\}$ and $\{k_2,k_4,k_6\}$) 
\begin{align}\label{eq:M6V}
M_6^V=\frac{1}{6}\sum_{j=1}^6(-1)^{j+1}m(k_j)^2(\omega_{(-1)^{j+1}k_j}+\gamma)+\sigma_6\alpha_6^V=M_6^{V,1}+M_6^{V,2},
\end{align}
and
\begin{align*}
M_{10}^V=\sum_{j=1}^6(-1)^jX_j(\sigma_6^V).
\end{align*}

Note that, as in \cite{LWX},
\begin{align}\label{eq:alpha2V}
\alpha_2^V=0
\end{align}
on $\Gamma_2$.

To treat the main term $\Lambda_6(M_6^V)$, we still follow \cite{LWX} and make the non-resonant/resonant decomposition $\widetilde{M_6^V}=\mathbf{1}_\Omega M_6^{V,1} +\mathbf{1}_\Upsilon M_6^{V,2}$ and $\overline{M_6^V} = M_6^V-\widetilde{M_6^V}$. Here the sets of frequencies $\Omega,\Upsilon\subset \Gamma_6$ are the same as in \cite{LWX}, namely
\begin{align}\label{eq:Ups}
\Upsilon=\{(k_1,\ldots,k_6)\in\Gamma_6,~~|k_1|\ge|k_3|\ge|k_5|,~~|k_2|\ge|k_4|\ge|k_6|,~~|k_1|\ge|k_2|,~~|k_1^*|\sim|k_2^*|\gg N\},
\end{align}
\begin{align}\label{eq:O1}
\Omega_1=\{(k_1,\ldots,k_6)\in\Upsilon,~~|k_1|\gg |k_2|\},
\end{align}
\begin{align}\label{eq:O2}
\Omega_2 = \{(k_1,\ldots,k_6)\in\Upsilon,~~|k_3^*|\gg |k_4^*|\},
\end{align}
\begin{align}\label{eq:O3}
\Omega_3 = \{(k_1,\ldots,k_6)\in\Upsilon,~~|k_1|\sim|k_5|,~~|k_5|\gg|k_4|\},
\end{align}
\begin{align}\label{eq:O4}
\Omega_4 = \Big\{(k_1,\ldots,k_6)\in\Upsilon,~~|k_1|\sim|k_6|\gg|k_3|,~~\big(||k_1|-|k_2||\ll|k_1|\text{ or }k_2k_4>0,~~k_2k_6>0\big)\Big\},
\end{align}
and
\begin{align}\label{eq:O5}
\Omega_5 = \Big\{(k_1,\ldots,k_6)\in\Upsilon,~~|k_1|\sim|k_2|\gtrsim N\gg|k_3^*|,~~|\omega_{k_1}-\omega_{-k_2}|\gg\big|\sum_{j=3}^6(-1)^{j+1}\omega_{(-1)^{j+1}k_j}\big|\Big\},
\end{align}
and $\Omega=\cup_{j=1}^5\Omega_j$, where $|k_1^*|\ge\ldots\ge|k_6^*|$ denotes the decreasing rearrangement of $(k_1,\ldots,k_6)$.

\medskip
With the decomposition above, the ``second generation'' $I$-method consists then in using the modified energy
\begin{align*}
E_2(u) = E(I_Nu)-\Lambda_6\big(\frac{\widetilde{M_6^V}}{\alpha_6^V}\big).
\end{align*}

Indeed, from the conservation of the $L^2$-norm and \eqref{eq:HsIscal} it holds
\begin{align}\label{eq:modifiedenergy*}
\|u(t)\|_{H^s}&\lesssim \|u(0)\|_{L^2}+\|\partial_x I_Nu\|_{L^2}\lesssim \|u(0)\|_{H^s}+E_1(u(t))^{\frac12}\notag\\
& \lesssim \|u(0)\|_{H^s}+E_2(u(t))^{\frac12}+N^{0-}E_2(u(t))^3,
\end{align}
where the second estimate follows from the positivity of $E$ in the defocusing case and the Gagliardo-Nirenberg inequality with the smallness assumption on $\|u_0\|_{H^s}$ in the focusing case, and the last estimate is a direct consequence of the following lemma, which is the exact analogue of \cite[Lemma 3.3]{LWX} (see Appendix~\ref{appendixproof} for a proof).

\begin{lemma}\label{lem:E1E2}
The following estimate holds for any $\frac13<s<1$ and $t>0$: $\big|\Lambda_6\big(\frac{\widetilde{M_6^V}}{\alpha_6^V};u(t)\big)\big|\lesssim N^{2(s-1)}\|I_Nu(t)\|_{H^1}^6$.
\end{lemma}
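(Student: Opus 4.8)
The goal is to bound the sextic functional $\Lambda_6\big(\tfrac{\widetilde{M_6^V}}{\alpha_6^V};u\big)$ by $N^{2(s-1)}\|I_Nu\|_{H^1}^6$. The strategy is to mimic the proof of \cite[Lemma 3.3]{LWX} verbatim, checking that the only place where the specific form of the Schrödinger symbol $k\mapsto k^2$ enters — namely the lower bounds on the resonance function $\alpha_6$ used to divide by it on each $\Omega_j$ — survives the replacement of $k^2$ by $\omega_{k,\lambda}=k^2+(2\pi)^{1/2}\lambda^{-2}V_{\lambda k}$. First I would recall that on $\Gamma_6$ one has $\alpha_6^V=-\sum_{j=1}^6(-1)^{j+1}\omega_{(-1)^{j+1}k_j,\lambda}$, so that $\alpha_6^V=\alpha_6^0+\mathcal{O}(\lambda^{-2}\|V\|_{\ell^\infty})$, where $\alpha_6^0$ is the symbol from \cite{LWX} built from $k^2$. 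Since on $\Upsilon$ (hence on each $\Omega_j$) one has $|k_1^*|\sim|k_2^*|\gg N\gg1$, the analysis in \cite{LWX} provides a lower bound $|\alpha_6^0|\gtrsim |k_1^*|^2$ (or $\gtrsim|k_1^*||k_3^*|$ on $\Omega_2$, etc.); because $\lambda\ge1$ and these lower bounds grow at least like a positive power of $N$, the perturbation $\mathcal{O}(\lambda^{-2}\|V\|_{\ell^\infty})=\mathcal{O}(1)$ is negligible for $N$ large, and one gets the same lower bounds for $|\alpha_6^V|$ up to a harmless constant depending on $1+\|V\|_{\ell^\infty}$.

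**Key steps.** Once the divisor estimates are secured, the remaining argument is purely multilinear and identical to \cite{LWX}: (i) on each $\Omega_j$, use the pointwise bound on $|M_6^{V,1}|$ or $|M_6^{V,2}|$ — here $M_6^{V,1}$ is a sum of terms $m_N(k_j)^2(\omega_{(-1)^{j+1}k_j,\lambda}+\gamma)\lesssim m_N(k_j)^2\langle k_j\rangle^2$, so the same symbol bounds as in \cite{LWX} hold up to the constant $1+\|V\|_{\ell^\infty}+\gamma$, and $M_6^{V,2}=\sigma_6\alpha_6^V$ likewise differs from its $V=0$ counterpart by a lower-order term controlled by $\lambda^{-2}\|V\|_{\ell^\infty}$; (ii) combine the resulting symbol bound $\big|\tfrac{\widetilde{M_6^V}}{\alpha_6^V}\big|\lesssim N^{2(s-1)}\langle k_1^*\rangle\langle k_2^*\rangle\prod_{j}m_N(k_j)$ (the $N^{2(s-1)}$ gain coming exactly as in \cite{LWX} from the ratio of $m_N$'s at the two highest frequencies) with the $L^4_{t,x}$ Strichartz estimate of Lemma~\ref{lem:linear*}~(v) (valid uniformly in $\lambda\ge1$) applied after dyadic decomposition and an $L^4$–$L^4$–$L^\infty$–$L^\infty$–$L^4$–$L^4$ Hölder splitting, with the $L^\infty$ factors placed on the two lowest frequencies and handled by Bernstein / Sobolev embedding on $\mathbb{T}_\lambda$; (iii) sum the dyadic pieces, the summation converging because of the $N^{2(s-1)}$ prefactor together with the constraint $s>\tfrac13$ which makes the remaining frequency-exponent negative. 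Each of these is line-by-line the same as \cite{LWX}, so I would present them compactly and point to the reference for the routine Littlewood–Paley bookkeeping.

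**Main obstacle.** The only genuine point requiring care — and the step I expect to be the crux — is verifying that replacing $k^2$ by $\omega_{k,\lambda}$ does not spoil the lower bounds on $\alpha_6^V$ on each of the five regions $\Omega_1,\dots,\Omega_5$. Regions $\Omega_1$–$\Omega_4$ are unproblematic: there the lower bound on $|\alpha_6^0|$ is of size a positive power of the top frequency $\gg N$, which swamps the $\mathcal{O}(1)$ perturbation. The delicate one is $\Omega_5$, whose very definition involves the difference $|\omega_{k_1}-\omega_{-k_2}|\gg\big|\sum_{j=3}^6(-1)^{j+1}\omega_{(-1)^{j+1}k_j}\big|$ and hence already refers to the perturbed frequencies; here one must check that the definition is self-consistent and that $|\alpha_6^V|\gtrsim|\omega_{k_1}-\omega_{-k_2}|$ with the same implicit constant structure as in \cite{LWX}, which follows directly from the triangle inequality and the region's defining inequality since $\alpha_6^V=(\omega_{k_1}-\omega_{-k_2})-\sum_{j=3}^6(-1)^{j+1}\omega_{(-1)^{j+1}k_j}$ on $\Gamma_6$ (using $k_1+\dots+k_6=0$). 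Thus the whole lemma reduces to the observation that the frequency regions and resonance lower bounds of \cite{LWX} are stable under a bounded $\ell^\infty$ perturbation of the symbol at frequencies $\gg N$, and I would organize the proof around that single remark.
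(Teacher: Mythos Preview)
Your analysis of the divisor is essentially correct and matches the paper: the key pointwise bound is simply $|\widetilde{M_6^V}|\lesssim |\alpha_6^V|$, obtained by checking that on each $\Omega_j$ the lower bound on $|\alpha_6^0|$ from \cite{LWX} survives the bounded perturbation $\alpha_6^V=\alpha_6^0+O(\lambda^{-2}\|V\|_{\ell^\infty})$. Your remark that $\Omega_5$ is defined directly in terms of the perturbed frequencies, so that $|\alpha_6^V|\gtrsim|\omega_{k_1}-\omega_{-k_2}|$ follows from the triangle inequality, is exactly what the paper does.

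There is, however, a genuine gap in your step (ii). The estimate to be proved is \emph{pointwise in time}: it bounds $\Lambda_6(\,\cdot\,;u(t))$ at a fixed $t$ in terms of $\|I_Nu(t)\|_{H^1}$, not in terms of any $X^{s,b}$ norm. The $L^4_{t,x}$ Strichartz estimate of Lemma~\ref{lem:linear*}~(v) is a space--time bound and gives you nothing here; there is no time integral to trade. (You may be conflating this lemma with Lemma~\ref{lem:Ikey}, where the time integration over $[0,\delta]$ is present and Strichartz/bilinear estimates are indeed the right tool.) Relatedly, the symbol bound you write, $|\widetilde{M_6^V}/\alpha_6^V|\lesssim N^{2(s-1)}\langle k_1^*\rangle\langle k_2^*\rangle\prod_j m_N(k_j)$, is not what is needed: the correct (and stronger) conclusion of the divisor analysis is just $|\widetilde{M_6^V}/\alpha_6^V|\lesssim 1$.

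The paper's argument, once $|\widetilde{M_6^V}/\alpha_6^V|\lesssim 1$ is established, is a purely spatial estimate at fixed $t$: H\"older in $x$ with six $L^6_x$ factors, then the Sobolev embedding $H^s(\mathbb{T}_\lambda)\hookrightarrow L^6(\mathbb{T}_\lambda)$, which is precisely where the hypothesis $s>\tfrac13$ enters. The gain $N^{2(s-1)}$ comes from the two factors carrying the largest frequencies $|k_1^*|\sim|k_2^*|\gtrsim N$: for such frequencies $\langle k\rangle^s\sim N^{s-1}m_N(k)\langle k\rangle$, so $\|\mathbf{1}_{|k|\gtrsim N}u\|_{H^s}\lesssim N^{s-1}\|I_Nu\|_{H^1}$, and the remaining four factors are controlled directly by $\|u\|_{H^s}\lesssim\|I_Nu\|_{H^1}$. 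Replace your Strichartz/$L^4$--$L^\infty$ scheme by this fixed-time $L^6$ argument and the proof goes through.
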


Thus, \eqref{eq:modifiedenergy*} implies that it is enough to prove the almost conservation of $E_2$ in order to globalize the solution provided by Lemma~\ref{lem:LWP*}.

Now, with the previous computations and the fundamental theorem of calculus we have
\begin{align}\label{eq:Evar}
E_2(u(t))&=E_2(u(0))+i\sigma\int_0^t\Big\{\Lambda_6(\overline{M_6^V})+\Lambda_{10}(\overline{M_{10}^V})\Big\}dt'
\end{align}
with
\begin{align*}
\overline{M_{10}^V}=M_{10}^V+\sum_{j=1}^6(-1)^jX_j\big(\frac{\widetilde{M_6}^V}{\alpha_6^V}\big).
\end{align*}

Then we can estimate the terms above similarly as in \cite[Proposition 3.2 \& 3.3]{LWX}.

\begin{lemma}\label{lem:Ikey}
For any $\frac25<s\le \frac12$ and $\delta\in (0;1)$, the following estimates hold uniformly in $\lambda,N$:\\
\textup{(i)} ${\displaystyle \big|\int_0^\delta \Lambda_6(\overline{M_6^V})dt\big|\lesssim N^{-3}\lambda^{0+}\|I_Nu\|_{X^{1,\frac12+}(\delta)}^6}$;\\
\textup{(ii)} ${\displaystyle \big|\int_0^\delta \Lambda_{10}(\overline{M_{10}^V})dt\big|\lesssim N^{-3}\lambda^{0+}\|I_Nu\|_{X^{1,\frac12+}(\delta)}^{10}}$.
\end{lemma}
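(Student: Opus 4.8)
The plan is to follow the proofs of \cite[Propositions 3.2 and 3.3]{LWX} essentially verbatim, with the dispersion relation $k^2$ of $-\partial_x^2$ replaced by $\omega_k=k^2+v_k$, $v_k:=(2\pi)^{\frac12}(V_\lambda)_k$, and to track the resulting perturbation, which is uniformly of size $\sup_k|v_k|=(2\pi)^{\frac12}\lambda^{-2}\|V\|_{\ell^\infty}$ (the mass term $\gamma$, of size $O(1)$, is present already in \cite{LWX}). The structural inputs of that argument are the bilinear Strichartz estimate on $\mathbb{T}_\lambda$ (\cite[Proposition 2.1]{LWX}, Lemma~\ref{lem:bil}), the $L^4$ Strichartz estimate (Lemma~\ref{lem:linear*}(v), uniform in $\lambda\ge1$), pointwise size bounds on $M_6^{V,1}$, $\sigma_6$ and $\alpha_6^V$, and the decomposition $\Gamma_6=\Omega^c\cup\Omega$, $\Omega=\cup_{j=1}^5\Omega_j$, of \eqref{eq:Ups}--\eqref{eq:O5}. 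Two algebraic facts, unaffected by $V$, drive the argument. First, since $\sum_{j=1}^6(-1)^{j+1}=0$, on $\{\max_j|k_j|\le N\}$ (where $m_N(k_j)\equiv1$ and $\sigma_6=\frac16$) one has $M_6^{V,1}=\frac16\sum_j(-1)^{j+1}\omega_{(-1)^{j+1}k_j}=-\frac16\alpha_6^V$ and $M_6^{V,2}=\sigma_6\alpha_6^V=\frac16\alpha_6^V$, hence $M_6^V\equiv0$ there; since $\Omega\subset\Upsilon\subset\{|k_1^*|\gg N\}$, this makes $\overline{M_6^V}=\mathbf{1}_{\Omega^c}M_6^{V,1}+\mathbf{1}_{\Upsilon^c}M_6^{V,2}$, the symbol $\overline{M_6^0}$, and their difference all supported on $\{|k_1^*|\gtrsim N\}$. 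Second, $|\alpha_6^V-\alpha_6^0|\le6(2\pi)^{\frac12}\lambda^{-2}\|V\|_{\ell^\infty}$ is far below the quantities $\gtrsim N^2$ (on $\Omega_1,\dots,\Omega_4$) and $\gtrsim N/\lambda$ (on $\Omega_5$) that bound $|\alpha_6^0|$ from below in \cite{LWX}; hence these lower bounds persist for $\alpha_6^V$, so $\widetilde{M_6^V}/\alpha_6^V$ is well defined, Lemma~\ref{lem:E1E2} holds, and the refined pointwise bounds on the $\omega$-symbols coincide with the $V=0$ ones up to harmless $O(1)$ additive errors.

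For (i), I would write $\overline{M_6^V}=\overline{M_6^0}+(\overline{M_6^V}-\overline{M_6^0})$. The first term is exactly the object estimated in \cite[Proposition 3.2]{LWX}, giving $\big|\int_0^\delta\Lambda_6(\overline{M_6^0})\,dt\big|\lesssim N^{-3}\lambda^{0+}\|I_Nu\|_{X^{1,\frac12+}(\delta)}^6$. The difference is supported on $\{|k_1^*|\gtrsim N\}$ and splits into a genuinely perturbative piece of pointwise size $O(\lambda^{-2})$ (from $v_k$ and $\alpha_6^V-\alpha_6^0$) and a piece carrying the characteristic function of the thin set $\Omega_5^V\triangle\Omega_5^0$ — the only $V$-dependent part of the decomposition, since $\Omega_1,\dots,\Omega_4,\Upsilon$ in \eqref{eq:Ups}--\eqref{eq:O4} depend only on the $k_j$ — on which $|k_1^2-k_2^2|$ and $\big|\sum_{j=3}^6(-1)^{j+1}k_j^2\big|$ agree up to $O(\lambda^{-2})$. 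Each of these I would estimate by the same combination of Cauchy--Schwarz, the $L^4$ Strichartz estimate and the bilinear Strichartz estimate used in \cite{LWX}; the new feature is simply the extra gain $\lambda^{-2}$ which, via the relation $\lambda\sim N^{\frac{1-s}{s}}\|u_0\|_{H^s}^{\frac1s}$ fixed in Section~\ref{sub:scal}, converts into a surplus of negative powers of $N$ (this is the ``favourable scaling on the potential'' mentioned in Section~\ref{sec:gwp}), comfortably enough to absorb the perturbative contributions into $N^{-3}\lambda^{0+}\|I_Nu\|_{X^{1,\frac12+}(\delta)}^6$ for $s\le\frac12$.

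For (ii), I would argue identically via \cite[Proposition 3.3]{LWX}. Here $M_{10}^V=\sum_{j=1}^6(-1)^jX_j(\sigma_6)$ carries no $V$, so the sole $V$-dependence of $\overline{M_{10}^V}=M_{10}^V+\sum_j(-1)^jX_j(\widetilde{M_6^V}/\alpha_6^V)$ enters through $\widetilde{M_6^V}/\alpha_6^V=\mathbf{1}_\Omega M_6^{V,1}/\alpha_6^V+\mathbf{1}_\Upsilon\sigma_6$, which differs from its $V=0$ analogue by an $O(\lambda^{-2})$ symbol perturbation on $\Omega_1,\dots,\Omega_4,\Upsilon$ plus a term supported on $\Omega_5^V\triangle\Omega_5^0$, with the same support and size properties as above. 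The estimate of \cite[Proposition 3.3]{LWX} then applies to $M_{10}^0+\sum_j(-1)^jX_j(\widetilde{M_6^0}/\alpha_6^0)$, and the perturbative remainder is controlled by the same bilinear analysis with the extra $\lambda^{-2}$ gain, yielding $\big|\int_0^\delta\Lambda_{10}(\overline{M_{10}^V})\,dt\big|\lesssim N^{-3}\lambda^{0+}\|I_Nu\|_{X^{1,\frac12+}(\delta)}^{10}$.

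The delicate point is not the potential itself — which contributes only $O(\lambda^{-2})$ at each stage and, thanks to the scaling, is strictly favourable — but faithfully reproducing the multilinear machinery of \cite{LWX}: one must re-examine the whole case analysis on $\Omega^c$, and of the correction terms on $\Omega$, with $k^2$ replaced by $\omega_k$, checking that every lower bound on $|\alpha_6^V|$, every refined pointwise symbol bound, and every application of the $\mathbb{T}_\lambda$ bilinear Strichartz estimate (with its $\lambda^{0+}$ loss) survives the $O(\lambda^{-2})$ discrepancy, including on the boundary set $\Omega_5^V\triangle\Omega_5^0$ where the resonant/non-resonant dichotomy is most sensitive. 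I expect this bookkeeping, rather than any single estimate, to be the main burden of the proof.
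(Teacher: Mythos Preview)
Your approach is sound in spirit but differs from the paper's, and one step does not quite close as written.

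The paper does \emph{not} split $\overline{M_6^V}=\overline{M_6^0}+(\overline{M_6^V}-\overline{M_6^0})$. Instead it argues that the pointwise symbol bounds of \cite[Lemma~3.4]{LWX} on $\overline{M_6}$ continue to hold for $\overline{M_6^V}$, by the same case analysis on $\Upsilon\setminus\Omega$ already carried out for \eqref{est:M6A6} in the proof of Lemma~\ref{lem:E1E2}. Once those bounds are in place, the proof of \cite[Proposition~3.2]{LWX} is purely harmonic-analytic --- it uses only the symbol bounds, the bilinear Strichartz estimate (Lemma~\ref{lem:bil}, re-proved here for the perturbed dispersion $\omega_k$), the $L^6$ estimate \eqref{est:L6}, and Sobolev --- and therefore carries over verbatim to give (i). Part (ii) follows the same pattern via \cite[Proposition~3.3]{LWX} together with \eqref{est:M6A6}. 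In particular there is no separate analysis of $\Omega_5^V\triangle\Omega_5^0$: the set $\Omega_5$ in \eqref{eq:O5} is defined with $\omega_k$ from the start, and the argument stays with the $V$-dependent symbols throughout.

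The gap in your route is the uniformity clause. The difference $\overline{M_6^V}-\overline{M_6^0}$ is indeed $O(\lambda^{-2})$ pointwise on $\{|k_1^*|\gtrsim N\}$, but it carries none of the cancellation that makes $\overline{M_6^0}$ satisfy the refined bounds of \cite[Lemma~3.4]{LWX}; feeding a merely bounded symbol on $\Upsilon$ into the multilinear machinery only yields $N^{2(s-1)}$ (cf.\ the proof of Lemma~\ref{lem:E1E2}), not $N^{-3}$. You then invoke $\lambda\sim N^{(1-s)/s}\|u_0\|_{H^s}^{1/s}$ to convert the extra $\lambda^{-2}$ into the missing $N$-decay, but the lemma asks for estimates \emph{uniform in $\lambda$ and $N$}, so that relation is not available inside its proof. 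Your argument therefore proves a slightly weaker statement --- still sufficient for the globalisation downstream, where the relation is eventually fixed --- but not the lemma as stated. The paper's route sidesteps this entirely by keeping the $V$-dependent symbol and checking \cite[Lemma~3.4]{LWX} for it directly, which delivers $N^{-3}$ without any recourse to the choice of $\lambda$.
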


We also postpone the proof of Lemma~\ref{lem:Ikey} to Appendix~\ref{appendixproof} and conclude the proof of Proposition~\ref{prop:GWP*}. Indeed, from \eqref{eq:modifiedenergy*}, and \eqref{eq:Evar} with Lemma~\ref{lem:Iperturb} and~\ref{lem:Ikey}, we get that there exists a constant $C(V)>0$ such that for\footnote{Recall that the local time is $\delta\sim 1$ by our choice of $\lambda$ and Lemma~\ref{lem:LWPIW}, and $\|I_Nu\|_{X^{1,\frac12+}(\delta)}\lesssim 1$.} any $t\in[0;1]$,
\begin{align*}
\|I_Nu(t)\|_{H^1}^2\lesssim E_2(u(t)) = E_2(u(0))+O(N^{-3}\lambda^{0+}).
\end{align*}
We can iterate this bound for $t\in [0;\lambda^2T]$ as long as $\|I_Nu(t)\|_{H^1}\lesssim 1$. Thus, after $\lambda^2T$ iterations we get
\begin{align*}
\|I_Nu(t)\|_{H^1}^2\lesssim E_2(u(t)) = E_2(u(0))+O(TN^{-3}\lambda^{2+}),~~|t|\le \lambda^2T.
\end{align*}
Since $\lambda\sim N^{\frac{1-s}{s}}\|u_0\|_{H^s}^{\frac1s}$ and $s>\frac25$, by setting 
\begin{align*}
\alpha(s)=3-2\frac{1-s}{s}-\epsilon>0
\end{align*}
provided that $0<\epsilon\ll 1$,
and\footnote{Recall that the last lower bound on $N$ comes from the need to have $\lambda\ge 1$ together with the definition $\lambda = N^{\frac{1-s}{s}}\|u_0\|_{H^s}^{\frac1s}$. This condition is only restrictive when $\|u_0\|_{H^s}\ll 1$, but this is the case we are interested in.}
\begin{align*}
N = \max\big(T^{\frac1{\alpha(s)}}\|u_0\|_{H^s}^{\frac2{s\alpha(s)}},\|u_0\|_{H^s}^{-\frac1{1-s}}\big),
\end{align*}
we obtain that $I_Nu$ can be extended as a solution on $[-\lambda^2 T;\lambda^2T]\times\mathbb{T}_\lambda$ which satisfies
\begin{align}\label{eq:Iu}
\|I_Nu\|_{L^\infty_{\lambda^2T}H^1_\lambda}^2\lesssim 1.
\end{align}
Reversing the scaling, this shows that the local solution $u$ to \eqref{eq:NLS*} provided by Lemma~\ref{lem:LWP*} can be extended on $[-T;T]\times\mathbb{T}$, thus proving global well-posedness and the estimate
\begin{align*}
\|u\|_{L^\infty_TH^s}\lesssim \lambda^s\|u_\lambda\|_{L^\infty_{\lambda^2T}H^s_\lambda} \lesssim \lambda^s\|I_Nu_\lambda\|_{L^\infty_{\lambda^2T}H^1_\lambda}\lesssim \lambda^s\sim N^{1-s}\|u_0\|_{H^s}\sim C(T,\|u_0\|_{H^s})\|u_0\|_{H^s}
\end{align*}
with
\begin{align*}
C(T,\|u_0\|_{H^s}) \sim\begin{cases} \langle t\rangle^\frac12,~~|t|\lesssim \|u_0\|_{H^s}^{-1};\\ \|u_0\|_{H^s}^{-1},~~\|u_0\|_{H^s}^{-1}\lesssim |t|\lesssim \|u_0\|_{H^s}^{-\frac{3-\epsilon}{1-s}}\\ |t|^{\frac{1-s}{\alpha(s)}}\|u_0\|_{H^s}^{\frac{2(1-s)}{s\alpha(s)}},~~|t|\gtrsim \|u_0\|_{H^s}^{-\frac{3-\epsilon}{1-s}}.\end{cases}
\end{align*}
This proves \eqref{global}.

\subsection{Proof of Proposition~\ref{prop:GWPW}}
We now move on to the proof of the global well-posedness for \eqref{eq:NLS}. We keep the same notations as for Proposition~\ref{prop:GWP*}, except that now the Bourgain type space $X^{s,b}_{-\partial_x^2+W}$ is defined with respect to the eigenvalues $\lambda_k$ and eigenfunctions $f_k$ of the Sturm-Liouville operator $-\partial_x^2+W$ for an even potential $W\in H^1(\mathbb{T};\mathbb{R})$ (see Proposition~\ref{prop_dir}):
\begin{align}\label{XW}
\|u\|_{X^{s,b}_{-\partial_x^2+W}}=\big\|\langle\lambda_k\rangle^{\frac{s}2}\langle\tau-\lambda_k\rangle^b \langle \hat{u}(\tau,\cdot),f_k\rangle_{L^2}\big\|_{L^2_\tau\ell^2_k},
\end{align}
where now $\hat{u}$ is only the temporal Fourier transform of $u$, and the coefficients $\langle u,f_k\rangle_{L^2}$ now play the role of the Fourier coefficients.

In the rest of this subsection, however, we will keep the subscript $-\partial_x^2+W$, and simply write $X^{s,b}$ (without subscript) when $W=0$. The time-localized version is defined as in \eqref{Xsb*T}, and we have the same linear estimates as in Lemma~\ref{lem:linear*}.
\begin{lemma}\label{lem:linearW}
The following properties hold:\\
\textup{(i) ($X^{s,b}_{-\partial_x^2+W}$ as a resolution space)} If $u\in X^{s,b}_{-\partial_x^2+W}$ for some $s\in\mathbb{R}$ and $b>\frac12$, then $u\in C(\mathbb{R};H^s(\mathbb{T}))$ and $\|u\|_{L^\infty_tH^s}\lesssim \|u\|_{X^{s,b}_{-\partial_x^2+W}}$.\\
\textup{(ii) (Time localization)} For any $T\in(0;1]$ and $s\in\mathbb{R}$, $-\frac12 <b'\le b <\frac12$, it holds $\|u\|_{X^{s,b'}_{-\partial_x^2+W}(T)}\lesssim T^{b-b'}\|u\|_{X^{s,b}_{-\partial_x^2+W}(T)}$.\\
\textup{(iii) (Linear estimate)} It holds $\|e^{it(-\partial_x^2+W)}u_0\|_{X^{s,b}_{-\partial_x^2+W}(T)}\lesssim \langle T\rangle^\frac12\|u_0\|_{H^s}$ uniformly in $T>0$, for any $s\in\mathbb{R}$ and $b>\frac12$.\\
\textup{(iv) (Energy estimate)} For any $s\in\mathbb{R}$ and $b>\frac12$ it holds $${\displaystyle \Big\|\int_0^te^{i(t-t')(-\partial_x^2+W)}F(t')dt'\Big\|_{X^{s,b}(T)}\lesssim \langle T\rangle^2\|F\|_{X^{s,b-1}}}$$ uniformly in $T>0$.\\
\textup{(v) ($L^4$ Strichartz estimate)} It holds $\|u\|_{L^4_{t,x}}\lesssim \|u\|_{X^{0,\frac38+}_{-\partial_x^2+W}}$.\\
\textup{(vi) (Equivalence of norms)} If $W\in H^\sigma(\mathbb{T})$, $\sigma\ge 1$, then for any $s,b,\beta \ge 0$ satisfying $b<\frac12+\beta$, $2b<1+\beta+\sigma$ and $2b+s<\frac32+\beta+\sigma$, there is $C(\|W\|_{H^\sigma})\ge 1$ such that $\frac1C\|u\|_{X^{s-\beta,b}}\le \|u\|_{X^{s,b}_{-\partial_x^2+W}}\le C\|u\|_{X^{s+\beta,b}}$.
\end{lemma}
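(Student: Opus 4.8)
The plan is to prove (vi) first, since once the two Bourgain scales built on $-\partial_x^2+W$ and on $-\partial_x^2$ are comparable (up to the unavoidable $\beta$ loss), the remaining five properties reduce to their classical counterparts for $W=0$ (Bourgain~\cite{Bou93}, see also~\cite{Tao06,ET}). For (vi) I would start from the P\"oschel--Trubowitz asymptotics (\cite[Thm~4 p.\,35]{PT}) together with the refined eigenfunction estimates of~\cite{BG21} recalled in~\eqref{est:ef}--\eqref{est:ef-tri}: writing $\mu_k := \lambda_k-k^2$ and $f_k = \sqrt{2/\pi}\,\sin(kx)+g_k$ (with $f_k$ extended oddly to $\mathbb{T}$), one has $\sup_k|\mu_k|\lesssim\|W\|_{L^2}$ and, when $W\in H^\sigma$, the remainder $g_k$ is $\sigma$ derivatives smoother than $f_k$, with Fourier content concentrated near $\pm k$ and decaying in $\langle j\mp k\rangle$. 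Hence the unitary map sending the spectral coefficients $(\langle u,f_k\rangle)_k$ to the Fourier coefficients $(\widehat u(j))_j$ is, after the elementary identification of the Dirichlet problem on $[0;\pi]$ with odd functions on $\mathbb{T}$, the identity plus a smoothing operator $S$ of order $-\sigma$ (or $-(\sigma+1)$). Two features make $S$ manageable in $X^{s,b}$: on the ``diagonal'' $j=\pm k$ the modulation weights $\langle\tau-\lambda_k\rangle$ and $\langle\tau-k^2\rangle$ are comparable since $\mu_k=O(1)$; off the diagonal, passing from $\langle\tau-\lambda_k\rangle^b$ to $\langle\tau-j^2\rangle^b$ costs at most a factor $(\langle j-k\rangle\langle j+k\rangle)^b$, which is absorbed by the kernel decay of $S$ provided its regularity gain beats $2b$. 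The three inequalities $b<\tfrac12+\beta$, $2b<1+\beta+\sigma$ and $2b+s<\tfrac32+\beta+\sigma$ in the statement are precisely the thresholds ensuring this; a symmetric estimate for the inverse change of basis then yields the two-sided bound, with all constants depending on $\|W\|_{H^\sigma}$ only through the implicit constants in the asymptotics above.

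Granting (vi) — and, for the endpoint-sharp spatial comparison, the equivalence $\|v\|_{H^{s'}}^2\simeq\sum_k\langle k\rangle^{2s'}|\langle v,f_k\rangle|^2$ for $0\le s'\le 1$ recalled in~\eqref{eq:lapin} — properties (i) and (ii) follow exactly as in the case $W=0$: (i) by Cauchy--Schwarz in $\tau$ (using $2b>1$) followed by a density/dominated-convergence argument for the continuity in time, and (ii) by the standard one-dimensional estimate for time cut-offs, which does not see $W$ (see~\cite[Ch.~2]{Tao06} and~\cite{ET}). For (iii)--(iv) the only non-routine feature is the polynomial loss $\langle T\rangle^{1/2}$, resp.\ $\langle T\rangle^2$; I would handle it exactly as in the convolution case of Appendix~\ref{appendix}, by splitting $[-T;T]$ into $O(T)$ intervals of unit length, applying the loss-free estimate on each, and summing using the almost-orthogonality of the $X^{s,b}$ pieces, the operator $-\partial_x^2+W$ entering only through the uniform bound on $\mu_k$. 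Finally, (v) cannot be deduced from (vi) alone, since (vi) costs $\beta>0$ derivatives whereas Bourgain's estimate $\|v\|_{L^4_{t,x}}\lesssim\|v\|_{X^{0,3/8+}_{-\partial_x^2}}$ is regularity-sharp; I would instead prove it by the transference principle from the linear propagator $e^{it(-\partial_x^2+W)}$, reducing the latter to Bourgain's trigonometric $L^4$ estimate~\cite{Bou93} via the eigenfunction localization~\eqref{est:ef} and the observation that the bounded shift $\mu_k$ perturbs the associated resonance relation by $O(1)$ and hence does not affect the underlying lattice-point count, the smoother tail coming from $g_k$ being absorbed by the Sobolev embedding $H^{1/4}(\mathbb{T})\hookrightarrow L^4(\mathbb{T})$ since $W\in H^4$.

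The hard part will be the off-diagonal analysis in (vi): one must trade the \emph{finite} smoothing of $f_k-\sqrt{2/\pi}\sin(kx)$ — bounded below only by the regularity $W\in H^\sigma$ — against the modulation mismatch created by the fact that $f_k$ is not supported on a single Fourier mode, and it is exactly this trade-off that forces the $\beta$-loss and pins down the admissible triples $(s,b,\beta)$; it is also the reason (v) needs its own argument rather than a black-box use of (vi). Everything else — the unit-interval summation for (iii)--(iv), the continuity statement in (i), the one-dimensional cut-off estimate for (ii), and the identification of the Dirichlet problem with odd periodic functions — is bookkeeping parallel to the convolution case already treated in Appendix~\ref{appendix}.
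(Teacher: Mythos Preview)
Your overall plan and the treatment of (vi) match the paper's: one uses the eigenfunction localisation \eqref{est:ef} and the eigenvalue asymptotics \eqref{est:ev}--\eqref{est:ev2} to compare the two bases, and the three inequalities on $(s,b,\beta)$ arise exactly from summing the off-diagonal kernel in the three regimes $|n|\ll|k|$, $|n|\sim|k|$, $|n|\gg|k|$. Parts (i) and (ii) also go through as you describe, since they reduce to one-dimensional time estimates insensitive to the spatial operator. Two points, however, need correction.

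For (iii)--(iv), Appendix~\ref{appendix} does \emph{not} split $[-T,T]$ into unit intervals; it uses a single rescaled cutoff $\eta(\langle T\rangle^{-1}t)$ and computes $\|\eta(\langle T\rangle^{-1}\cdot)\|_{H^b_t}$ and $\|\eta(\langle T\rangle^{-1}\cdot)\int_0^t f\|_{H^b_t}$ directly. Your alternative via ``almost-orthogonality of the $X^{s,b}$ pieces'' fails as stated: for $b>\tfrac12$ functions supported on disjoint unit intervals are \emph{not} almost-orthogonal in $H^b(\mathbb R)$, and summing $O(T)$ pieces by the triangle inequality yields $\langle T\rangle$, not $\langle T\rangle^{1/2}$. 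The direct computation with the scaled cutoff is what produces the correct powers.

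For (v), the transference route has a genuine gap. Abstract transference from a linear $L^4$ estimate for $e^{it(-\partial_x^2+W)}$ yields only $\|u\|_{L^4}\lesssim\|u\|_{X^{0,1/2+}}$, not the sharper exponent $3/8+$ in the statement; and even the linear $L^4$ bound is not a direct corollary of Zygmund/Bourgain, since the perturbed propagator is no longer time-periodic (the $\lambda_k$ are not integers) --- the paper's Remark following the proof makes exactly this point. Instead the paper works directly at the $X^{s,b}$ level: write $\|u\|_{L^4}^2=\|u^2\|_{L^2}$, decompose dyadically in modulation, and control the trilinear coefficient $\langle f_{n_1}f_{n_2},f_{n_0}\rangle_{L^2}$ by expanding each $f_{n_j}$ via \eqref{est:ef} into a diagonal part ($|k_j|=|n_j|$) and an off-diagonal tail, then running a case-by-case analysis (I--I--I through II--II--II). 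Each case collapses to the same lattice-point count as in the unperturbed problem, which is precisely where your observation that the $O(1)$ shift $\mu_k$ does not alter the count actually enters. So the key ingredient you identify is correct, but it must be deployed inside this bilinear decomposition rather than through a black-box transference from the linear flow.
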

Note that compared to Lemma~\ref{lem:linear*}~(vi), here we have a loss of derivatives in the embeddings between $X^{s,b}$ and $X^{s,b}_{-\partial_x^2+W}$. Indeed, in the following we build the local solution to \eqref{eq:NLS} in $X^{s,b}_{-\partial_x^2+W}$ (Lemma~\ref{lem:LWPW} below) but after rescaling we extend it globally by iterating the local theory in $X^{s,b}$ (Lemma~\ref{lem:LWPIW} below), both with $b>\frac12$. Thus, we need $\beta>0$ in Lemma~\ref{lem:linearW}~(vi).

Again, we refer to Appendix~\ref{appendix} for the proof of this statement. In particular, as in Lemma~\ref{lem:LWP*}, the estimates above imply the following local well-posedness result.
\begin{lemma}\label{lem:LWPW}
Let $W\in H^1(\mathbb{T})$, and $s>\frac14$. Then there exists $\epsilon_0\in (0;1]$ such that for any $u_0\in H^s(\mathbb{T})$ with $\|u_0\|_{H^s}\le \epsilon_0$, letting $\delta\sim \|u_0\|_{H^s}^{-1}$, there exists a unique mild solution $u\in X^{s,\frac12+}_{-\partial_x^2+W}(\delta)$ to \eqref{eq:NLS} on $[-\delta,\delta]\times\mathbb{T}$ with $u(0)=u_0$, which satisfies
\begin{equation}\label{localW}
\|u\|_{X^{s,\frac12+\epsilon}_{-\partial_x^2+W}(t)}\lesssim \langle t\rangle^\frac12\|u_0\|_{H^s}
\end{equation}
for any $0\le t\le \delta$.
\end{lemma}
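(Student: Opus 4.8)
The plan is to mirror, essentially verbatim, the contraction argument used for Lemma~\ref{lem:LWP*}, now with the Bourgain space $X^{s,\frac12+}_{-\partial_x^2+W}$ in place of $X^{s,\frac12+}_{-\partial_x^2+V\ast}$ and with Lemma~\ref{lem:linearW} in place of Lemma~\ref{lem:linear*}. Concretely, I would fix $\epsilon_0\in(0;1]$ small, set $\delta=A\|u_0\|_{H^s}^{-1}\ge 1$ and $R=2C\delta^{\frac12}\|u_0\|_{H^s}$ for suitable universal $A,C\ge 1$, and check that
$$\Gamma u:=e^{it(-\partial_x^2+W)}u_0-i\sigma\int_0^t e^{i(t-t')(-\partial_x^2+W)}\big(|u|^4u\big)(t')\,dt'$$
maps the ball $B(R)\subset X^{s,\frac12+}_{-\partial_x^2+W}(\delta)$ into itself and is a contraction. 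The linear estimate Lemma~\ref{lem:linearW}(iii) bounds the first term by $C\langle\delta\rangle^{\frac12}\|u_0\|_{H^s}$, and the energy estimate Lemma~\ref{lem:linearW}(iv) bounds the Duhamel term by $C\langle\delta\rangle^{2}\,\big\||u|^4u\big\|_{X^{s,-\frac12+}_{-\partial_x^2+W}}$; once the quintic estimate
$$\big\||v|^4v\big\|_{X^{s,-\frac12+}_{-\partial_x^2+W}}\lesssim \|v\|_{X^{s,\frac12+}_{-\partial_x^2+W}}^5,\qquad s>\tfrac14,$$
is available, the same elementary algebra as in Lemma~\ref{lem:LWP*} (namely $C\delta^2R^4\sim A^4$, so one first chooses $A$ small and then $\epsilon_0$ small to force $R\le 1$) closes the fixed point and, through the same estimates, gives uniqueness and the local bound \eqref{localW}. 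As there, the point of the scheme is that the non-standard weights $\langle T\rangle^{\frac12}$ and $\langle T\rangle^{2}$ in Lemma~\ref{lem:linearW}(iii)--(iv) are precisely what allows the choice $\delta\sim\|u_0\|_{H^s}^{-1}\ge 1$; see Remark~\ref{rk:LWP}.

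The only ingredient not already contained in the proof of Lemma~\ref{lem:LWP*} is the quintic estimate, and the sole new difficulty there is that the Sturm--Liouville eigenbasis $(f_k)$ is not the Fourier basis, so that the fractional derivative $\langle\nabla\rangle^s$ no longer commutes with the propagator $e^{it(-\partial_x^2+W)}$. I would get around this with the comparison estimate Lemma~\ref{lem:linearW}(vi): since $s>\frac14$, fix $\beta>0$ with $s+2\beta<\frac12$. Then, exactly as in the convolution case, using first the embedding $X^{s,-\frac38-}_{-\partial_x^2+W}\hookrightarrow X^{s,-\frac12+}_{-\partial_x^2+W}$, then the dual of the $L^4$-Strichartz estimate Lemma~\ref{lem:linearW}(v), the fractional Leibniz rule, and H\"older in time,
$$\big\||v|^4v\big\|_{X^{s,-\frac12+}_{-\partial_x^2+W}}\lesssim \big\||v|^4v\big\|_{L^\frac43_t W^{s,\frac43}_x}\lesssim \|v\|_{L^4_t W^{s,4}_x}\,\|v\|_{L^8_{t,x}}^4,$$
where passing from $X^{s,-\frac38-}_{-\partial_x^2+W}$ to $L^\frac43_t W^{s,\frac43}_x$ uses Lemma~\ref{lem:linearW}(vi) (extended to negative temporal regularity by duality) to trade the perturbed space for the flat one $X^{s+\beta,-\frac38-}$. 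For the two remaining factors, interpolating the Sobolev embedding $L^\infty_{t,x}\hookrightarrow X^{\frac12+,\frac12+}_{-\partial_x^2+W}$ with Lemma~\ref{lem:linearW}(v) gives $\|v\|_{L^8_{t,x}}\lesssim \|v\|_{X^{\frac14+,\frac7{16}+}_{-\partial_x^2+W}}$, while
$$\|v\|_{L^4_t W^{s,4}_x}=\|\langle\nabla\rangle^s v\|_{L^4_{t,x}}\lesssim \|\langle\nabla\rangle^s v\|_{X^{0,\frac38+}_{-\partial_x^2+W}}\lesssim \|v\|_{X^{s+2\beta,\frac38+}_{-\partial_x^2+W}}$$
by Lemma~\ref{lem:linearW}(v) and two more applications of (vi). Since $s+2\beta<\frac12$, collecting these bounds yields the quintic estimate.

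I expect the main obstacle to be bookkeeping rather than conceptual: one has to track the arbitrarily small losses of derivatives incurred at each passage between the perturbed scale $X^{\cdot}_{-\partial_x^2+W}$ and the flat scale $X^{\cdot}$ through Lemma~\ref{lem:linearW}(vi) (it is here that the hypothesis $W\in H^1$ enters, via the constraints in (vi) with $\sigma=1$), and check that their sum stays strictly below the available margin $s-\frac14$; one also has to justify the negative temporal exponents appearing in the energy estimate by a routine duality argument. Once these verifications are made, the proof reads word for word like that of Lemma~\ref{lem:LWP*}, with Lemma~\ref{lem:linear*} replaced by Lemma~\ref{lem:linearW} and the comparison (vi) inserted wherever a fractional derivative meets the propagator.
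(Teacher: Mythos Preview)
Your approach is correct and essentially coincides with the paper's: the paper's entire proof is the single sentence that the argument of Lemma~\ref{lem:LWP*} transfers verbatim, since it only used items (i)--(v) of Lemma~\ref{lem:linear*}, all of which are restated for the multiplicative case in Lemma~\ref{lem:linearW}(i)--(v). Your extra care in invoking Lemma~\ref{lem:linearW}(vi) to mediate between the Sturm--Liouville and Fourier frames when applying the fractional Leibniz rule is a legitimate way to make that step explicit, but the paper does not treat this as a new difficulty and simply asserts the quintic estimate goes through unchanged.
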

The proof is exactly the same as for Lemma~\ref{lem:LWP*}, as the latter only relied on the estimates of Lemma~\ref{lem:linear*}~(i)--(v), which remain true in this context.
                                                        
Next, we define again the $I$ operator as the Fourier multiplier (now back to the usual Fourier basis) with symbol $m(k)=m_s(N^{-1}k)$ for some smooth even function $m_s$ which equals 1 on $[0;1]$ and behaves like $\langle k\rangle^{s-1}$ for $|k|\ge 1$. We will then make the same rescaling procedure and set
$$u_{\lambda}(t,x)=\lambda^{-\frac12}u(\lambda^{-2}t,\lambda^{-1}x)$$
which now is a solution of 
\begin{equation}\label{eq:NLSscalW}
i\partial_t u_\lambda = -\partial_x^2 u_\lambda + W_\lambda u_\lambda +\sigma|u_\lambda|^4u_\lambda
\end{equation} 
on $[-\lambda^2T,\lambda^2T]\times\mathbb{T}_\lambda$, where
\begin{align}\label{eq:W}
W_\lambda(x)=\lambda^{-2}W(\lambda^{-1}x),~~x\in (-\pi\lambda,\pi\lambda).
\end{align}

Again, we start with a local in time estimate for $I_Nu_\lambda$.
\begin{lemma}\label{lem:LWPIW}
Let $W\in H^4(\mathbb{T})$, $\lambda\ge 1$, $s>\frac14$ and $u_\lambda(0)\in H^s(\mathbb{T}_\lambda)$. Then for $$ \delta\sim_{W}(1+\|I_Nu_\lambda(0)\|_{H^1_\lambda})^{-8-},$$ it holds
\begin{equation}\label{localI}
\|I_Nu_\lambda\|_{X^{1,\frac12+}_\lambda(\delta)} \lesssim \|I_Nu_\lambda(0)\|_{H^1_\lambda}.
\end{equation}
\end{lemma}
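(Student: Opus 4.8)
The plan is to mimic almost verbatim the proof of Lemma~\ref{lem:LWPI*}, the only genuinely new ingredient being the potential term: since $I_N$ is a Fourier multiplier and does not commute with $W_\lambda$, I would keep $I_N(W_\lambda u_\lambda)$ on the nonlinear side rather than absorb $W_\lambda$ into the linear operator. Concretely, applying $I_N$ to \eqref{eq:NLSscalW} shows that $I_N u_\lambda$ solves
\begin{equation*}
i\partial_t I_N u_\lambda = -\partial_x^2 I_N u_\lambda + I_N(W_\lambda u_\lambda) + \sigma I_N(|u_\lambda|^4 u_\lambda),
\end{equation*}
and I would set up the associated Duhamel map as a fixed point problem in the ball $B(R)$ of radius $R\sim\|I_N u_\lambda(0)\|_{H^1_\lambda}$ in $X^{1,\frac12+}_\lambda(\delta)$ (the Bourgain space attached to $-\partial_x^2$ on $\mathbb{T}_\lambda$), using the $\lambda$- and $N$-uniform linear estimates of Lemma~\ref{lem:linearW}(i)--(v). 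The quintic contribution would be treated exactly as in the proof of Lemma~\ref{lem:LWPI*}: introducing $U$ with $\widehat{U}_k(\tau)=m_N(k)\langle k\rangle^{1-s}|\widehat{(u_\lambda)}_k(\tau)|$, using the elementary symbol bound $m_N(k_1+\cdots+k_5)\langle k_1+\cdots+k_5\rangle^{1-s}\lesssim\sum_{j}m_N(k_j)\langle k_j\rangle^{1-s}$ for $k_1+\cdots+k_5=k$ to dominate $I_N(|u_\lambda|^4 u_\lambda)$ by (a multiplier applied to) $|U|^4 U$, and then invoking \eqref{multiI}, which gives $\|I_N(|u_\lambda|^4 u_\lambda)\|_{X^{1,-\frac12+}_\lambda(\delta)}\lesssim\delta^{\frac12-}\|I_N u_\lambda\|_{X^{1,\frac12+}_\lambda(\delta)}^5$.

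The only new estimate is the one for $I_N(W_\lambda u_\lambda)$. First I would record the ``almost monotonicity'' of the multiplier $k\mapsto m_N(k)\langle k\rangle$ (which is $\sim\langle k\rangle$ for $|k|\lesssim N$ and $\sim N^{1-s}|k|^s$ for $|k|\gtrsim N$, hence nondecreasing in $|k|$ up to constants), yielding $m_N(k_1+k_2)\langle k_1+k_2\rangle\lesssim\langle k_1\rangle\,m_N(k_2)\langle k_2\rangle$. By Young's inequality on $\tfrac1\lambda\mathbb{Z}$ this gives
\begin{equation*}
\|I_N(W_\lambda u_\lambda)\|_{H^1_\lambda}\lesssim\big\|\langle k\rangle\,\widehat{W_\lambda}\big\|_{L^1((dk)_\lambda)}\,\|I_N u_\lambda\|_{H^1_\lambda}\lesssim\|W_\lambda\|_{H^{\frac32+}_\lambda}\,\|I_N u_\lambda\|_{H^1_\lambda},
\end{equation*}
and then, using $\widehat{W_\lambda}(k)=\lambda^{-1}\widehat{W}(\lambda k)$ for $k\in\tfrac1\lambda\mathbb{Z}$ together with $\langle k\rangle\le\langle\lambda k\rangle$ when $\lambda\ge1$ and $W\in H^4\subset H^{\frac32+}$, one gets $\|W_\lambda\|_{H^{\frac32+}_\lambda}\lesssim_W\lambda^{-\frac32}\le1$. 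Converting this fixed-time $L^2_tH^1_\lambda$ bound into an $X^{1,-\frac12+}_\lambda(\delta)$ bound by standard time-localization estimates for $X^{s,b}$ norms (Lemma~\ref{lem:linearW}(ii)) and using $X^{1,\frac12+}_\lambda\hookrightarrow L^\infty_tH^1_\lambda$, I would obtain $\|I_N(W_\lambda u_\lambda)\|_{X^{1,-\frac12+}_\lambda(\delta)}\lesssim_W\delta^{\theta}\|I_N u_\lambda\|_{X^{1,\frac12+}_\lambda(\delta)}$ for some $\theta>0$ (in fact $\theta=1-$).

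Plugging these two bounds and the linear estimates (iii)--(iv) of Lemma~\ref{lem:linearW} into the mild formulation yields, for $\delta\le1$,
\begin{equation*}
\|I_N u_\lambda\|_{X^{1,\frac12+}_\lambda(\delta)}\le C_W\Big(\|I_N u_\lambda(0)\|_{H^1_\lambda}+\delta^{\theta}\|I_N u_\lambda\|_{X^{1,\frac12+}_\lambda(\delta)}+\delta^{\frac12-}\|I_N u_\lambda\|_{X^{1,\frac12+}_\lambda(\delta)}^5\Big),
\end{equation*}
and, exactly as in the proof of Lemma~\ref{lem:LWP*}, the Duhamel map is then a contraction on $B(R)$ with $R\sim\|I_N u_\lambda(0)\|_{H^1_\lambda}$ as soon as $\delta^{\frac12-}(1+R^4)\lesssim_W1$, i.e. for $\delta\sim_W(1+\|I_N u_\lambda(0)\|_{H^1_\lambda})^{-8-}$; the difference estimate giving uniqueness is identical. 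Note that the constraint on $\delta$ comes solely from the quintic term, exactly as in Lemma~\ref{lem:LWPI*}, the potential contribution being harmless (it even gains a factor $\lambda^{-3/2}$, reflecting that the rescaling is favourable on $W$). I expect the main obstacle to be purely bookkeeping: tracking the uniformity in $\lambda$ and $N$ of all the auxiliary estimates on the large torus $\mathbb{T}_\lambda$ --- the $L^4$ Strichartz estimate of Lemma~\ref{lem:linearW}(v) and its interpolated $L^8$ consequence, and the Young/Sobolev-type bound used for the potential term --- rather than anything specific to the potential itself.
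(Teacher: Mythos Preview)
Your overall strategy and all the estimates are correct. In particular, your treatment of the potential term via the almost-monotonicity bound $m_N(k_1+k_2)\langle k_1+k_2\rangle\lesssim\langle k_1\rangle\,m_N(k_2)\langle k_2\rangle$ and Young's inequality is valid and actually simpler than the paper's: the paper splits $I_N(W_\lambda u_\lambda)=W_\lambda I_Nu_\lambda+[I_N,W_\lambda]u_\lambda$ and analyzes the commutator via the mean value theorem, gaining an extra $N^{-1}$. That gain is not needed for this lemma (it is used later, in Lemma~\ref{lem:Iperturb}, for the energy increment), so your shortcut is fine here.

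There is however one genuine gap. You run the contraction in $X^{1,\frac12+}_\lambda(\delta)$ and obtain a fixed point $w$ with the desired bound, but you never explain why $w=I_Nu_\lambda$, where $u_\lambda$ is the rescaling of the solution from Lemma~\ref{lem:LWPW}. That $u_\lambda$ lives a priori in $X^{s,\frac12+}_{-\partial_x^2+W_\lambda,\lambda}$, and the embedding of Lemma~\ref{lem:linearW}(vi) into the standard $X^{s,b}$ space carries an unavoidable derivative loss: you only know $I_Nu_\lambda\in X^{1-,\frac12+}_\lambda$, not $X^{1,\frac12+}_\lambda$. Uniqueness in $X^{1,\frac12+}_\lambda$ alone therefore does not let you identify $w$ with $I_Nu_\lambda$. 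The paper handles this by first proving local well-posedness of the $I_N$-equation in $X^{1-,\frac12+}_\lambda(\delta)$ (where $I_Nu_\lambda$ is known to sit), which forces the fixed point there to equal $I_Nu_\lambda$, and then showing propagation of regularity from $X^{1-,\frac12+}_\lambda$ to $X^{1,\frac12+}_\lambda$ using $I_Nu_\lambda(0)\in H^1_\lambda$. Your estimates work verbatim at regularity $1-$, so the fix is routine, but you must include this two-step (uniqueness at $1-$, then regularity propagation) to close the argument; see the remark following the paper's proof.
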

\begin{proof}
Note that this time we work in the standard $X^{s,b}$ space, namely the one corresponding to $W=0$. In particular, the solution $u$ to \eqref{eq:NLS} with initial data $u_0$ obtained from Lemma~\ref{lem:LWPW} belongs to $X^{s,\frac12+}_{-\partial_x^2+W}(T)$, $T\sim\|u_0\|_{H^s}^{-1}$. By Lemma~\ref{lem:linearW}~(vi), it thus belongs also to $X^{s-,\frac12+}(T)$. Then by rescaling $u_\lambda\in X^{s-,\frac12+}_\lambda(\lambda^2T)$ and $I_Nu_\lambda\in X^{1-,\frac12+}_\lambda(\lambda^2T)$. On the other hand, we can apply a fixed point argument to the equation solved by $I_Nu_\lambda$:
\begin{align}\label{eq:NLSI}
i\partial_t I_Nu_\lambda = -\partial_x^2I_Nu_\lambda + I_N(W_\lambda u_\lambda) + \sigma I_N\big(|u_\lambda|^4u_\lambda\big)
\end{align}
with initial data $I_Nu_\lambda(0)\in H^{1-}(\mathbb{T}_\lambda)$. We then proceed as in the proof of Lemma~\ref{lem:LWPI*}, except that we have to deal with the extra term $I_N(W_\lambda u_\lambda)$ as part of the nonlinearity. To estimate this term, we write it as
\begin{align*}
I_N(W_\lambda u_\lambda)= W_\lambda I_N u_\lambda + [I_N,W_\lambda]u_\lambda.
\end{align*}
The first term is straightforward to estimate with the fractional Leibniz rule:
\begin{align*}
\|W_\lambda I_N u_\lambda\|_{X^{1-,-\frac12+}(\delta)}&\lesssim \delta^{\frac12-}\|W_\lambda I_N u_\lambda\|_{L^2_\delta H^{1-}}\\
&\lesssim \delta^{1-}\big(\|W_\lambda\|_{W^{1-,\infty}}\|I_Nu_\lambda\|_{X^{0,\frac12+}(\delta)}+\|W_\lambda\|_{L^\infty}\|I_N u_\lambda\|_{X^{1-,\frac12+}(\delta)}\big)\\
&\lesssim \delta^{1-}\|W_\lambda\|_{H^4}\|I_N u_\lambda\|_{X^{1-,\frac12+}(\delta)}.
\end{align*}
As for the second term, we exploit that it is a commutator between $I_N$ and $W_\lambda$ to gain a factor $N^{-1}$ at the expense of putting a derivative on $W_\lambda$. Indeed, for fixed $t\in\mathbb{R}$ we can write its Fourier coefficient as
\begin{align*}
\big|([I_N,W_\lambda]u_\lambda)_k(t)\big|&=\Big|\int_{k_1}(m_N(k)-m_N(k_1))u_{k_1}(t)W_{k-k_1}(dk_1)_\lambda\Big|\\
&\lesssim N^{-1}\int_{|k|+|k_1|\gtrsim N}\int_0^1\langle N^{-1}(k_1+\theta(k-k_1))\rangle^{s-2}d\theta|u_{k_1}(t)||(k-k_1)W_{k-k_1}|(dk_1)_\lambda
\end{align*}
where we used that the commutator vanishes if $|k|+|k_1|\lesssim N$, and the mean value theorem to estimate its symbol. Thus, we can estimate for fixed $t\in\mathbb{R}$
\begin{align}\label{commutator}
&\big\|[I_N,W_\lambda]u_\lambda(t)\big\|_{H^{1-}}^2\notag\\
&\lesssim N^{-2}\int_{k}\langle k\rangle^{2-} \Big(\int_{|k|+|k_1|\gg N}\int_0^1\langle N^{-1}(k_1+\theta(k-k_1))\rangle^{s-2}d\theta|u_{k_1}(t)||(k-k_1)W_{k-k_1}|(dk_1)_\lambda\Big)^2(dk)_\lambda\notag\\
&\lesssim N^{-2}\int_{k}\langle k\rangle^{2-} \Big(\int_{|k-k_1|\sim|k_1|\gtrsim\max(|k|,N)}|u_{k_1}(t)||(k-k_1)W_{k-k_1}|(dk_1)_\lambda\Big)^2(dk)_\lambda\notag\\
&\qquad+ N^{-2}\int_{k}\langle k\rangle^{2-} \Big(\int_{|k-k_1|\sim|k|\gtrsim\max(|k_1|,N)}|u_{k_1}(t)||(k-k_1)W_{k-k_1}|(dk_1)_\lambda\Big)^2(dk)_\lambda\\
&\qquad+N^{-2}\int_{k}\langle k\rangle^{2-} \Big(\int_{|k|\sim|k_1|\gg\max(|k-k_1|,N)}(N^{-1}|k_1|)^{s-2}|u_{k_1}(t)||(k-k_1)W_{k-k_1}|(dk_1)_\lambda\Big)^2(dk)_\lambda\notag\\
&\lesssim N^{-2}\|W_\lambda\|_{W^{2-,\infty}}^2\|u_\lambda(t)\|_{L^2}^2 + N^{-2}\|\partial_x W_\lambda\|_{L^\infty}^2\|I_N u_\lambda(t)\|_{H^{1-}}^2\notag.
\end{align}
Thus
\begin{align*}
\big\|[I_N,W_\lambda]u_\lambda\big\|_{X^{1,-\frac12+}(\delta)}&\lesssim \delta^{1-}\lambda^{-2}\|W\|_{H^4}\|I_Nu_\lambda\|_{X^{1-,\frac12+}(\delta)}.
\end{align*}
Then we can finish as in the proof of Lemma~\ref{lem:LWPI*} to get
\begin{align*}
\|I_Nu_\lambda\|_{X^{1,\frac12+}(\delta)}&\lesssim \delta^\frac12\|I_Nu_\lambda(0)\|_{H^{1-}}+\delta^{1-}\lambda^{-2}\|W\|_{H^4}\|I_Nu_\lambda\|_{X^{1-,\frac12+}(\delta)}\\
&\qquad\qquad+\delta^{\frac12-}\|I_Nu_\lambda\|_{X^{1-,\frac12+}(\delta)}^5.
\end{align*}
A similar estimate holds for the difference equation, allowing to close a fixed point argument with our choice of $\delta$. This shows local well-posedness of \eqref{eq:NLSI} in $X^{1-,\frac12+}(\delta)$ with initial data $I_Nu_\lambda(0)$, and which thus agrees with $I_Nu_\lambda$ on $[-\delta;\delta]$. Moreover, similar estimates as above replacing $1-$ by $1$ show propagation of regularity, namely that since $I_Nu_\lambda(0)\in H^1(\mathbb{T})$ it actually holds $I_Nu_\lambda\in X^{1,\frac12+}(\delta)$. Together with a similar estimate as above shows local well-posedness in $X^{1,\frac12+}(\delta)$ and \eqref{localI} by our choice of $\delta$.
\end{proof}
\begin{remark}\rm
Note that the local well-posedness results in Lemmas~\ref{lem:LWP*},~\ref{lem:LWPI*},~\ref{lem:LWPW}, and~\ref{lem:LWPIW} are conditional, meaning that we cannot claim uniqueness of the solution in $C([-T;T];H^s(\mathbb{T}))$ but only in the smaller space $X^{s,\frac12+}_{-\partial_x^2+V\ast}(T)$ (respectively $X^{1,\frac12+}(\delta)$, $X^{s,\frac12+}_{-\partial_x^2+W}(T)$, and $X^{1,\frac12+}(\delta)$). In particular, we can only compare $I_Nu_\lambda$, where $u$ is provided by Lemma~\ref{lem:LWPW}, to the solution of \eqref{eq:NLSI}, if they both belong to $X^{1,\frac12+}(\delta)$. But the embeddings of Lemma~\ref{lem:linearW}~(vi) can only guarantee that $I_Nu_\lambda\in X^{1-,\frac12+}(\delta)$. So we need both local well-posedness of \eqref{eq:NLSI} in $X^{1-,\frac12+}(\delta)$ and the propagation of regularity to obtain that actually $I_Nu_\lambda\in X^{1,\frac12+}(\delta)$.
\end{remark}

We can then proceed with the second modified energy as above, by introducing
\begin{align*}
E_2(u)&= E(I_Nu)-\Lambda_6\big(\frac{\widetilde{M_6}}{\alpha_6}\big)\\
&= \Lambda_2(\sigma_2)+\Lambda_3(\sigma_3;u,\overline{u},W)+\Lambda_6(\sigma_6)-\Lambda_6\big(\frac{\widetilde{M_6}}{\alpha_6}\big)
\end{align*}
where $\sigma_3=\frac12m_N(k_1)m_N(k_2)$.

From \eqref{eq:NLS}, we thus get
\begin{align}\label{E2W}
\frac{d}{dt}E_2(u(t))&=\Lambda_2(\sigma_2\alpha_2)+\Lambda_6(\overline{M_6})+\Lambda_{10}(\overline{M_{10}})+F(u,W)
\end{align}
where
\begin{align*}
F(u,W)&=\langle -\partial_x^2I_Nu ,iI_N(W_\lambda u)\rangle_{L^2}+\langle W_\lambda I_Nu,-i\partial_x^2I_Nu+iI_N(W_\lambda u)+iI_N(|u|^4u)\rangle_{L^2}\\
&\qquad\qquad+\langle |I_Nu|^4I_Nu,iI_N(W_\lambda u)\rangle_{L^2}+\Lambda_7(M_7;u,\ldots,\overline{u},W)\\
&=\mathrm{I}+\mathrm{II}+\mathrm{III}+\mathrm{IV}+\mathrm{V}+\Lambda_7(M_7;u,\ldots,\overline{u},W),
\end{align*}
where 
\begin{align*}
M_7=\sum_{j=1}^6(-1)^j\frac{\widetilde{M_6}}{\alpha_6}(k_1,\ldots,k_{j-1},k_j+k_7,k_{j+1},\ldots,k_6).
\end{align*}

Again, recall that $\alpha_2\equiv 0$ on $\Gamma_2$, and moreover
\begin{align*}
\mathrm{I}+\mathrm{II}&=\langle \partial_xI_Nu,\partial_x[I_N,W_\lambda]u\rangle_{L^2},
\end{align*}
\begin{align*}
\mathrm{III}&=-\langle W_\lambda I_Nu,iI_N(W_\lambda u)\rangle_{L^2} = \langle iW_\lambda I_Nu,[I_N,W_\lambda ]u\rangle_{L^2},
\end{align*}
and
\begin{align*}
\mathrm{IV}+\mathrm{V}&=-\langle iW_\lambda I_Nu,I_N(|u|^4u)\rangle_{L^2}-\langle I_N(W_\lambda u),|I_Nu|^4I_Nu)\rangle_{L^2}\\
&=-\langle iW_\lambda I_Nu - iI_N(W_\lambda u),I_N(|u|^4u)\rangle_{L^2}-\langle iI_N(W_\lambda u),I_N(|u|^4u)-|I_Nu|^4I_Nu\rangle_{L^2}.
\end{align*}
The term $\Lambda_7(M_7;u,\ldots,\overline{u},W)$, and the commutator terms $\mathrm{I}+\mathrm{II}$, $\mathrm{III}$, and $\mathrm{IV}+\mathrm{V}$, are then perturbation with respect to the analysis in \cite{LWX}. Indeed, we have the following lemma, whose proof is postponed to Appendix~\ref{appendixproof}.
\begin{lemma}\label{lem:Iperturb}
The following estimates hold uniformly in $\lambda,N$ and $t\in[0;1]$:\\
\textup{(i)} $\big|\mathrm{I}+\mathrm{II}\big|\lesssim_{\|W\|_{H^4}} \lambda^{-2}N^{-1}\|I_Nu\|_{H^1}^2$;\\
\textup{(ii)} $\big|\mathrm{III}\big|\lesssim_{\|W\|_{H^4}}N^{-1-s}\lambda^{-4}\|u\|_{L^2}^2$;\\
\textup{(iii)} $\big|\mathrm{IV}+\mathrm{V}\big|\lesssim_{\|W\|_{H^4}}\lambda^{-2}N^{-1}\big\|I_Nu\big\|_{H^1}^6$;\\
\textup{(iv)}  $\big|\Lambda_7(M_7;u,\ldots,\overline{u},W)\Big|\lesssim_{\|W\|_{H^4}} N^{2(s-1)}\lambda^{-2}\|I_Nu\|_{H^1}^6$.
\end{lemma}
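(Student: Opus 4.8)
The plan is to reduce all four estimates to two ingredients already in the paper: the commutator bound for $[I_N,W_\lambda]$ established inside the proof of Lemma~\ref{lem:LWPIW} (see \eqref{commutator}), and the multilinear machinery of \cite{LWX} together with Lemma~\ref{lem:E1E2} and Lemma~\ref{lem:Ikey}. The common mechanism is that every occurrence of the rescaled potential $W_\lambda(x)=\lambda^{-2}W(\lambda^{-1}x)$ carries a favourable power of $\lambda^{-1}$: one has $\|\partial_x^j W_\lambda\|_{L^\infty}=\lambda^{-2-j}\|\partial_x^jW\|_{L^\infty}$ and $\||k|^j(W_\lambda)_k\|_{\ell^1_\lambda}\lesssim\lambda^{-2-j}\|W\|_{H^{j+1+}}$, so that two copies of $W_\lambda$ give the $\lambda^{-4}$ of (ii) and one copy gives the $\lambda^{-2}$ of (i), (iii), (iv); moreover $(W_\lambda)_k$ is effectively supported in $|k|\lesssim\lambda^{-1}$, which makes multiplication by $W_\lambda$ almost diagonal in frequency and forces all frequency interactions below. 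Throughout I would use the elementary bounds $m_N(k)^2\langle k\rangle^2\gtrsim\langle k\rangle^{2s}\gtrsim1$ (whence $\|u\|_{L^2}\le\|I_Nu\|_{H^1}$, and $\|u\|_{H^{2/5}}\le\|I_Nu\|_{H^1}$ since $s\ge\tfrac25$), the embeddings $H^4(\mathbb{T})\hookrightarrow W^{3,\infty}(\mathbb{T})$ and $H^{2/5}_\lambda\hookrightarrow L^{10}$ (uniform in $\lambda\ge1$), and the conservation/smallness of $\|u\|_{L^2}$.

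For (i)--(iii) I would start from the rewritings given in the text just before the statement: $\mathrm{I}+\mathrm{II}=\langle\partial_xI_Nu,\partial_x[I_N,W_\lambda]u\rangle_{L^2}$; $\mathrm{III}=\langle iW_\lambda I_Nu,[I_N,W_\lambda]u\rangle_{L^2}$ (using the cancellation $\langle W_\lambda I_Nu,iW_\lambda I_Nu\rangle_{L^2}=0$); and $\mathrm{IV}+\mathrm{V}=\langle i[I_N,W_\lambda]u,I_N(|u|^4u)\rangle_{L^2}-\langle iI_N(W_\lambda u),I_N(|u|^4u)-|I_Nu|^4I_Nu\rangle_{L^2}$. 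Then apply Cauchy--Schwarz and the commutator estimate. The commutator estimate is the slightly delicate point: it vanishes unless some frequency is $\gtrsim N$, and on the remaining regimes the symbol difference $m_N(k)-m_N(k_1)$ with $|k-k_1|\lesssim\lambda^{-1}$ is handled by the mean value theorem exactly as in \eqref{commutator}, here pushed to the $H^1$ level at the cost of one more derivative on $W$ (hence $W\in H^4$). This yields a commutator bound of size $\lambda^{-2}N^{-1}$ (in $\dot H^1$, resp.\ $L^2$) times $\|I_Nu\|_{H^1}$, resp.\ $\|u\|_{L^2}$, whence (i) and (iii); for (ii) the extra factor $N^{-s}$ comes from pairing $W_\lambda I_Nu$ in $H^{-s}$ against $[I_N,W_\lambda]u$ in $H^s$ and noting $m_N(k)\langle k\rangle^{-s}\lesssim N^{-s}$ on the relevant frequencies ($\gtrsim N$). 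The quintic factor in (iii) is controlled by $\|I_N(|u|^4u)\|_{L^2}\le\|u\|_{L^{10}}^5\lesssim\|I_Nu\|_{H^1}^5$, and $I_N(|u|^4u)-|I_Nu|^4I_Nu$ is the familiar ``extended Leibniz'' multilinear commutator of the $I$-method, whose $N^{-1}$ gain is of the same nature.

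For (iv) I would identify $\Lambda_7(M_7;u,\dots,\overline u,W)$ as the contribution of $\frac{d}{dt}\Lambda_6\big(\frac{\widetilde{M_6}}{\alpha_6}\big)$ coming from the potential term $W_\lambda u$ in \eqref{eq:NLSscalW}: $M_7$ is obtained from $\frac{\widetilde{M_6}}{\alpha_6}$ by inserting a new low frequency $k_7$ (carrying $(W_\lambda)_{k_7}$) into one of the six slots, summed over the slot. Since $|k_7|\lesssim\lambda^{-1}\ll N$, merging $k_7$ perturbs the resonance function $\alpha_6$ and the frequency conditions defining $\Omega,\Upsilon$ (see \eqref{eq:Ups}--\eqref{eq:O5}) only negligibly, so the pointwise multiplier bound $\big|\frac{\widetilde{M_6}}{\alpha_6}\big|\lesssim N^{2(s-1)}\prod_j m_N(k_j)^{-1}\langle k_j\rangle^{-1}$ extracted in the proof of Lemma~\ref{lem:E1E2} survives; summing over $k_7$ first pulls out $\|(W_\lambda)_k\|_{\ell^1_\lambda}\lesssim\lambda^{-2}\|W\|_{H^1}$, and the remaining hexilinear expression is estimated exactly as in Lemma~\ref{lem:E1E2}, giving $N^{2(s-1)}\lambda^{-2}\|I_Nu\|_{H^1}^6$.

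I expect (iv) to be the main obstacle. Unlike (i)--(iii), which are two-factor pairings closed by Cauchy--Schwarz and a single commutator estimate, (iv) requires following the resonant/non-resonant decomposition of \cite{LWX} through the insertion of the extra frequency $k_7$ and checking, case by case over the six slots and over $\Omega_1,\dots,\Omega_5,\Upsilon$, that none of the delicate frequency balances used there (in particular the relative sizes governing $\mathbf{1}_\Omega M_6^1$ versus $\mathbf{1}_\Upsilon M_6^2$ and the lower bound on $\alpha_6$) is destroyed. This is bookkeeping rather than new analysis, but it is the step where an error would most plausibly hide; a secondary subtlety is making sure the $H^1$-level commutator estimate of Step 2 really closes with only the regularity $W\in H^4$ we are allowed to use.
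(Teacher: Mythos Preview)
Your proposal is correct and follows essentially the same route as the paper. Parts (i)--(iii) are exactly as in the paper: Cauchy--Schwarz plus the commutator estimate \eqref{commutator} (pushed to the $H^1$ level, which is where the $H^4$ regularity on $W$ enters), and for the second piece of (iii) the paper also writes the ``extended Leibniz'' difference $I_N(|u|^4u)-|I_Nu|^4I_Nu$ as a multilinear form whose symbol gains $N^{-1}$, then closes with H\"older and Sobolev as you outline.

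The one place where you overestimate the difficulty is (iv). You anticipate having to re-run the case analysis over $\Omega_1,\dots,\Omega_5,\Upsilon$ after inserting the extra frequency $k_7$. The paper avoids this entirely: since \cite[Lemma 3.2]{LWX} gives the \emph{pointwise} bound $|\widetilde{M_6}|\lesssim|\alpha_6|$ on the support of $\widetilde{M_6}$, one has directly $\big|\frac{\widetilde{M_6}}{\alpha_6}(k_1,\dots,k_{j-1},k_j+k_7,k_{j+1},\dots,k_6)\big|\lesssim 1$ at the shifted arguments as well---no frequency balance needs to be re-checked. One then sums $k_7$ against $(W_\lambda)_{k_7}$ (gaining $\lambda^{-2}$) and estimates the remaining hexalinear form exactly as in Lemma~\ref{lem:E1E2}, which is where the factor $N^{2(s-1)}$ appears (from the constraint $|k_1^*|\sim|k_2^*|\gtrsim N$ in $\Upsilon$), not from the symbol bound itself. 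So your step (iv) is not ``the main obstacle'' but in fact the most mechanical of the four.
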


Since the remaining main terms in \eqref{E2W} are the same as those treated in \cite{LWX}, and $s\le \frac12$, we can finally estimate the increments of the modified energy by
\begin{align*}
E_2(u(t)) = E_2(u(0))+O(N^{-1}\lambda^{-2})+O(N^{-3}\lambda^{0+})
\end{align*}
for $|t|\le \delta\sim 1$. Then, iterating $\lambda^{2}T$ times yields
\begin{align*}
E_2(u(t)) = E_2(u(0))+O(TN^{-1})+O(TN^{-3}\lambda^{2+}).
\end{align*}
We can thus conclude as in the proof of Proposition~\ref{prop:GWP*}.

\appendix

\section{Estimates related to $X^{s,b}$ spaces}\label{appendix}
In this section, we give a proof of the standard estimates in $X^{s,b}$ spaces as stated in Lemma~\ref{lem:linear*} and~\ref{lem:linearW}.

\subsection{Proof of Lemma~\ref{lem:linear*}}
We start by proving the equivalence of norms, i.e. Lemma~\ref{lem:linear*}~(v).
Similarly, as in \eqref{eq:ev}, we have
\begin{align*}
1+|\tau-k^2-V_k|^2\le 2(1+\|V\|_{\ell^\infty}^2)(1+|\tau-k^2|^2)\le 4(1+\|V\|_{\ell^\infty}^2)^2(1+|\tau-\omega_k|^2),
\end{align*}
for any $\tau\in\mathbb{R}$ and $k\in\mathbb{Z}$. This shows that
\begin{align*}
\|u\|_{X^{s,b}_{-\partial_x^2+V\ast}}^2&=\int_\mathbb{R}\sum_{k\in\mathbb{Z}}(1+|\tau-\omega_k|^2)^b(1+\omega_k^2)^s|\hat{u_k}(\tau)|^2d\tau\\
& \le C_b(1+\|V\|_{\ell^\infty})\int_\mathbb{R}\sum_{k\in\mathbb{Z}}(1+|\tau-k^2|^2)^b(1+k^2)^{2s}|\hat{u_k}(\tau)|^2d\tau =\|u\|_{X^{s,b}_{-\partial_x^2}}^2\\
& \le C_b'(1+\|V\|_{\ell^\infty})\|u\|_{X^{s,b}_{-\partial_x^2+V\ast}}^2
\end{align*}
for any $s,b\in\mathbb{R}$ and some constants $C_b=C_b(1+\|V\|_{\ell^\infty})>0$ and $C_b'=C_b'(1+\|V\|_{\ell^\infty})>0$. This proves Lemma~\ref{lem:linear*}~(vi). In particular, it suffices to prove Lemma~\ref{lem:linear*}~(i), (ii), and (v) in the case $V=0$, which we now recall for completeness.

To show (v), we will use a dyadic decomposition in the modulation variable. Namely, for $K\in 2^{\mathbb{N}}$ running on dyadic integers, let $\chi_K$ be a smooth dyadic partition of unity: $\chi_K$ is a smooth compactly supported function such that $\chi_K(x)= 1$ on $K\le |x|\le 2K$ and $\chi_K$ is supported on $\frac34K\le |x|\le \frac52K$, and $\sum_K\chi_K\equiv 1$. Then for $u\in X^{s,b}$ define the smooth projector $\widehat{P_Ku}(\tau,k)=\chi_K(\tau-k^2)\hat{u}(\tau,k)$.  Then, using Cauchy-Schwarz inequality, we get
\begin{align*}
\|u\|_{L^4_{t,x}}^2&=\|u^2\|_{L^2_{t,x}} \le \sum_{K_1,K_2} \|P_{K_1}uP_{K_2}u\|_{L^2_{t,x}} \\
&= \sum_{K_1,K_2}\Big(\int_{\mathbb{R}}\sum_{k\in\mathbb{Z}}\big|\int_{\mathbb{R}}\sum_{k_1\in\mathbb{Z}}\widehat{P_{K_1}u}(\tau_1,k_1)\widehat{P_{K_2}u}(\tau-\tau_1,k-k_1)d\tau_1\big|^2d\tau\Big)^\frac12\\
&\le \sum_{K_1,K_2}\Big(\sup_{\tau,k}|A_{\tau,k}|\|P_{K_1}u\|_{L^2_{t,x}}^2\|P_{K_2}u\|_{L^2_{t,x}}^2\Big)^\frac12,
\end{align*}
where $A_{\tau,k}=\{(\tau_1,k_1)\in\mathbb{R}\times\mathbb{Z},~~\langle\tau_1-k_1^2\rangle\sim K_1,~~\langle\tau-\tau_1-(k-k_1)^2\rangle\sim K_2\}$. In particular, using that $\tau_1$ varies in a set of size $\min(K_1,K_2)$ and that for $(\tau_1,k_1)\in A_{\tau,k}$, $$|\tau-k_1^2-(k-k_1)^2|\le |\tau_1-k_1^2|+|\tau-\tau_1-(k-k_1)^2|\lesssim \max(K_1,K_2),$$ we get 
\begin{align*}
|A_{\tau,k}|&\lesssim \min(K_1,K_2)\#\{k_1\in\mathbb{Z},~~|\tau-k_1^2-(k-k_1)^2|\lesssim \max(K_1,K_2)\}\\
&=\min(K_1,K_2)\#\{k_1\in\mathbb{Z},~~(k_1-\frac{k}2)^2=\frac{\tau}{2}-\frac{k^2}{4}+O( \max(K_1,K_2))\}\\
&\lesssim \min(K_1,K_2)\max(K_1,K_2)^\frac12.
\end{align*}
Since $\min(K_1,K_2)\max(K_1,K_2)^\frac12\le (K_1K_2)^\frac34$, using Cauchy-Schwarz inequality to sum on $K_1,K_2\in 2^{\mathbb{N}}$ and using that $\sum_K K^{0-}<\infty$ and $\sum_{K}K^{2b}\|P_Ku\|_{L^2_{t,x}}^2 \sim \|u\|_{X^{0,b}}^2$ concludes the proof of (v) in the case $V=0$. Note that the estimate on $A_{\tau,k}$ remains valid if $k^2$ is replaced by $k^2+V_k$ for $V\in L^2(\mathbb{T})$.

Next, if $u\in X^{s,b}$ for some $s\in\mathbb{R}$ and $b>\frac12$, we have by Cauchy-Schwarz inequality
\begin{align*}
\|u(t)\|_{H^s}\le \|\langle k\rangle^s\hat{u_k}(\tau)\|_{\ell^2_kL^1_\tau} \le \|\langle\tau - k^2\rangle^{-b}\|_{\ell^\infty_kL^2_\tau}\|u\|_{X^{s,b}}\lesssim \|u\|_{X^{s,b}},
\end{align*}
uniformly in $t\in\mathbb{R}$. This shows (i).

For (ii), if we take $u\in X^{s,b}(T)$ and $v\in X^{s,b}$ be an extension such that $\|v\|_{X^{s,b}}\le 2\|u\|_{X^{s,b}(T)}$, then for a smooth cut-off function $\eta$ such that $\eta\equiv 1$ on $[-1;1]$, $\eta(T^{-1}t)v$ is also an extension of $u$. Thus,
\begin{align*}
\|u\|_{X^{s,b'}(T)}\le \|\eta(T^{-1}\cdot)v\|_{X^{s,b'}} = \|\eta(T^{-1}\cdot)f\|_{H^{b'}_t},
\end{align*}
with $$f(t)=\int_{\mathbb{R}}e^{it\tau}\|\langle k\rangle^s \hat{v}(\tau-k^2)\|_{\ell^2_k}d\tau$$ such that $\|f\|_{H^b_t}=\|v\|_{X^{s,b}}$. Thus, the estimate of Lemma~\ref{lem:linear*}~(ii) is reduced to the general estimate for functions of time only:
\begin{align*}
\|\eta(T^{-1}\cdot)f\|_{H^{b'}_t}\lesssim T^{b-b'}\|f\|_{H^b_t},
\end{align*}
for any $T\in (0;1]$, the proof of which is given in \cite[Lemma 3.11]{ET}.

It remains to prove (iii) and (iv). Just as for (ii), these estimates follow from
\begin{align*}
\|\eta(\langle T\rangle^{-1}\cdot)\|_{H^b_t}\lesssim_\eta \langle T\rangle^\frac12
\end{align*}
and
\begin{align*}
\Big\|\eta(\langle T\rangle^{-1}\cdot)\int_0^tf(t')dt'\Big\|_{H^b_t}\lesssim_\eta \langle T\rangle^2\|f\|_{H^{b-1}}
\end{align*}
for any $f\in H^{b-1}(\mathbb{R})$. The first one follows from a direct computation. As for the second one, we first compute
\begin{align*}
\int_0^t f(t')dt' = \int_{\mathbb{R}}\frac{e^{it\tau}-1}{i\tau}\hat{f}(\tau)d\tau,
\end{align*}
so that
\begin{align*}
\Big\|\eta(\langle T\rangle^{-1}\cdot)\int_0^tf(t')dt'\Big\|_{H^b_t}^2&=\int_{\mathbb{R}}\langle\tau\rangle^{2b}\Big|\int_{\mathbb{R}}\langle T\rangle\frac{\widehat{\eta}(\langle T\rangle(\tau-\tau_1))-\widehat{\eta}(\langle T\rangle \tau)}{i\tau_1}\hat{f}(\tau_1)d\tau_1\Big|^2d\tau\\
&=\int_{\mathbb{R}}\langle\tau\rangle^{2b}\Big|\int_{|\tau_1|\le 1}\langle T\rangle\frac{\widehat{\eta}(\langle T\rangle(\tau-\tau_1))-\widehat{\eta}(\langle T\rangle \tau)}{i\tau_1}\hat{f}(\tau_1)d\tau_1\Big|^2d\tau\\
&\qquad\qquad+\int_{\mathbb{R}}\langle\tau\rangle^{2b}\Big|\int_{|\tau_1|>1}\langle T\rangle\frac{\widehat{\eta}(\langle T\rangle(\tau-\tau_1))-\widehat{\eta}(\langle T\rangle \tau)}{i\tau_1}\hat{f}(\tau_1)d\tau_1\Big|^2d\tau.
\end{align*}
As for the first term, using the mean value theorem, that $\widehat{\eta}$ is a Schwartz function, and Cauchy-Schwarz inequality,
\begin{align*}
&\int_{\mathbb{R}}\langle\tau\rangle^{2b}\Big|\int_{|\tau_1|\le 1}\langle T\rangle\frac{\widehat{\eta}(\langle T\rangle(\tau-\tau_1))-\widehat{\eta}(\langle T\rangle \tau)}{i\tau_1}\hat{f}(\tau_1)d\tau_1\Big|^2d\tau\\
&\qquad\qquad\lesssim\int_{\mathbb{R}}\langle\tau\rangle^{2b}\Big|\int_{|\tau_1|\le 1}\langle T\rangle^2\int_0^1|\widehat{\eta}'(\langle T\rangle(\tau-\theta\tau_1))|d\theta|\hat{f}(\tau_1)|d\tau_1\Big|^2d\tau\\
&\qquad\qquad\lesssim_\eta \int_{|\tau|\gg 1}|\tau|^{2b}\Big(\int_{|\tau_1|\le 1}\langle T\rangle^2 \langle\langle T\rangle \tau\rangle^{-10} |\hat{f}(\tau_1)|d\tau_1\Big)^2 d\tau\\
&\qquad\qquad\qquad\qquad+ \int_{|\tau'|\lesssim \langle T\rangle}\Big(\int_{|\tau_1|\le 1}\langle T\rangle^2  |\hat{f}(\tau_1)|d\tau_1\Big)^2 \langle T\rangle^{-1}d\tau'\\
&\qquad\qquad\lesssim (\langle T\rangle^{-16}+\langle T\rangle^4)\Big(\int_{|\tau_1|\le 1}|\hat{f}(\tau_1)|d\tau_1\Big)^2 \lesssim \langle T\rangle^4\|f\|_{H^{b-1}}^2.
\end{align*}
For the second term, we use instead (recall that $b>\frac12$) $\langle \tau\rangle^b\lesssim \langle \tau-\tau_1\rangle^b\langle\tau_1\rangle^b$ to estimate with Young and Cauchy-Schwarz inequalities together with $b>\frac12$ and that $\widehat{\eta}$ is a Schwartz function:
\begin{align*}
&\int_{\mathbb{R}}\langle\tau\rangle^{2b}\Big|\int_{|\tau_1|>1}\langle T\rangle\frac{\widehat{\eta}(\langle T\rangle(\tau-\tau_1))-\widehat{\eta}(\langle T\rangle \tau)}{i\tau_1}\hat{f}(\tau_1)d\tau_1\Big|^2d\tau\\
&\qquad\qquad\lesssim\int_{\mathbb{R}}\Big(\int_{|\tau_1|>1}\langle T\rangle\big(\langle \tau-\tau_1\rangle^{b}\langle \tau_1\rangle^{b-1}|\widehat{\eta}(\langle T\rangle(\tau-\tau_1))|+\langle \tau\rangle^b\langle\tau_1\rangle^{-1}|\widehat{\eta}(\langle T\rangle \tau)|\big)|\hat{f}(\tau_1)|d\tau_1\Big)^2d\tau\\
&\qquad\qquad\lesssim \langle T\rangle^2\|\langle \tau\rangle^b\widehat{\eta}(\langle T\rangle \tau)\|_{L^1}^2\|f\|_{H^{b-1}}^2 +\langle T\rangle^2\|\langle \tau\rangle^b\widehat{\eta}(\langle T\rangle \tau)\|_{L^2}^2\|\langle \tau_1\rangle^{-1}\hat{f}\|_{L^1}^2\\
&\qquad\qquad\lesssim \langle T\rangle \|f\|_{H^{b-1}}^2.
\end{align*}
This finally shows Lemma~\ref{lem:linear*}~(iii) and (iv).

\subsection{Proof of Lemma~\ref{lem:linearW}}
Contrary to the previous case, in the case of a multiplicative potential we could only prove the equivalence of norms with a derivative loss, see Lemma~\ref{lem:linearW}~(vi). So instead we show the $L^4$ Strichartz estimate of Lemma~\ref{lem:linearW}~(v) directly. Indeed, note that the other estimates of Lemma~\ref{lem:linearW} apart from (vi) are proved exactly as those for Lemma~\ref{lem:linear*}. Recall from \cite[Theorem 4]{PT} that for an even $W\in H^\sigma(\mathbb{T})$, $\sigma\ge 1$, the operator $-\partial_x^2+W$ has eigenvalues $\lambda_n$ satisfying
\begin{align}\label{est:ev0}
\Big|\lambda_n-n^2-\frac1\pi\int_0^\pi W(x)dx\Big|\leq C(\|W\|_{H^\sigma}) \frac1n,
\end{align} 
$n\in\mathbb{Z}$. Here and below $C(\|W\|_{H^\sigma}),C'(\|W\|_{H^\sigma})$ are various constants which may vary from line to line.

 Its orthonormal eigenfunctions $f_n$ are odd (resp. even) when $n$ is positive (resp. nonpositive) and satisfy for any $k\in\mathbb{Z}$
\begin{align}\label{est:ef}
\big|\widehat{f_n}(k)\mathbf{1}_{|n|\neq |k|}\big|\le C(\|W\|_{H^\sigma})\langle|n|+|k|\rangle^{-1}\langle|n|-|k|\rangle^{-1-\sigma}.
\end{align}
Indeed this follows by writing
\begin{align*}
\lambda_n \widehat{f_n}(k)= \langle (-\partial_x^2+W)f_n,e^{ikx}\rangle_{L^2} = \langle f_n,(-\partial_x^2+W)e^{ikx}\rangle_{L^2} = k^2\widehat{f_n}(k)+\langle f_n,We^{ikx}\rangle_{L^2}
\end{align*}
for any $n,k\in\mathbb{Z}$. This yields
\begin{align*}
\widehat{f_n}(k)\mathbf{1}_{|n|\neq |k|}&=\frac{\mathbf{1}_{|n|\neq |k|}}{\lambda_n-k^2}\sum_{k_1\in\mathbb{Z}}W_{k-k_1}\widehat{f_n}(k_1)\\
&=\frac{\mathbf{1}_{|n|\neq |k|}}{\lambda_n-k^2}\Big\{W_{k\pm n}\widehat{f_n}(\pm n)+\sum_{|k_1|\neq |n|}W_{k-k_1}\frac1{\lambda_n-k_1^2}\sum_{k_2\in\mathbb{Z}}W_{k_1-k_2}\widehat{f_n}(k_2)\Big\}.
\end{align*}
Since we have from \eqref{est:ev0} that
\begin{align*}
|\lambda_n-k^2|\geq |n^2-k^2| - \frac1\pi\|W\|_{L^1}-C(\|W\|_{H^\sigma})\frac1n \geq C(\|W\|_{H^\sigma}) \langle n^2-k^2\rangle  
\end{align*}
for $|n|\neq|k|$, together with the trivial bounds 
$$|\hat{f}(j)|\le \|f\|_{L^2}=1 \text{ and }|W_{k\pm n}|\le \langle|n|-|k|\rangle^{-\sigma}\|W\|_{H^\sigma},$$ we indeed infer
\begin{align*}
&\big|\widehat{f_n}(k)\mathbf{1}_{|n|\neq |k|}\big|\\
&\le C(\|W\|_{H^\sigma}) \langle n^2-k^2\rangle^{-1}\langle |n|-|k|\rangle^{-\sigma}+C(\|W\|_{H^\sigma})\langle n^2-k^2\rangle^{-1}\sum_{k_1}\langle k-k_1\rangle^{-\sigma}\langle n^2-k_1^2\rangle^{-1}\\
&\le C(\|W\|_{H^\sigma})\langle|n|+|k|\rangle^{-1}\langle|n|-|k|\rangle^{-1-\sigma}+C(\|W\|_{H^\sigma})\sum_{k_2}\langle k_2\rangle^{-\sigma}\langle n\pm (k-k_2)\rangle^{-1}\langle |n|+|k-k_2|\rangle^{-1}\\
&\le C(\|W\|_{H^\sigma})\langle|n|+|k|\rangle^{-1}\langle|n|-|k|\rangle^{-1-\sigma}\\
&\qquad\qquad+C(\|W\|_{H^\sigma})\Big\{|n\pm k|^{-\sigma}\sum_{|k_2|\sim |n\pm k|\gg |n\pm (k-k_2)|}\langle n\pm (k-k_2)\rangle^{-2}\\
&\qquad\qquad\qquad\qquad+\sum_{|k_2|\sim |n\pm(k-k_2)|\gtrsim |n\pm k|}\langle k_2\rangle^{-\sigma-1}+|n\pm k|^{-1}\sum_{|k_2|\ll |n\pm k|\sim|n\pm (k-k_2)|}\langle k_2\rangle^{-\sigma}\Big\}\\
&\le C(\|W\|_{H^\sigma})\langle|n|+|k|\rangle^{-1}\langle|n|-|k|\rangle^{-1-\sigma}.
\end{align*}
This shows \eqref{est:ef}.

On top of the estimate \eqref{est:ef} on the eigenfunctions, we also have from the expansion \eqref{est:ev0} of the eigenvalues that
\begin{align}\label{est:ev}
\langle\lambda_n\rangle\le C(\|W\|_{H^\sigma})\langle n\rangle^2 \le C'(\|W\|_{H^\sigma}) \langle \lambda_n \rangle,
\end{align}
and thus also
\begin{align}\label{est:ev2}
\langle\tau-\lambda_n\rangle^2\le C(\|W\|_{H^\sigma})\langle\tau-n^2\rangle^2\le C'(\|W\|_{H^\sigma})\langle\tau-\lambda_n\rangle^2
\end{align}
for any $\tau\in\mathbb{R}$ and $n\in\mathbb{Z}$, since
\begin{align*}
1+|\tau-\lambda_n|\le 1+|\tau-n^2|+C(\|W\|_{H^\sigma})\le C'(\|W\|_{H^\sigma})\big(1+|\tau-n^2|\big)
\end{align*}
and similarly for the second estimate in \eqref{est:ev2}.

To show Lemma~\ref{lem:linearW}~(v), we will then proceed as for Lemma~\ref{lem:linear*}~(v) and use a dyadic decomposition in the modulation variable:
\begin{align*}
\|u\|_{L^4_{t,x}}^2&=\|u^2\|_{L^2_{t,x}} \le \sum_{K_1,K_2} \|P_{K_1}uP_{K_2}u\|_{L^2_{t,x}} \\
&= \sum_{K_1,K_2}\Big(\int_{\mathbb{R}}\sum_{n_0\in\mathbb{Z}}\big|\int_{\mathbb{R}}\sum_{n_1,n_2\in\mathbb{Z}}\widehat{P_{K_1}u}(\tau_1,n_1)\widehat{P_{K_2}u}(\tau_0-\tau_1,n_2)\langle f_{n_1}f_{n_2},f_{n_0}\rangle_{L^2} d\tau_1\big|^2d\tau_0\Big)^\frac12.
\end{align*}
Now the main difference with the previous subsection is to estimate the coefficient $\langle f_{n_1}f_{n_2},f_{n_0}\rangle_{L^2}$. Using Plancherel's theorem and \eqref{est:ef} with $\sigma=0$, we have
\begin{align}\label{est:ef-tri}
&\langle f_{n_1}f_{n_2},f_{n_0}\rangle_{L^2}\notag\\
&= \sum_{k_1,k_2}\widehat{f_{n_1}}(k_1)\widehat{f_{n_2}}(k_2)\overline{\widehat{f_{n_0}}}(k_1+k_2)\notag\\
&=\sum_{k_1,k_2}\big(c_{n_1}\mathbf{1}_{|n_1|=|k_1|}+O(\langle n_1^2-k_1^2\rangle^{-1})\mathbf{1}_{|n_1|\neq |k_1|}\big)\big(c_{n_2}\mathbf{1}_{|n_2|=|k_2|}+O(\langle n_2^2-k_2^2\rangle^{-1})\mathbf{1}_{|n_2|\neq |k_2|}\big)\\
&\qquad\qquad\times\big(c_{n_0}\mathbf{1}_{|n_0|=|k_1+k_2|}+O(\langle n_0^2-(k_1+k_2)^2\rangle^{-1})\mathbf{1}_{|n_0|\neq |k_1+k_2|}\big),\notag
\end{align}
with $|c_{n_j}|\le 1$. 

Letting $a_{n_j,k_j}$ denotes either $\mathbf{1}_{|n_j|=|k_j|}$ or $\langle n_j^2-k_j^2\rangle^{-1}\mathbf{1}_{|n_j|\neq |k_j|}$, we have
\begin{align}\label{est-bil}
&\int_{\mathbb{R}}\sum_{n_0\in\mathbb{Z}}\Big|\int_{\mathbb{R}}\sum_{n_1,n_2\in\mathbb{Z}}\widehat{P_{K_1}u}(\tau_1,n_1)\widehat{P_{K_2}u}(\tau_0-\tau_1,n_2)\langle f_{n_1}f_{n_2},f_{n_0}\rangle_{L^2} d\tau_1\Big|^2d\tau_0\notag\\
&\qquad\lesssim \int_{\mathbb{R}}\sum_{n_0}\Big(\int_{\mathbb{R}}\sum_{n_1,n_2,k_1,k_2}\big|\widehat{P_{K_1}u}(\tau_1,n_1)\widehat{P_{K_2}u}(\tau_0-\tau_1,n_2)\big|\\
&\qquad\times\mathbf{1}_{A_{\tau_0}}(\tau_1,n_1,n_2)a_{n_1,k_1}a_{n_2,k_2}a_{n_0,k_1+k_2}d\tau_1\Big)^2d\tau_0,\notag
\end{align}
where similarly as before
\begin{align*}
A_{\tau_0} = \{(\tau_1,n_1,n_2)\in \mathbb{R}\times\mathbb{Z}^2,~~|\tau_1-\lambda_{n_1}|\lesssim K_1,~~|\tau_0-\tau_1-\lambda_{n_2}|\lesssim K_2\}.
\end{align*}
To end the proof of Lemma~\ref{lem:linearW}, we do a case-by-case analysis on \eqref{est-bil} depending on the value of the $a_{n_j,k_j}$'s. We say that $a_{n_j,k_j}$ is of type I if $a_{n_j,k_j}=\mathbf{1}_{|n_j|=|k_j|}$, and of type II otherwise.\\
\textbf{Case 1: I-I-I.} In the diagonal case, we have $n_0=\pm_1n_1\pm_2n_2$ for some choices of signs $\pm_j$, so that we can estimate \eqref{est-bil} as above by Cauchy-Schwarz inequality in $\tau_1,n_1$:
\begin{align*}
&\int_{\mathbb{R}}\sum_{n_0}\Big(\int_{\mathbb{R}}\sum_{n_1}\big|\widehat{P_{K_1}u}(\tau_1,n_1)\widehat{P_{K_2}u}(\tau_0-\tau_1,\mp_2(n_0\mp_1n_1))\mathbf{1}_{A_{\tau_0}}(\tau_1,n_1,\mp_2(n_0\mp_1n_1))d\tau_1\Big)^2d\tau_0\\
&\qquad\lesssim \|P_{K_1}u\|_{L^2_{t,x}}^2\|P_{K_2}u\|_{L^2_{t,x}}^2\sup_{\tau_0,n_0}|\{(\tau_1,n_1)\in\mathbb{R}\times\mathbb{Z},~~(\tau_1,n_1,\mp_2(n_0\mp_1n_1))\in A_{\tau_0}\}|\\
&\qquad\lesssim (K_1K_2)^\frac34\|P_{K_1}u\|_{L^2_{t,x}}^2\|P_{K_2}u\|_{L^2_{t,x}}^2.
\end{align*}
\textbf{Case 2: II-I-I.} If $a_{n_0,k_1+k_1}=\langle n_0^2-(k_1+k_2)^2\rangle^{-1}$ and $a_{n_1,k_1}=\mathbf{1}_{|n_1|=|k_1|}$, $a_{n_2,k_2}=\mathbf{1}_{|n_2|=|k_2|}$, we use Cauchy-Schwarz inequality and then sum on $n_0,n_2$ to estimate \eqref{est-bil} similarly as above by
\begin{align*}
&\int_{\mathbb{R}}\sum_{n_0}\Big(\sum_{n_1,n_2}\big|\widehat{P_{K_1}u}(\tau_1,n_1)\widehat{P_{K_2}u}(\tau_0-\tau_1,n_2)\big|\mathbf{1}_{A_{\tau_0}}(\tau_1,n_1,n_2)\langle n_0^2-(\pm_1n_1\pm_2n_2)^2\rangle^{-1}d\tau_1\Big)^2d\tau_0\\
&\qquad\lesssim \|P_{K_1}u\|_{L^2_{t,x}}^2\|P_{K_2}u\|_{L^2_{t,x}}^2\sup_{\tau_0}\int_{\mathbb{R}}\sum_{n_0,n_1,n_2}\mathbf{1}_{A_{\tau_0}}(\tau_1,n_1,n_2)\langle n_0\rangle^{-2}\langle n_0\mp_1n_1\mp_2n_2\rangle^{-2}d\tau_1\\
&\qquad\lesssim \|P_{K_1}u\|_{L^2_{t,x}}^2\|P_{K_2}u\|_{L^2_{t,x}}^2\sup_{\tau_0,n_2}|\{(\tau_1,n_1)\in\mathbb{R}\times\mathbb{Z},~~(\tau_1,n_1,n_2)\in A_{\tau_0}\}|\\
&\qquad\lesssim (K_1K_2)^\frac34\|P_{K_1}u\|_{L^2_{t,x}}^2\|P_{K_2}u\|_{L^2_{t,x}}^2.
\end{align*}
The cases I-II-I and I-I-II are dealt with similarly.\\
\textbf{Case 3: II-II-I.} In this case we use Cauchy-Schwarz inequality and then sum on $n_0,n_2,k_1$ to estimate \eqref{est-bil} by
\begin{align*}
&\int_{\mathbb{R}}\sum_{n_0}\Big(\sum_{n_1,n_2,k_1}\big|\widehat{P_{K_1}u}(\tau_1,n_1)\widehat{P_{K_2}u}(\tau_0-\tau_1,n_2)\big|\\
& \qquad\qquad\qquad\qquad\times\mathbf{1}_{A_{\tau_0}}(\tau_1,n_1,n_2)\langle n_0-(k_1\pm_2n_2)^2\rangle^{-1}\langle n_1^2-k_1^2\rangle^{-1}d\tau_1\Big)^2d\tau_0\\
&\lesssim \sup_{\tau_0}\int_{\mathbb{R}}\sum_{n_0,n_1,n_2,k_1}\mathbf{1}_{A_{\tau_0}}(\tau_1,n_1,n_2)\langle n_0\rangle^{-2}\langle n_0-k_1\mp_2n_2\rangle^{-2}\langle k_1\rangle^{-2}\langle n_1-k_1\rangle^{-2}d\tau_1\\
& \qquad\qquad\qquad\qquad\times \|P_{K_1}u\|_{L^2_{t,x}}^2\|P_{K_2}u\|_{L^2_{t,x}}^2\\
&\lesssim \|P_{K_1}u\|_{L^2_{t,x}}^2\|P_{K_2}u\|_{L^2_{t,x}}^2\sup_{\tau_0,n_2}|\{(\tau_1,n_1)\in\mathbb{R}\times\mathbb{Z},~~(\tau_1,n_1,n_2)\in A_{\tau_0}\}|\\
&\lesssim (K_1K_2)^\frac34\|P_{K_1}u\|_{L^2_{t,x}}^2\|P_{K_2}u\|_{L^2_{t,x}}^2.
\end{align*}
The cases II-I-II and I-II-II are dealt with similarly.\\
\textbf{Case 4: II-II-II.} Finally, in this last case we use Cauchy-Schwarz inequality and then sum on $n_0,n_2,k_1,k_2$ to estimate \eqref{est-bil} by
\begin{align*}
&\int_{\mathbb{R}}\sum_{n_0}\Big(\sum_{n_1,n_2,k_1,k_2}\big|\widehat{P_{K_1}u}(\tau_1,n_1)\widehat{P_{K_2}u}(\tau_0-\tau_1,n_2)\big|\\
&\qquad\qquad\times\mathbf{1}_{A_{\tau_0}}(\tau_1,n_1,n_2)\langle n_0-(k_1+k_2)^2\rangle^{-1}\langle n_1^2-k_1^2\rangle^{-1}\langle n_2^2-k_2^2\rangle^{-1}d\tau_1\Big)^2d\tau_0\\
&\lesssim \|P_{K_1}u\|_{L^2_{t,x}}^2\|P_{K_2}u\|_{L^2_{t,x}}^2\sup_{\tau_0}\int_{\mathbb{R}}\sum_{n_0,n_1,n_2,k_1,k_2}\mathbf{1}_{A_{\tau_0}}(\tau_1,n_1,n_2)\\
&\qquad\qquad\times\langle n_0\rangle^{-2}\langle n_0-k_1-k_2\rangle^{-2}\langle k_1\rangle^{-2}\langle n_1-k_1\rangle^{-2}\langle k_2\rangle^{-2}\langle n_2-k_2\rangle^{-2}d\tau_1\\
&\lesssim \|P_{K_1}u\|_{L^2_{t,x}}^2\|P_{K_2}u\|_{L^2_{t,x}}^2\sup_{\tau_0,n_2}|\{(\tau_1,n_1)\in\mathbb{R}\times\mathbb{Z},~~(\tau_1,n_1,n_2)\in A_{\tau_0}\}|\\
&\lesssim (K_1K_2)^\frac34\|P_{K_1}u\|_{L^2_{t,x}}^2\|P_{K_2}u\|_{L^2_{t,x}}^2.
\end{align*}
This concludes the proof of Lemma~\ref{lem:linearW}~(v).

It remains to prove the double estimate of Lemma~\ref{lem:linearW}~(vi). By duality, we can assume $b\ge 0$. Using \eqref{est:ef} together with \eqref{est:ev}-\eqref{est:ev2} and Cauchy-Schwarz inequality, we then find
\begin{align*}
\|u\|_{X^{s,b}}^2&= \int_\mathbb{R}\sum_{k\in\mathbb{Z}}\langle \tau-k^2\rangle^{2b}\langle k\rangle^{2s}|\hat{u}_k(\tau)|^2d\tau \lesssim \int_\mathbb{R}\sum_{k\in\mathbb{Z}}\langle \tau-k^2\rangle^{2b}\langle k\rangle^{2s}\Big|\sum_{n\in\mathbb{Z}}\langle\hat{u},f_n\rangle_{L^2_x}\hat{f_n}(k)\Big|^2d\tau\\
&\lesssim \|u\|_{X^{s,b}_{-\partial_x^2+W}}^2+\int_\mathbb{R}\sum_{k\in\mathbb{Z}}\langle \tau-k^2\rangle^{2b}\langle k\rangle^{2s}\Big(\sum_{|n|\neq|k|}|\langle\hat{u},f_n\rangle_{L^2_x}|\langle |n|-|k|\rangle^{-1-\sigma}\langle |n|+|k|\rangle^{-1}\Big)^2d\tau\\
&\lesssim \|u\|_{X^{s+\beta,b}_{-\partial_x^2+W}}^2\Big\{1+\sup_{\tau\in\mathbb{R}}\sum_{\substack{n,k\in\mathbb{Z}\\|n|\neq |k|}}\frac{\langle \tau-k^2\rangle^{2b}}{\langle\tau-n^2\rangle^{2b}}\frac{\langle k\rangle^{2s}}{\langle n\rangle^{2(s+\beta)}}\langle |n|-|k|\rangle^{-2-2\sigma}\langle |n|+|k|\rangle^{-2}\Big\}.
\end{align*}
To estimate the last sum, we treat separately different contributions. We have
\begin{align*}
\sup_{\tau\in\mathbb{R}}\sum_{\substack{n,k\\|n|\ll |k|}}\frac{\langle \tau-k^2\rangle^{2b}}{\langle\tau-n^2\rangle^{2b}}\frac{\langle k\rangle^{2s}}{\langle n\rangle^{2(s+\beta)}}\langle |n|-|k|\rangle^{-2-2\sigma}\langle |n|+|k|\rangle^{-2}& \lesssim \sum_{\substack{n,k\\|n|\ll |k|}}\langle k\rangle^{4b+2s-4-2\sigma}\langle n\rangle^{-2(s+\beta)}\lesssim 1
\end{align*}
provided that $2b+s<\frac32+\sigma$ and $2b<1+\beta+\sigma$. Next,
\begin{align*}
&\sup_{\tau\in\mathbb{R}}\sum_{\substack{n,k\\|n|\sim |k|\gtrsim |n\pm k|}}\frac{\langle \tau-k^2\rangle^{2b}}{\langle\tau-n^2\rangle^{2b}}\frac{\langle k\rangle^{2s}}{\langle n\rangle^{2(s+\beta)}}\langle |n|-|k|\rangle^{-2-2\sigma}\langle |n|+|k|\rangle^{-2}\\
&\qquad\qquad \lesssim \sum_{\substack{n,k\\|n|\sim |k|\gtrsim|n\pm k|}}\langle k\rangle^{2b-2-2\beta}\langle n\pm k\rangle^{2b-2-2\sigma}\lesssim 1
\end{align*}
provided that $b<\frac12+\beta$ and $2b<1+\beta+\sigma$. Finally,
\begin{align*}
\sup_{\tau\in\mathbb{R}}\sum_{\substack{n,k\\|n|\gg |k|}}\frac{\langle \tau-k^2\rangle^{2b}}{\langle\tau-n^2\rangle^{2b}}\frac{\langle k\rangle^{2s}}{\langle n\rangle^{2(s+\beta)}}\langle |n|-|k|\rangle^{-2-2\sigma}\langle |n|+|k|\rangle^{-2}& \lesssim \sum_{\substack{n,k\\|n|\gg |k|}} \langle n\rangle^{4b-2s-2\beta-4-2\sigma}\langle k\rangle^{2s}\lesssim 1
\end{align*}
provided again that $2b+s<\frac32+\sigma+\beta$ and $2b<1+\beta+\sigma$. This proves Lemma~\ref{lem:linearW}~(vi).

\begin{remark}\rm~~\\
(i) In the case $V=0$, the Strichartz estimate of Lemma~\ref{lem:linear*}~(v) is due to Bourgain, and is an improvement on the corresponding $L^4$-Strichartz estimates proved in \cite{Zyg}:
\begin{equation}\label{StrL4}
\|e^{it\partial_x^2}u_0\|_{L^4_{t,x}(\mathbb{T}\times\mathbb{T})}\lesssim \|u_0\|_{L^2}.
\end{equation}
Indeed, for $u$ space-time periodic, expanding the space-time Fourier series and making a change of variables we can write ${\displaystyle u(t,x)=\sum_{j}e^{itj}e^{it\partial_x^2}U_j(x)}$ with ${U_j(x)=\sum_{k\in\Z}e^{ikx}\widehat{u_k}(j+k^2)}$. Together with \eqref{StrL4} and Cauchy-Schwarz inequality we get
\begin{align*}
\|u\|_{L^4_{t,x}}\le \sum_{j\in\Z}\|e^{-it\partial_x^2}U_j(x)\|_{L^4_{t,x}} \lesssim \sum_{j\in\Z}\|U_j\|_{L^2_x} \lesssim \|\langle j\rangle^{\frac12+}U_j\|_{\ell^2_jL^2_x} = \|u\|_{X^0,\frac12+}.
\end{align*}
Thus Lemma~\ref{lem:linear*}~(v) gains almost $\frac18$ regularity in modulation. On the contrary, \eqref{StrL4} does not hold directly anymore for lower order perturbations of $-\partial_x^2$, since the Schr\"odinger semigroup is no longer time periodic. However, the estimate of Lemma~\ref{lem:linear*}~(v) remains true, as we have seen above.\\
(ii) As a consequence of the equivalence of norms in Lemma~\ref{lem:linear*}~(vi), we also have the $L^6$ Strichartz type estimate \textit{in $X^{s,b}_{-\partial_x^2+V\ast}$ space} for $0<T\le 1$
\begin{align}\label{est:L6}
\|u\|_{L^6_{T,x}}\lesssim \|u\|_{X^{0+,\frac12+}_{-\partial_x^2+V\ast}(T)}
\end{align}
since it holds for the case $V=0$ from the $L^6$ estimate \eqref{strichartz} and the argument in (i) above. Note however that the corresponding $L^6$ Strichartz estimate for linear solutions
\begin{align*}
\|e^{it(-\partial_x^2+V\ast)}u_0\|_{L^6_{T,x}}\lesssim \|u_0\|_{H^{0+}}
\end{align*}
cannot be proved directly with the original proof of Bourgain \cite{Bou93} since it relied crucially on the fact that the symbol of the linear operator is integer valued. In particular, it is not clear if these estimates hold in the case of a multiplicative potential.
\end{remark}

\section{Proof of some technical lemmas}\label{appendixproof}

In this section we give the proofs of Lemmas~\ref{lem:E1E2},~\ref{lem:Ikey}, and~\ref{lem:Iperturb}.

\subsection{Multilinear estimates in case of a convolution potential} 
We start with the proof of Lemmas~\ref{lem:E1E2} and~\ref{lem:Ikey}. These are straightforward adaptations of \cite[Lemma 3.3, Propositions 3.2 \& 3.3]{LWX}, that we detail here for completeness.

\begin{proof}[Proof of Lemma~\ref{lem:E1E2}]
We start with the proof of the equivalence of the modified energy $E_1$ and $E_2$ in the sense of \eqref{eq:modifiedenergy*}, which is the analogue to \cite[Lemma 3.3]{LWX}. The latter relied mainly on the bound on the symbol $|\widetilde{M_6}|\lesssim |\alpha_6|$. Thus, we start by showing
\begin{align}\label{est:M6A6}
|\widetilde{M_6}^V|\lesssim |\alpha_6^V|
\end{align}
where the symbols are defined in \eqref{eq:alpha} and \eqref{eq:M6V}. In particular, $$\widetilde{M_6}^V = \mathbf{1}_\Omega M_6^{V,1}+\mathbf{1}_\Upsilon M_6^{V,2}=\frac{\mathbf{1}_\Omega}6\sum_{j=1}^6(-1)^{j+1}m(k_j)^2(\omega_{(-1)^{j+1}k_j}+\gamma)+\mathbf{1}_\Upsilon \sigma_6 \alpha_6^V,$$ where the sets of frequencies have been defined in \eqref{eq:Ups}--\eqref{eq:O5}. Since $\sigma_6=\frac16\prod_{j=1}^6m(k_j)$, we have directly
\begin{align*}
\big|\mathbf{1}_\Upsilon \sigma_6 \alpha_6^V\big|\lesssim |\alpha_6^V|,
\end{align*}
so that we only need to estimate $\mathbf{1}_\Omega\sum_{j=1}^6(-1)^{j+1}m(k_j)^2(\omega_{(-1)^{j+1}k_j}+\gamma)$. We then follow the proof of Lemma 3.2 in \cite{LWX}. We will frequently make use of the equivalence between $k^2$ and $\omega_{(-1)^{j+1}k_j}+\gamma$. Indeed, recalling that the eigenvalues of $-\partial_x^2+V_\lambda\ast$ are $\omega_k = k^2+(2\pi)^{\frac12}\lambda^{-2}V_{\lambda k}$, it holds for any real-valued $V\in \ell^\infty (\Z;\R)$
\begin{align}\label{eq:ev}
1+|\omega_{\pm k}+\gamma|\le (1+\gamma+\lambda^{-2}\|V\|_{\ell^\infty})(1+k^2)\le (1+\gamma +\lambda^{-2}\|V\|_{\ell^\infty})^2(1+|\omega_{\pm k}+\gamma|).
\end{align}

In the following we thus drop the dependence of the constants in $\gamma,\|V\|_{\ell^\infty}$.
Then \eqref{eq:ev} implies the bound $$|\mathbf{1}_\Omega M_6^{V,1}|\lesssim \max_j |k_j|^2.$$
In particular in $\Omega_1$ \eqref{eq:O1}, we have $|\alpha_6^V|\sim |k_1|^2= \max_j |k_j|^2$, which is enough for \eqref{est:M6A6}.

Next, in $\Omega_2\setminus \Omega_1$, since $|\alpha_6^V|=|\alpha_6|+O(1)$ we have from \cite[(3.44)--(3.48)]{LWX} and the mean value theorem that in case $k_2^*=k_2$ and $k_3^*=k_3$,
\begin{align*}
|\mathbf{1}_{\Omega_2\setminus\Omega_1}M_6^{V,1}|&\lesssim |m(k_1)^2k_1^2-m(k_2)^2k_2^2|+O(1)+\sum_{j=3}^6|\omega_{(-1)^{j+1}k_j}+\gamma|\\
&\lesssim |k_1^2-k_2^2|+|k_3|^2 \sim k_2k_3 \lesssim |\alpha_6| \lesssim |\alpha_6^V|,
\end{align*}
where in the last step we used that $|k_2k_3|\geq 1$ in $\Omega_2\setminus \Omega_1$ if $k_2=k_2^*$ and $k_3=k_3^*$, so that $|\alpha_6^V|=|\alpha_6|+O(1)\lesssim |\alpha_6|\lesssim |\alpha_6^V|$.

Similarly as above, in case $k_2^*=k_2$ and $k_3^*=k_4$, we find $|\alpha_6^V|=|\alpha_6|+O(1)\sim k_2k_4 \gtrsim |\mathbf{1}_{\Omega_2\setminus\Omega_1}M_6^{V,1}|$. In the last case $k_2^*=k_3$ and $k_3^*=k_2$ it holds $|\alpha_6^V|=|\alpha_6|+O(1)\sim k_1^2 \sim \max_j |k_j|^2\gtrsim |\mathbf{1}_\Omega M_6^{V,1}|$.

To treat the contribution $\Omega_3\setminus \Omega_1$, as in \cite{LWX} it holds
\begin{align*}
|\alpha_6^V| = (k_1^2-k_2^2+k_3^3+k_5)^2+o(k_1^2)+O(1) \gtrsim k_3^2+k_5^2 \sim k_1^2\gtrsim \max_j |k_j|^2\gtrsim |\mathbf{1}_\Omega M_6^{V,1}|.
\end{align*}

For the case $\Omega_4\setminus\Omega_1$, \cite[(3.49)-(3.50)]{LWX} are replaced by 
\begin{align*}
|\alpha_6^V|&\ge (k_4^2+k_6^2)-|k_1^2-k_2^2|-|k_3^2+k_5^2|+O(1) = (k_4^2+k_6^2)+o(k_1^2)+O(1)\sim k_1^2\sim \max_j|k_j|^2\\
&\gtrsim |\mathbf{1}_\Omega M_6^{V,1}|,
\end{align*}
and
\begin{align*}
|\alpha_6^V|=(k_1^2-k_2^2-k_4^2-k_6^2)+o(k_1^2)+O(1)\sim k_1^2\sim \max_j|k_j|^2\gtrsim |\mathbf{1}_\Omega M_6^{V,1}|.
\end{align*}

At last, in $\Omega_5$, $$|\alpha_6^V|\gtrsim |\omega_{k_1}-\omega_{-k_2}|= |k_1^2-k_2^2|+O(1).$$ Note that the definition of $\Omega_5$ and $\Upsilon$ prevents $k_1-k_2=0$. Note also that $k_1+k_2\neq 0$ in $\Omega_5$, since otherwise $|\omega_{k_1}-\omega_{-k_2}|=0$ which contradicts ${\displaystyle |\omega_{k_1}-\omega_{-k_2}|\gg |\sum_{j=3}^6(-1)^{j+1}\omega_{(-1)^{j+1}k_j}|}$. This yields $|\alpha_6^V|\sim|k_1^2-k_2^2|\geq 1$, and by the mean value theorem
\begin{align*}
\big|M_6^V\big|&\le |m(k_1)^2(\omega_{k_1}+\gamma)-m(k_2)^2(\omega_{-k_2}+\gamma)|+\Big|\sum_{j=3}^6(-1)^{j+1}(\omega_{(-1)^{j+1}k_j}+\gamma)\Big|\\
&=N^{2(1-s)}\big||k_1|^{2s}+|k_1|^{2s-2}(\gamma+(2\pi)^\frac12(V_{\lambda})_{k_1})-|k_2|^{2s}-|k_2|^{2s-2}(\gamma+(2\pi)^\frac12(V_{\lambda})_{-k_2})\big|\\
&\qquad\qquad+\Big|\sum_{j=3}^6(-1)^{j+1}(\omega_{(-1)^{j+1}k_j}+\gamma)\Big|\\
&\lesssim N^{2(1-s)}|k_1^2-k_2^2||k_2|^{2(s-1)}+O(1)+\Big|\sum_{j=3}^6(-1)^{j+1}\widetilde{\omega}_{k_j}\Big|\\
&\lesssim |k_1^2-k_2^2|+\Big|\sum_{j=3}^6(-1)^{j+1}(\omega_{(-1)^{j+1}k_j}+\gamma)\Big|
\\
&\lesssim |\alpha_6^V|.
\end{align*}
Therefore \eqref{est:M6A6} is proved.

Recalling that $|k_1^*|\sim|k_2^*|\gtrsim N$ in $\Upsilon$ and $m(k_1^*)\langle k_1^*\rangle \sim N^{1-s}|k_1^*|^s$, we can then use \eqref{est:M6A6} with H\"older and Sobolev inequalities and \eqref{eq:HsIscal} to finally bound for $s>\frac13$: 
\begin{align*}
\Big|\Lambda_6\big(\frac{\widetilde{M_6^V}}{\alpha_6^V};u(t)\big)\Big|&=\Big|\int_{\Gamma_6}\frac{\widetilde{M_6^V}}{\alpha_6^V}(k_1,\ldots,k_6)\prod_{j=1}^6\hat{u}(t,k_j)(dk_j)_\lambda\Big|\lesssim \int_{\Upsilon}\prod_{j=1}^6|\hat{u}(t,k_j)|(dk_j)_\lambda\\
&\lesssim \|\mathcal{F}^{-1}\{\mathbf{1}_{|k_1^*|\gg N}|\hat{u}(t,k_1^*)|\}|\|_{L^6_x}\|\mathcal{F}^{-1}\{\mathbf{1}_{|k_2^*|\gg N}|\hat{u}(t,k_2^*)|\}|\|_{L^6_x}\|\mathcal{F}^{-1}\{|\hat{u}(t,k)|\}|\|_{L^6_x}^4 \\
&\lesssim \|\langle k_1^*\rangle^s\hat{u}(t,k_1^*)|\|_{\ell^2_{|k_1^*|\gg N}}\|\langle k_2^*\rangle^s\hat{u}(t,k_2^*)|\|_{\ell^2_{|k_2^*|\gg N}} \|u(t)\|_{H^s}^4\\
&\lesssim N^{2(s-1)}\|I_Nu(t)\|_{H^1}^6,
\end{align*}
where $\mathcal{F}^{-1}\{a_k\}(x)=(2\pi)^{-\frac12}\int a_ke^{ikx}(dk)_\lambda$ is the inverse spatial Fourier transform for $\lambda$-periodic functions. This concludes the proof of Lemma~\ref{lem:E1E2}.
\end{proof}

\begin{proof}[Proof of Lemma~\ref{lem:Ikey}]
Let us start with 
the multilinear estimate (i) on $\Lambda_6(\overline{M_6^V})$. First, we decompose dyadically
\begin{align*}
\Lambda_6\big(\overline{M_6^V}\big)=\sum_{N_1,...,N_6}\int_{\Upsilon}\frac{\mathbf{1}_{\Upsilon\setminus\Omega}}6\sum_{j=1}^6(-1)^{j+1}m(k_j)^2(\omega_{(-1)^{j+1}k_j}+\gamma)\prod_{j=1}^6\widehat{P_{N_j}u_{k_j}}(t)(dk_j)_\lambda,
\end{align*}
where the $N_j$ run over dyadic integers, and $P_{N_j}$ is the projector on frequencies $N_j\le|k_j|<2N_j$. Writing similarly as for the $k_j$'s $N_1^*\ge N_2^*\ge \ldots\ge N_6^*$ the decreasing rearrangement of the $N_j$'s, by definition of $\Upsilon$ it holds $N_1^*\sim N_2^*\gtrsim N$. Moreover, we can replace $u\in X^{s,\frac12+}_\lambda(\delta)$ above by an extension $v\in X^{s,\frac12+}_\lambda$ satisfying $\|v\|_{X^{s,\frac12+}}\le 2\|u\|_{X^{s,\frac12+}(\delta)}$. To simplify notations, we still write $u$ in place of $v$.

Then, to deal with the sharp time truncation restricting the time integral to $[0;\delta]$, since multiplication by an indicator function is not bounded on $X^{s,b}$ when $b>\frac12$, we proceed as in \cite{CKSTT1} and decompose 
\begin{align*}
\mathbf{1}_{[0;\delta]}=f(t)+g(t)=\mathbf{1}_{[0;\delta]}\ast (N_1^*)^{100}\chi((N_1^*)^{100}\cdot) + \big[\mathbf{1}_{[0;\delta]}-\mathbf{1}_{[0;\delta]}\ast(N_1^*)^{100}\chi((N_1^*)^{-100}\cdot)\big]
\end{align*}
for some smooth cut-off $\chi$ satisfying $\chi\equiv 1$ on $[-1;1]$. To estimate the contribution of the second term to the integral, since we can bound crudely $|\overline{M_6^V}|\lesssim (N_1^*)^2$, we have by H\"older and Sobolev inequalities
\begin{align*}
\Big|\int_{\mathbb{R}}g(t)\int_{\Upsilon}\overline{M_6^V}\prod_{j=1}^6\widehat{P_{N_j}u_{k_j}}(t)(dk_j)_\lambda dt\Big|&\lesssim (N_1^*)^2\|g\|_{L^2}\|P_{N_1}u\|_{L^2_tL^6_x}\prod_{j=2}^6\|P_{N_j}u\|_{L^\infty_tL^6_x}\\
&\lesssim (N_1^*)^2\|g\|_{L^2}\prod_{j=1}^6\|P_{N_j}u\|_{X^{s,\frac12+}}
\end{align*}
for $s>\frac13$. This is enough to sum on $N_j$'s since by the mean value theorem
\begin{align*}
\|g\|_{L^2}^2&=\int_{\mathbb{R}}\Big|\frac{1-e^{-i\delta\tau}}{i\tau}\big(1-\widehat{\chi}((N_1^*)^{-100}\tau)\big)\Big|^2d\tau\\
&\lesssim\int_{|\tau|\lesssim (N_1^*)^{100}}(N_1^*)^{-200}\Big(\int_0^1\big|\widehat{\chi}'\big(\theta(N_1^*)^{-100}\tau\big)\big|d\theta\Big)^2d\tau+\int_{|\tau|\gg (N_1^*)^{100}}|\tau|^{-2}d\tau\\
&\lesssim (N_1^*)^{-200}\int_{|\tau|\lesssim (N_1^*)^{100}}d\tau+(N_1^*)^{-100}\lesssim (N_1^*)^{-100}.
\end{align*}

To estimate the contribution of the first term, since $H^{\frac12+}(\mathbb{R})$ is an algebra, we have
\begin{align*}
\|fu\|_{X^{s,\frac12+}}=\big\|f(t)\|e^{it(-\partial_x^2+V_\lambda\ast)}u(t)\|_{H^s_x}\big\|_{H^{\frac12+}_t} \lesssim \|f\|_{H^{\frac12+}}\|u\|_{X^{s,\frac12+}},
\end{align*}
and, since $\widehat{\chi}$ is a Schwartz function,
\begin{align*}
\|f\|_{H^{\frac12+}}^2&=\int_{\mathbb{R}}\langle\tau\rangle^{1+}\Big|\frac{1-e^{i\delta\tau}}{i\tau}\widehat{\chi}\big((N_1^*)^{-100}\tau\big)\Big|^2d\tau\\
&\lesssim \int_{|\tau|\lesssim (N_1^*)^{100}}\langle \tau\rangle^{-1+}d\tau +\int_{|\tau|\gtrsim (N_1^*)^{100}}\langle \tau\rangle^{-1+}\langle (N_1^*)^{-100}\tau\rangle^{-10}d\tau\\
&\lesssim (N_1^*)^{0+}.
\end{align*}

Then, we define $$\widehat{U_{N_1}}(t,k_1)=g(t)\mathbf{1}_{[N_1;2N_1)}(k_1)m(k_1)\langle k_1\rangle |\widehat{u}(t,k_1)|$$ and $$\widehat{U_{N_j}}(\tau_j,k_j)=\mathbf{1}_{[N_j;2N_j)}(k_j)m(k_j)\langle k_j\rangle |\widehat{u}(\tau_j,k_j)|,$$ $j=2,\ldots,6$, and seek to prove
\begin{equation}\label{est:multiN}
\begin{split}
\int_{\Gamma_6}\int_{\Upsilon\setminus\Omega}\frac{\big|\sum_{j=1}^6(-1)^{j+1}m(k_j)^2(\omega_{(-1)^{j+1}k_j}+\gamma)\big|}{\prod_{j=1}^6m(k_j)\langle k_j\rangle}&\prod_{j=1}^6\widehat{U_{N_j}}(\tau_j,k_j)d\tau_j(dk_j)_\lambda\\ &\lesssim N^{-3+}(N_1^*)^{0-}\prod_{j=1}^6\|U_{N_j}\|_{X^{0,\frac12+}}.
\end{split}
\end{equation}

In order to prove estimate \eqref{est:multiN}, we will need the refined bilinear estimates for $\lambda$-periodic functions that we alluded to before.

\begin{lemma}\label{lem:bil}
The bilinear map
\begin{align*}
J_N^- : (u,v) \mapsto \int e^{ikx}\int_{k_1+k_2=k}\mathbf{1}_{|k_1-k_2|\gtrsim N}\widehat{u}(k_1)\widehat{v}(k_2)(dk_1)_\lambda(dk)_\lambda
\end{align*}
is bounded from $(X^{0,\frac12+}_\lambda)^2$ to $L^2(\mathbb{R}\times \mathbb{T}_\lambda)$, with 
\begin{align*}
\|J_N^-(u,v)\|_{L^2_{t,x}}\lesssim \big(N^{-1}+\lambda^{-1}\big)^{\frac12}\|u\|_{X^{0,\frac12+}}\|v\|_{X^{0,\frac12+}}.
\end{align*}
The same property holds for
\begin{align*}
J_N^+:(u,v)\mapsto\int e^{ikx}\int_{k_1+k_2=k}\mathbf{1}_{|k_1+k_2|\gtrsim N}\widehat{u}(k_1)\widehat{\overline{v}}(k_2)(dk_1)_\lambda(dk)_\lambda.
\end{align*}
\end{lemma}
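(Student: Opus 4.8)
\textbf{Proof plan for Lemma~\ref{lem:bil}.}
The plan is to reduce the bilinear estimate to a counting argument on the Fourier support, exactly as in the classical periodic bilinear Strichartz estimate of Bourgain (and its $\lambda$-periodic refinement in \cite{CKSTT3,DSPST1,LWX}), keeping track of the precise gain $(N^{-1}+\lambda^{-1})^{\frac12}$. First I would dualize: writing $w\in L^2_{t,x}$ with $\|w\|_{L^2}=1$, it suffices to bound
$$
\Big| \int_{\R}\int_{\frac1\lambda\Z} \overline{\widehat w_k(\tau)} \int_{\tau_1+\tau_2=\tau}\int_{k_1+k_2=k} \mathbf 1_{|k_1-k_2|\gtrsim N}\, \widehat u_{k_1}(\tau_1)\widehat v_{k_2}(\tau_2)\,(dk_1)_\lambda d\tau_1 (dk)_\lambda d\tau\Big|.
$$
Then I would perform a dyadic decomposition in the modulation variables, setting $\langle\tau_1-\omega_{k_1,\lambda}\rangle\sim L_1$, $\langle\tau_2-\omega_{k_2,\lambda}\rangle\sim L_2$, $\langle\tau-\omega_{k,\lambda}\rangle\sim L_0$, so that writing $u_{L_1},v_{L_2},w_{L_0}$ for the corresponding localizations, the multiplier $\langle\tau_j-\omega_{k_j,\lambda}\rangle^{-\frac12-}$ absorbs the modulation weights and lets me sum over $L_0,L_1,L_2$ at the end at the cost of an $\eps$-loss.

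The heart of the matter is the \emph{resonance identity}: on the support of the integrand,
$$
\tau - \omega_{k,\lambda} = (\tau_1-\omega_{k_1,\lambda}) + (\tau_2-\omega_{k_2,\lambda}) + \big(\omega_{k_1,\lambda}+\omega_{k_2,\lambda}-\omega_{k_1+k_2,\lambda}\big),
$$
and since $\omega_{k,\lambda}=k^2+(2\pi)^{\frac12}\lambda^{-2}V_{\lambda k}$ with $\|V\|_{\ell^\infty}<\infty$, one has $\omega_{k_1,\lambda}+\omega_{k_2,\lambda}-\omega_{k_1+k_2,\lambda}= -2k_1k_2 + O(\lambda^{-2})$. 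Hence $\max(L_0,L_1,L_2)\gtrsim |k_1k_2| - O(1) $, but more importantly, fixing $k=k_1+k_2$ and the quantity $\omega_{k_1,\lambda}+\omega_{k_2,\lambda}$, the variable $k_1$ is constrained to lie in the solution set of a quadratic equation; for the $J_N^-$ case, using $k_1-k_2 = 2k_1-k$, the Jacobian $\partial_{k_1}(-2k_1k_2) = -2(k_1-k_2)$ has size $\gtrsim N$ on the support, so that for fixed $(k,\tau,\tau_1)$ the set of admissible $k_1\in\frac1\lambda\Z$ in a window of size $\sim\max(L_0,L_1,L_2)$ has cardinality $\lesssim \lambda\big(\max(L_j)/N + 1/\lambda\big) = \lambda\max(L_j)/N + 1$, i.e. after dividing by $\lambda$ the normalized count is $\lesssim \max(L_j)/N + \lambda^{-1}$. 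Combining this with Cauchy--Schwarz in $(k_1,\tau_1)$ and the standard dyadic orthogonality argument (only finitely many triples $(L_0,L_1,L_2)$ with two of them comparable to the max contribute), one gets
$$
\|J_N^-(u_{L_1},v_{L_2})\|_{L^2}\lesssim \big(\tfrac{\max(L_0,L_1,L_2)}{N}+\tfrac1\lambda\big)^{\frac12} (L_0L_1L_2)^{0}\cdots
$$
and the factors $L_0^{-\frac12-}L_1^{-\frac12-}L_2^{-\frac12-}$ coming from the modulation weights beat the $\max(L_j)^{\frac12}$ in the numerator, leaving a convergent sum and the claimed bound $(N^{-1}+\lambda^{-1})^{\frac12}\|u\|_{X^{0,\frac12+}}\|v\|_{X^{0,\frac12+}}$. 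The case $J_N^+$ is identical after replacing $v$ by $\bar v$, which changes $\omega_{k_2,\lambda}\mapsto -\omega_{-k_2,\lambda}$ in the resonance function, turning the relevant derivative into $\partial_{k_1}(k_1^2+k_2^2)=2(k_1+k_2)$, of size $\gtrsim N$ on the support of $\mathbf 1_{|k_1+k_2|\gtrsim N}$.

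The main obstacle, compared with the flat case $V=0$, is that $k\mapsto\omega_{k,\lambda}$ is no longer an exact quadratic, so the Fourier support of $e^{it(-\partial_x^2+V_\lambda\ast)}f$ is not contained in a genuine parabola and the classical ``number of lattice points on a parabola'' estimates do not apply verbatim; one must instead run the divisor/counting argument directly at the level of the $X^{s,b}$ norms via the resonance identity above, checking carefully that the $O(\lambda^{-2})$ perturbation of the symbol does not spoil the lower bound on the relevant Jacobian (it does not, since $|k_1\mp k_2|\gtrsim N\geq 1$ dominates the $O(\lambda^{-2})$ error). An alternative, which I would mention as a fallback, is to invoke Lemma~\ref{lem:linear*}~(vi): since the $X^{s,b}$ norms for $V\neq 0$ and $V=0$ are equivalent uniformly (with $\|V_\lambda\|_{\ell^\infty}\lesssim \lambda^{-2}\|V\|_{\ell^\infty}$), it is enough to prove the bilinear estimate for $V=0$, which is precisely \cite[Proposition~2.1]{LWX}; this immediately gives the result with constants uniform in $\lambda,N$. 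I would present the self-contained counting proof but note this shortcut explicitly.
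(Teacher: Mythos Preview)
Your approach is essentially the same as the paper's: dyadic decomposition in the modulation variables, Cauchy--Schwarz, and a lattice-point count exploiting that the derivative of the resonance function has size $\gtrsim N$ on the support of the cutoff. The paper does not dualize and tracks only the two modulations $K_1,K_2$ of the inputs, obtaining the set bound $|A_{N,\lambda,\tau,k}|\lesssim \min(K_1,K_2)\big(1+\lambda\max(K_1,K_2)/N\big)$ directly; your introduction of a third dyadic parameter $L_0$ for the output is harmless but unnecessary, and your appeal to a weight ``$L_0^{-\frac12-}$ coming from the modulation'' is incorrect since $w\in L^2$ carries no such weight --- fortunately it is not needed, as $L_1^{-\frac12-}L_2^{-\frac12-}$ already beats $\max(L_1,L_2)^{\frac12}$ with room to sum. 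Your fallback via the norm equivalence of Lemma~\ref{lem:linear*}~(vi), reducing to the $V=0$ case of \cite[Proposition~2.1]{LWX}, is a legitimate shortcut that the paper does not take but could.
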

We postpone the proof of Lemma~\ref{lem:bil} to the end of this subsection and continue with the proof of Lemma~\ref{lem:Ikey}. With the same arguments as for the proof of \eqref{est:M6A6}, it is straightforward to check that \cite[Lemma 3.4]{LWX} remains true with $\overline{M_6^V}$ in place of $\overline{M_6}$. Then, since the proof of \cite[Proposition 3.2]{LWX} only relies on the bound on the symbol, Lemma~\ref{lem:bil}, Sobolev inequality and \eqref{est:L6}, the exact same arguments prove \eqref{est:multiN} and thus Lemma~\ref{lem:Ikey}~(i).

The same argument holds for the proof of \cite[Proposition 3.3]{LWX} which relied again on Lemma~\ref{lem:bil}, Sobolev inequality, \eqref{est:L6}, and \eqref{est:M6A6}, and thus remains true when replacing $\overline{M_{10}}$ by $\overline{M_{10}^V}$. This concludes the proof of Lemma~\ref{lem:Ikey}~(ii).

\end{proof}

Finally, we give the proof of Lemma~\ref{lem:bil}.
\begin{proof}[Proof of Lemma~\ref{lem:bil}]
We proceed as in the proof of Lemma~\ref{lem:linear*}~(v) given in Appendix~\ref{appendix} above and decompose dyadically
\begin{align*}
&\Big\|\int_{k_1+k_2=k}\mathbf{1}_{|k_1\pm k_2|\gtrsim N}\widehat{u}(k_1)\widehat{v}(k_2)(dk_1)_\lambda\Big\|_{L^2_{t,(dk)_\lambda}}\\
&\qquad\qquad\qquad\qquad\le \sum_{K_1,K_2}\Big\|\int_{k_1+k_2=k}\mathbf{1}_{|k_1\pm k_2|\gtrsim N}\widehat{P_{K_1}u}(k_1)\widehat{P_{K_2}v}(k_2)(dk_1)_\lambda\Big\|_{L^2_{t,(dk)_\lambda}},
\end{align*}
where $P_{K_j}$ is the smooth projector on the modulation $|\tau_j-\omega_{k_j}|\sim K_j$. Then proceeding as in the proof of Lemma~\ref{lem:linear*}~(v),
\begin{align*}
&\int_{\mathbb{R}}\int\Big|\int_{\tau_1+\tau_2=\tau}\int_{k_1+k_2=k}\mathbf{1}_{|k_1- k_2|\gtrsim N}\widehat{P_{K_1}u}(\tau_1,k_1)\widehat{P_{K_2}v}(\tau_2,k_2)d\tau_1(dk_1)_\lambda\Big|^2d\tau(dk)_\lambda\\
 &\qquad\qquad\lesssim \|P_{K_1}u\|_{L^2_{t,x}}^2\|P_{K_2}v\|_{L^2_{t,x}}^2\sup_{\tau,k} \frac1\lambda|A_{N,\lambda,\tau,k}|,
\end{align*}
where
\begin{align*}
A_{N,\lambda,\tau,k}=\big\{(\tau_1,k_1)\in\mathbb{R}\times\frac1\lambda\mathbb{Z},~~|k_1-(k-k_1)|\gtrsim N,~~|\tau_1-\omega_{k_1}|\sim K_1,~~|\tau-\tau_1-\omega_{k-k_1}|\sim K_2\big\}.
\end{align*}

In case $\max(K_1,K_2)\gtrsim N^2$, we proceed as in the proof of Lemma~\ref{lem:linear*}~(v) to get
\begin{align*}
\big|A_{N,\lambda,\tau,k}\big|&\lesssim \min(K_1,K_2)\#\big\{k_1\in\frac1\lambda\mathbb{Z},~~|k_1-(k-k_1)|\gtrsim N,~~|\tau-\omega_{k_1}-\omega_{k-k_1}|\lesssim \max(K_1,K_2)\big\}\\
&\lesssim \min(K_1,K_2)\#\big\{k_1\in\frac1\lambda\mathbb{Z},~~2k_1^2+2kk_1=\tau-k^2=O(\max(K_1,K_2))\big\}\\
&\lesssim \min(K_1,K_2)\lambda\max(K_1,K_2)^\frac12\lesssim \frac{\lambda}{N} K_1K_2.
\end{align*}
In case $\max(K_1,K_2)\ll N^2$, noting that
\begin{align}\label{est:lb}
(k_1-(k-k_1))^2=2\tau -k^2-2(\tau_1-\omega_{k_1})-2((\tau-\tau_1)-\omega_{k-k_1})+O_V(1),
\end{align}
we get
\begin{align*}
&\big|A_{N,\lambda,\tau,k}\big|\\
&\lesssim \min(K_1,K_2)\#\big\{k_1\in\frac1\lambda\mathbb{Z},~~|k_1-(k-k_1)|\gtrsim N,~~|(k_1-(k-k_1))^2-4\tau+k^2|\lesssim \max(K_1,K_2)\big\}\\
&\lesssim \min(K_1,K_2)\#\big\{k_1\in\frac1\lambda\mathbb{Z},~~|k_1-(k-k_1)|\gtrsim N,~~k_1=\frac{k}2\pm\frac12\sqrt{2\tau-k^2+O(\max(K_1,K_2))}\big\}.
\end{align*}
But for $k_1,\widetilde{k_1}$ in the above set, we have from \eqref{est:lb} and the lower bound on $|k_1-(k-k_1)|$ that
\begin{align*}
|k_1-\widetilde{k_1}|&\lesssim\big|\sqrt{2\tau-k^2+O(\max(K_1,K_2))}-\sqrt{2\tau-k^2+O(\max(K_1,K_2))}\big|\\
&\lesssim\frac{O(\max(K_1,K_2)}{\sqrt{2\tau-k^2+O(\max(K_1,K_2))}}\lesssim \frac{\max(K_1,K_2)}{N}.
\end{align*}
This finally gives
\begin{align*}
\big|A_{N,\lambda,\tau,k}\big|&\lesssim \min(K_1,K_2)\big(1+\lambda\frac{\max(K_1,K_2)}{N}\big).
\end{align*}

All in all, this yields
\begin{align*}
&\int_{\mathbb{R}}\int\Big|\int_{\tau_1+\tau_2=\tau}\int_{k_1+k_2=k}\mathbf{1}_{|k_1- k_2|\gtrsim N}\widehat{P_{K_1}u}(\tau_1,k_1)\widehat{P_{K_2}v}(\tau_2,k_2)d\tau_1(dk_1)_\lambda\Big|^2d\tau(dk)_\lambda\\
 &\qquad\qquad\lesssim \frac1\lambda \big(1+\lambda\frac{1}{N}\big)K_1K_2\|P_{K_1}u\|_{L^2_{t,x}}^2\|P_{K_2}v\|_{L^2_{t,x}}^2.
\end{align*}
This is enough to prove Lemma~\ref{lem:bil} after summing on $K_1,K_2$.

When $v$ is replaced by $\overline{v}$, we have similarly
\begin{align*}
&\int_{\mathbb{R}}\int\Big|\int_{\tau_1+\tau_2=\tau}\int_{k_1+k_2=k}\mathbf{1}_{|k_1+k_2|\gtrsim N}\widehat{P_{K_1}u}(\tau_1,k_1)\widehat{\overline{P_{K_2}v}}(\tau_2,k_2)d\tau_1(dk_1)_\lambda\Big|^2d\tau(dk)_\lambda\\
&=\int_{\mathbb{R}}\int\Big|\int_{\tau_1-\tau_2=\tau}\int_{k_1-k_2=k}\mathbf{1}_{|k|\gtrsim N}\widehat{P_{K_1}u}(\tau_1,k_1)\overline{\widehat{P_{K_2}v}}(\tau_2,k_2)d\tau_1(dk_1)_\lambda\Big|^2d\tau(dk)_\lambda\\
&\lesssim \frac1\lambda \sup_{\tau,k}|\widetilde{A}_{N,\lambda,\tau,k}|\|P_{K_1}u\|_{L^2_{t,x}}^2\|P_{K_2}v\|_{L^2_{t,x}}^2,
\end{align*}
with
\begin{align*}
\widetilde{A}_{N,\lambda,\tau,k}=\big\{(\tau_1,k_1)\in\mathbb{R}\times\frac1\lambda\mathbb{Z},~~|k|\gtrsim N,~~|\tau_1-\omega_{k_1}|\sim K_1,~~|\tau_1-\tau-\omega_{k_1-k}|\sim K_2\big\}.
\end{align*}
Proceeding as above,
\begin{align*}
|\widetilde{A}_{N,\lambda,\tau,k}|&\lesssim \min(K_1,K_2)\#\big\{k_1\in\frac1\lambda\mathbb{Z},~~|k|\gtrsim N,~~|\tau-\omega_{k_1}+\omega_{k_1-k}|\lesssim \max(K_1,K_2)\big\}\\
&\lesssim \min(K_1,K_2)\#\big\{k_1\in\frac1\lambda\mathbb{Z},~~|k|\gtrsim N,~~k_1^2-(k_1-k)^2=\tau+O(\max(K_1,K_2))\big\}\\
&\lesssim \min(K_1,K_2)\#\big\{k_1\in\frac1\lambda\mathbb{Z},~~|k|\gtrsim N,~~k_1=\frac{\tau+k^2+O(\max(K_1,K_2))}{2k}\big\}\\
&\lesssim  \min(K_1,K_2)\big(1+\lambda \frac{\max(K_1,K_2)}{N}\big).
\end{align*}
This finishes the proof of Lemma~\ref{lem:bil}.
\end{proof}

\subsection{Estimates in the case of a multiplicative potential}
We finish this section by giving the proof of Lemma~\ref{lem:Iperturb}.

First, from similar computations as for \eqref{commutator} we have for any $t\in\mathbb{R}$
\begin{align*}
\big|\langle \partial_x I_Nu,\partial_x[I_N,W_\lambda]u\rangle_{L^2_x}\big|& \le \|I_Nu\|_{H^1}\|\partial_x [I_N,W_\lambda]u\|_{L^2}\\
&\lesssim \big(N^{-1}\|W_\lambda\|_{H^4}\|u\|_{L^2}+N^{-1}\|\partial_x W_\lambda\|_{L^\infty}\|I_Nu\|_{H^1}\big)\|I_Nu\|_{H^1}\\
&\lesssim N^{-1}\lambda^{-2}\|W\|_{H^4}\|I_Nu\|_{H^1}^2.
\end{align*}
This proves (i). 

Similarly,
\begin{align*}
\big|\langle W_\lambda I_Nu,[I_N,W_\lambda]u\rangle_{L^2}\big|&\lesssim \|W_\lambda\|_{L^\infty}\|I_Nu\|_{L^2}\|[I_N,W_\lambda]u\|_{L^2}\\
& \lesssim N^{-1}\|W_\lambda\|_{L^\infty}\|\partial_x W_\lambda\|_{L^\infty}\|I_Nu\|_{L^2}\|u\|_{L^2}\\
&\lesssim N^{-1}\|W_\lambda\|_{H^4}^2\|u\|_{L^2}^2.
\end{align*}
This proves (ii).

With Sobolev embedding and similar computations, we also have 
\begin{align*}
\big|\langle [I_N,W_\lambda]u,I_N(|u|^4u)\rangle_{L^2}\big|&\lesssim N^{-1}\|\partial_x W_\lambda\|_{L^\infty}\|u\|_{L^2}\||u|^4u\|_{L^2}\lesssim N^{-1}\|W_\lambda\|_{H^4}\|u\|_{H^s}^6\\
&\lesssim N^{-1}\|W_\lambda\|_{H^4}\|I_Nu\|_{H^1}^6,
\end{align*}
since $s>\frac25$. Moreover,
\begin{align*}
\langle I_N(W_\lambda u),I_N(|u|^4u)-|I_Nu|^4I_Nu\rangle_{L^2}&=\int_{\Gamma_6}m_6(k_1,\ldots,k_6)(W_\lambda u)_{k_1}(dk_1)_\lambda\prod_{j=2}^6U_{k_j}(dk_j)_\lambda,
\end{align*}
where the symbol is given by
\begin{align*}
m_6=\frac{m(k_1)\big(m(k_2+\cdots+k_6)-\prod_{j=2}^6m(k_j)}{\prod_{j=2}^6m(k_j)\langle k_j\rangle}
\end{align*}
and $U_{k_j}(t)=m(k_j)\langle k_j\rangle |\hat{u}_{k_j}(t)|$. In particular, if $|k_{j^*}|=\max(|k_j|,~j\ge 2)$, then $m_6$ vanishes unless $|k_{j^*}|\gtrsim N$, and since 
\begin{align*}
m(k)\langle k\rangle\sim\begin{cases} N^{1-s}|k|^s,~~|k|\gtrsim N,\\
\langle k\rangle,~~|k|\ll N,
\end{cases}
\end{align*} 
we have the rough bound
\begin{align*}
|m_6|\lesssim N^{-1}\prod_{j\neq j^*}\langle k_j\rangle^{-s}.
\end{align*}
Together with H\"older and Sobolev inequalities, we get
\begin{align*}
\big|\langle I_N(W_\lambda u),I_N(|u|^4u)-|I_Nu|^4I_Nu\rangle_{L^2}\big|&\lesssim N^{-1}\|W_\lambda\|_{L^\infty}\|u\|_{L^{10}}\|\langle \partial_x\rangle^{-s}U\|_{L^{10}}^4\|U\|_{L^2}\\
&\lesssim N^{-1}\lambda^{-2}\|W\|_{H^4}\|u\|_{H^s}\|U\|_{L^2}^5 \\
&\lesssim N^{-1}\lambda^{-2}\|W\|_{H^4}\|I_Nu\|_{H^1}^6.
\end{align*}
This proves (iii).

Finally, it remains to estimate
\begin{align*}
&\Lambda_7(M_7;u,\ldots,u,W_\lambda)\\
&=\int_{\Gamma_7} \frac{\sum_{j=1}^6(-1)^j\frac{\widetilde{M_6}}{\alpha_6}(k_1,\ldots,k_{j-1},k_j+k_7,k_{j+1},\ldots,k_6)}{\prod_{j=1}^6m(k_j)\langle k_j\rangle}(W_\lambda)_{k_7}(dk_7)_\lambda\prod_{j=1}^6U_{k_j}(dk_j)_\lambda,
\end{align*}
with $U_{k_j}=m(k_j)\langle k_j\rangle |\hat{u_{k_j}}|$. Recall from \cite[Lemma 3.2]{LWX} that $|\widetilde{M_6}|\lesssim |\alpha_6|$, and thus the symbol above is bounded by
\begin{align*}
\big|\sum_{j=1}^6(-1)^j\frac{\widetilde{M_6}}{\alpha_6}(k_1,\ldots,k_{j-1},k_j+k_7,k_{j+1},\ldots,k_6)\big|\lesssim 1,
\end{align*}
thus proceeding as in \cite[Lemma 3.3]{LWX} or equivalently as in the proof of Lemma~\ref{lem:E1E2}, we get
\begin{align*}
\big|\Lambda_7(M_7;u,\ldots,u,W_\lambda)(t)\big|&\lesssim N^{2(s-1)}\lambda^{-2}\|W\|_{H^4}\|I_Nu\|_{H^1}^6.
\end{align*}
This shows (iv). This concludes the proof of Lemma~\ref{lem:Iperturb}.

\end{document}